\documentclass[reqno]{amsart}

\usepackage{a4,amsmath,amssymb,amsthm,slashed}
\usepackage[top=1in, bottom=1in, left=1.25in, right=1.25in]{geometry}
\usepackage{xfrac}
\usepackage{faktor}
\usepackage{latexsym}
\usepackage{mathrsfs}
\usepackage[numbers]{natbib}
\pagestyle{plain}

\usepackage{leftidx}
\usepackage{inputenc}

\usepackage{scrextend}

\usepackage{calc}
\usepackage{enumitem}
\usepackage{amsaddr}

\usepackage[T1]{fontenc}

\usepackage{leftidx}
\usepackage{tikz}

\usepackage{hyperref}
\newcounter{mnotecount}[section]

\usepackage{color}

\usepackage{url}

\numberwithin{equation}{section}

\usepackage[mathscr]{eucal}

\usepackage{graphicx}
\graphicspath{pic/}

\newcommand{\sfrak}{\mathfrak{s}}

\newcommand{\B}{\mathbf}
\newcommand{\half}{\frac{1}{2}}

\newcommand{\pt}{\partial}
\newcommand{\veps}{\varepsilon}

\newcommand{\prb}{\partial_{\rb}}
\newcommand{\di}{\mathrm{d}} 

\newcommand{\Donetwo}{\DOC_{\tb_1,\tb_2}}

\newcommand{\NPR}{{\Upsilon}}
\newcommand{\NPRplus}{\NPR_{+1}}
\newcommand{\NPRminus}{\NPR_{-1}}
\newcommand{\NPRpluss}{\NPR_{+\sfrak}}
\newcommand{\NPRminuss}{\NPR_{-\sfrak}}
\newcommand{\NPRzero}{\NPR_{0,\text{mid}}}

\newcommand{\psiplus}{\psi_{+1}}
\newcommand{\psiminus}{\psi_{-1}}

\newcommand{\Psiplus}{\Psi_{+1}}
\newcommand{\Psiminus}{\Psi_{-1}}

\newcommand{\psipluss}{\psi_{+\sfrak}}
\newcommand{\psiminuss}{\psi_{-\sfrak}}

\newcommand{\Psipluss}{\Psi_{+\sfrak}}
\newcommand{\Psiminuss}{\Psi_{-\sfrak}}
\newcommand{\Psipms}{\Psi_{s}}

\newcommand{\Phiplus}{\Phi_{+1}^{(0)}}
\providecommand{\Phiminus}[1]{\Phi_{-1}^{(#1)}}

\providecommand{\tildePhiminus}[1]{\widetilde{\Phi}_{-1}^{(#1)}}
\providecommand{\tildePhiplusHigh}[1]{\widetilde{\Phi}_{+1}^{(#1)}}

\providecommand{\Phiminuss}[1]{\Phi_{-\sfrak}^{(#1)}}

\providecommand{\PhiplussHigh}[1]{\Phi_{+\sfrak}^{(#1)}}
\providecommand{\tildePhiminuss}[1]{\widetilde{\Phi}_{-\sfrak}^{(#1)}}
\providecommand{\tildePhiplussHigh}[1]{\widetilde{\Phi}_{+\sfrak}^{(#1)}}

\providecommand{\InizeroEnergypluss}[4]{\mathbf{E}_{\Sigma_{#1}}^{#2,#3,#4}[\Psipluss]}
\providecommand{\InizeroEnergyminuss}[4]{\mathbf{E}_{\Sigma_{#1}}^{#2,#3,#4}[\Psiminuss]}

\providecommand{\InizeroEnergyplussell}[4]{\mathbf{E}_{\Sigma_{#1}}^{#2,#3,#4}[(\Psipluss)_{(m,\ell)}]}
\providecommand{\InizeroEnergyplussellTI}[4]{\mathbf{E}_{\Sigma_{#1}}^{#2,#3,#4}[\Psi_{\gplus}]}

\providecommand{\InizeroEnergyplussnv}[2]{\mathbf{E}_{\text{NV},\Sigmazero}^{\ell\geq\ell_0,#1, #2}[\Psipluss]}
\providecommand{\InizeroEnergyplussv}[2]{\mathbf{E}_{\text{V},\Sigmazero}^{\ell\geq\ell_0,#1, #2}[\Psipluss]}

\providecommand{\InizeroEnergyminussnv}[2]{\mathbf{E}_{\text{NV},\Sigmazero}^{\ell\geq\ell_0,#1, #2}[\Psiminuss]}
\providecommand{\InizeroEnergyminussv}[2]{\mathbf{E}_{\text{V},\Sigmazero}^{\ell\geq\ell_0,#1, #2}[\Psiminuss]}

\providecommand{\InizeroEnergyplusshigh}[5]{\mathbf{E}_{\Sigma_{#1}}^{#2,#3,#4}[#5]}

\providecommand{\NPCP}[1]{\mathbb{Q}_{+1}^{(m,#1)}}

\providecommand{\NPCIT}[1]{\mathbb{Q}_{g_+}^{(m,#1)}}

\providecommand{\NPCPs}[1]{\mathbb{Q}_{+\sfrak}^{(m,#1)}}
\providecommand{\NPCNs}[1]{\mathbb{Q}_{-\sfrak}^{(m,#1)}}
\providecommand{\NPCITs}[1]{\mathbb{Q}_{\gplus}^{(m,#1)}}


\newcommand{\NPRplustwo}{\NPR_{+2}}
\newcommand{\NPRminustwo}{\NPR_{-2}}

\newcommand{\psiplustwo}{\psi_{+2}}
\newcommand{\psiminustwo}{\psi_{-2}}

\newcommand{\Psiminustwo}{\Psi_{-2}}

\newcommand{\Phiplustwo}{\Phi_{+2}^{(0)}}
\providecommand{\Phiminustwo}[1]{\Phi_{-2}^{(#1)}}

\providecommand{\tildePhiminustwo}[1]{\widetilde{\Phi}_{-2}^{(#1)}}
\providecommand{\tildePhiplustwoHigh}[1]{\widetilde{\Phi}_{+2}^{(#1)}}

\newcommand{\Lxi}{\partial_{\tb}}

\newcommand{\pu}{\partial_u}
\newcommand{\pv}{\partial_v}

\newcommand{\DOC}{\mathcal{D}}
\newcommand{\tb}{\tau}
\newcommand{\pb}{{\phi}}
\newcommand{\rb}{\rho}
\newcommand{\Hyper}{\Sigma}

\newcommand{\Sigmazero}{\Hyper_{\tb_0}}
\newcommand{\Sigmatb}{\Hyper_{\tb}}
\newcommand{\Sigmatwo}{\Hyper_{\tb_2}}
\newcommand{\Sigmaone}{\Hyper_{\tb_1}}

\newcommand{\Horizon}{\mathcal{H}^+}
\newcommand{\Scri}{\mathcal{I}^+}
\newcommand{\Horizononetwo}{\Horizon_{\tb_1,\tb_2}}
\newcommand{\Scrionetwo}{\Scri_{\tb_1,\tb_2}}

\newcommand{\CDeri}{\mathbb{D}}
\def\CDeris{\slashed{\mathbb{D} }}

\newcommand{\PDeri}{\mathbb{B}}

\newcommand{\PriceDeri}{\mathbb{P}}

\newcommand{\VR}{\hat{V}}
\newcommand{\curlVR}{\hat{\mathcal{V}}}
\newcommand{\edthR}{\mathring{\eth}}

\newcommand{\Sphere}{S^2}

\providecommand{\abs}[1]{\lvert#1\rvert}
\providecommand{\norm}[1]{\lVert#1\rVert}

\providecommand{\absHighOrder}[3]{\abs{#1}_{#2,#3}}

\providecommand{\absCDeri}[2]{\absHighOrder{#1}{#2}{\CDeri}}
\providecommand{\absCDeris}[2]{\absHighOrder{#1}{#2}{\slashed{\mathbb{ D}}}}

\makeatletter
  \def\moverlay{\mathpalette\mov@rlay}
  \def\mov@rlay#1#2{\leavevmode\vtop{%
     \baselineskip\z@skip \lineskiplimit-\maxdimen
     \ialign{\hfil$#1##$\hfil\cr#2\crcr}}}
\makeatother

\newcommand{\reg}{k}

\newcommand{\regl}{k'}

\newcommand{\FBT}{{\mathbb{I}_{\textup{NV},\ell}}}
\newcommand{\FBTV}{{\mathbb{I}_{\textup{V},\ell}}}

\newcommand{\FBNV}{\mathbb{I}_{\textup{NV}}}
\newcommand{\FBV}{\mathbb{I}_{\textup{V}}}

\newcommand{\hell}{h_{\sfrak,\ell}}
\newcommand{\hellone}{h_{1,\ell}}

\newcommand{\hellt}{\tilde{h}_{+\sfrak,\ell}}
\newcommand{\hellh}[1]{h_{\sfrak,\ell}^{(#1)}}
\newcommand{\hellz}{h_{\sfrak,\ell_0}}
\newcommand{\hellzp}{h_{\sfrak,\ell_0+1}}

\newcommand{\varphisell}{\varphi_{s,\ell}}
\newcommand{\varphiminussell}{\varphi_{-\sfrak,\ell}}
\newcommand{\varphiplussell}{\varphi_{+\sfrak,\ell}}
\newcommand{\hatvarphisell}{\hat\varphi_{s,\ell}}
\newcommand{\hatvarphiplussell}{\hat\varphi_{+\sfrak,\ell}}
\newcommand{\hatvarphiell}{\hat\varphi_{-\sfrak,\ell}}
\newcommand{\Hvarphisell}{H_{\varphi_{s,\ell}}}
\newcommand{\Hhatvarphisell}{H_{\hat\varphi_{s,\ell}}}
\newcommand{\Hhatvarphiell}{H_{\hat\varphi_{-\sfrak,\ell}}}

\newcommand{\Phiplussell}{\Phi_{+\sfrak,\ell}^{(0)}}

\newcommand{\gplus}{(g_{+\sfrak})_{m,\ell}}
\newcommand{\PhiplussHighTI}[1]{\Phi_{\gplus}^{(#1)}}
\newcommand{\tildePhiplussHighTI}[1]{\widetilde{\Phi}_{\gplus}^{(#1)}}

\newcommand{\Hpsi}{\mathcal{H}[\psi_{+\sfrak,\ell}]}
\newcommand{\Ppsiell}{\psi_{+\sfrak,\ell}}
\newcommand{\Phell}{h_{\sfrak,\ell}}
\newcommand{\Pthell}{\tilde{h}_{\sfrak,\ell}}
\newcommand{\Pgell}{g_{+\sfrak,\ell}}
\newcommand{\gsf}[1]{\mathfrak{g}_{+\sfrak,\ell}^{(#1)}}
\newcommand{\csf}{\mathfrak{c}_{+\sfrak,\ell}}
\newcommand{\asfl}[1]{\mathfrak{a}_{\ell-\sfrak,#1}}

\newcommand{\cfz}{\mathfrak{c}_{+0,\ell}}
\newcommand{\cfo}{\mathfrak{c}_{+1,\ell}}
\newcommand{\cft}{\mathfrak{c}_{+2,\ell}}

\newcommand{\hhyp}{h_{\text{hyp}}}
\newcommand{\Hhyp}{H_{\text{hyp}}}

\theoremstyle{plain}
\newtheorem{thm}{Theorem}[section]
\newtheorem{cor}[thm]{Corollary}
\newtheorem{lemma}[thm]{Lemma}
\newtheorem{prop}[thm]{Proposition}

\theoremstyle{definition}
\newtheorem{definition}[thm]{Definition}
\newtheorem{assump}[thm]{Assumption}
\newtheorem{remark}[thm]{Remark}


\title{Price's law for spin fields on a Schwarzschild background}

\author[S. Ma and L.Zhang]{Siyuan Ma$^\dagger$ and Lin Zhang$^\star$}
\email{$^\dagger$siyuan.ma@sorbonne-universite.fr, $^\star$linzhang2013@pku.edu.cn}
\address{$^\dagger$Laboratoire Jacques-Louis Lions,
Sorbonne Universit\'{e} Campus Jussieu,
4 place Jussieu 75005 Paris, France\\
and\\
Max Planck Institute for Gravitational Physics, Am M\"uhlenberg 1, 14476, Potsdam, Germany\\
$^\star$College of Mathematics and Statistics, Chongqing University, Chongqing 401331, China.}

\setcounter{tocdepth}{2}

\begin{document}

\allowdisplaybreaks

\begin{abstract}
In this work, we derive the globally precise late-time asymptotics for the spin-$\mathfrak{s}$ fields on a Schwarzschild background, including the scalar field $(\mathfrak{s}=0)$, the Maxwell field $(\mathfrak{s}=\pm 1)$ and the linearized gravity $(\mathfrak{s}=\pm 2)$. The conjectured Price's law in the physics literature which predicts the sharp rates of decay of the spin $s=\pm \mathfrak{s}$ components towards the future null infinity as well as in a compact region is shown. Further, we confirm the heuristic claim by Barack and Ori that the spin $+1, +2$ components have an extra power of decay at the event horizon than the conjectured Price's law. The asymptotics are derived via a unified, detailed analysis of the Teukolsky master equation that is satisfied by all these components.
\end{abstract}

\maketitle

\tableofcontents


\section{Introduction}

The metric of a Schwarzschild black hole spacetime \cite{schw1916}
takes the form of
\begin{align}\label{eq:SchwMetricinHHtetrad}
g_{ab}= & -2l_{(a}n_{b)}+2m_{(a}\bar{m}_{b)},
\end{align}
where $(l^a,n^a,m^a,\bar{m}^a)$ is a Hawking--Hartle null tetrad \cite{HHtetrad72} and reads in the Boyer-Lindquist coordinates $(t,r,\theta,\phi)$ \cite{boyer:lindquist:1967}
\begin{align}\label{eq:HHtetrad}
l^a = \tfrac{1}{\sqrt{2}}(1 , \mu , 0 ,0), \,\,
n^a = \tfrac{1}{\sqrt{2}} (\tfrac{1}{\mu} , - 1 , 0 , 0), \,\,m^a = \tfrac{1}{\sqrt{2} r}\left(0,0 , 1, \tfrac{i}{\sin{\theta}}\right),\,\,
\bar{m}^{a}= \tfrac{1}{\sqrt{2} r}\left(0,0 , 1, \tfrac{-i}{\sin{\theta}}\right).
\end{align}
Here, the function
$\mu=\mu(r,M)=\Delta r^{-2}$ with $\Delta=\Delta(r,M)= r^2 -2Mr$ and $M$ is the mass of the black hole. The larger root $r=2M$ of the function $\Delta$ is the location of the event horizon $\mathcal{H}$, and we define the domain of outer communication (DOC), denoted as $\DOC$, of a Schwarzschild black hole spacetime to be the closure of $\{(t,r,\theta,\phi)\in \mathbb{R}\times (2M,\infty)\times {S}^2\}$ in the Kruskal maximal extension.
We consider in this work only the future Cauchy problem and denote the future event horizon and the future null infinity as $\Horizon$ and $\Scri$, respectively. Denote $v$ and $u$ the forward and retarded time, respectively, and define $\tb$ a hyperboloidal time function such that the level sets of the time function are spacelike hypersurfaces,\footnote{In fact,  the estimates in our main theorem \ref{thm:main:intro} are still valid if one portion or the whole of the level sets of $\tb$ is null.} cross $\Horizon$ regularly, and are asymptotic to $\Scri$ for large $r$. We call the coordinate system $(\tb, \rb=r,\theta,\pb)$ as the hyperboloidal coordinate system and denote the level sets of $\tb$ as $\Sigmatb$. See Section \ref{sect:foliation}.


\subsection{Heuristic Price's law}

This work is for the spin-$\sfrak$ fields, which correspond to the scalar field, the Maxwell field and the linearized gravity in the case of $\sfrak=0, 1$ and $2$, respectively.
The scalar field, denoted as $\NPR_0$, solves the scalar wave equation $\Box_{g} \NPR_0=0$.
The Maxwell field, a real two-form $\mathbf{F}_{\alpha\beta}$,  satisfies the Maxwell equations
\begin{align}\label{eq:MaxwellEqs}
\nabla^{\alpha}\mathbf{F}_{\alpha\beta}&=0, & \nabla_{[\gamma}\mathbf{F}_{\alpha\beta]}&=0.
\end{align}
In the Newman--Penrose (N--P) formalism \cite{newmanpenrose62,newmanpenrose63errata}, one projects the Maxwell field onto the Hawking--Hartle tetrad \eqref{eq:HHtetrad}  and obtains the N--P components of the Maxwell field
\begin{equation}\label{eq:MaxwellNPcomponentswithnosuperscript}
\NPRplus = \mathbf{F}_{\mu\nu} l^\mu m^\nu , \quad \NPRzero= \mathbf{F}_{\mu\nu}(l^{\mu}n^{\nu}+\bar{m}^{\mu}m^{\nu}),\quad  \NPRminus = \mathbf{F}_{\mu\nu} \bar{m}^\mu n^\nu.
\end{equation}
The lower index of each N--P component indicates its spin weight,  and we call the N--P components $\NPRplus$ and $\NPRminus$ as the spin $+1$ and $-1$ components of the Maxwell field, respectively, and $\NPRzero$ as the middle component. In the same way, we define the N--P components of the linearized gravity by projecting the linearized Weyl tensor $\B{W}_{\alpha\beta\gamma\delta}$ onto
the Hawking--Hartle tetrad: the spin $+2$ and $-2$ components of the linearized gravity are given by
\begin{align}\label{def:regularNPComps}
   \NPRplustwo&=\B{W}_{{l} m{l}m}, &
     \NPRminustwo&=\B{W}_{{n}\bar{m} {n}\bar{m}}.
\end{align}
The above-defined spin $s$ components $\NPR_s$ $(s=0,\pm1, \pm2)$ are regular and non-degenerate at the future event horizon. \underline{Throughout this work, we always denote $s$ the spin weight and $\sfrak=\abs{s}$.}

As is well-known, these massless spin-$s$ fields will develop power tails in the future development in the DOC of a Schwarzschild spacetime. Physically, this is due to the backscattering caused by an effective curvature potential which is in turn related to the non-vanishing middle N--P component of the Weyl curvature tensor of the Schwarzschild background, and, in view of the expectation that these linear models provide high accuracy approximation for the nonlinear dynamics, these tails are intimately related to many important problems in General Relativity, such as the black hole exterior (in-)stability problem and the Strong Cosmic Censorship conjecture for the black hole interior.
These power tails are predicted by Price in \cite{Price1972SchwScalar,Price1972SchwIntegerSpin}, and they are conjectured to be sharp and are now called the \textit{Price's law}. In addition,  the tails of a fixed $\ell$ mode are also discussed therein.
We shall also remark that these Price's law are conjectured to be sharp as both an \emph{upper} and a \emph{lower} bound. Furthermore, Barack--Ori claimed in \cite{bo99} that the positive spin components for both  the Maxwell field and the linearized gravity have an extra $v^{-1}$ decay at the event horizon than the conjectured Price's law, which means the sharp decay for $({\Upsilon}_{+\sfrak})^{\ell=\ell_0}$, $\sfrak=1,2$ on event horizon shall be $v^{-4-2\ell_0}$.  The conjectured Price's law and the heuristic argument of Barack--Ori together for both the spin $s$ component ${\Upsilon}_s$ and its fixed $\ell=\ell_0$ mode $({\Upsilon}_s)^{\ell=\ell_0}$ are summarized in Table $1$ provided the initial data decay sufficiently fast (or have compact support) on an initial future Cauchy surface terminating at spatial infinity.  \textbf{When there is no confusion, we shall call the summarized sharp decay rates in Table \ref{tabel:PL} as the \underline{Price's law}, albeit it is in fact a combination of the conjectured Price's law in \cite{Price1972SchwScalar,Price1972SchwIntegerSpin} and the predicted asymptotics for the positive spin components by Barack--Ori in  \cite{bo99}.} There are also works discussing the power tails in a Kerr spacetime; we guide the readers to \cite{Hod99Mode,bo99,gleiser2008late,csukas2019numerical} and the references therein.

\begin{table}[htbp]
\begin{center}
\begin{tabular}{|l|l|l|l|}
\hline
&\qquad towards $\Scri$ & in a compact region & \qquad\qquad on  $\Horizon$\\
  \hline
  \qquad\quad $\NPR_s$ &\quad$r^{-1-\sfrak-s}u^{-2-\sfrak+s}$  &  \qquad \, \,$v^{-3-2\sfrak}$ & \qquad \quad $v^{-3-2\sfrak-\zeta(s)}$\\
  \hline
\quad\quad$(\NPR_s)^{\ell=\ell_0}$ & \,\,\,\,$r^{-1-\sfrak-s}u^{-2-\ell_0+s}$  & \qquad \,\,\,$v^{-3-2\ell_0}$  & \qquad \,\,\,\, $v^{-3-2\ell_0-\zeta(s)}$\\
  \hline
  total decay power&$-3-\sfrak - \max\{\ell_0,\sfrak\}$ &\,\, $-3-2\max\{\ell_0,\sfrak\}$ & $-3-2\max\{\ell_0,\sfrak\}-\zeta(s)$ \\
  \hline
\end{tabular}
\end{center}
\caption{The conjectured Price's law for the spin $s=\pm \sfrak$ components on Schwarzschild are as above. The original Price's law, or the sharp decay rates predicted by Price in \cite{Price1972SchwScalar,Price1972SchwIntegerSpin}, contains the ones towards $\Scri$ and in a compact region. The last column about the sharp decay rates on $\Horizon$ contains the predicted decay rates by Barack--Ori \cite{bo99} for the positive spin component, with $\zeta(s)$ equals $1$ if $s>0$ and $0$ if $s\leq 0$.}
\label{tabel:PL}
\end{table}

\subsection{Main results}

Our main results give an affirmative answer to the conjectured Price's law in \cite{Price1972SchwScalar,Price1972SchwIntegerSpin} and Barack--Ori's claim in  \cite{bo99} for the spin $s$ components on a Schwarzschild background by providing a rigorous mathematical proof.  

\begin{thm}[Rough version of the globally precise late-time asymptotics of the spin $s$ components]\label{thm:main:intro}
Let $j\in \mathbb{N}$, let $\sfrak=0,1,2$, let $\ell_0\geq \sfrak$, and let $\tb_0\geq 1$. Let $Y_{m,\ell}^{s}(\cos\theta)e^{im\pb}$ be the spin-weighted spherical harmonics defined as in Section \ref{sect:decompIntoModes}. Let function $h_{\sfrak,\ell_0}$ be defined as in Definition \ref{def:hell:pm}.
\begin{enumerate}[label=\arabic*)]
\item\label{point:suffidecay:intro} Assume the initial data of the spin $s=\pm \sfrak$ components on $\Sigmazero$ are smooth and decay sufficiently fast (or are of compact support away from infinity), and are supported on $\ell\geq\ell_0$ modes.  Then in the future domain of dependence of $\Sigmazero$, there exists a constant $\veps>0$,  constants $\{C_{\sfrak;m,\ell_0}\}_{m=-\ell_0,-\ell_0+1,\ldots,\ell_0}$ which can be calculated from the initial data, and $D<\infty$ which depends only on the initial data, such that
\begin{subequations}
\label{eq:mainthm:pt1}
  \begin{align}
  &\Bigg|\Lxi^j\biggl(\Upsilon_s-2^{2\ell_0+2}
  \prod_{i=\ell_0+\sfrak+1}^{2\ell_0+1} i^{-1}\sum\limits_{m=-\ell_0}^{\ell_0} C_{\sfrak;m,\ell_0}Y_{m,\ell_0}^{s}(\cos\theta)e^{im\pb}\notag\\
  &\qquad \quad \quad\,\,\times \mu^{\frac{\sfrak+s}{2}}\hellz r^{\ell_0-\sfrak}\frac{-(\ell_0-s+1)v-(\ell_0+s+1)\tau}{\tau^{\ell_0-s+2}v^{\ell_0+s+2}}\biggr)\Bigg|\notag\\
  &\leq D r^{\ell_0-\sfrak}\tau^{-\ell_0+s-2-j-\varepsilon}v^{-\ell_0-s-1},
  \end{align}
  and, on the future event horizon $\Horizon$, for $\sfrak=1,2$,
  \begin{align}
  \hspace{4ex}&\hspace{-4ex}
  \bigg|\Lxi^{j}\NPR_{+\sfrak}\Big|_{\Horizon}-\prod_{n=2\ell_0+2}^{2\ell_0+j+3} n\frac{(-1)^{\sfrak+j+1}2^{2\ell_0+3}\sfrak (\ell_0-\sfrak)!(2M)^{\ell_0-\sfrak+1}\hellz(2M)}{(2\ell_0+1)!}\notag\\
  &\qquad \quad\quad\times
 \sum\limits_{m=-\ell_0}^{\ell_0} C_{\sfrak;m,\ell_0}Y_{m,\ell_0}^{+\sfrak}(\cos\theta)e^{im\pb}  v^{-2\ell_0-4-j}\bigg|\notag\\
  \leq{}&D
  v^{-2\ell_0-4-j-\veps}.
  \end{align}
  \end{subequations}
\item\label{point:ccinfty:intro} Assume the initial data for the spin $s=\pm \sfrak$ components on $t=t_0$ hypersurface (in the Boyer--Lindquist coordinates) is smooth and supported in a compact region of $(2M,\infty)$,\footnote{This means that the initial data are compactly supported away from both the future event horizon and infinity.} and assume they are supported on $\ell\geq\ell_0$ modes. Then the estimates in point \ref{point:suffidecay:intro}  hold true with the constants $C_{\sfrak;m,\ell_0}$ given by  
\begin{align}
  \label{def:NPell0:compsupp:intro}
 C_{\sfrak;m,\ell_0}={}&\frac{(-1)^{\ell_0-\sfrak+1}(\ell_0-\sfrak)!}{(2\ell_0+1)(2\ell_0+2)}\lim_{r\to \infty}(rh_{\sfrak, \ell_0-\sfrak,\ell_0-\sfrak} )\notag\\
 &\times \int_{2M}^{\infty}\mu^{-1}h_{\sfrak,\ell_0}r^{\ell_0+\sfrak}\big(r^2\partial_t (\NPRpluss)_{m,\ell=\ell_0}+2\sfrak(r-3M)(\NPRpluss)_{m,\ell=\ell_0}\big)\di r,
 \end{align}
where the function $h_{\sfrak, \ell_0-\sfrak,\ell_0-\sfrak}$\footnote{In particular, $rh_{\sfrak, 0,0}= -(2\sfrak+1)(2\sfrak+2)M$.} is determined as in Proposition \ref{prop:wave:Phihigh:pm1}.
\end{enumerate}
\end{thm}

Let us make a few remarks on this theorem.

\begin{remark}\label{rem:mainthm:intro}
\begin{enumerate}[label=\roman*)]

\item{(General and precise versions).}
Both points  \ref{point:suffidecay:intro} and \ref{point:ccinfty:intro} are immediate corollaries (and special cases) of Theorem \ref{summary:PL:v:pm:global:HiGhmodes}, where the  asymptotic conditions at infinity for the initial data on $\Sigmazero$ are explicitly imposed. In fact, we will prove Theorem \ref{summary:PL:nv:pm:global:HM} for the non-vanishing Newman--Penrose constant (NVNPC) case in Section \ref{sect:NVNPC:PL} and Theorem \ref{summary:PL:v:pm:global:HiGhmodes} for the vanishing Newman--Penrose constant (VNPC) case in Section \ref{sect:VNPC:PL}, both theorems requiring milder assumptions on the initial data than Theorem \ref{thm:main:intro}. 

\item {(Verify the Price's law).}
This theorem verifies the sharp decay rates for the spin $s=\pm \sfrak$ components on a Schwarzschild background presented  in the above Table \ref{tabel:PL}.  In particular, it suggests that the correct asymptotic decay rates should be a combination of the conjectured Price's law in \cite{Price1972SchwScalar,Price1972SchwIntegerSpin} and the heuristic claim of Barack--Ori in \cite{bo99}.

\item{(Genericity of the decay rates and the values of the constants $D$ and $C_{\sfrak;m,\ell_0}$).}
In both points \ref{point:suffidecay:intro} and \ref{point:ccinfty:intro} of this theorem, the constant $D$ is expressed purely in terms of the information of the initial data, and the constants $C_{\sfrak;m,\ell_0}$ are equal to $\NPCIT{\ell_0}$ which are the Newman--Penrose constants of the time integral that can in turn be calculated from the initial data of the spin $+\sfrak$ component; see more in Section \ref{subsect:PL:v:MT}. In particular, this suggests that the decay rates in the estimates \eqref{eq:mainthm:pt1} and \eqref{def:NPell0:compsupp:intro} be \underline{generically} sharp.

\item{(Properties of the function $h_{\sfrak,\ell_0}$).}
Function $r^{\ell_0-\sfrak}\hellz\sum\limits_{m=-\ell_0}^{\ell_0} Y_{m,\ell_0}^{-\sfrak}(\cos\theta)e^{im\pb}$ is a stationary solution to the equation of  an $\ell=\ell_0$ mode of the spin $-\sfrak$ component $\NPR_{-\sfrak}$, with the limit $\lim_{r\to \infty}\hellz=1$, while function $\mu^{\sfrak}r^{\ell_0-\sfrak}\hellz\sum\limits_{m=-\ell_0}^{\ell_0} Y_{m,\ell_0}^{+\sfrak}(\cos\theta)e^{im\pb}$ is a stationary solution to the equation of  an $\ell=\ell_0$ mode of the spin $+\sfrak$ component $\NPR_{+\sfrak}$.

\item
   The proof in this work also implies that in point \ref{point:suffidecay:intro}, if the coefficients of the asymptotic profiles are both vanishing, the decay rate will be faster by an extra $\tb^{-1}$ globally.

Similarly, in point \ref{point:ccinfty:intro}, one finds the decay rate can be improved if and only if the integral in the second line of \eqref{def:NPell0:compsupp:intro} vanishes. For the scalar field ($\sfrak=0$), it is manifest that if, additionally, the time derivative of the initial data vanishes, then the decay rate is  faster by an extra $\tb^{-1}$ globally. Note that this has been verified in \cite{hintz2020sharp,angelopoulos2021price} as well. For $\sfrak=1,2$, this is in fact compatible with the expectation in \cite{karkowski2004comments,price2004late} that solutions to the Regge--Wheeler equation with smooth, compactly supported in a compact region in $(2M,+\infty)$, static (in the sense that its time derivative on the initial hypersurface $t=\tb_0$ vanishes) initial data  will develop power tails $\tb^{-2\ell_0-4}$ in a finite radius region. This can be seen as follows: it is certain $\sfrak$-order derivative of the spin $s$ component which satisfies the Regge--Wheeler equation \cite{ReggeWheeler1957}, and we can use the wave equation satisfied by the spin $s$ component  to rewrite its time derivative as a combination of total radial derivatives, a first order time derivative and the component itself, which will then yield that the integral in the second line of \eqref{def:NPell0:compsupp:intro} vanishes.
  \end{enumerate}
\end{remark}

\begin{remark}[Asymptotics for the middle component of the Maxwell field]
   As a byproduct, we also obtain from the proof the global late-time asymptotics for the middle component of the Maxwell field. This thus yields that the middle component asymptotically decays to a static Coulomb solution, which equals $r^{-2}$ times a complex constant that  can be calculated from any sphere of the initial hypersurface, at a rate of decay that can be explicitly calculated.
\end{remark}

\begin{remark}[Discussions on other spin fields]
   Although we consider only $\sfrak=0,1,2$ cases in this work, the proof in this work suggests that the Price's law shall also hold true for the spin $s=\pm \sfrak$ components of the spin-$\sfrak$ field for arbitrary $\sfrak\in \mathbb{N}$. Additionally, the method developed in this work can be applied together with the results in \cite{MaZhang20sharp} to obtain the Price's law, or the global asymptotic profiles, for the massless Dirac field, which is also called as the massless spin-$\frac{1}{2}$ field.  Further, as pointed out in \cite{MaZhang20sharp}, Barack--Ori's claim about faster decay for the positive integer-spin component on the future event horizon can not be extended to the Dirac field.
\end{remark}

\begin{remark}[Generalization to wave equations with a potential]
It is without much effort to extract from the proof a general sharp decay estimate for a wave equation with a potential: Assume $\varphi$ solves $r^2\Box_g \varphi+\mathbf{P}\varphi=0$ in a Schwarzschild spacetime, where $\mathbf{P}=\mathbf{P}(r)=O(r^{-1})$ is smooth  and $\{\lim_{r\to \infty}(r^2 \partial_r)^i \mathbf{P}\}_{i=0,\ldots, n}$ exist for a sufficiently large $n$, assume there is a uniform energy boundedness and an integrated local energy decay (Morawetz) estimate for $\varphi$, and assume the initial dat on $\Sigmazero$ are smooth, decay sufficiently fast, and are supported on $\ell\geq \ell_0$ modes, $\ell_0\in \mathbb{N}$, then the statements in point \ref{point:suffidecay:intro} of Theorem \ref{thm:main:intro} with $\sfrak=0$ hold true for $\varphi$ and the coefficients in the asymptotic profiles can be calculated explicitly from the initial data and the information of the potential $\mathbf{P}$.
\end{remark}

\subsection{Outline of the proof}
\label{intro:outlineproof}

In this subsection, we make a comparison to other existed results, outline the methods and ideas,  and in the end, propose the future applications.

\subsubsection{Overview of the approach}

In \cite{angelopoulos2018vector,angelopoulos2018late},  Angelopoulos--Aretakis--Gajic initiated works aiming to prove the Price's law $v^{-1}\tb^{-2}$ for the scalar field on a Reissner--Nordstr\"{o}m background by the following steps:
\begin{labeling}{alligator}  
 \item  [A)] prove uniform energy boundedness and the Morawetz estimate;
    \item  [B)] obtain the almost Price's law in both the non-vanishing Newman--Penrose constant (NVNPC) case  and the vanishing Newman--Penrose constant (VNPC) case via
\begin{labeling}{alligator}  
\item [B1)] a definition of the Newman--Penrose constant for the spherically symmetric $\ell=0$ mode, which is a conserved quantity at null infinity;
\item [B2)] an extended $r^p$ hierarchy \cite{dafermos2009new} of spatially weighted energy estimates for an optimal range of the $r$-weight $p$;
\end{labeling}
\item [C)] achieve the sharp asymptotic $cv^{-1}\tb^{-1}$ in the NVNPC case;
\item [D)] in the VNPC case, derive the leading asymptotic $cv^{-1}\tb^{-2}$ for the $\ell=0$ mode by defining the \textquotedblleft{time integral\textquotedblright} which solves the scalar wave equation and whose time derivative equals the scalar field.
\end{labeling}
In particular, the step A) and a significant part of step B) have been well-developed for the wave equation, both linear and nonlinear, in black hole spacetimes in the past two decades.

In our work, we consider arbitrary modes of the spin $s$ components, and, without loss of much information, we shall consider here only an  arbitrary $(m,\ell)$ mode of the spin $s$ components and the dependence on $m$ and $\ell$ may be suppressed. One has to generalize the above approach to some extend and develop some novel methods and ideas.
 We provide an overview of the approach in the present work which can be basically divided into the following steps:
\begin{labeling}{alligator}  
 \item  [A')] prove uniform energy boundedness and the Morawetz estimate for the spin $s$ components;
    \item  [B')] obtain the almost Price's law via
\begin{labeling}{alligator}
\item [B1')] a derivation of extended wave systems, and a definition of the corresponding Newman--Penrose constant for the $(m,\ell)$ mode, which is also a conserved quantity at null infinity;
\item [B2')]  an extended $r^p$ hierarchy \cite{dafermos2009new} of spatially weighted energy estimates for the extended wave systems for an optimal\footnote{What we mean by \textquotedblleft{optimal\textquotedblright} here is in the sense that the obtained $p$ range has a sharp upper bound.} range of the $r$-weight $p$;
\item [B3')] obtaining the almost Price's law in the exterior region $\{\rb\geq \tb\}$;
\item [B4')] obtaining the almost Price's law in the interior region $\{\rb\leq \tb\}$;
\end{labeling}
\item [C')] derive the sharp asymptotics in the NVNPC case;
\item [D')] derive the asymptotic profiles in the VNPC case by defining the time integral for the spin $s$ component.
\end{labeling}
We shall emphasize that all the estimates are derived through a unified analysis of the \textit{Teukolsky master equation} (TME)\footnote{In some works, it is called as \emph{Teukolsky equation} instead.} \cite{Teu1972PRLseparability}, a wave equation satisfied by all the spin $s$ components. See Section \ref{sect:tme}. In fact, this equation is satisfied by the spin $s$ components in any subextremal or extremal Kerr spacetime. It has shown its enormous importance in recent works \cite{dafermos2019linear,Ma17spin2Kerr,
dafermos2019boundedness,andersson2019stability,
Giorgi2019linearRNfullcharge,
klainermanszeftel2017Schw}  in proving the linear or nonlinear stability of black hole spacetimes and recent works \cite{Ma2017Maxwell,Ma17spin2Kerr} towards the (almost) Price's law for non-zero spin fields.

In the following, we discuss each step, contrast our method with the one outlined above in \cite{angelopoulos2018vector,angelopoulos2018late}, the one developed in \cite{Ma20almost,MaZhang20sharp}, as well as the ones in other related works, and present the new ideas in this work.\\\\
\textbf{Step \textup{A')}.}
As is mentioned above, the starting point is to achieve the energy and Morawetz estimates  for solutions to the TME. These estimates have been proven in a Schwarzschild spacetime for the scalar field in \cite{bluesterbenz2006,dafrod09red} and for the Maxwell field and the linearized gravity in \cite{dafermos2019linear,pasqualotto2019spin}, where a key point is to use certain differential transformations due to Chandrasekhar \cite{chandrasekhar1975linearstabSchw}. Extensions to the Kerr spacetimes include \cite{tataru2011localkerr,larsblue15hidden,dafermos2016decay} for the scalar field and \cite{Ma2017Maxwell,Ma17spin2Kerr,dafermos2019boundedness} for non-zero spin fields. For the non-zero spin fields,  we start with the energy and Morawetz estimates in \cite{Ma2017Maxwell,Ma17spin2Kerr} where the author uses the standard technique in estimating the scalar field to  treat the coupled wave systems
\begin{align}
\textbf{WS}_{s} = \left\{ \text{the wave system of } \{\curlVR^i (\mu^{\sfrak}\NPRminuss)\}_{i=0,\ldots, \sfrak} \text{ or } \{(r^2 Y)^i \NPRpluss\}_{i=0,\ldots, \sfrak}\right\},
\end{align}
where  $\curlVR=r^2 \VR$, and  $Y=\mu^{-1}\partial_t -\partial_r$ and $\VR=\mu^{-1}V=\mu^{-1}\partial_t +\partial_r$ are the ingoing and outgoing principle null vectors, respectively. In particular, the wave equations of $\curlVR^{\sfrak} (\mu^{\sfrak}\NPRminuss)$ and  $(r^2 Y)^{\sfrak} \NPRpluss$ are the Regge--Wheeler equation. \\\\
\textbf{Step \textup{B')}.}
This step has much difference from the step B) where $\ell=\sfrak=0$. The above Morawetz estimates have already encoded most of the local information, and it remains to extract out the information in a large radius region. The substeps B1'), B2'), B3') have followed closely the approach  in \cite{Ma20almost} which treats the Maxwell field in Schwarzschild. See also  \cite{MaZhang20sharp}.

\textbf{Substep \textup{B1')}.}
The spin-weighted spherical Laplacian has a nonpositive eigenvalue when acting on a fixed $(m,\ell)$ mode, and to make use of this eigenvalue gap away from zero, one can in fact commute $\curlVR$ further  $j$ times, $j\in \mathbb{N}$,  with the equation of $\curlVR^{\sfrak} (\mu^{\sfrak}\NPRminuss)$ and obtain an extended system
\begin{align*}
\textbf{WS}^{(j)}_{-\sfrak} = \left\{ \text{the system of wave equations of } \{\Phiminuss{i}\}_{i=0,\ldots, j+\sfrak}\right\}.
\end{align*}
Here, $\Phiminuss{i}\triangleq \curlVR^i (\mu^{\sfrak}\NPRminuss)$, and each wave equation $\Phiminuss{i}$ can be written in the following form
\begin{align}
\label{eq:Phiminuss:govern:intro}
&-\mu Y\curlVR\Phiminuss{i}-(\ell-\sfrak+i+1)(\ell+\sfrak-i)\Phiminuss{i}\notag\\
&\qquad
-2(i-\sfrak+1)(r-3M)r^{-2}\curlVR\Phiminuss{i}+2i (i-\sfrak)(i-2\sfrak)M\Phiminuss{i-1}={}0.
\end{align}
It is crucial in the latter application of the $r^p$ method that the coefficient of the $\Phiminuss{i}$ term is nonpositive, and this requirement, which is satisfied only for $i\leq \ell+\sfrak$, imposes an upper bound for the value of $j$, that is, $j\leq \ell$.

To properly define a conserved quantity, the N--P constant,  at null infinity, we aim to find a scalar $\tildePhiminuss{\ell+\sfrak}$ whose equation is in the following form
\begin{align}
\label{eq:tildePhiminuss:ell+sfrak:intro}
-\mu Y\curlVR\tildePhiminuss{\ell+\sfrak}=O(r^{-1})\curlVR\tildePhiminuss{\ell+\sfrak}+\sum_{i=0}^{\ell+\sfrak}O(r^{-1})\Phiminuss{i}.
\end{align}
The right-hand side will have a zero limit towards null infinity, hence $\lim_{\rb\to \infty}\curlVR\tildePhiminuss{\ell+\sfrak}$ is a constant independent of time $\tb$; we call it the N--P constant for this $(m,\ell)$ mode. Such a scalar $\tildePhiminuss{\ell+\sfrak}$ is constructed from a linear combination of $\{\Phiminuss{i}\}_{i=0,\ldots,\ell+\sfrak}$ in \cite{Ma20almost} for the Maxwell field ($\sfrak=1)$, and, in particular, the scalar $\tildePhiminuss{\ell+\sfrak}$ is unique in all such linear combinations with constant coefficients up to an overall non-vanishing constant factor. Here, we follow the idea therein and extend  to any $\sfrak\in \mathbb{N}$. More generally, one can construct a set of scalars $\{\tildePhiminuss{i}\}_{i=0,\ldots, j+\sfrak}$, each of which takes the form $\tildePhiminuss{i}=\Phiminuss{i}+\sum_{j=0}^{i-1}c_{\sfrak,i,j}M^{i-j}\tildePhiminuss{j}$, $c_{\sfrak,i,j}$ being constants,  such that they satisfy
\begin{align*}
&-\mu Y\curlVR\tildePhiminuss{i}
-\frac{2(i-\sfrak+1)(r-3M)}{r^{2}}\curlVR\tildePhiminuss{i}
-(\ell-\sfrak+i+1)(\ell+\sfrak-i)\tildePhiminuss{i}+\sum_{j=0}^iO(r^{-1})\tildePhiminuss{j}={}0.
\end{align*}
Note that $\tildePhiminuss{i}=\Phiminuss{i}$ for $i\in \{0,\ldots,2\sfrak\}$.
 As a result, we obtain different extended wave systems for $j=0,\ldots, \ell$:
\begin{align*}
\widetilde{\textbf{WS}}^{(j)}_{-\sfrak} = \left\{ \text{the system of wave equations of } \{\tildePhiminuss{i}\}_{i=0,\ldots, j+\sfrak}\right\}.
\end{align*}

Analogously, one can define scalars $\{\PhiplussHigh{i}\}_{i=0,\ldots,\ell-\sfrak}$ and $\{\tildePhiplussHigh{i}\}_{i=0,\ldots,\ell-\sfrak}$ such that their equations are similar to the above ones for $\{\Phiminuss{i}\}_{i=2\sfrak,\ldots,\ell-\sfrak}$ and $\{\tildePhiminuss{i}\}_{i=2\sfrak,\ldots,\ell-\sfrak}$, respectively, as well as the wave systems $\textbf{WS}^{(j)}_{+\sfrak}$ and $\widetilde{\textbf{WS}}^{(j)}_{+\sfrak}$ for $j=0,\ldots, \ell-\sfrak$. Note that the equation for $\tildePhiplussHigh{i}$ is exactly the same as the one for $\tildePhiminuss{i+2\sfrak}$, that is, for any $i\in \{0,\ldots, \ell-\sfrak\}$,
\begin{align}
\label{eq:tildePhiplussHigh:govern:intro}
&-\mu Y\curlVR\tildePhiplussHigh{i}
-\frac{2(i+\sfrak+1)(r-3M)}{r^{2}}\curlVR\tildePhiplussHigh{i}
-(\ell+\sfrak+i+1)(\ell+\sfrak-i)\tildePhiplussHigh{i}+\sum_{j=0}^iO(r^{-1})\tildePhiplussHigh{j}={}0.
\end{align}

Let us finally remark that the Newman--Penrose constant for the $(m,\ell)$ mode of the spin $+\sfrak$ component is simply a non-vanishing constant multiple of the one for the spin $-\sfrak$ component.

\textbf{Substep \textup{B2')}.}
We are ready to apply the $r^p$ method to achieve the energy decay. For $i\in \{\sfrak,\ldots, \ell+\sfrak\}$, the $r^p$ estimates with $p\in [0,2]$ can be easily proven for each wave equation of $\tildePhiminuss{i}$, and one can obtain $\tb^{-2}$ decay for the $p=0$ weighted energy of $\tildePhiminuss{i}$ in terms of the $p=2$ weighted energy of $\tildePhiminuss{i}$ which in turn, by definition, is equivalent to the $p=0$ weighted energy of $\tildePhiminuss{i+1}$.  This thus yields $\tb^{-2\ell}$ decay for the $p=0$ weighted energy of $\tildePhiminuss{\sfrak}$ in terms of the $p=2$ weighted energy of $\tildePhiminuss{\ell+\sfrak}$ with a loss in the order of regularity.  For the equation of $\tildePhiminuss{\ell+\sfrak}$, because of the vanishing constant coefficient of $\tildePhiminuss{\ell+\sfrak}$ in \eqref{eq:tildePhiminuss:ell+sfrak:intro}, one can in fact extend the $r^p$ estimates from $p\in [0,2]$ to $p\in [0,3)$. This provides an optimal upper bound for the weight $p$ in the NVNPC case since the energy is infinite for the $p=3$ weighted energy. In total, we achieve $\tb^{-2\ell-1+2\veps}$ decay for the weighted energy of $\tildePhiminuss{\sfrak}$ in terms of the $p=3-2\veps$ weighted energy of $\tildePhiminuss{\ell+\sfrak}$. Additionally, we remark that for any $j\in \mathbb{N}$, there is an extra $\tb^{-2j}$ decay, that is, $\tb^{-2\ell-1-2j+2\veps}$ decay in total, for the $p=0$ weighted energy of $\Lxi^j\tildePhiminuss{\sfrak}$ in terms of the $p=3-2\veps$ weighted energy of $\tildePhiminuss{\ell+\sfrak}$, but with a further loss of regularity.  There are analogous energy decay estimates for the spin $+\sfrak$ component.

In the VNPC case, we can derive the $r^p$ estimates for $p\in [0,5)$ for the wave equation of $\tildePhiminuss{\ell+\sfrak}$, hence the above energy decay will be faster by an extra $\tb^{-2}$ decay.

\textbf{Substep \textup{B3')}.}
In the NVNPC case, the above energy decay estimate yields $v^{-1}\tb^{-\ell-1-j+\veps}$ globally pointwise decay for the scalars $\{\Lxi^j(r^2 V)^i \NPRminuss\}_{i=0,\ldots,\sfrak}$. In the exterior region $\{\rb\geq \tb\}$, one needs to gain extra $\tb^{-\sfrak}$ decay for $\NPRminuss$, and this is done by appealing to a derivation of an elliptic systems from the wave system of $\{\Phiminuss{i}\}_{i=0,\ldots,\sfrak-1}$. Consider only the most complicated case $\sfrak=2$. By equation \eqref{eq:Phiminuss:govern:intro}, the wave system can be rewritten as
\begin{align*}
\mathbf{A} (\Phiminustwo{0}, \Phiminustwo{1})^T=O(r^{-1}) (rV\Phiminustwo{1},rV\Phiminustwo{2} )+ O(r^{-1})(\Phiminustwo{1},\Phiminustwo{2} )
+O(1)(\Lxi\Phiminustwo{1},\Lxi\Phiminustwo{2} ),
\end{align*}
where $\mathbf{A}$ is a strongly elliptic $2\times 2$ matrix. This clearly implies an extra $\tb^{-1}$ decay for $\{\Phiminustwo{i}\}_{i=0,1}$, and one can derive an elliptic equation for $\Phiminustwo{0}$ to achieve a further $\tb^{-1}$ decay for $\Phiminustwo{0}$. To summarize, we obtain $v^{-1}\tb^{-\ell-\sfrak-1-j+\veps}$ pointwise decay for the scalars $\Lxi^j\NPRminuss$ in the exterior region. This is exactly the \underline{almost} Price's law in the exterior region in the NVNPC case. A similar argument works in the VNPC case and one obtains extra $\tb^{-1}$ pointwise decay.

The $v^{-1-2\sfrak}\tb^{-\ell+\sfrak-1-j+\veps}$ pointwise decay estimates for the scalars $\Lxi^j\NPRpluss$ are then derived from  the above energy decay estimates for the spin $+\sfrak$ component in Substep \textup{B2')} and the pointwise decay for $\Lxi^j\NPRminuss$ together with an application of the \textbf{Teukolsky--Starobinsky identities} (TSI) \cite{TeuPress1974III,starobinsky1973amplification}, which are two $2\sfrak$-order differential identities between the spin $\pm \sfrak$ components. See Section \ref{sect:TSI} for the TSI. The TSI allow one to derive certain estimates for one spin component from the estimates of the other spin component, and they are frequently used and play a vital role in each of the following steps as well.

\textbf{Substep \textup{B4')}}
This substep is fundamentally different from the other works. It suffices to consider the spin $-\sfrak$ component, as the estimates for the spin $+\sfrak$ component can be achieved via the TSI.

A key ingredient is to define a scalar $\varphiminussell=(\hell r^{\ell-\sfrak})^{-1}\NPRminuss$ such that its equation takes the form of
\begin{align}
\prb(r^{2\ell+2}\mu^{1+\sfrak}\hell^2 \prb \varphiminussell)=\Lxi H_{\varphiminussell},
\end{align}
where $H_{\varphiminussell}$ is an expression of $\varphiminussell, \prb\varphiminussell$ and $\Lxi\varphiminussell$. Recall from Remark \ref{rem:mainthm:intro} that $\hell r^{\ell-\sfrak}$ is a zero energy mode solution to the TME of $\NPRminuss$. This is essentially a degenerate (at horizon) elliptic equation with the RHS viewed as a source. One can derive elliptic estimates for this equation which basically says that $2\beta$-weighted energy of $\Lxi^j\varphiminussell$ in the interior region is bounded by a weighted energy of the source, which is in turn bounded by $2\beta+2$-weighted energy of $\Lxi^{j+1}\varphiminussell$, plus some boundary flux at $\rb=\tb$, for any $\beta\in [0, \ell-\sfrak-1]$. A simple iteration in $\beta$ for these elliptic estimates then yields $\tb^{-3-4\ell-2j+2\veps}$ decay for the energy $\int_{2M}^{\rb}(\abs{\Lxi^j\varphiminussell}^2+\abs{\Lxi^j (r\prb\varphiminussell)}^2) \di \rb$ in terms of the $p=3-2\veps$ weighted energy of $\tildePhiminuss{\ell+\sfrak}$; therefore, in the NVNPC case, we prove $\tb^{-2-2\ell-j+\veps}$ pointwise decay for $\Lxi^j\varphiminussell$ in the interior region, and in the VNPC case, the decay is faster by $\tb^{-1}$ in terms of  the $p=5-2\veps$ weighted energy of $\tildePhiminuss{\ell+\sfrak}$. As an extra benefit, we achieve an extra $\tb^{-1}$ decay for $\Lxi^j\prb\varphiminussell$ compared to $\Lxi^j\varphiminussell$  in both the NVNPC and VNPC cases.\\\\
\textbf{Step \textup{C')}}
First, we  follow the idea in \cite{angelopoulos2018late} to derive the asymptotic profiles for $\Lxi^{j_1}\pu^{j_2}\big((v-u)^{-2\ell}\tildePhiplussHigh{\ell-\sfrak}\big)$ in the region $\{v-u\geq v^{\alpha}\}$ where $\alpha$ is a constant strictly less than $1$ but supposed to be sufficiently close to $1$. This is done as follows: One can first integrate along constant $v$ for the wave equation \eqref{eq:tildePhiplussHigh:govern:intro} of $\tildePhiplussHigh{\ell-\sfrak}$ to obtain the asymptotic profile of $V\tildePhiplussHigh{\ell-\sfrak}$, then integrate along constant $u$ from the hypersurface $\{v-u=v^{\alpha}\}$ to achieve the asymptotic profile of $\tildePhiplussHigh{\ell-\sfrak}$. We can commute $\pv$ with the wave equation for $\tildePhiplussHigh{\ell-\sfrak}$ and run through the above argument again, which then enables us to derive the asymptotic profiles of $\Lxi^{j_1}\pu^{j_2}\big((v-u)^{-2\ell}\tildePhiplussHigh{\ell-\sfrak}\big)$.

In the next step, we make use of equation \eqref{eq:tildePhiplussHigh:govern:intro} for $i\in \{0,\ldots, \ell-\sfrak-1\}$ and obtain the following system
\begin{align*}
\hspace{4ex}&\hspace{-4ex}
(\ell-i-\sfrak)(\ell+i+\sfrak+1)(v-u)^{-2(i+\sfrak)}\tildePhiplussHigh{i}\notag\\
={}&
-2(v-u)^2\pu\big((v-u)^{-2(i+\sfrak+1)}\tildePhiplussHigh{i+1}\big)
\notag\\
&
+(v-u)^{-2(i+\sfrak)}\Big(O(1) \pu\tildePhiplussHigh{i}
+\sum_{j=0}^{i}O(r^{-1}) \tildePhiplussHigh{j}
+O(r^{-1})r^{-1}\log r \tildePhiplussHigh{i+1}\Big).
\end{align*}
The last line has faster fall-off in the region $v-u\geq v^{\alpha}$, hence, one can determine the asymptotic profiles for $(v-u)^{-2(i+\sfrak)}\tildePhiplussHigh{i}$ iteratively, including in particular the one for $(v-u)^{-2\sfrak}\tildePhiplussHigh{0}\sim 2^{-2\sfrak} r \NPRpluss$, which then yields the asymptotic profile of $\Lxi^j\NPRpluss$ in  the region $\{v-u\geq v^{\alpha}\}$.
In the remaining region $\{v-u\leq v^{\alpha}\}$, one can integrate $\prb\Lxi^j\varphiminussell$ from the hypersurface $\{v-u=v^{\alpha}\}$ and make use of the better decay for $\prb\Lxi^j\varphiminussell$ proven in Substep \textup{B4')}; we thus conclude that the asymptotic profile of $\Lxi^j\varphiminussell$ remains the same in the region $\{v-u\leq v^{\alpha}\}$. We shall comment that arbitrary $j\in \mathbb{N}$ times $\Lxi$ derivative yields an extra $\tb^{-j}$ pointwise decay, a fact which will show its importance in the next step. An application of the TSI then yields the asymptotics of the spin $+\sfrak$ component. In particular, for the spin $+\sfrak$ component, both the conjectured Price's law outside the black hole and the claim of Barack--Ori about faster decay on $\Horizon$ can be shown via the TSI. \\\\
\textbf{Step \textup{D')}}
The basic idea in this step is in the same spirit of the one in \cite{angelopoulos2018late} by reducing the VNPC problem to a NVNPC problem via defining a time integral of the solution. Again, in view of the TSI, we consider only the spin $+\sfrak$ component. The time integral is defined such that it solves also the TME of the spin $+\sfrak$ component and, more importantly, the time derivative of the time integral equals the spin $+\sfrak$ component. By this property,  one gains an extra $\tb^{-1}$ decay for the spin $+\sfrak$ component than the decay of the time integral, hence than the one in NVNPC case. The main task of the remaining discussions is to calculate the associated N--P constant of the time integral.

\subsubsection{Remarks on other most relevant works}

Throughout the above discussions, we have been contrasting our results with the ones in \cite{angelopoulos2018vector,angelopoulos2018late,Ma20almost}  for a couple of reasons: on one hand, our approach follows partly from the ones in these works; on the other hand, we can contrast ours with these works in different steps in the proof easily since they are all based on the vector field method. We shall now briefly discuss the other works aiming at proving the Price's law and make comparisons with our results.

The work \cite{metcalfe2017pointwise} by Metcalfe--Tataru--Tohaneanu followed the authors' earlier results \cite{tataru2013local,metcalfe2012price} and derived, under assumptions of both integrated local energy decay estimates and stationary local energy bounds, a global $v^{-2-s}\tb^{-2+s}$ decay for the spin $s=\pm 1$ components  and the middle component, for which $s=0$,  of the Maxwell field in a class of non-stationary asymptotically flat spacetimes. The backgrounds under consideration in this work  include in particular the Schwarzschild spacetimes and the family of the Kerr spacetimes, and the assumptions are known to hold true  in a Schwarzschild spacetime or a slowly rotating Kerr spacetime after subtracting the static/stationary Coulomb solution but unknown in more general spacetimes.  If in a Schwarzschild or a Kerr spacetime, the proven decay rates in \cite{metcalfe2017pointwise} are slower than the Price's law by $\tb^{-1}$.

The works \cite{donninger2011proof,donninger2012pointwise} by Donninger--Schlag--Soffer treated a Regge--Wheeler equation by constructing the Green's function for its solution and obtained on Schwarzschild $t^{-2\ell-2}$ for a fixed $\ell$ mode of the solution, $t^{-3}$ decay for the scalar field, $t^{-4}$ and $t^{-6}$ decay for the part of Maxwell field and linearized gravity which solves this equation, respectively. Further, they showed that the decay rate is faster by $t^{-1}$ for initially static initial data. As was discussed in Remark \ref{rem:mainthm:intro}, by applying certain derivatives, known as the  Chandrasekhar transformation \cite{chandrasekhar1975linearstabSchw}, on the spin $s$ components, the obtained scalars satisfy the Regge--Wheeler equation, the potential in which is propositional to $s^2$.  These estimates are valid  in a compact region but are not uniform in the future Cauchy development of a future Cauchy surface; the decay estimates are sharp for the entire scalar field,  but  have one less power of decay in time compared to Price's law in a compact region in the remaining cases.

Very recently, much important progress were made in proving the Price's law for the spin-$\sfrak$ fields on Schwarzschild, Reissner--Nordstr\"{o}m, and Kerr backgrounds. For the scalar field,  Hintz
\cite{hintz2020sharp} computed the $v^{-1}\tb^{-2}$ leading order term  on both Schwarzschild and subextremal Kerr spacetimes and obtained $t^{-2\ell_0-3}$ sharp asymptotics for  $\ell\geq \ell_0$ modes in a compact region on Schwarzschild; Angelopoulos--Aretakis--Gajic derived in \cite{angelopoulos2021late}  the asymptotic profiles of the $\ell=0$, $\ell=1$, and $\ell\geq 2$ modes in a subextremal Kerr spacetime and computed in \cite{angelopoulos2021price} the $v^{-1}\tb^{-2\ell_0-2}$ asymptotics for  $\ell\geq \ell_0$ modes on a  subextremal Reissner--Nordstr\"{o}m background.  For non-zero spin fields,
 in an earlier work \cite{Ma20almost} of the first author of our current work,  $v^{-2-s}\tb^{-\frac{3}{2}+s}$ decay in non-static Kerr and  $v^{-2-s}\tb^{-3+s+\epsilon}$ decay in Schwarzschild towards a stationary/static Coulomb solution are proven, and it also proves the almost Price's law $v^{-2-s}\tb^{-2-\ell_0+s+\veps}$ for any $\ell\geq \ell_0$ modes for the Maxwell field in the region $\rb\geq \tb$ on a Schwarzschild background; the authors of this current work obtained in  \cite{MaZhang20sharp}  the energy and Morawetz estimates and calculated the asymptotic profiles with decay $v^{-\frac{3}{2}-s}\tb^{-\frac{5}{2}+s}$ for the spin $s=\pm \half$ components of the massless Dirac field on Schwarzschild.

\subsubsection{Future applications}

We propose some future applications of the methods developed here:
\begin{enumerate}
\item a generalization of the results herein to the Reissner--Nordstr\"{o}m spacetime, which will be a cornerstone in a proof of the Strong Cosmic Censorship for the linearized gravity of this spacetime. We note from \cite{Giorgi2019linearRNfullcharge} that the spin $\pm 2$ components of the linearized gravity and the spin $\pm 1$ components of the Maxwell field in their TME-like wave equations. Given now a unified treatment for the TME of both the Maxwell field and the linearized gravity in a Schwarzschild spacetime, we expect that the Price's law, or the asymptotic profiles, can also be proven for both the linearized gravity and the Maxwell field in a Reissner--Nordstr\"{o}m spacetime;
\item a generalization to the non-static Kerr spacetimes where the angular momentum per mass $\mathbf{a}\neq 0$. In the non-vanishing $\mathbf{a}$ case, decoupling between different spherical symmetric modes is no longer valid in the evolution. It is quite interesting to analyze this coupling between different modes and investigate how the parameter $\mathbf{a}$ affects the asymptotic profiles of the spin $s$ components or of a higher mode of them.
\end{enumerate}

\subsection{Other relevant works}

Apart from the above mentioned works which are most relevant to the Price's law topic, we list here some other related works.

For the works on wave equations in Minkowski, we refer to the pioneering ones \cite{morawetz1968time,klainerman1986null,christodoulou1986global,CK93global,lindblad2010global} and the references therein.
There is a large amount of works in the literature about scalar field on a black hole background:  Morawetz estimates as well as pointwise decay estimates are obtained in Schwarzschild in \cite{bluesoffer03mora,blue:soffer:integral,bluesterbenz2006,dafrod09red} using a Morawetz type multiplier, in slowly rotating Kerr in \cite{larsblue15hidden,tataru2011localkerr}, and the estimates are further extended to subextremal Kerr in \cite{dafermos2016decay}.  Strichartz estimates are shown in \cite{MMTT,tohaneanu2012strichartz}. See also \cite{Finster2006} for local decay of the scalar field on Kerr.

Morawetz estimates and decay estimates for Maxwell field  are  obtained in a Schwarzschild spacetime in  \cite{blue08decayMaxSchw,pasqualotto2019spin}, in some general family of spherically symmetric stationary spacetimes in \cite{sterbenz2015decayMaxSphSym}, and in slowly rotating Kerr in  \cite{larsblue15Maxwellkerr,Ma2017Maxwell}.
We note also that a conserved, positive definite energy has been constructed for the Maxwell field in Schwarzschild in \cite{andersson16decayMaxSchw} and on Kerr and Kerr-de Sitter backgrounds but under axial symmetry in \cite{gudapati2017positive,gudapati2019conserved}. Morawetz estimates and decay estimates for the spin $\pm 2$ components of the linearized gravity in a slowly rotating Kerr spacetime are proven in \cite{Ma17spin2Kerr,dafermos2019boundedness}. See also  \cite{finster2016linear,shlapentokh2020boundedness}  for some results in a subextremal Kerr spacetime. The mode stability results in a Kerr spacetime have been obtained for the scalar field in \cite{2015AnHP...16..289S}  and  for general spin-$\sfrak$ fields in \cite{whiting1989mode,andersson2017mode,da2019mode}.

Linear stability of a Schwarzschild or a subextremal  Reissner--Nordstr\"{o}m spacetime has been shown by \cite{dafermos2019linear,hung2017linearstabSchw,johnson2019linear,Hung18odd, Hung19even,Giorgi2019linearRNfullcharge}. See also \cite{Jinhua17LinGraSchw}. Linear stability of a slowly rotating Kerr spacetime is proven in \cite{andersson2019stability,hafner2019linear}. 

Finally, we mention two nonlinear stability results \cite{klainermanszeftel2017Schw,HintzKds2018}: the first one
for Schwarzschild under polarized axisymmetry, and the second one for  slowly rotating Kerr-de Sitter spacetimes.

\subsection*{Overview of the paper}

We give some preliminaries in Section \ref{sect:prel}, and then introduce the Teukolsky master equation and Teukolsky--Starobinsky identities in Section \ref{sect:TMETSI}. Afterwards, we obtain the almost Price's law in Section \ref{sect:APL:all}. In Section \ref{sect:NVNPC:PL}, we compute the late-time asymptotics for the spin $\pm\sfrak$ components in the non-vanishing Newman--Penrose constant case. In the end, Section \ref{sect:VNPC:PL} is devoted to deriving the late-time asymptotics for the spin $\pm\sfrak$ components in the vanishing Newman--Penrose constant case and proving Theorem \ref{thm:main:intro}.


\section{Preliminaries}
\label{sect:prel}

We introduce in this section some preliminaries in geometric and analytic aspects. We first discuss the geometry of the Schwarzschild spacetime in Section \ref{sect:foliation} and make some general conventions in Section \ref{sect:convention}. Next, we define a few operators and norms in Section \ref{sect:OpersNorms}, followed by discussions on the spin-weighted spherical harmonic decomposition in Section \ref{sect:decompIntoModes}. The last sections \ref{sect:HardySobolev}--\ref{sect:rplemma} are constituted by purely analytic estimates: some Hardy and Sobolev inequalities, decay estimates followed from a hierarchy of estimates,  and $r^p$ estimates for wave equations.

\subsection{Coordinates and foliation of the spacetime}
\label{sect:foliation}

Define a tortoise coordinate $r^*$ by
\begin{align}
\di r^*=\mu^{-1}\di r,\qquad r^*(3M)=0.
\end{align}
Denote the retarded and forward double null coordinates by
\begin{align}
u=t-r^*, \qquad v=t+r^*,
\end{align}
respectively. Define  a \textit{hyperboloidal coordinate system} $(\tb,\rb,\theta,\phi)$ as in \cite{andersson2019stability}, where $\rb=r$, $\tb=v-\hhyp$ and $\hhyp=\hhyp(r)$, such that the level sets of the time function $\tb$ are strictly spacelike with
\begin{align}
c(M)r^{-2}\leq -g(\nabla \tb,\nabla\tb)\leq C(M) r^{-2}
\end{align}
for two positive universal constants $c(M)$ and $C(M)$
and they are transverse to the future event horizon regularly and asymptotic to future null infinity $\Scri$, and for large $r$, $1\lesssim \lim_{\rb\to \infty}r^2 (\partial_r\hhyp - 2\mu^{-1})\vert_{\Sigmatb}<\infty$.

Let $\Sigmatb$ be the constant $\tb$ hypersurface in the domain of outer communication $\DOC$.  Let $\tb_0\geq 1$, and let $\Sigmazero$ be our initial hypersurface on which the initial data are imposed. For any $\tb_0\leq \tb_1<\tb_2$, let $\Donetwo$, $\Scrionetwo$ and $\Horizononetwo$ be the truncated parts of $\DOC$, $\Scri$ and $\Horizon$ on $\tb\in [\tb_1,\tb_2]$, respectively. See Figures \ref{fig:2} and  \ref{fig:1}.

\begin{figure}[htbp]
\begin{minipage}[t]{0.5\linewidth}
\begin{center}
\begin{tikzpicture}[scale=0.8]
\draw[thin]
(0,0)--(2.45,2.45);
\draw[very thin]
(2.5,2.5) circle (0.05);
\coordinate [label=90:$i_+$] (a) at (2.5,2.5);
\draw[thin]
(0,0)--(2.45,-2.45);
\draw[dashed]
(2.55,2.45)--(4.95,0.05);
\draw[very thin]
(5,0) circle (0.05);
\coordinate [label=360:$i_0$] (a) at (5,0);
\draw[dashed]
(4.95,-0.05)--(2.55,-2.45);
\draw[very thin]
(2.5,-2.5) circle (0.05);
\coordinate [label=270:$i_-$] (a) at (2.5,-2.5);
\draw[thin]
(0.9,0.9) arc (215:320:2.1 and 1.7);
\node at (2.5,-0.15) {\small $\Sigma_{\tau_1}$};
\draw[thin]
(1.5,1.5) arc (212:323:1.25 and 0.9);
\node at (2.5,1.35) {\small $\Sigma_{\tau_2}$};
\draw[very thick]
(0.9,0.9)--(1.5,1.5);
\draw[dashed, very thick]
(3.57,1.43)--(4.28,0.72);
\node at (0.95,1.45) [rotate=45] {\small $\mathcal{H}_{\tau_1,\tau_2}^+$};
\node at (4.15,1.35) [rotate=-45] {\small $\mathcal{I}_{\tau_1,\tau_2}^+$};
\node at (2.5,0.7) {\small $\Donetwo$};
\end{tikzpicture}
\end{center}
\caption{Hyperboloidal foliation and related definitions.}
\label{fig:2}
\end{minipage}%
\begin{minipage}[t]{0.5\linewidth}
\begin{center}
\begin{tikzpicture}[scale=0.8]
\draw[thin]
(0,0)--(2.45,2.45);
\draw[very thin]
(2.5,2.5) circle (0.05);
\coordinate [label=90:$i_+$] (a) at (2.5,2.5);
\draw[thin]
(0,0)--(2.45,-2.45);
\draw[dashed]
(2.55,2.45)--(4.95,0.05);
\draw[very thin]
(5,0) circle (0.05);
\coordinate [label=360:$i_0$] (a) at (5,0);
\draw[dashed]
(4.95,-0.05)--(2.55,-2.45);
\draw[very thin]
(2.5,-2.5) circle (0.05);
\coordinate [label=270:$i_-$] (a) at (2.5,-2.5);
\draw[thin]
(0.6,0.6) arc (209:335:2.1 and 1.1);
\node at (1.08,1.42) [rotate=45] {\small $\mathcal{H}^+$};
\node at (3.9,1.4) [rotate=-45] {\small $\mathcal{I}^+$};
\node at (2.5,-0.25) {\small $\Sigma_{\tau_0}$};
\end{tikzpicture}
\end{center}
\caption{Initial hypersurface $\Sigma_{\tau_0}$ on which initial data will be imposed.}
\label{fig:1}
\end{minipage}
\end{figure}

For latter convenience, we define 
a function related to the hyperboloidal foliation:
\begin{align}
\Hhyp=2\mu^{-1}-\partial_r \hhyp.
\end{align}
By our choice of the hyperboloidal coordinates, it holds that \begin{align}
\label{eq:propertyofHfunction}
r^2 \Hhyp\lesssim 1 \quad
\text{for  } r  \text{ large}, \quad \text{and} \quad \abs{ \Hhyp-2\mu^{-1}}\lesssim 1  \quad
\text{as } r\to r_+.
\end{align}

We end this subsection with a remark on differential coordinate derivatives.
Since we are using a few different coordinate systems, there is potential confusion about the coordinate derivatives. We shall emphasis that the partial derivatives $\pt_{t}$ and $\pt_r$ are always in terms of the Boyer--Lindquist coordinate system $(t,r,\theta,\phi)$. Similarly, the partial derivative $\pt_{r^*}$ is in terms of the tortoise coordinates $(t, r^*, \theta, \phi)$, the partial derivatives $\pt_u$ and $\pt_v$ are in terms of the double null coordinates $(u,v,\theta,\phi)$, and the partial derivatives $\pt_{\tb}$ and $\prb$ are in terms of the hyperboloidal coordinate system $(\tb,\rb,\theta,\phi)$. Note that the coordinate derivative $\pt_{\theta}$ in each above coordinate systems is equal, and so does $\pt_{\phi}$.


\subsection{General conventions}
\label{sect:convention}

$\mathbb{N}$ is denoted as the natural number set $\{0,1,\ldots\}$, $\mathbb{N}^+$ the positive natural number set, $\mathbb{R}$ the real number set, and $\mathbb{R}^+$ the positive real number set.  

Denote $\Re(\cdot)$ as the real part.

For any $x\in \mathbb{R}$, let the Japanese bracket $\langle \cdot \rangle$ be defined by $\langle x\rangle=\sqrt{x^2+1}$.

LHS and RHS are short for left-hand side and right-hand side, respectively.

Constants in this work may depend on the hyperboloidal foliation via the function $\hhyp$. For simplicity, we shall always suppress this dependence throughout this work as one can fix this function once for all. For the same reason, the dependence on the mass parameter $M$ is always suppressed as well.

We denote a universal constant by $C$ if it depends only on the hyperboloidal foliation and the mass $M$. If a universal constant depends on a set of parameters $\mathbf{P}$, we denote it by $C(\mathbf{P})$. We denote $\reg$ as a general regularity parameter,  and denote $\regl$ as a universal constant which may change from line to line. Also, $\regl(\mathbf{P})$ means a regularity constant depending on the parameters in the set $\mathbf{P}$.

We say $F_1\lesssim F_2$ if there exists a universal constant $C$ such that $F_1\leq CF_2$. Similarly for $F_1\gtrsim F_2$. If both $F_1\lesssim F_2$ and $F_1\gtrsim F_2$ hold, we say $F_1\sim F_2$.

Let $\mathbf{P}$ be a set of parameters. We say $F_1\lesssim_{\mathbf{P}} F_2$ if there exists a universal constant $C(\mathbf{P})$ such that $F_1\leq C(\mathbf{P})F_2$. Similarly for $F_1\gtrsim_{\mathbf{P}} F_2$. If both $F_1\lesssim_{\mathbf{P}} F_2$ and $F_1\gtrsim_{\mathbf{P}} F_2$ hold, then we say $F_1\sim_{\mathbf{P}} F_2$.

For any $\alpha \in \mathbb{R}^+$, we say a function $f(r,\theta,\pb)$ is $O(r^{-\alpha})$ if  for any $j\in \mathbb{N}$,
$\abs{(\partial_r)^j f_2}\leq C_j r^{-\alpha-j}$.  Further, we say a function $f(r,\theta,\pb)$  is $O(1)$ if $\abs{f}\leq C_0$ and $\abs{(\partial_r)^j f }\leq C_jr^{-1-j}$ for any $j\in \mathbb{N}^+$.

Let  $\chi_1$ be a standard smooth cutoff function that is decreasing, equals $1$ on $(-\infty,0)$, and equals $0$ on $(1,\infty)$. Let $\chi=\chi_1((R_0-r)/M)$, with $R_0$ being suitably large and to be fixed in the proof, so $\chi=1$ for $r\geq R_0$ and vanishes identically for $r\leq R_0-M$.

\subsection{Operators and norms}
\label{sect:OpersNorms}

In this subsection, we define a few differential operators and use them to define the energy norms and spacetime Morawetz norms.

To begin with, we shall recall from \cite{geroch1973space} and \cite[Chapter 4.12]{penroserindlerI} the standard definitions of spin-weighted scalars and spin-weighted operators. 

\begin{definition}
\begin{itemize}
\item A scalar which has proper spin weight and zero boost weight in the sense of Geroch, Held and Penrose \cite{geroch1973space} is called a \textit{spin-weighted scalar}.\footnote{According to Geroch--Held--Penrose  (GHP) \cite{geroch1973space}, a GHP scalar can be viewed as $\mathbb{C}$-valued contractions of a tensor with elements of a local null tetrad $(l,n,m,\bar{m})$ or their derivatives. A GHP scalar $\alpha$ is called a scalar with proper spin weight $s$ and boost weight $b$ if the scalar $\alpha$ is transformed to $\lambda^be^{is\varphi}\alpha$ when the tetrad $(l,n,m,\bar{m})$ is transformed to $(\lambda l, \lambda^{-1}n, e^{i\varphi}m, e^{-i\varphi}\bar{m})$. In fact, the spin-weighted scalars are sections of complex line bundles.}
    Unless otherwise stated, we shall always denote $s$ the spin weight, and we may call a spin-weighted scalar with spin weight $s$ as a \textit{spin $s$ scalar}.
\item A differential operator is a \textit{spin-weighted operator} if it takes a spin-weighted scalar to a spin-weighted scalar.
\end{itemize}
\end{definition}

We first define operators that are aligned with or rescaled from the principal null directions.

\begin{definition}
\label{def:basic:vectorfields}
Define the differential operators
\begin{align}
Y=\sqrt{2}n^a\partial_a , \qquad V=\sqrt{2}l^a \partial_a.
\end{align}
Further, we define two rescaled operators from $V$ as follows:
\begin{align}
\VR= \mu^{-1}V,\qquad \curlVR={}r^2\VR.
\end{align}
\end{definition}

\begin{remark}
\label{rem:YVsoon}
\begin{itemize}
\item 
All these operators are spin-weighted operators, and the operation of these operators upon a spin $s$ scalar does not change the scalar's spin weight.
\item
One can express  $Y$ and $\VR$ as
\begin{align}
\label{def:vectorVRintermsofprb}
Y={}-\prb+(2\mu^{-1}-\Hhyp)\Lxi,\qquad
\VR={}\partial_{\rb}+\Hhyp \Lxi.
\end{align}
Note that $V=\mu\partial_{\rb}+\mu\Hhyp \Lxi\sim\mu\partial_{\rb}+2 \Lxi$ as $r\to r_+$, so the operator $V$ is regular near the future event horizon $\Horizon$; however, the operator $\VR$ is singular near $\Horizon$ since $\VR\sim\partial_{\rb}+2\mu^{-1} \Lxi$ as $r\to r_+$.
\item 
Further, we can relate these operators with the coordinate derivatives in double null coordinates $(u,v,\theta,\phi)$ via
\begin{align}
\pu=\half\mu Y,\qquad\pv=\half V=\half \mu \VR.
\end{align}
\end{itemize}
\end{remark}

For a spin $s$ scalar, there are associated angular operators, both of which being spin-weighted operators. The definitions and notations are both taken from the standard textbook of Penrose--Rindler \cite{penroserindlerI}. 

\begin{definition}
\label{def:setsofopers}
Let $\varphi$ be a spin $s$ scalar.
Let the \textit{spherical edth angular operators} $\edthR$ and $\edthR'$ be given in B--L coordinates by
\begin{align}
\edthR\varphi={}&\partial_{\theta}\varphi
+{i}csc\theta\partial_{\phi}\varphi
-{s}cot\theta\varphi,&
\edthR'\varphi={}&\partial_{\theta}\varphi
-{i}\csc\theta\partial_{\phi}\varphi
+{s}cot\theta\varphi.
\end{align}
\end{definition}

\begin{remark}
If $\varphi$ is a spin $s$ scalar, then $\edthR\varphi$ and $\edthR'\varphi$ are spin $s+1$ and $s-1$ scalars, respectively. That is, $\edthR$ increases the spin weight by $1$, while $\edthR'$ decreases the spin weight by $1$.
\end{remark}

We next define a couple of operator sets from these differential operators.

\begin{definition}
\label{def:setsofopers:commutators}
Let $\varphi$ be a spin $s$ scalar.
Define a set of operators
\begin{subequations}
\begin{align}
\PDeri={}\{Y,V, r^{-1}\edthR,r^{-1}\edthR'\}
\end{align}
adapted to the Hawking--Hartle tetrad. 
Define a set of operators
\begin{align}
\CDeri={}\{Y,rV, \edthR,\edthR'\},
\end{align}
which is adapted to the hyperboloidal foliation and will be used as the set of commutators.
\end{subequations}
\end{definition}

These operator sets can be used to define pointwise norms.

\begin{definition}
Let $\mathbb{X}=\{X_1, X_2, \ldots, X_n\}$, $n\in \mathbb{N}^+$, be a set of spin-weighted operators, and let a multi-index $\mathbf{a}$ be an ordered set $\mathbf{a}=(a_1,a_2,\ldots,a_m)$ with all $a_i\in \{1,\ldots, n\}$.  Define $|\mathbf{a}|=m$, and define $\mathbb{X}^{\mathbf{a}}=X_{a_1}X_{a_2}\cdots X_{a_m}$ if $m\in \mathbb{N}^+$ and $\mathbb{X}^{\mathbf{a}}$ as the identity operator if $m=0$. Let $\varphi$ be a spin-weighted scalar, and define its pointwise norm of order $m$, $m\in \mathbb{N}$, as
\begin{align}
\absHighOrder{\varphi}{m}{\mathbb{X}}={}\sqrt{\sum_{\abs{\mathbf{a}}\leq m}\abs{\mathbb{X}^{\mathbf{a}}\varphi}^2}.
\end{align}
\end{definition}

Finally, we are able to define energy norms and (spacetime) Morawetz norms.
\begin{definition}
\label{def:basicweightednorm}
Let $\varphi$ be a spin-weighted scalar. Let $k\in \mathbb{N}$ and $\gamma\in \mathbb{R}$. Let $\Omega$ be a $4$-dimensional subspace of the DOC and let $\Sigma$ be a $3$-dimensional space that can be parameterized by $(\rb,\theta,\pb)$. Denote the volume element of unit sphere by $\di^2\mu=\sin\theta \di \theta \wedge \di \pb$. We define the Morawetz norm in $\Omega$ and the energy norm in $\Sigma$ by
\begin{subequations}
\begin{align}
\norm{\varphi}_{W_{\gamma}^{\reg}(\Omega)}^2
={}&\int_{\Omega} \rb^{\gamma}\absCDeri{\varphi}{\reg}^2\di\tb\wedge\di \rb \wedge\di^2\mu,\\
\norm{\varphi}_{W_{\gamma}^{\reg}(\Sigma)}^2
={}&\int_{\Hyper} \rb^{\gamma}\absCDeri{\varphi}{\reg}^2\di \rb \wedge\di^2\mu.
\end{align}
\end{subequations}
\end{definition}

\subsection{Spin-weighted spherical harmonic decomposition}
\label{sect:decompIntoModes}

A theory of decomposing a spin $s$ scalar into spin-weighted spherical harmonics is standard. We here follow closely the standard textbook of Penrose--Rindler \cite[Chapter 4]{penroserindlerI}.

Recall that $\{Y_{m,\ell}^{s}(\cos\theta)e^{im\pb}\}_{m,\ell}$, $\ell\geq \abs{s}$ and $m\in\{-\ell. -\ell+1,\ldots, \ell\}$, are the eigenfunctions, called as  the \textquotedblleft{spin-weighted spherical harmonics,\textquotedblright}
 of a self-adjoint operator
$\edthR'\edthR$ with eigenvalues $-\Lambda_{\ell,s}=-(\ell-s)(\ell+s+1)$:
\begin{equation}
\label{eq:eigenvalueSWSHO}
\edthR'\edthR(Y_{m,\ell}^{s}(\cos\theta)e^{im\pb})=
-\Lambda_{\ell,s}
Y_{m,\ell}^{s}(\cos\theta)e^{im\pb},
\end{equation}
and they form a complete orthonormal basis on $L^2(\di^2\mu)$. Further, they satisfy
\begin{subequations}
\label{eq:ellipticop:eigenvalue:fixedmode}
\begin{align}
\edthR (Y_{m,\ell}^{s}(\cos\theta)e^{im\pb})={}&-\sqrt{(\ell-s)(\ell+s+1)}Y_{m,\ell}^{s+1}(\cos\theta)e^{im\pb}, \\
\edthR' (Y_{m,\ell}^{s}(\cos\theta)e^{im\pb})={}& \sqrt{(\ell+s)(\ell-s+1)}Y_{m,\ell}^{s-1}(\cos\theta)e^{im\pb}.
\end{align}
\end{subequations}

The spin-weighted spherical harmonic decomposition for spin $s$ scalars is provided in the following definition.

\begin{definition}[Mode decomposition]
\label{def:fixedmode}
For a spin $s$ scalar $\varphi$, let $(\varphi)^{\ell=\ell_0}$ and $(\varphi)_{m,\ell_0}$, $m\in \{-\ell_0,-\ell_0+1,\ldots, \ell_0\}$, be defined  such that the following decompositions hold in $L^2 (\Sphere)$:
\begin{subequations}
\begin{align}
\varphi=&\sum\limits_{\ell_0=\abs{s}}^{\infty}(\varphi)^{\ell=\ell_0},\\
(\varphi)^{\ell=\ell_0}=&\sum\limits_{m=-\ell_0}^{\ell_0}(\varphi)_{m,\ell_0}
Y_{m,\ell_0}^{s}(\cos\theta)e^{im\pb}.
\end{align}
\end{subequations}
\end{definition}

We collect a few properties for the edth operators $\edthR$ and $\edthR'$.  These are standard facts and can be found in \cite{E82E,penroserindlerI}.

\begin{lemma}
\label{lem:eigenvalue:edthandprime}
\begin{enumerate}[label=\arabic*)]
\item 
Let $\varphi$ be a spin $s$ scalar, then
\begin{align}
\label{eq:l=l0mode:eigenvalue}
\edthR\edthR'(\varphi)^{\ell=\ell_0}
=-(\ell_0+s)(\ell_0-s+1)(\varphi)^{\ell=\ell_0}, \quad
\edthR'\edthR(\varphi)^{\ell=\ell_0}
=-(\ell_0-s)(\ell_0+s+1)(\varphi)^{\ell=\ell_0}.
\end{align}
\item
\label{pt:eigenvalue:edthsquare}
Let $\varphi$ be a spin $s$ scalar, then
\begin{align}
\label{eq:ellipestis}
\int_{S^2}\big(\abs{\edthR'\varphi}^2
-({s+\abs{s}})\abs{\varphi}^2\big) \di^2\mu =\int_{S^2}\big(\abs{\edthR\varphi}^2
-({\abs{s}-s})\abs{\varphi}^2\big) \di^2\mu \geq 0.
\end{align}
If $\varphi$ is a spin $s$ scalar and is supported on $\ell\geq \ell_0$ modes, then
\begin{align}
\label{eq:ellip:highermodes}
\int_{S^2}\big(\abs{\edthR'\varphi}^2
-{(\ell_0+s)(\ell_0-s+1)}\abs{\varphi}^2\big) \di^2\mu
=&\int_{S^2}\big(\abs{\edthR\varphi}^2
-{(\ell_0-s)(\ell_0+s+1)}\abs{\varphi}^2\big) \di^2\mu \geq 0.
\end{align}
\end{enumerate}
\end{lemma}

\begin{proof}
By \eqref{eq:ellipticop:eigenvalue:fixedmode}, it holds that $
\edthR'\edthR(Y_{m,\ell}^{s}(\cos\theta)e^{im\pb})
=-(\ell-s)(\ell+s+1)(Y_{m,\ell}^{s}(\cos\theta)e^{im\pb})$, hence the second formula in \eqref{eq:l=l0mode:eigenvalue} holds true. Similarly for the first formula.

For point \ref{pt:eigenvalue:edthsquare}, it suffices to show \eqref{eq:ellip:highermodes}, since the other equation \eqref{eq:ellipestis} follows by simply setting $\ell_0=\sfrak$ in \eqref{eq:ellip:highermodes}. By assumption, we make a mode decomposition for $\varphi$ as in Definition \ref{def:fixedmode} and obtain $\varphi=\sum\limits_{\ell'=\ell_0}^{\infty}\sum\limits_{m=-\ell'}^{\ell'}(\varphi)_{m,\ell'}
Y_{m,\ell'}^{s}(\cos\theta)e^{im\pb}$, hence, applying $\edthR$ on both sides and using \eqref{eq:ellipticop:eigenvalue:fixedmode}, one finds
\begin{align}
\edthR\varphi=&-\sum\limits_{\ell'=\ell_0}^{\infty}\sum\limits_{m=-\ell'}^{\ell'}\sqrt{(\ell'-s)(\ell'+s+1)}(\varphi)_{m,\ell'}
Y_{m,\ell'}^{s+1}(\cos\theta)e^{im\pb}.
\end{align}
Since the functions $\{Y_{m,\ell}^{s}(\cos\theta)e^{im\pb}\}_{m,\ell}$ 
form a complete orthonormal basis on $L^2(\di^2\mu)$, the formula on $\edthR\varphi$ in \eqref{eq:ellip:highermodes} follows from the Plancherel's lemma. The other formula on $\edthR'\varphi$ in \eqref{eq:ellip:highermodes} follows in a similar manner.
\end{proof}


\subsection{Basic analytic estimates}
\label{sect:HardySobolev}

The following simple Hardy's inequality will be useful.
\begin{lemma}
Let $\varphi$ be a spin $s$ scalar. Then for any $r'>2M$,
\begin{align}
\label{eq:Hardy:trivial}
\int_{2M}^{r'}\abs{\varphi}^2\di r
\lesssim{}&\int_{2M}^{r'}\mu^2r^2\abs{\partial_r\varphi}^2\di r
+(r'-2M)\abs{\varphi(r')}^2.
\end{align}
If, moreover, $\lim\limits_{r\to \infty} r\abs{\varphi}^2 =0$, then
\begin{align}
\label{eq:Hardy:trivial:1}
\int_{2M}^{\infty}\abs{\varphi}^2\di r
\lesssim{}&\int_{2M}^{\infty}\mu^2r^2\abs{\partial_r\varphi}^2\di r.
\end{align}
\end{lemma}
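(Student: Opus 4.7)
The plan is to prove this via the standard integration-by-parts trick underlying one-dimensional Hardy-type inequalities, exploiting the algebraic identity $\mu^2 r^2 = (r-2M)^2$, which follows directly from $\mu = (r-2M)/r$. This rewrites the claimed bound as
\begin{equation*}
\int_{2M}^{r'}\abs{\varphi}^2\di r \lesssim \int_{2M}^{r'}(r-2M)^2\abs{\partial_r\varphi}^2\di r + (r'-2M)\abs{\varphi(r')}^2,
\end{equation*}
which is a transparent weighted Hardy inequality with the weight $(r-2M)^2$ vanishing to second order at the horizon.

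To prove the first estimate, I would write $1 = \partial_r(r-2M)$ and integrate by parts on $[2M,r']$:
\begin{equation*}
\int_{2M}^{r'}\abs{\varphi}^2 \di r = (r'-2M)\abs{\varphi(r')}^2 - 2\int_{2M}^{r'}(r-2M)\Re(\bar{\varphi}\,\partial_r\varphi)\di r,
\end{equation*}
where the boundary term at $r=2M$ drops out since $\varphi$ is a regular spin-weighted scalar on $\Horizon$ and hence bounded there, so $(r-2M)|\varphi|^2 \to 0$. Next, I would apply Cauchy--Schwarz followed by Young's inequality with a small parameter $\eps>0$:
\begin{equation*}
2(r-2M)\abs{\varphi}\abs{\partial_r\varphi} \le \eps \abs{\varphi}^2 + \eps^{-1}(r-2M)^2\abs{\partial_r\varphi}^2,
\end{equation*}
and choose $\eps = 1/2$ so that the $\int |\varphi|^2$ term on the right can be absorbed into the left-hand side. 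This yields the first inequality up to a universal constant.

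For the second inequality, I would send $r' \to \infty$ in the estimate just proved. Under the hypothesis $\lim_{r\to\infty} r\abs{\varphi}^2 = 0$, the boundary contribution satisfies $(r'-2M)\abs{\varphi(r')}^2 \le r'\abs{\varphi(r')}^2 \to 0$, so it drops out in the limit, leaving
\begin{equation*}
\int_{2M}^{\infty}\abs{\varphi}^2 \di r \lesssim \int_{2M}^{\infty}(r-2M)^2\abs{\partial_r\varphi}^2 \di r,
\end{equation*}
which is the claim. There is no real obstacle; the only delicate point worth verifying is the vanishing of the inner boundary term at $r = 2M$, which is automatic from the regularity of spin-weighted scalars across the event horizon. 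One could, for a strictly rigorous treatment, first approximate by a cutoff $\chi_\delta$ supported in $r \ge 2M+\delta$, run the argument, and pass to the limit $\delta\to 0^+$ using dominated convergence and the finiteness of the right-hand side.
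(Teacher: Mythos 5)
Your proof is correct and is essentially the same as the paper's: the paper also integrates the identity $\partial_r\bigl((r-2M)\abs{\varphi}^2\bigr)=\abs{\varphi}^2+2(r-2M)\Re(\bar{\varphi}\,\partial_r\varphi)$ from $2M$ to $r'$ and applies Cauchy--Schwarz (with absorption) to the cross term, which is precisely your integration by parts with $1=\partial_r(r-2M)$ followed by Young's inequality. The only difference is presentational detail; the key steps (the algebraic fact $\mu^2 r^2=(r-2M)^2$, the vanishing of the boundary term at the horizon, and passing $r'\to\infty$ using the decay hypothesis) are identical.
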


\begin{proof}
It follows easily by integrating the following equation
\begin{align}
\partial_r((r-2M)\abs{\varphi}^2)=\abs{\varphi}^2
+2(r-2M)\Re(\bar{\varphi}\partial_r\varphi)
\end{align}
from $2M$ to $r'$ and applying the Cauchy-Schwarz inequality to the last product term.
\end{proof}

We will also use the following standard Hardy's inequality. Its proof can be found in, for instance, \cite[Lemma 4.30]{andersson2019stability}.
\begin{lemma}
\label{lem:HardyIneq}
Let $\alpha \in \mathbb{R}\setminus \{0\}$  and $h: [r_0,r_1] \rightarrow \mathbb{R}$ be a $C^1$ function.
\begin{enumerate}
\item \label{point:lem:HardyIneqLHS} If $r_0^{\alpha}\vert h(r_0)\vert^2 \leq D_0$ and $\alpha<0$, then
\begin{subequations}
\label{eq:HardyIneqLHSRHS}
\begin{align}\label{eq:HardyIneqLHS}
-2\alpha^{-1}r_1^{\alpha}\vert h(r_1)\vert^2+\int_{r_0}^{r_1}r^{\alpha -1} \vert h(r)\vert ^2 \di r \leq \frac{4}{\alpha^2}\int_{r_0}^{r_1}r^{\alpha +1} \vert \partial_r h(r)\vert ^2 \di r-2\alpha^{-1}D_0;
\end{align}
\item \label{point:lem:HardyIneqRHS} If $r_1^{\alpha}\vert h(r_1)\vert^2 \leq D_0$ and $\alpha>0$, then
\begin{align}\label{eq:HardyIneqRHS}
2\alpha^{-1}r_0^{\alpha}\vert h(r_0)\vert^2+\int_{r_0}^{r_1}r^{\alpha -1} \vert h(r)\vert ^2 \di r \leq \frac{4}{\alpha^2}\int_{r_0}^{r_1}r^{\alpha +1} \vert \partial_r h(r)\vert ^2 \di r +2\alpha^{-1}D_0.
\end{align}
\end{subequations}
\end{enumerate}
\end{lemma}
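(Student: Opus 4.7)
The plan is to prove both parts of the lemma by a single integration-by-parts identity followed by a weighted Cauchy--Schwarz absorption argument. The starting point is the pointwise identity
\begin{align*}
\partial_r\bigl(r^{\alpha}\lvert h(r)\rvert^2\bigr) = \alpha\, r^{\alpha-1}\lvert h\rvert^2 + 2 r^{\alpha}\operatorname{Re}(\bar{h}\,\partial_r h),
\end{align*}
which upon integration from $r_0$ to $r_1$ yields the fundamental relation
\begin{align*}
r_1^{\alpha}\lvert h(r_1)\rvert^2 - r_0^{\alpha}\lvert h(r_0)\rvert^2 = \alpha \int_{r_0}^{r_1} r^{\alpha-1}\lvert h\rvert^2\,\di r + 2\int_{r_0}^{r_1} r^{\alpha}\operatorname{Re}(\bar{h}\,\partial_r h)\,\di r.
\end{align*}

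For the cross-term I would apply Cauchy--Schwarz with a small parameter $\delta>0$ in the weighted form
\begin{align*}
2\bigl\lvert r^{\alpha}\bar{h}\,\partial_r h\bigr\rvert \leq \delta\, r^{\alpha-1}\lvert h\rvert^2 + \delta^{-1} r^{\alpha+1}\lvert\partial_r h\rvert^2,
\end{align*}
so that the $r^{\alpha-1}\lvert h\rvert^2$ contribution can be absorbed into the left-hand side of the integrated identity after dividing through by $\lvert\alpha\rvert$. The optimal choice $\delta = \lvert\alpha\rvert/2$ leaves a factor $1/2$ in front of the weighted $L^2$ integral of $h$, which, after multiplication by $2/\lvert\alpha\rvert$, produces precisely the constants $4/\alpha^2$ and $2/\lvert\alpha\rvert$ appearing in the statement.

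Case \ref{point:lem:HardyIneqRHS} (where $\alpha>0$) is the direct version: rearrange the identity so that $\alpha\int r^{\alpha-1}\lvert h\rvert^2$ and the $r_0$ boundary term sit on the left, and use the assumption $r_1^{\alpha}\lvert h(r_1)\rvert^2 \leq D_0$ to bound the positive boundary contribution at $r_1$. Case \ref{point:lem:HardyIneqLHS} (where $\alpha<0$) follows by the same algebra with signs flipped: now $-\alpha>0$ multiplies the integral on the left after rearranging, the $r_1$ boundary term has a favourable sign and should be kept, and the assumption $r_0^{\alpha}\lvert h(r_0)\rvert^2 \leq D_0$ absorbs the contribution at $r_0$.

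I do not anticipate any genuine obstacle here; this is a one-line identity plus a Cauchy--Schwarz step, and the only subtlety is being careful with signs so that the boundary terms end up on the desired sides of the inequality for each sign of $\alpha$, and choosing $\delta$ so that the absorbed constant is exactly $4/\alpha^2$. The same proof works verbatim for complex-valued $h$ (replacing $h\,\partial_r h$ by $\operatorname{Re}(\bar{h}\,\partial_r h)$, as above), which is the form needed elsewhere in the paper.
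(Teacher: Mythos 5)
Your proof is correct, and the sign-tracking and the choice $\delta=|\alpha|/2$ do produce exactly the constants $4/\alpha^2$ and $2/|\alpha|$ as you claim (after multiplying the absorbed inequality through by $2/|\alpha|$, and noting that $2/|\alpha|=-2\alpha^{-1}$ when $\alpha<0$). Note that the paper itself does not supply a proof of this lemma — it is quoted verbatim from \cite[Lemma~4.30]{andersson2019stability} — so there is no internal proof to compare against; your argument, integrating $\partial_r(r^{\alpha}|h|^2)$ and absorbing the cross term by weighted Cauchy--Schwarz, is the standard one and is surely the argument intended by that reference.
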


Recall the following Sobolev-type estimates from \cite[Lemmas 4.32 and 4.33]{andersson2019stability}. These are used to derive pointwise decay estimates from energy decay estimates.

\begin{lemma}
\label{lem:Sobolev}
Let $\varphi$ be a spin $s$ scalar. Then
\begin{align}
\label{eq:Sobolev:1}
\sup_{\Sigmatb}\abs{\varphi}^2\lesssim_{s}{} \norm{\varphi}_{W_{-1}^3(\Sigmatb)}^2.
\end{align}
If $\alpha\in (0,1]$, then
\begin{align}
\label{eq:Sobolev:2}
\sup_{\Sigmatb}\abs{\varphi}^2\lesssim_{s,\alpha} {} \Big(\norm{\varphi}_{W_{-2}^3(\Sigmatb)}^2
+\norm{rV\varphi}_{W_{-1-\alpha}^2(\Sigmatb)}^2\Big)^{\half}
\Big(\norm{\varphi}_{W_{-2}^3(\Sigmatb)}^2
+\norm{rV\varphi}_{W_{-1+\alpha}^2(\Sigmatb)}^2\Big)^{\half}.
\end{align}
If $\lim\limits_{{\tb\to\infty}}\abs{r^{-1}\varphi}=0$ pointwise in $(\rb,\theta,\pb)$, then
\begin{align}
\label{eq:Sobolev:3}
\abs{r^{-1}\varphi}^2\lesssim_{s} {}\norm{\varphi}_{W_{-3}^3(\DOC_{\tb,\infty})}
\norm{\Lxi\varphi}_{W_{-3}^3(\DOC_{\tb,\infty})}.
\end{align}
\end{lemma}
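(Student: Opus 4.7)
All three estimates are standard weighted Sobolev embeddings adapted to the hyperboloidal foliation. I would reduce each to the combination of an angular Sobolev embedding on the spheres $S^2(\tb,\rb)$ (exploiting $\edthR,\edthR'\in\CDeri$) with a one-dimensional fundamental theorem of calculus, either in $\rb$ for \eqref{eq:Sobolev:1}--\eqref{eq:Sobolev:2} or in $\tb$ for \eqref{eq:Sobolev:3}. The basic observation is that $\partial_{\rb}$ can be traded for elements of $\CDeri$ modulo $\Lxi$: using Definition \ref{def:basic:vectorfields} and \eqref{def:vectorVRintermsofprb}, $\partial_{\rb}\varphi=\VR\varphi-\Hhyp\Lxi\varphi=\mu^{-1}r^{-1}(rV)\varphi-\Hhyp\Lxi\varphi$, and $\Lxi$ itself is a combination of $Y$ and $rV$.

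\textbf{Estimate \eqref{eq:Sobolev:1}.} First I would fix a large $R>2M$ and split $\Sigmatb$. On the compact slab $\{2M\le\rb\le R\}$, the weight $r^{-1}$ is comparable to $1$ and the standard 3D Sobolev embedding $H^2\hookrightarrow L^{\infty}$ for spin-weighted scalars (with the frame-adapted operators $\CDeri$ giving a non-degenerate second-order elliptic control down to $\Horizon$) yields the claim. For $\rb\ge R$, I would first apply angular Sobolev on each sphere to obtain
\[
\sup_{S^2(\tb,\rb)}\abs{\varphi}^{2}\lesssim_{s}\int_{S^2}\absCDeri{\varphi}{2}^{2}\,\di^2\mu,
\]
and then run the FTC $\int_{S^2}\absCDeri{\varphi}{2}^2\bigr|_{\rb}=-\int_{\rb}^{\infty}\partial_{\rb'}\!\int_{S^2}\absCDeri{\varphi}{2}^2\,\di\rb'$, after justifying the vanishing at $\rb=\infty$ from the finiteness of the $W^{3}_{-1}$ norm. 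Cauchy--Schwarz and the above replacement of $\partial_{\rb}$ convert this into $\norm{\varphi}_{W^{3}_{-1}(\Sigmatb)}^{2}$, spending at most three $\CDeri$-derivatives (two angular, one radial).

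\textbf{Estimate \eqref{eq:Sobolev:2}.} This is a weighted interpolation of the previous one. On the compact region the $W^{3}_{-2}$ norm controls $\sup\abs{\varphi}^{2}$ exactly as above. On the far region, I would combine the angular Sobolev step with the radial identity
\[
\int_{S^2}\absCDeri{\varphi}{2}^{2}\bigr|_{\rb_0}\le 2\int_{\rb_0}^{\infty}\!\!\int_{S^2}\absCDeri{\varphi}{2}\cdot\absCDeri{\partial_{\rb}\varphi}{2}\,\di^2\mu\,\di\rb,
\]
and split the integrand as $r^{-\frac{1-\alpha}{2}}\absCDeri{\varphi}{2}\cdot r^{\frac{1-\alpha}{2}}\absCDeri{\partial_{\rb}\varphi}{2}$. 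A balanced Cauchy--Schwarz then produces the geometric mean structure: one factor of $(\int r^{-2+\alpha}\absCDeri{\varphi}{2}^{2})^{1/2}$ (bounded by $\norm{\varphi}_{W^{3}_{-2}}$) together with $(\int r^{-\alpha}\absCDeri{\partial_{\rb}\varphi}{2}^{2})^{1/2}\lesssim(\int r^{-2-\alpha}\absCDeri{rV\varphi}{2}^{2})^{1/2}=\norm{rV\varphi}_{W^{2}_{-1-\alpha}}$, and the mirror-image factor with $\alpha\mapsto-\alpha$. The inequality $(A+B_1)^{1/2}(A+B_2)^{1/2}\ge A+\sqrt{B_1 B_2}$ then absorbs the compact-region term to produce the stated form.

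\textbf{Estimate \eqref{eq:Sobolev:3}.} Using the pointwise vanishing $\lim_{\tb\to\infty}\abs{r^{-1}\varphi}=0$ at each $(\rb,\theta,\pb)$ and $\Lxi r=0$, I would write
\[
r^{-2}\abs{\varphi(\tb,\rb,\theta,\pb)}^{2}=-2\int_{\tb}^{\infty}r^{-2}\Re\bigl(\bar\varphi\,\Lxi\varphi\bigr)(\tb',\rb,\theta,\pb)\,\di\tb'.
\]
Cauchy--Schwarz in $\di\tb'$ separates this into $(\int_{\tb}^{\infty}r^{-2}\abs{\varphi}^{2}\,\di\tb')^{1/2}(\int_{\tb}^{\infty}r^{-2}\abs{\Lxi\varphi}^{2}\,\di\tb')^{1/2}$. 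Each factor is a pointwise-in-$(\rb,\theta,\pb)$ value of a function only of the spatial variables; I would upgrade it to a spacetime integral by running the angular-Sobolev-plus-radial-FTC procedure of \eqref{eq:Sobolev:1} in the variables $(\rb,\theta,\pb)$, with the radial FTC costing an extra $r^{-1}$ weight and thereby converting the $r^{-2}$-weighted $\tb'$-integral into the $r^{-3}$-weighted spacetime integral with at most three $\CDeri$-derivatives.

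\textbf{Main obstacle.} The arguments are essentially routine, and the principal subtlety is bookkeeping of regularity and weights: the three $\CDeri$-derivatives available on the right must be budgeted as (two angular $+$ one radial) in \eqref{eq:Sobolev:1}--\eqref{eq:Sobolev:2} and per factor in \eqref{eq:Sobolev:3}, and one has to justify the vanishing of boundary terms at $\rb\to\infty$ (respectively $\tb\to\infty$) from the finiteness of the right-hand sides together with the density of smooth compactly supported spin-weighted scalars. A second minor point is ensuring the interpolation in \eqref{eq:Sobolev:2} uses the weights symmetrically so that the product of square roots arises rather than a sum.
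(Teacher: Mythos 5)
Your reduction to an angular Sobolev inequality on the spheres followed by a radial (or temporal) fundamental theorem of calculus, with $\partial_{\rb}$ re-expressed via $\partial_{\rb}=\VR-\Hhyp\Lxi$ as a combination of $\CDeri$-derivatives carrying an $r^{-1}$-gain, is the right skeleton for \eqref{eq:Sobolev:1} and \eqref{eq:Sobolev:3}, and the weight bookkeeping there closes (for \eqref{eq:Sobolev:3} it is cleanest to apply the radial FTC to the weighted density $r^{-2}\int_{S^2}\absCDeri{\varphi}{2}^2\,\di^2\mu$, so that both the derivative hitting $r^{-2}$ and the one hitting $\varphi$ produce $W_{-3}^3$-type integrands). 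However, there is a genuine gap in your argument for \eqref{eq:Sobolev:2}.

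After your split $r^{-(1-\alpha)/2}\absCDeri{\varphi}{2}\cdot r^{(1-\alpha)/2}\absCDeri{\partial_{\rb}\varphi}{2}$ and Cauchy--Schwarz, the undifferentiated factor carries weight $r^{\alpha-1}$ (you wrote $r^{-2+\alpha}$, but that is only a bookkeeping slip), giving $\int r^{\alpha-1}\absCDeri{\varphi}{2}^2\,\di\rb\,\di^2\mu$. You assert this is bounded by $\norm{\varphi}_{W_{-2}^3(\Sigmatb)}^2$. That is false for every $\alpha>0$: the weight $r^{\alpha-1}$ decays strictly more slowly than $r^{-2}$ at infinity, and passing from $W^2$ to $W^3$ only adds derivatives, which cannot compensate for a weaker radial weight. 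A rescaling test makes this explicit: with $\varphi(\rb)=f(\rb/R)$ for a fixed bump $f$ supported in $[1,2]$, both integrals are finite but their ratio grows like $R^{\alpha}$ as $R\to\infty$.

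The missing ingredient is a one-dimensional Hardy inequality (Lemma \ref{lem:HardyIneq}, in the form \eqref{eq:HardyIneqRHS} with exponent $\alpha>0$): it trades the undifferentiated quantity for a radial derivative at the cost of $r^{2}$ in the weight,
\begin{align*}
\int r^{\alpha-1}\absCDeri{\varphi}{2}^2\,\di\rb\,\di^2\mu
\lesssim \int r^{\alpha+1}\absCDeri{\partial_{\rb}\varphi}{2}^2\,\di\rb\,\di^2\mu + \text{boundary term},
\end{align*}
and then $\absCDeri{\partial_{\rb}\varphi}{2}\lesssim r^{-1}\absCDeri{rV\varphi}{2}+r^{-2}\absCDeri{\varphi}{3}$ (using $\VR=\mu^{-1}r^{-1}(rV)$ and $\Hhyp=O(r^{-2})$) converts this into $\norm{rV\varphi}_{W_{-1+\alpha}^2(\Sigmatb)}^2$ plus a contribution absorbed into $\norm{\varphi}_{W_{-2}^3(\Sigmatb)}^2$. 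The boundary term is evaluated at a finite radius (or at infinity, where it vanishes by the same density or limit argument you already invoke for the FTC step) and is controlled by the compact-region Sobolev estimate. Pairing this with your differentiated factor, which correctly produces $\norm{rV\varphi}_{W_{-1-\alpha}^2(\Sigmatb)}$, reproduces the geometric-mean structure of \eqref{eq:Sobolev:2}. Without the Hardy step, the undifferentiated factor is simply uncontrolled and the interpolation does not close.
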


In the end, we present a lemma showing that a hierarchy of estimates implies a rate of decay for the energy in the hierarchy. This is the basic lemma that will be frequently used to derive energy decay estimates.

The way this lemma is stated is in the same spirit of \cite[Lemma 5.2]{andersson2019stability}.\footnote{In fact, the current statement exhibits only  the special and simpler case with $\gamma=0$ in \cite[Lemma 5.2]{andersson2019stability}.} The main ingredient of the proof is an application of the mean-value principle, and we guide the reader to a proper proof therein. In applications, $\reg$ represents a regularity level, $p$ represents a weight that arises from $r^p$ estimates, and $\tb$ represents a time function. In addition, $\regl$ characterizes the potential loss of regularity in the hierarchy of estimates. 

\begin{lemma}[A hierarchy of estimates implies energy decay]
\label{lem:hierarchyImpliesDecay}
Let $D\geq 0$. Let $\regl\geq 0$, let $\reg_0\in\mathbb{N}^+$ be suitably large, let $\tb_0\geq 1$, and let $p_1,p_2\in\mathbb{R}$ be such that $p_1\leq p_2-1$. Let $F:\{0,\ldots,\reg_0\}\times[p_1-1,p_2]\times[\tb_0,\infty)\rightarrow[0,\infty)$ be such that $F(\reg,p,\tb)$ is Lebesgue measurable in $\tb$ for each $p$ and $\reg$. 

If the following hierarchy of estimates hold:
\begin{subequations}
\begin{enumerate}[label=\arabic*)]
\item\label{pt:implydecay:1}{} [monotonicity] \label{assump:HierarchyToDecay(1)}for all $\reg,\reg_1,\reg_2\in\{0,\ldots,\reg_0\}$ with $\reg_1\leq \reg_2$, all $p, p',p''\in[p_1,p_2]$ with $p'\leq p''$, and all $\tb\geq \tb_0$,
\begin{align}
F(\reg_1,p,\tb)\lesssim{}& F(\reg_2,p,\tb) ,
\label{eq:Rev:HierarchyToDecayReal:MonotonicityHypothesis:j}\\
F(\reg, p',\tb)\lesssim{}& F(\reg, p'',\tb) ,
\label{eq:Rev:HierarchyToDecayReal:MonotonicityHypothesis:beta}
\end{align}

\item\label{pt:implydecay:2}{} [interpolation] \label{assump:HierarchyToDecay(2)}for all $\reg\in\{0,\ldots,\reg_0\}$, all $p, p', p''\in[p_1,p_2]$ such that $ p'\leq p \leq p''$, and all $\tb\geq \tb_0$,
\begin{align}
F(\reg,p,\tb)
\lesssim{}&
F(\reg, p',\tb)^{\frac{ p''-p}{ p''- p'}}
F(\reg, p'',\tb)^{\frac{p- p'}{ p''- p'}} ,
\label{eq:Rev:HierarchyToDecayReal:InterpolationHypothesis}
\end{align}

\item\label{pt:implydecay:3}{} [energy and Morawetz estimate] for all $\reg\in\{0,\ldots,\reg_0-\regl\}$, $p\in[p_1,p_2]$, and $\tb_2>\tb_1\geq \tb_0$,
\begin{align}
F(\reg,p,\tb_2)
+\int_{\tb_1}^{\tb_2} F(\reg-\regl,p-1,\tb) \di \tb
\lesssim F(\reg+\regl,p,\tb_1) +\tb_1^{p-p_2}D,
\label{eq:Rev:HierarchyToDecayReal:EvolutionHypothesis}
\end{align}
\end{enumerate}
\end{subequations}
then there exists a constant $C>0$ such that for all $\reg\in\{0,\ldots,\reg_0-C\regl\}$, all $p\in[p_1,p_2]$, and all $\tb_2>\tb_1\geq \tb_0$,
\begin{align}
F(\reg,p,\tb_2) \lesssim_{p_2, p_1}{}& \langle \tb_2-\tb_1\rangle^{p-p_2}  (F(\reg+C\regl,p_2,\tb_1) +D).
\label{eq:Rev:HierarchyToDecayReal}
\end{align}
\end{lemma}

\subsection{An $r^p$ lemma for wave equations}
\label{sect:rplemma}

We state here an $r^p$ estimate for a general spin-weighted wave equation, which is an analog of the one first proven for spin-$0$ scalar field in \cite{dafermos2009new}. See also \cite{andersson2019stability,MaZhang20sharp} where $r^p$ estimates are derived for a general spin-weighted wave equation on Schwarzschild and Kerr backgrounds. 

\begin{lemma}[$r^p$ lemma for wave equations]
\label{lem:wave:rp}
Let $\reg\in \mathbb{N}$, $\sfrak\in  \mathbb{N}$, and $p\in \mathbb{R}^+\cup\{0\}$. Let  $0<\delta, \varsigma, \veps<1/2$ be arbitrary. Let $\varphi$ and $\vartheta=\vartheta(\varphi)$ be spin $s$ scalars satisfying
\begin{align}
\label{eq:wave:rp}
-r^2YV\varphi
+(\edthR\edthR'-b_0)\varphi
-b_V V\varphi =\vartheta.
\end{align}
Let $b_{V}$ and $b_0$ be smooth real functions of $r$ such that
\begin{enumerate}[label=\arabic*)]
\item\label{assu:rplemma:1} $\exists b_{V,-1}\in \mathbb{R}^+\cup\{0\}$ such that $b_V=b_{V,-1} r +O(1)$, and
\item\label{assu:rplemma:2} $\exists b_{0,0}\in \mathbb{R}$ such that $b_0=b_{0,0}+O(r^{-1})$ and the eigenvalues of $\edthR\edthR'-b_{0,0}$ are non-positive, i.e. $\int_{\mathbb{S}^2}(\abs{\edthR'\varphi}^2+ b_{0,0} \abs{\varphi}^2) \di^2\mu\geq 0$.
\end{enumerate}

Then 
\begin{enumerate}[label=\roman*)]
\item\label{pt:rplemma:1}
there is a constant $\hat{R}_0=\hat{R}_0(p,b_0,b_V)$ such that for all $R_0\geq \hat{R}_0$ and $\tb_2>\tb_1\geq \tb_0$,
\begin{itemize}
\item for $p=0$ and $b_{V,-1}>0$,
\begin{subequations}
\label{eq:rp:0to2:full}
\begin{align}\label{eq:rp:p=0}
\norm{\varphi}^2_{W_{-2}^{\reg+1}(\Sigmatwo^{R_0})}
+\norm{\varphi}^2_{W_{-3}^{\reg+1}(\Donetwo^{R_0})}
&\lesssim_{[R_0-M,R_0]} {}\norm{\varphi}^2_{W_{-2}^{\reg+1}(\Sigmaone^{R_0})}
+\norm{\vartheta}^2_{W_{-3}^{\reg}(\Donetwo^{R_0})};
\end{align}
\item for $p=0$ and $b_{V,-1}=0$,
\begin{align}\label{eq:rp:p=0:v2}
\norm{\varphi}^2_{W_{-2}^{\reg+1}(\Sigmatwo^{R_0})}
+\norm{\varphi}^2_{W_{-3}^{\reg+1}(\Donetwo^{R_0})}
&\lesssim_{[R_0-M,R_0]} {}\norm{\varphi}^2_{W_{-2}^{\reg+1}(\Sigmaone^{R_0})}
+\norm{rV\varphi}^2_{W_{-3}^{\reg}(\Donetwo^{R_0})}
+\norm{\vartheta}^2_{W_{-3}^{\reg}(\Donetwo^{R_0})};
\end{align}
\item for $p\in (0, 2)$,
    \begin{align}\label{eq:rp:less2:2}
\hspace{2ex}&\hspace{-2ex}
\norm{rV\varphi}^2_{W_{p-2}^\reg(\Sigmatwo^{\geq R_0})}
+\norm{\varphi}^2_{W_{-2}^{\reg+1}(\Sigmatwo^{\geq R_0})}
+\norm{\varphi}^2_{W_{p-3}^{\reg+1}(\Donetwo^{\geq R_0})}
+\norm{Y\varphi}^2_{W_{-1-\delta}^{\reg}(\Donetwo^{\geq R_0})}
\notag\\
&\lesssim_{[R_0-M,R_0]} {}\norm{rV\varphi}^2_{W_{p-2}^\reg(\Sigmaone^{\geq R_0})}
+\norm{\varphi}^2_{W_{-2}^{\reg+1}(\Sigmaone^{\geq R_0})}
+\norm{\vartheta}^2_{W_{p-3}^{\reg}(\Donetwo^{\geq R_0})}
;
\end{align}
\item for $p=2$, both of the following two estimates hold: 
  \begin{align}\label{eq:rp:lp=2:2}
\hspace{2ex}&\hspace{-2ex}
\norm{rV\varphi}^2_{W_{0}^\reg(\Sigmatwo^{\geq R_0})}
+\norm{\varphi}^2_{W_{-2}^{\reg+1}(\Sigmatwo^{\geq R_0})}
+\norm{\varphi}^2_{W_{-1-\delta}^{\reg+1}(\Donetwo^{\geq R_0})}
+\norm{rV\varphi}^2_{W_{-1}^{\reg}(\Donetwo^{\geq R_0})}
\notag\\
&\lesssim_{[R_0-M,R_0]} {}\norm{rV\varphi}^2_{W_{0}^\reg(\Sigmaone^{\geq R_0})}
+\norm{\varphi}^2_{W_{-2}^{\reg+1}(\Sigmaone^{\geq R_0})}
+\norm{\vartheta}^2_{W_{-1}^{\reg}(\Donetwo^{\geq R_0})};\\
\label{eq:rp:lp=2:2:v22}
\hspace{2ex}&\hspace{-2ex}
\norm{rV\varphi}^2_{W_{0}^\reg(\Sigmatwo^{\geq R_0})}
+\norm{\varphi}^2_{W_{-2}^{\reg+1}(\Sigmatwo^{\geq R_0})}
+\norm{\varphi}^2_{W_{-1-\delta}^{\reg+1}(\Donetwo^{\geq R_0})}
+\norm{rV\varphi}^2_{W_{-1}^{\reg}(\Donetwo^{\geq R_0})}
\notag\\
&\lesssim_{[R_0-M,R_0]} {}\norm{rV\varphi}^2_{W_{0}^\reg(\Sigmaone^{\geq R_0})}
+\norm{\varphi}^2_{W_{-2}^{\reg+1}(\Sigmaone^{\geq R_0})}
\notag\\
&
{}+\veps\int_{\tb_1}^{\tb_2}\langle\tb-\tb_1\rangle^{-1-\varsigma}\norm{rV\varphi}^2_{W_{0}^\reg(\Sigmatb^{\geq R_0})}\di \tb
+\frac{1}{\veps}\int_{\tb_1}^{\tb_2}\langle\tb-\tb_1\rangle^{1+\varsigma}\norm{\vartheta}^2_{W_{-2}^{\reg}(\Sigmatb^{\geq R_0})}
\di \tb;
\end{align}
\end{subequations}
\end{itemize}
\item
\label{pt:rplemma:2}
if, in addition, the eigenvalue of of $\edthR\edthR'-b_{0,0}$ acting on $\varphi$ vanishes, both of the following estimates hold for $p\geq 2$:
\begin{align}\label{eq:rp:pleq4:2}
\hspace{1ex}&\hspace{-1ex}
\norm{rV\varphi}^2_{W_{p-2}^\reg(\Sigmatwo^{\geq R_0})}
+\norm{\varphi}^2_{W_{-2}^{\reg+1}(\Sigmatwo^{\geq R_0})}
+\norm{\varphi}^2_{W_{-1-\delta}^{\reg+1}(\Donetwo^{\geq R_0})}
+\norm{rV\varphi}^2_{W_{p-3}^{\reg}(\Donetwo^{\geq R_0})}
\notag\\
&\lesssim_{[R_0-M,R_0]} {}\norm{rV\varphi}^2_{W_{p-2}^\reg(\Sigmaone^{\geq R_0})}
+\norm{\varphi}^2_{W_{-2}^{\reg+1}(\Sigmaone^{\geq R_0})}\notag\\
&
+\Big(\norm{\vartheta}^2_{W_{p-3}^{\reg}(\Donetwo^{\geq R_0})}+\norm{\varphi}^2_{W_{p-5}^{\reg}(\Donetwo^{\geq R_0})}\Big);\\
\label{eq:rp:pgeq4:2}
\hspace{1ex}&\hspace{-1ex}
\norm{rV\varphi}^2_{W_{p-2}^\reg(\Sigmatwo^{\geq R_0})}
+\norm{\varphi}^2_{W_{-2}^{\reg+1}(\Sigmatwo^{\geq R_0})}
+\norm{\varphi}^2_{W_{-1-\delta}^{\reg+1}(\Donetwo^{\geq R_0})}
+\norm{rV\varphi}^2_{W_{p-3}^{\reg}(\Donetwo^{\geq R_0})}
\notag\\
&\lesssim_{[R_0-M,R_0]} {}\norm{rV\varphi}^2_{W_{p-2}^\reg(\Sigmaone^{\geq R_0})}
+\norm{\varphi}^2_{W_{-2}^{\reg+1}(\Sigmaone^{\geq R_0})}
\notag\\
&
{}+\veps\int_{\tb_1}^{\tb_2}\frac{1}{\langle\tb-\tb_1\rangle^{1+\varsigma}}\norm{rV\varphi}^2_{W_{p-2}^\reg(\Sigmatb^{\geq R_0})}\di \tb
+\frac{1}{\veps}\int_{\tb_1}^{\tb_2}\langle\tb-\tb_1\rangle^{1+\varsigma}\big(\norm{\vartheta}^2_{W_{p-4}^{\reg}(\Sigmatb^{\geq R_0})}
+\norm{\varphi}^2_{W_{p-6}^{\reg}(\Sigmatb^{\geq R_0})}\big)\di \tb.
\end{align}
\end{enumerate}
In all the above estimates, the terms $\norm{\varphi}^2_{W_{0}^{\reg+1}(\Sigmatwo^{R_0-M,R_0})}
+\norm{\varphi}^2_{W_{0}^{\reg+1}(\Sigmaone^{R_0-M,R_0})} +\norm{\varphi}^2_{W_{0}^{\reg+1}(\Donetwo^{R_0-M,R_0})}
+\norm{\vartheta}^2_{W_{0}^{\reg}(\Donetwo^{R_0-M, R_0})}$ supported on $[R_0-M, R_0]$ are implicit in the symbol $\lesssim_{[R_0-M, R_0]}$.
\end{lemma}

\begin{proof}
The estimates \eqref{eq:rp:0to2:full} in point \ref{pt:rplemma:1} are the same as the ones proven in \cite[Proposition 2.16]{MaZhang20sharp}. Here, we provide only an outline of the proof.
To describe the main idea, it suffices to prove \eqref{eq:rp:less2:2}, since the other ones are proven in a similar manner. Further, we only consider $\reg=0$ case for the estimate \eqref{eq:rp:less2:2}, and the general $\reg\in\mathbb{N}$ case follows by a standard approach of commuting the operator set $\CDeri$ with the wave equation.

The estimate \eqref{eq:rp:less2:2}  is achieved via the following steps. Recall that $\chi$ is a smooth cutoff function that satisfies $\chi=1$ for $r\geq R_0$ and $\chi$ vanishes identically for $r\leq R_0-M$. We multiply equation \eqref{eq:wave:rp} by $-2\chi^2 r^{p-2}V\bar{\varphi}$, take the real part and integrate over spheres. Then, based on the following calculations:
\begin{align*}
\int_{S^2}\Re\big((-r^2 YV\varphi)(-2\chi^2 r^{p-2}V\bar{\varphi})\big)\di^2\mu={}&\int_{S^2}\big(Y(\chi^2 r^p \abs{V\varphi}^2)+\partial_r (\chi^2 r^p)\abs{V\varphi}^2\big)\di^2\mu,\\
\int_{S^2}\Re\big((\edthR\edthR'\varphi-b_0\varphi)(2\chi^2 r^{p-2}V\bar{\varphi})\big)\di^2\mu={}&\int_{S^2}V\big(\chi^2 r^{p-2}(\abs{\edthR'\varphi}^2+b_0\abs{\varphi}^2)\big)\di^2\mu\notag\\
&
-\int_{S^2}\big(\mu \partial_r (\chi^2r^{p-2})\abs{\edthR'\varphi}^2 +\mu\partial_r (b_0\chi^2 r^{p-2})\abs{\varphi}^2\big)\di^2\mu,\\
\int_{S^2}\Re\big((-b_V V\varphi)(-2\chi^2 r^{p-2}V\bar{\varphi})\big)\di^2\mu={}&\int_{S^2}2\chi^2 b_Vr^{p-2}\abs{V\varphi}^2 \di^2\mu,
\end{align*}
we arrive at
\begin{align}
\label{eq:rplemma:0to2:general:mul}
&\int_{S^2}\Big(V(\chi^2 r^{p-2}(\abs{ \edthR'\varphi}^2
+b_{0}\abs{\varphi}^2))
+Y(\chi^2r^p\abs{V\varphi}^2)\Big)\di^2\mu
\notag\\
&
+\int_{S^2}\Big((\partial_r(\chi^2r^p)
+2\chi^2 b_{V}r^{p-2})\abs{V\varphi}^2
-\mu\partial_r(\chi ^2r^{p-2})
(\abs{\edthR'\varphi}^2
+b_{0}\abs{\varphi}^2)
-\mu\partial_r b_0 \chi^2 r^{p-2} \abs{\varphi}^2\Big)\di^2\mu\notag\\
&
={}-\int_{S^2} 2\chi^2 r^{p-2}\Re(V\overline{\varphi}\vartheta)\di^2\mu.
\end{align}
By assumption, the coefficient of $\abs{V\varphi}^2$  satisfies $\partial_r(\chi^2r^p)
+2\chi^2 b_{V}r^{p-2}\geq \frac{1}{2}pr^{p-1}$ in $[R_0, +\infty)$ for $R_0$ suitably large, and $\int_{S^2}(\abs{ \edthR'\varphi}^2
+b_{0}\abs{\varphi}^2)\geq -C r^{-1}\int_{S^2}\abs{\varphi}^2$. 
Therefore, by integrating this equation over the quotient space ${\Donetwo}\slash{S^2}$, adding into a standard Morawetz estimate near infinity and applying the Cauchy--Schwarz inequality to the integral of the last line of \eqref{eq:rplemma:0to2:general:mul}, the estimate \eqref{eq:rp:less2:2} then follows.

It remains to show point \ref{pt:rplemma:2}. 
If the eigenvalue of $\edthR\edthR'-b_{0,0}$ acting on $\varphi$ vanishes, the wave equation \eqref{eq:wave:rp} reduces to
\begin{align}
\label{eq:wave:rp:reduce}
-r^2YV\varphi
-b_V V\varphi =\vartheta+(b_0-b_{0,0})\varphi=\vartheta +O(r^{-1})\varphi\doteq \tilde{\vartheta}.
\end{align}
Applying the same approach outlined above by multiplying  equation \eqref{eq:wave:rp:reduce} by $-2\chi^2 r^{p-2}V\bar{\varphi}$ and taking the real part, we obtain
\begin{align}
Y(\chi^2r^p\abs{V\varphi}^2)
+(\partial_r(\chi^2r^p)
+2\chi^2 b_{V}r^{p-2})\abs{V\varphi}^2
=-2\chi^2 r^{p-2}\Re(V\overline{\varphi}\tilde{\vartheta}).
\end{align}
Then, by integrating this equation over the spacetime region $\Donetwo$ and adding into a standard Morawetz estimate near infinity, one arrives at
\begin{align}
\label{eq:rp:p=2:2:errorterm}
\hspace{2ex}&\hspace{-2ex}
\norm{rV\varphi}^2_{W_{p-2}^\reg(\Sigmatwo^{\geq R_0})}
+\norm{\varphi}^2_{W_{-2}^{\reg+1}(\Sigmatwo^{\geq R_0})}
+\norm{\varphi}^2_{W_{-1-\delta}^{\reg+1}(\Donetwo^{\geq R_0})}
+\norm{rV\varphi}^2_{W_{p-3}^{\reg}(\Donetwo^{\geq R_0})}
\notag\\
&\lesssim_{[R_0-M,R_0]} {}\norm{rV\varphi}^2_{W_{p-2}^\reg(\Sigmaone^{\geq R_0})}
+\norm{\varphi}^2_{W_{-2}^{\reg+1}(\Sigmaone^{\geq R_0})}
+\sum\limits_{\abs{\mathbf{a}}\leq \reg}\bigg|\int_{\Donetwo^{R_0}}r^{p-2}\Re\Big(
V\overline{\mathbb{D}^{\mathbf{a}}\varphi}
\mathbb{D}^{\mathbf{a}}\tilde{\vartheta}\Big) \di^4 \mu\bigg|.
\end{align}
Here, the general $\reg\in\mathbb{N}$ case is due to the fact that the obtained equation, after commuting equation \eqref{eq:wave:rp:reduce} with the operator set $\{\Lxi, \edthR,\edthR', rV\}$, is of the same form as \eqref{eq:wave:rp:reduce}  and satisfies also the assumptions \ref{assu:rplemma:1} and \ref{assu:rplemma:2}.
The estimates \eqref{eq:rp:pleq4:2} and \eqref{eq:rp:pgeq4:2}  then both follow by applying the Cauchy--Schwarz inequality to the last term of \eqref{eq:rp:p=2:2:errorterm}.
\end{proof}


\section{Spin $s$ components, Teukolsky master equation and Teukolsky--Starobinsky identities}
\label{sect:TMETSI}



In this section, we define a few scalars that are constructed from the spin $s$ components, introduce the Teukolsky master equation, and finally state the Teukolsky--Starobinsky identities relating the spin $+\sfrak$ and $-\sfrak$ components. As an aside, we write down the full Maxwell system of equations.


\subsection{Teukolsky master equation}
\label{sect:tme}

Teukolsky \cite{Teukolsky1973I} found the celebrated \textit{Teukolsky Master Equation} (TME) that is the governing equation for the spin-$s$ fields on a Schwarzschild background. 

\begin{prop}[Teukolsky master Equation \cite{Teukolsky1973I}]
\label{prop:TME}
The rescaled spin $s$ components $\psi_s$, $s=\pm \sfrak$, defined by 
\begin{align}
\psipluss={}&r^{2\sfrak}\NPRpluss, \qquad\qquad
\psiminuss=\NPRminuss,
\end{align}
satisfy the 
\textit{Teukolsky Master Equation}, a separable, decoupled wave equation which takes the following form in Boyer-Lindquist coordinates:
\begin{align}\label{eq:TME}
\hspace{4ex}&\hspace{-4ex}
\left(-\mu^{-1} r^2\partial^2_t
+ \partial_r \left( \Delta \partial_r \right)
+\frac{1}{\sin^2{\theta}} \partial^2_{\phi}+ \frac{1}{\sin{\theta}} \partial_\theta \left( \sin{\theta} \partial_\theta\right)+ \frac{2is\cos\theta}{\sin^2 \theta}\partial_{\phi} - (s^2\cot^2\theta +s)\right)\psi_s \notag\\
=&-2s((r-M)Y-2r\partial_t)\psi_s.
\end{align}
\end{prop}

This is a spin-weighted wave equation in the sense that the operator on the LHS of \eqref{eq:TME}, being equal to $-\mu^{-1} r^2\partial^2_t
+ \partial_r ( \Delta \partial_r )+\edthR \edthR'$, is a spin-weighted second-order wave operator.  Such a TME is actually derived  first by Bardeen--Press \cite{Bardeen73TMESchw} on a Schwarzschild background and later by Teukolsky \cite{Teukolsky1973I} for general half integer spin fields on the larger Kerr family of spacetimes \cite{kerr63}, and it serves as a starting model for quite many results  in obtaining quantitative estimates for these fields, including  the scalar field, the Maxwell field and the linearized gravity.


\subsection{Spin $s$ components}
\label{sect:spinscomp:all}

For convenience of latter discussions, we define in this subsection a few sets of scalars that are in turn defined from the spin $s$ components $\NPR_s$. 

\begin{definition}[Scalars constructed from the spin $s$ components]
\label{def:PsiPhietc}
\begin{enumerate}
\item
Recall from above equation \eqref{eq:spinsfields} that the scalars $\psi_s$, $s=\pm \sfrak$, are defined by
\begin{align}\label{eq:spinsfields}
\psipluss={}&r^{2\sfrak}\NPRpluss, \qquad\qquad
\psiminuss=\NPRminuss.
\end{align}

\item
Define the  radiation fields of the spin $s$ components $\psi_s$ $(s=\pm\sfrak)$ by 
\begin{align}
\label{def:Psiplusminuss}
\Psipluss={}& r\psipluss, \qquad\qquad
\Psiminuss= r\psiminuss.
\end{align}

\item
Let $i\in \mathbb{N}$. Define the following spin $s$ scalars\footnote{Here, the upper index $i$ inside the round brackets indicates how many times we apply the operator $\curlVR$ to $\Phi_s^{(0)}$.}: 
\begin{subequations}
\label{def:Phiplusminus:High}
\begin{align}
\label{def:PhiplussHigh0}
\PhiplussHigh{0}={}&\mu^{-\sfrak}\Psipluss=\mu^{-\sfrak}r\psipluss, \qquad\qquad \PhiplussHigh{i}={}\curlVR^i\PhiplussHigh{0}=(r^2\VR)^i \PhiplussHigh{0},\\
\label{eq:Phiminusi:def}
\Phiminuss{0}={}&\mu^{\sfrak}\Psiminuss=\mu^{\sfrak}  r \psiminuss,\qquad\qquad\, \,\quad \Phiminuss{i}={}\curlVR^i \Phiminuss{0}=(r^2\VR)^i \Phiminuss{0}.
\end{align}
\end{subequations}

\item 
Let $i\in \mathbb{N}$. Define the following scalars constructed from the spin $+ \sfrak$ component:
\begin{align}\label{eq:DefOfphi012PosiSpinS2}
    \Xi^{(0)}_{+\sfrak}={}&r^{1-2\sfrak}\psipluss, \qquad\qquad
\Xi^{(i)}_{+\sfrak}={}(-r^2 Y)^i\Xi^{(0)}_{+\sfrak}
\end{align}
\end{enumerate}
\end{definition}

\begin{remark}
 The first set \eqref{eq:spinsfields} of scalars are the ones solving the TME as shown in Proposition \ref{prop:TME}. In the second and third sets of scalars, we have used the uppercase Greek letters to indicate that they have incorporated the suitable $r$ weights such that they are regular, non-degenerate scalars near future null infinity. The second set \eqref{def:Psiplusminuss} consists of the scalars that are the radiation fields (by a pure $r$ scaling) of the ones in the first set.  The third set \eqref{def:Phiplusminus:High} consists of scalars that are used to derive the extended wave systems in Sections \ref{sect:BEAM} and \ref{sect:extendedwavesystem}. The scalars in the last set \eqref{eq:DefOfphi012PosiSpinS2} constructed from the spin $+\sfrak$ component satisfy a basic wave system for which the energy and Morawetz estimates are derived; see Section  \ref{sect:BEAM}.
\end{remark}

\begin{remark}
The first two sets and the last set of scalars are nondegenerate and regular near $\Horizon$. By Remark \ref{rem:YVsoon}, we have $\VR={}\partial_{\rb}+\Hhyp \Lxi\sim {}\partial_{\rb}+2\mu^{-1} \Lxi$ as $r\to r_+$, hence the scalars $\PhiplussHigh{i}$ are in fact singular near $\Horizon$. Nevertheless, we only do estimates for them in a region away from $\Horizon$. Similarly, the scalars $\Phiminuss{i}$ behave like $O(\mu^{\sfrak-i})$ towards $\Horizon$,  and the estimates for these scalars are again only performed in a region away from $\Horizon$.
\end{remark}

In addition, we define tilde scalars $\widetilde{\Phi}_{s}^{(i)}$, each of which is constructed to be a linear combination of the scalar $\{\Phi_s^{(i')}\}_{0\leq i'\leq i}$. As has already been discussed in Section \ref{intro:outlineproof}, these scalars are of paramount importance in both proving the almost sharp decay estimates and defining the Newman--Penrose constants for the spin $\pm\sfrak$ components!

\begin{definition}
\label{def:tildePhiplusandminusHigh}
Let $i\in \mathbb{N}$. Let $f_{\sfrak, i,1}=i(i+2\sfrak+1)$, $f_{\sfrak, i,2}=-(f_{\sfrak, i+1,1}-f_{\sfrak, i,1})=-2(\sfrak+i+1)$, $f_{\sfrak, i,3}=f_{\sfrak, i,1}+\frac{1}{3}(\sfrak+1)(2\sfrak+1)$, and $g_{\sfrak, i}=6 \sum\limits_{j=0}^{i-1} f_{\sfrak, j,3}= 2i(i+\sfrak)(i+2\sfrak)$.  Let $x_{\sfrak, i+1,i}=\frac{g_{\sfrak, i+1}}{f_{\sfrak, i+1,1} -f_{\sfrak, i,1}}=(i+1)(i+2\sfrak+1)$ and $x_{\sfrak, i+1,j}=-\frac{g_{\sfrak, i+1}x_{\sfrak, i,j}}{f_{\sfrak, i+1,1}-f_{\sfrak, j,1}}$ for $0\leq j\leq i-1$. Define the following scalars constructed from the scalars $\Phi_s^{(i)}$:
\begin{subequations}
\begin{align}
\tildePhiplussHigh{0}={}&\PhiplussHigh{0},& \tildePhiplussHigh{i+1}={}&\PhiplussHigh{i+1}
+\sum_{j=0}^ix_{\sfrak, i+1,j}M^{i+1-j}\tildePhiplussHigh{j},\\
\tildePhiminuss{2\sfrak}={}&\Phiminuss{2\sfrak},& \tildePhiminuss{i+2\sfrak+1}={}&\Phiminuss{i+2\sfrak+1}
+\sum_{j=0}^ix_{\sfrak, i+1,j}M^{i+1-j}\tildePhiminuss{j+2\sfrak}.
\end{align}
\end{subequations}
\end{definition}

All the scalars defined above are spin $s$ scalars and, without confusion, we may call all of the above spin $s$ scalars, constructed from the spin $s$ components $\NPR_s$, as the \textit{spin $s$ components} as well.

\subsection{Teukolsky--Starobinsky identities}
\label{sect:TSI}

It is surprising that the spin $\pm \sfrak$ components can actually be related to each other by purely differential identities--the \emph{Teukolsky-Starobinsky identities} (TSI), originating from \cite{TeuPress1974III,starobinsky1973amplification}. See also the covariant form of these identities in \cite{aksteiner2019new}. We state explicitly these TSI in terms of our notations, and these identities will be crucial and applied throughout the rest sections. 

\begin{lemma}[Teukolsky--Starobinsky identities]
\label{lem:TSI:gene}
\begin{enumerate}
\item
There are the following TSI for the spin $\pm 1$ components of the Maxwell field
\begin{subequations}
\begin{align}
\label{eq:TSI:simpleform}
(\edthR')^2(\Delta^{-1}\psiplus)
={}&\VR^2(\Delta\psiminus),\\
\label{eq:otherTSI:simpleform}
(\edthR)^2 \psiminus={}&
Y^2 \psiplus.
\end{align}
\end{subequations}
The first one \eqref{eq:TSI:simpleform} can also be written as
\begin{align}
\label{eq:TSI:simpleform:v2}
\Phiminus{2}={}(\edthR')^2 \Phiplus.
\end{align}
If restricted to a fixed $\ell$ mode, then equations \eqref{eq:TSI:simpleform} become
\begin{subequations}
\begin{align}
\label{eq:TSI:simpleform:l=1}
\ell(\ell+1)\Phiplus={}&\Phiminus{2},\\
\label{eq:otherTSI:simpleform:l=1}
\ell(\ell+1)\psiminus={}&  Y^2 \psiplus.
\end{align}
\end{subequations}
\item
There are the following TSI for the spin $\pm 2$ components of the linearized gravity:
\begin{subequations}
\label{eq:TSIspin2}
\begin{align}
\label{eq:TSIspin2:V-2}
\mu^2 (\VR)^4 (\mu^2 r^4\psiminustwo)={}& (\edthR')^4 (r^{-4}\psiplustwo ) -12 M \overline{\Lxi(r^{-4}\psiplustwo)},\\
\label{eq:TSIspin2:Y+2}
Y^4 (\psiplustwo)={}& (\edthR)^4 \psiminustwo +12 M\overline{\Lxi\psiminustwo}.
\end{align}
\end{subequations}
Moreover, equation \eqref{eq:TSIspin2:V-2} can also be written as
\begin{align}
\label{eq:TSIspin2:Phiminustwo4}
\Phiminustwo{4}={}& (\edthR')^4 \Phiplustwo  -12 M \overline{\Lxi\Phiplustwo}.
\end{align}
If restricted to a fixed $\ell$ mode, then equations \eqref{eq:TSIspin2} become
\begin{subequations}
\begin{align}
\label{eq:TSI:simpleform:l=2}
(\ell-1)\ell (\ell+1)(\ell+2)\Phiplustwo={}&\Phiminustwo{4}+12 M \overline{\Lxi\Phiplustwo},\\
\label{eq:otherTSI:simpleform:l=2}
(\ell-1)\ell (\ell+1)(\ell+2)\psiminustwo={}&  Y^4 \psiplustwo-12 M\overline{\Lxi\psiminustwo}.
\end{align}
\end{subequations}
\end{enumerate}
\end{lemma}

\begin{proof}
For the spin $\pm 1$ components, these TSI can be derived from the Maxwell system of equations \eqref{eq:TSIsSpin1Kerr}: the identity \eqref{eq:otherTSI:simpleform} can be obtained by applying $\half r^{-1}Y$ to \eqref{eq:TSIsSpin1KerrAngular0With1} and using \eqref{eq:TSIsSpin1KerrRadial0With-1};
the other identity \eqref{eq:TSI:simpleform} can be obtained from \eqref{eq:otherTSI:simpleform} by interchanging $l$ and $n$ and $\edthR$ and $\edthR'$. Equation \eqref{eq:TSI:simpleform:v2} is straightforward from identity \eqref{eq:TSI:simpleform}. When considering a fixed $\ell$ mode, equations \eqref{eq:TSI:simpleform:l=1} and \eqref{eq:otherTSI:simpleform:l=1} are immediate from  \eqref{eq:TSI:simpleform:v2} and \eqref{eq:otherTSI:simpleform}, respectively, together with the eigenvalues of $(\edthR')^2$ and $(\edthR)^2$ from
\eqref{eq:ellipticop:eigenvalue:fixedmode}.

For the spin $\pm 2$ components, the equations in \cite[Equations (3.26)]{andersson2019stability} can be rewritten as
\begin{subequations}
\begin{align}
Y^4 (\kappa_1^4\NPRplustwo)={}&r^{-4}(\edthR)^4(\kappa_1^4\NPRminustwo)
+\frac{M}{27}\Lxi \overline{\NPRminustwo},\\
(\VR)^4(\mu^2\kappa_1^4\NPRminustwo)={}&\mu^{-2}\bigg(r^{-4}(\edthR')^4(\kappa_1^4\NPRplustwo)-\frac{M}{27}\Lxi \overline{\NPRplustwo}\bigg),
\end{align}
\end{subequations}
where $\kappa_1=-\frac{1}{3}r$ in Schwarzschild spacetime. By substituting in $\NPRplustwo=r^{-4}\psiplustwo$ and $\NPRminustwo=\psiminustwo$, one arrives at equations
\eqref{eq:TSIspin2}.

Using Definition \ref{def:PsiPhietc}, equation \eqref{eq:TSIspin2:V-2} becomes
\begin{align}
 r^5 (\VR)^4 (r^3\Phiminustwo{0})={}& (\edthR')^4 \Phiplustwo  -12 M \overline{\Lxi\Phiplustwo}.
\end{align}
Expanding out the LHS and using Definition \ref{def:PsiPhietc} again, one finds the LHS equals $\Phiminustwo{4}$, hence completing the proof of \eqref{eq:TSIspin2:Phiminustwo4}. If restricted to a fixed $\ell$ mode, since we have from \eqref{eq:ellipticop:eigenvalue:fixedmode} that the eigenvalues of $(\edthR')^4$ acting on the spin $+2$ component and of $(\edthR)^4$ acting on the spin $-2$ component are both $(\ell-1)\ell (\ell+1)(\ell+2)$, equations \eqref{eq:TSI:simpleform:l=2} and \eqref{eq:otherTSI:simpleform:l=2} are easily justified from  \eqref{eq:TSIspin2:Y+2} and \eqref{eq:TSIspin2:V-2}.
\end{proof}


\subsection{The full Maxwell system of equations}

The full system of the Maxwell equations \eqref{eq:MaxwellEqs} can be written as a system of first-order differential equations for the spin $s$ components:
\begin{subequations}\label{eq:TSIsSpin1Kerr}
\begin{align}
\label{eq:TSIsSpin1KerrAngular0With1}
\edthR \NPRzero={}& 2 Y\left(r^{-1}{\psiplus}\right),\\
\label{eq:TSIsSpin1KerrAngular0With-1}
\edthR' \NPRzero={}&2\mu^{-1}  V\left(\mu r\psiminus\right),\\
\label{eq:TSIsSpin1KerrRadipal0With1}
V \NPRzero=
{}&2\edthR'\left(r^{-3}{\psiplus}\right),\\
\label{eq:TSIsSpin1KerrRadial0With-1}
Y\NPRzero
={}&2\edthR
\left(r^{-1}{\psiminus}\right).
\end{align}
\end{subequations}

\begin{definition}
Let $\leftidx^{\star} \mathbf{F}$ be the Hodge dual of the Maxwell field $\mathbf{F}$. 
Define the electronic and magnetic charges of a Maxwell field by
\begin{align}
q_{\mathbf{E}}={}&\frac{1}{4\pi}\int_{S^2(\tb,\rb)}\leftidx^{\star} \mathbf{F}, &q_{\mathbf{B}}={}&\frac{1}{4\pi}\int_{S^2(\tb,\rb)} \mathbf{F}.
\end{align}
\end{definition}

The following lemma is a standard statement and is taken from \cite[Proposition 2]{larsblue15Maxwellkerr}. See also  \cite{Ma20almost}. This is to decompose a Maxwell field into a static part and a radiative part.

\begin{lemma}
\label{lem:decomp:Maxwellfield}
The Maxwell field in a Schwarzschild spacetime can be decomposed into
\begin{align}
\mathbf{F}=\mathbf{F}_{\text{sta}}+
       \mathbf{F}_{\text{rad}},
\end{align}
where the part $\mathbf{F}_{\text{rad}}$ is the non-charged radiative part of the Maxwell field and the other part $\mathbf{F}_{\text{sta}}$ is the charged static Coulomb part,
such that
\begin{enumerate}
\item $\mathbf{F}_{\text{sta}}$ and $\mathbf{F}_{\text{rad}}$ are both solutions to the Maxwell equations, and the N-P components of them satisfy
\begin{subequations}
\label{eq:FradandFsta:property}
\begin{align}
\NPRplus(\mathbf{F}_{\text{sta}})
={}&\NPRminus(\mathbf{F}_{\text{sta}})=0,&
\NPRzero(\mathbf{F}_{\text{sta}})
={}&r^{-2}
\left(q_{\mathbf{E}} + iq_{\mathbf{B}}\right),\\
\NPRplus(\mathbf{F}_{\text{rad}})={}&\NPRplus(\mathbf{F}),&
\NPRminus(\mathbf{F}_{\text{rad}})={}&\NPRminus(\mathbf{F});
\end{align}
\end{subequations}
\item The charges $q_{\mathbf{E}}$ and $q_{\mathbf{B}}$ are constants at all spheres ${S}^2(\tb,\rb)$ for any $\tb\in \mathbb{R}$ and $\rb\geq 2M$ and can be calculated from the initial data;
   \item For any closed $2$-surface, say $S^2$, $\int_{S^2} \mathbf{F}_{\text{rad}}=  \int_{S^2} \leftidx{^{\star}}{\mathbf{F}}_{\text{rad}}=0$;
   \item $\Lxi\mathbf{F}_{\text{sta}}=0$.
 \end{enumerate}
 \end{lemma}

\begin{remark}
By such a decomposition, one can easily calculate the charged static Coulomb part of the Maxwell field from the initial data, and the estimates for the non-charged radiative part can be derived from the system \eqref{eq:TSIsSpin1Kerr} if given estimates for the spin $\pm 1$ components.
\end{remark}


\section{Almost Price's law}
\label{sect:APL:all}


The aim of this section is to derive the almost sharp energy decay and pointwise decay estimates for the spin $\pm \sfrak$ components solving the TME. 

In Section \ref{sect:BEAM}, we first review the basic energy and Morawetz estimates for wave systems of the spin $s$ components that have been proven in the past few years. Then, in Sections \ref{sect:extendedwavesystem}--\ref{sect:weakdecayesti}, we derive extended wave systems and deduce weak decay estimates  for the spin $s$ components. Afterwards, we define in Section \ref{sect:NPconsts:Maxwell} the Newman--Penrose constants--the most central definition in determining the sharp decay rate for the field.
In the end, in Sections \ref{sect:APL:ext}--\ref{sect:APL:IntRe}, we prove the almost Price's law in the exterior region $\{\rb\geq \tb\}$ and in the interior region $\{\rb\leq \tb\}$ respectively; the main statements on the almost Price's law are 
Theorems \ref{thm:almost:ext:ipm:nvv:higher} and \ref{thm:almost:int:ipm:nvv:hig}.

\subsection{Energy and Morawetz estimates}
\label{sect:BEAM}

We briefly review the energy and Morawetz estimates for the TME in this subsection.

To start with, we have to derive basic wave systems for the spin $s$ components $\Phi_{-\sfrak}^{(i)}$ and $\Xi_{+\sfrak}^{(i)}$ that are defined in Definition \ref{def:PsiPhietc}.

\begin{lemma}
\label{lem:basicsystem}
\begin{itemize}
\item Let $\sfrak=1$.
The equations of $\Phiminus{i}$, $i=0,1,2$, are
\begin{subequations}
\label{eq:RWschwPhi012ope}
\begin{align}
\label{eq:RWPhi01schwope}
(-r^2YV
+\edthR\edthR'-2)\Phiminus{0} ={}&-{2(r-3M)}{r^{-2}}\Phiminus{1},\\
\label{eq:RWPhi11schwope}
(-r^2YV
+\edthR\edthR'-2)\Phiminus{1} ={}&0,\\
\label{eq:RWPhi21schwope}
(-r^2YV
+\edthR\edthR'
-12M r^{-1})\Phiminus{2} -2(r-3M)r^{-2}\curlVR\Phiminus{2}={}&0.
\end{align}
\end{subequations}
The wave equations for $\{\Xi_{+1}^{(i)}\}_{i=0,1}$ are the same as \eqref{eq:RWPhi01schwope}--\eqref{eq:RWPhi11schwope} but replacing $\edthR\edthR'$ and $\{\Phiminus{i}\}_{i=0,1}$ by $\edthR'\edthR$ and $\{\Xi_{-1}^{(i)}\}_{i=0,1}$, respectively.
\item Let $\sfrak=2$. The equations of $\Phiminustwo{i}$, $i=0,1,2,3,4$, are
\begin{subequations}
\label{eq:RWschwPhi01234ope}
\begin{align}
\label{eq:RWPhi02schwope}
(-r^2YV
+\edthR\edthR'-4-6M r^{-1})\Phiminustwo{0} ={}&-{4(r-3M)}{r^{-2}}\Phiminustwo{1},\\
\label{eq:RWPhi12schwope}
(-r^2YV
+\edthR\edthR'-6+6M r^{-1})\Phiminustwo{1} ={}&-{2(r-3M)}{r^{-2}}\Phiminustwo{2}-6M\Phiminustwo{0},\\
\label{eq:RWPhi22schwope}
(-r^2YV
+\edthR\edthR'
-6+6M r^{-1})\Phiminustwo{2} ={}&0,\\
\label{eq:RWPhi32schwope}
(-r^2YV
+\edthR\edthR'
- 4 -6M r^{-1}) \Phiminustwo{3}
-2(r-3M)r^{-2}\curlVR\Phiminustwo{3}
={}&
6M\Phiminustwo{2} ,\\
\label{eq:RWPhi42schwope}
(-r^2YV
+\edthR\edthR'-{30M}{r^{-1}})\Phiminustwo{4}
-4(r-3M)r^{-2}\curlVR\Phiminustwo{4}={}&0.
\end{align}
\end{subequations}
The wave equations for $\{\Xi_{+2}^{(i)}\}_{i=0,1,2}$ are the same as \eqref{eq:RWPhi02schwope}--\eqref{eq:RWPhi22schwope} but replacing $\edthR\edthR'$ and  $\{\Phiminustwo{i}\}_{i=0,1,2}$ by $\edthR'\edthR$ and $\{\Xi_{-2}^{(i)}\}_{i=0,1,2}$, respectively.
\end{itemize}
\end{lemma}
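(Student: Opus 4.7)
The plan is to verify each stated wave equation by direct computation starting from the TME \eqref{eq:TME}, rewriting it in the Hawking--Hartle null frame and then iterating commutations with $\curlVR$ (for the spin $-\sfrak$ case) or with $-r^2 Y$ (for the spin $+\sfrak$ case). First, using $Y=\sqrt{2}n^a\partial_a$, $V=\sqrt{2}l^a\partial_a$ together with $\partial_r(\Delta\partial_r)=\Delta\partial_r^2+\Delta'\partial_r$ and $\Delta=r^2\mu$, the TME can be brought into a clean form whose principal part is $-r^2YV+\edthR\edthR'$ for spin $s=-\sfrak$ (respectively $-r^2YV+\edthR'\edthR$ for spin $s=+\sfrak$), together with first-order $Y$- and $\partial_t$-corrections coming from the RHS of \eqref{eq:TME} and a zeroth-order potential of the form $c_1+c_2 M/r$.

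Next, I substitute $\Phiminuss{0}=\mu^{\sfrak}r\psiminuss$ (respectively $\Xi_{+\sfrak}^{(0)}=r^{1-2\sfrak}\psipluss$) and apply the Leibniz rule. The weight factors $\mu^{\sfrak}r$ and $r^{1-2\sfrak}$ are chosen precisely so that the residual first-order $Y$- and $V$-terms cancel, leaving an equation of the form $(-r^2YV+\edthR\edthR'+W_{-\sfrak}(r))\Phiminuss{0}=\text{source}$ and giving \eqref{eq:RWPhi01schwope} for $\sfrak=1$ and \eqref{eq:RWPhi02schwope} for $\sfrak=2$; the analogous substitution produces the base equations for $\Xi_{+1}^{(0)}$ and $\Xi_{+2}^{(0)}$. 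For $i\geq 1$, I proceed inductively by applying $\curlVR=r^2\VR$ to the equation for $\Phiminuss{i-1}$. Since $[\curlVR,\edthR\edthR']=0$, only the commutator $[\curlVR,-r^2YV+W]$ needs to be computed, and this produces (i) a shift in the zeroth-order potential (reflecting the ladder eigenvalue structure $-(\ell-\sfrak+i+1)(\ell+\sfrak-i)$ visible in the general formula \eqref{eq:Phiminuss:govern:intro}), (ii) a first-order $\curlVR$-correction with coefficient proportional to $(i-\sfrak+1)(r-3M)/r^2$, and (iii) lower-order source terms involving $\Phiminuss{i-1}$ or earlier. The positive spin case works in the same way with $-r^2Y$ replacing $\curlVR$ throughout.

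The hard part will not be any single commutator but rather the careful bookkeeping of coefficients through the iteration, particularly in the $\sfrak=2$ case where five consecutive commutations produce several cross-source terms (such as the $-6M\Phiminustwo{0}$ in \eqref{eq:RWPhi12schwope} and the $6M\Phiminustwo{2}$ in \eqref{eq:RWPhi32schwope}) that do not appear in the $\sfrak=1$ case. A useful sanity check throughout is that the principal part $-r^2YV+(\edthR\edthR'$ or $\edthR'\edthR)$ is preserved at each commutation, while only the potential $W$ and the source structure shift; in particular, the top-level scalars $\Phiminus{2}$, $\Phiminustwo{4}$ and their positive spin counterparts $\Xi_{+1}^{(1)}$, $\Xi_{+2}^{(2)}$ satisfy Regge--Wheeler-type equations with only a first-order $\curlVR$-correction, as reflected in \eqref{eq:RWPhi21schwope} and \eqref{eq:RWPhi42schwope}.
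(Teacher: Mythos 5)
Your proposal is correct and follows essentially the same route as the paper's own proof: obtain the base case $i=0$ from the TME \eqref{eq:TME} via the rescalings $\Phiminuss{0}=\mu^{\sfrak}r\psiminuss$ and $\Xi^{(0)}_{+\sfrak}=r^{1-2\sfrak}\psipluss$, then iterate by commuting with $\curlVR$ (respectively $-r^2Y$) using the key identity $[\curlVR,-r^2YV]\varphi=-\frac{2(r-3M)}{r^2}\curlVR^2\varphi+(2-12Mr^{-1})\curlVR\varphi$. The only cosmetic difference is that the paper states this commutator explicitly and declares the rest "straightforward," whereas you spell out the cancellation of first-order terms in the base case and the bookkeeping of the zeroth-order-potential shift and cross-source terms through the iteration.
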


\begin{proof}
The governing equations for the spin $+\sfrak$ scalars can be similarly derived as the ones  for the spin $-\sfrak$ scalars, hence we focus only on the spin $-\sfrak$ scalars.
Equations \eqref{eq:RWPhi01schwope} and \eqref{eq:RWPhi02schwope} are easily obtained from the TME \eqref{eq:TME} since the definition \eqref{eq:Phiminusi:def} of $\Phiminuss{0}$ involves only a rescaling of $\psiminuss$ by $\mu^{\sfrak}r$.  By utilizing the following commutation relation
\begin{align}\label{eq:commrela:withcurlVR}
[\curlVR, -r^2YV]\varphi={}&-\curlVR\bigg(\frac{2(r-3M)}{r^{2}} \curlVR\varphi\bigg)
=-\frac{2(r-3M)}{r^2}\curlVR^2\varphi
+(2-12Mr^{-1})\curlVR\varphi,
\end{align}
the derivations of the other equations are straightforward.
\end{proof}

Equations of $\Phiminuss{\sfrak}$ and $\Xi^{(\sfrak)}_{+\sfrak}$ are the scalar wave equation for $\sfrak=0$, the Fackerell--Ipser equation \cite{fackerell:ipser:EM}  for $\sfrak=1$ and the Regge--Wheeler equation  \cite{ReggeWheeler1957} for $\sfrak=2$,\footnote{In some works, these three equations are put in a wave form with an $\sfrak$-dependent potential and are all called the Regge--Wheeler equation.} respectively. These equations can be treated in a similar way as the scalar wave equation to obtain for the spin-weighted scalars under consideration a uniform bound of a nondegenerate energy and an integrated local decay estimate (also called a Morawetz estimate). We call these two types of estimates together as the energy and Morawetz estimate.  One can also treat the wave systems of $\{\Phiminuss{i}\}_{i=0,\ldots, \sfrak}$ and $\{\Xi^{(i)}_{+\sfrak}\}_{i=0,\ldots, \sfrak}$ and arrive at the energy and Morawetz estimates for both spin components. The following is a summary of the energy and Morawetz estimates proven for different spin fields in \cite{dafrod09red,Ma2017Maxwell,Ma17spin2Kerr}.

\begin{thm}[Energy and Morawetz estimates for TME]\label{thm:BEAM}
Let $\sfrak+1\leq \reg\in \mathbb{N}^+$. There exists a universal constant $C=C(\reg)$ such that the following energy and Morawetz estimate holds in the region $\Donetwo$ for any $\tb_0\leq \tb_1<\tb_2$ for the spin $s=\pm\sfrak$ components: 
\begin{align}\label{eq:BEAM}
\hspace{2ex}&\hspace{-2ex}
\mathbf{BE}^{\reg}_{\Sigmatwo}(\Psipms)+\int_{\tb_1}^{\tb_2}\mathbf{BM}^{\reg}_{\Sigmatb}(\Psipms)\di \tb\leq C\mathbf{BE}^{\reg}_{\Sigmaone}(\Psipms),
\end{align}
where the basic energies for any $\tb\geq \tb_0$ are
\begin{subequations}
\begin{align}
\mathbf{BE}^{\reg}_{\Sigmatb}(\Psipluss)={}\sum_{i=0,\ldots, \sfrak}\sum_{\abs{\mathbf{a}}\leq \reg-\sfrak-1}
\norm{\PDeri^{\mathbf{a}}\Xi^{(i)}_{+\sfrak}}^2_{W_{-2}^1(\Sigmatb)}\\
\mathbf{BE}^{\reg}_{\Sigmatb}(\Psiminuss)={}\sum_{i=0,\ldots, \sfrak}\sum_{\abs{\mathbf{a}}\leq \reg-\sfrak-1}
\norm{\PDeri^{\mathbf{a}}(\mu^{-\sfrak+i}\Phiminuss{i})}^2_{W_{-2}^1(\Sigmatb)},
\end{align}
\end{subequations}
and the basic Morawetz densities (in time) for any $\tb_2>\tb_1\geq \tb_0$ are
\begin{align*}
\mathbf{BM}^{\reg}_{\Sigmatb}(\Psipluss)={}&\sum_{i=0,\ldots, \sfrak}\sum_{\abs{\mathbf{a}}\leq \reg-\sfrak-1}\left(
\norm{\PDeri^{\mathbf{a}}\Xi^{(i)}_{+\sfrak}}^2_{W_{-3}^0(\Sigmatb)}
+\norm{\PDeri^{\mathbf{a}}\PDeri(\Xi^{(i)}_{+\sfrak})}^2_{W_{-2}^0(\Sigmatb\cap\{r\geq 4M\})}
\right)\\
\mathbf{BM}^{\reg}_{\Sigmatb}(\Psiminuss)={}&\sum_{i=0,\ldots, \sfrak}\sum_{\abs{\mathbf{a}}\leq \reg-\sfrak-1}\Big(
\norm{\PDeri^{\mathbf{a}}(\mu^{-\sfrak+i}\Phiminuss{i})}^2_{W_{-3}^0(\Sigmatb)}+\norm{\PDeri^{\mathbf{a}}\PDeri(\mu^{-\sfrak+i}\Phiminuss{i})}^2_{W_{-2}^0(\Sigmatb\cap\{r\geq 4M\})}
\Big).
\end{align*}
\end{thm}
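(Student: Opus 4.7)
The plan is to reduce the estimate to the top-order Regge--Wheeler-type scalars and then recover control on the remaining lower-order scalars by transport estimates. The key structural observation from Lemma \ref{lem:basicsystem} is that the top-order scalars $\Phiminuss{\sfrak}$ (resp.\ $\Xi^{(\sfrak)}_{+\sfrak}$) satisfy \emph{decoupled} scalar wave equations, namely the scalar wave equation ($\sfrak=0$), the Fackerell--Ipser equation ($\sfrak=1$), and the Regge--Wheeler equation ($\sfrak=2$), each with a real, nonnegative potential (plus, for $\sfrak=2$, a $\curlVR$-term whose coefficient vanishes at $r=3M$). For equations of this form, vector-field methods apply directly and the desired estimates are essentially contained in \cite{dafrod09red,Ma2017Maxwell,Ma17spin2Kerr}.

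Concretely, for each top-order scalar I would run the following three multipliers. First, the Killing multiplier $\Lxi$ yields the energy identity on $\Donetwo$ with vanishing bulk and nonnegative fluxes on $\Sigmatb$, $\Scrionetwo$, and $\Horizononetwo$; here the choice of hyperboloidal foliation from Section~\ref{sect:foliation}, together with the weight $r^{-2}$ in $W_{-2}^{1}(\Sigmatb)$, is exactly what makes the flux on $\Sigmatb$ coercive. Second, a red-shift multiplier of Dafermos--Rodnianski type supported in a neighborhood of $\Horizon$ upgrades the degenerate $\Lxi$-energy to the non-degenerate energy that controls $Y$-derivatives on $\Horizon$. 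Third, a Morawetz multiplier of schematic form $f(r)\partial_{r^{\ast}}+g(r)$ with $f$ changing sign at the photon sphere $r=3M$ produces the integrated local decay density $\mathbf{BM}^{\reg}_{\Sigmatb}$, with the standard trapping degeneracy encoded in the $r\geq 4M$ cutoff on the angular term. Higher-order estimates come from commuting with $\Lxi$, $\edthR$, $\edthR'$ (Killing/good commutators) together with the red-shift commutation for $Y$-derivatives near $\Horizon$.

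The second step is to propagate these estimates from $i=\sfrak$ down to $i=0$. Since $\Phiminuss{i+1}=\curlVR\Phiminuss{i}$ and $\Xi^{(i+1)}_{+\sfrak}=-r^{2}Y\,\Xi^{(i)}_{+\sfrak}$, one integrates these as transport equations along the outgoing and ingoing null directions, using data on $\Sigmaone$ together with the already-proven control of $\Phiminuss{i+1}$ (resp.\ $\Xi^{(i+1)}_{+\sfrak}$) as a forcing term. The $\mu^{-\sfrak+i}$ weights in the definition of $\mathbf{BE}^{\reg}_{\Sigmatb}(\Psiminus)$ are tuned so that the horizon degeneracy introduced by the $\mu^{\sfrak}$ in $\Phiminuss{0}=\mu^{\sfrak}r\psiminuss$ is absorbed exactly, while the $r^{-2}$ and $r^{-3}$ weights in $W^{1}_{-2}$ and $W^{0}_{-3}$ are compatible with the $r^{2}$-growth from each $\curlVR$. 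The coupling source terms appearing in the lower-order equations (for instance the $\Phiminustwo{0}$-source in \eqref{eq:RWPhi12schwope} and the $(r-3M)r^{-2}\Phiminuss{i+1}$ source in \eqref{eq:RWPhi01schwope}, \eqref{eq:RWPhi02schwope}, \eqref{eq:RWPhi12schwope}) come with favorable powers of $r^{-1}$ or $M$ and are absorbed by a standard bootstrap, yielding the combined bound \eqref{eq:BEAM}.

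The main obstacle is the simultaneous treatment of trapping at $r=3M$ and the horizon degeneracy of $\Lxi$: trapping forces the loss of one angular derivative in the Morawetz density (hence the $r\geq 4M$ cutoff on the $\PSDeri$ term), while the red-shift effect must compensate for the $\Lxi$-degeneracy at $\Horizon$ and deliver the non-degenerate energy on the left of \eqref{eq:BEAM}. For the positive-spin components there is the additional difficulty that the $\Lxi$-energy at the level of $\psipluss$ is not coercive, which is precisely why one first passes through the Chandrasekhar-type transformation to $\Xi^{(\sfrak)}_{+\sfrak}$ before running the multiplier program above; the reverse transport step then re-expresses the positive-spin estimate in terms of the norms in $\mathbf{BE}^{\reg}(\Psiplus)$. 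Once these points are in place, \eqref{eq:BEAM} holds at any regularity $\reg\geq \sfrak+1$, with the loss of $\sfrak+1$ derivatives reflecting the $\sfrak$ transport steps plus the first-order nature of the commutators needed to close the Morawetz estimate.
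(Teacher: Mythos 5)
The paper does not actually supply a proof of Theorem~\ref{thm:BEAM}: the statement immediately after Lemma~\ref{lem:basicsystem} explicitly declares the result a \emph{summary} of the energy and Morawetz estimates established in \cite{dafrod09red,Ma2017Maxwell,Ma17spin2Kerr}, which treat the \emph{coupled} wave systems $\{\Phiminuss{i}\}_{i=0,\ldots,\sfrak}$ and $\{\Xi^{(i)}_{+\sfrak}\}_{i=0,\ldots,\sfrak}$ directly with multiplier methods. Your proposal is broadly correct on the multiplier ingredients (Killing $\Lxi$, red-shift near $\Horizon$, Morawetz of the form $f(r)\partial_{r^\ast}+g(r)$ vanishing at the photon sphere, the $r\geq 4M$ cutoff encoding trapping, commutation for higher regularity), but it organizes the reduction to lower orders differently than the paper's cited sources: you propose to first treat the decoupled top scalar $\Phiminuss{\sfrak}$ (or $\Xi^{(\sfrak)}_{+\sfrak}$) and then recover the remaining scalars by \emph{transport} along $\curlVR$ (resp.\ $-r^{2}Y$), whereas the cited references run the multiplier program on the whole system at once, exploiting the favorable structure of the source terms.

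There is one place where the transport framing deserves caution. For $\sfrak=2$, the equations~\eqref{eq:RWPhi02schwope}--\eqref{eq:RWPhi12schwope} are \emph{not} strictly upper-triangular: $\Phiminustwo{1}$ is sourced by $-6M\Phiminustwo{0}$ in addition to the top scalar $\Phiminustwo{2}$, so a one-pass top-down transport does not close by itself. Moreover, a transport equation of the form $\curlVR\Phiminuss{i}=\Phiminuss{i+1}$ controls the $\curlVR$-derivative but does not by itself deliver the \emph{Morawetz density} $\mathbf{BM}^{\reg}$, which requires control of all the $\PSDeri$-derivatives of $\mu^{-\sfrak+i}\Phiminuss{i}$ in a space-time $L^2$ sense. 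To get these, one should run the Morawetz multiplier on the lower-order wave equations in~\eqref{eq:RWschwPhi01234ope} as well, treating the coupling terms as sources; the $M$ coefficient of $\Phiminustwo{0}$ in~\eqref{eq:RWPhi12schwope} and the $(r-3M)r^{-2}$ factors give the smallness needed to absorb them into the 2$\times$2 system $\{\Phiminustwo{0},\Phiminustwo{1}\}$, with the already-proven bound on $\Phiminustwo{2}$ entering as an inhomogeneity. Your closing sentence (``standard bootstrap'') gestures at this, but the ``integrate as transport equations'' language earlier in the paragraph oversells what transport alone gives; the actual argument is a multiplier estimate on the full coupled system, as the paper's citations indicate.
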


\begin{remark}
\begin{enumerate}
\item The presence of a cutoff integral region $\Sigmatb\cap\{r\geq 4M\}$, instead of $\Sigmatb$, in the expressions of the basic Morawetz densities is due to the trapping phenomenon at the trapped surface $r=3M$ where one has to loss derivatives.
\item The presence of the factor $\mu^{-\sfrak+i}$ in the expressions of the basic energy and the basic Morawetz density for the spin $s=-\sfrak$ component is such that the scalars $\mu^{-\sfrak+i}\Phiminuss{i}$ are regular and nondegenerate at future event horizon.
\end{enumerate}
\end{remark}

\subsection{Extended wave systems}
\label{sect:extendedwavesystem}

As discussed in Section \ref{intro:outlineproof}, we shall base on the basic wave systems in Lemma \ref{lem:basicsystem} and derive extended wave systems for $\Phi_s^{(i)}$ and $\widetilde{\Phi}_s^{(i)}$ for a larger range of value of $i$ in order to show decay estimates that are close to the Price's law. This follows closely \cite{Ma20almost} where the Maxwell field ($\sfrak=1$) is treated.

Recall from Definition \ref{def:PsiPhietc} that $\Phi_{s}^{(i)}=\curlVR^i\Phi_{s}^{(0)}$. Let us also recall from Definition \ref{def:tildePhiplusandminusHigh}  that we have defined
\begin{subequations}
\label{eq:fgxconst}
\begin{align}
f_{\sfrak, i,1}={}&i(i+2\sfrak+1), & f_{\sfrak, i,2}={}&-2(\sfrak+i+1),\\
f_{\sfrak, i,3}={}&i(i+2\sfrak+1)+\frac{1}{3}(\sfrak+1)(2\sfrak+1), & g_{\sfrak, i}={}&2i(i+\sfrak)(i+2\sfrak)
\end{align}
and the following constants
\begin{align}
\label{eq:defconstants:extended}
x_{\sfrak, i+1,i}=\frac{g_{\sfrak, i+1}}{f_{\sfrak, i+1,1} -f_{\sfrak, i,1}}=(i+1)(i+2\sfrak+1),  \quad x_{\sfrak, i+1,j}=-\frac{g_{\sfrak, i+1}x_{\sfrak, i,j}}{f_{\sfrak, i+1,1}-f_{\sfrak, j,1}} \,\,\text{for } 0\leq j\leq i-1.
\end{align}
\end{subequations}
The scalars $\widetilde{\Phi}_s^{(i)}$ ($i\in \mathbb{N}$) are constructed from the scalars $\Phi_{s}^{(i)}$ in the following way:
\begin{subequations}
\label{eq:defscalar:extended}
\begin{align}
\tildePhiplussHigh{0}={}&\PhiplussHigh{0},& \tildePhiplussHigh{i+1}={}&\PhiplussHigh{i+1}
+\sum_{j=0}^ix_{\sfrak, i+1,j}M^{i+1-j}\tildePhiplussHigh{j},\\
\tildePhiminuss{2\sfrak}={}&\Phiminuss{2\sfrak},& \tildePhiminuss{i+2\sfrak+1}={}&\Phiminuss{i+2\sfrak+1}
+\sum_{j=0}^ix_{\sfrak, i+1,j}M^{i+1-j}\tildePhiminuss{j+2\sfrak}.
\end{align}
\end{subequations}

\begin{prop}[Extended wave systems for the spin $\pm \sfrak$ components]
\label{prop:wave:Phihigh:pm1}
Let $i\in \mathbb{N}$.
\begin{enumerate}
  \item The equation of $\PhiplussHigh{0}$ is
\begin{align}
\label{eq:Phi+1:Schw:generall}
-\mu Y\curlVR \PhiplussHigh{0}+\edthR'\edthR\PhiplussHigh{0}-{2(\sfrak+1)(r-3M)r^{-2}}\curlVR\PhiplussHigh{0} -{2(\sfrak+1)(2\sfrak+1)M}{r^{-1}}\PhiplussHigh{0}={}&0,
\end{align}
 the equation of $\PhiplussHigh{i}$ is
\begin{align}
\label{eq:Phiplushighi:Schw:generall}
&-\mu Y \curlVR  \PhiplussHigh{i} +(\edthR'\edthR+f_{\sfrak, i,1})\PhiplussHigh{i}
+{f_{\sfrak, i,2}(r-3M)r^{-2}}\curlVR\PhiplussHigh{i}
-6f_{\sfrak, i,3}Mr^{-1}\PhiplussHigh{i}
+g_{\sfrak, i} M\PhiplussHigh{i-1}={}0,
\end{align}
and the equation of $\tildePhiplussHigh{i}$ is
\begin{align}
\label{eq:Phiplushighi:Schw:generall:tildePhiplusi}
&-\mu Y \curlVR  \tildePhiplussHigh{i} +(\edthR'\edthR+f_{\sfrak, i,1})\tildePhiplussHigh{i}
+{f_{\sfrak, i,2}(r-3M)r^{-2}}\curlVR\tildePhiplussHigh{i}
+\sum_{j=0}^{i}h_{\sfrak, i,j} \PhiplussHigh{j}={}0,
\end{align}
with $h_{\sfrak, i,j}=O(r^{-1})$ for all $j\in \{0,1,\ldots, i\}$.
Moreover, all the functions $h_{\sfrak,i,j}$ can be determined by the following relation
\begin{align}
\sum_{j=0}^{i+1}h_{\sfrak, i+1,j} \PhiplussHigh{j}={}&-(r-3M) r^{-2}\sum_{j=0}^i (f_{\sfrak, i+1,2}-f_{\sfrak, j,2})x_{\sfrak, i+1,j}M^{i+1-j}  \curlVR\tildePhiplussHigh{j}\notag\\
&+\sum_{j=0}^i x_{\sfrak, i+1,j} M^{i+1-j}\sum_{j'=0}^{j}h_{\sfrak, j,j'} \PhiplussHigh{j'}
\end{align}
together with $h_{\sfrak,0,0}=-2(\sfrak+1)(2\sfrak+1)Mr^{-1}$.
  \item The equation of $\Phiminuss{2\sfrak}$ is
\begin{align}
\label{eq:Phiminus2:Schw:generall}
-\mu Y \curlVR\Phiminuss{2\sfrak}+\edthR\edthR'\Phiminuss{2\sfrak}
-{2(\sfrak+1)(r-3M)r^{-2}}\curlVR\Phiminuss{2\sfrak}-{2(\sfrak+1)(2\sfrak+1)M}{r^{-1}}\Phiminuss{2\sfrak}
={}0,
\end{align}
and the equation of $\Phiminuss{i+2\sfrak}$  is
\begin{align}
\label{eq:Phiminusi:Schw:generall}
&-\mu Y \curlVR  \Phiminuss{i+2\sfrak} +(\edthR\edthR'+f_{\sfrak, i,1})\Phiminuss{i+2\sfrak}
+{f_{\sfrak, i,2}(r-3M)r^{-2}}\curlVR\Phiminuss{i+2\sfrak}
\notag\\
&-6f_{\sfrak, i,3}Mr^{-1}\Phiminuss{i+2\sfrak}
+g_{\sfrak, i} M\Phiminuss{i+2\sfrak-1}={}0,
\end{align}
and the equation of $\tildePhiminuss{i+2\sfrak}$ is
\begin{align}
\label{eq:tildePhiminusi:Schw:generall:tildePhiminusi}
&-\mu Y \curlVR  \tildePhiminuss{i+2\sfrak} +(\edthR\edthR'+f_{\sfrak, i,1})\tildePhiminuss{i+2\sfrak}
+{f_{\sfrak, i,2}(r-3M)r^{-2}}\curlVR\tildePhiminuss{i+2\sfrak}
+\sum_{j=0}^{i}h_{\sfrak, i,j} \Phiminuss{j+2\sfrak}={}0,
\end{align}
with $h_{\sfrak, i,j}$ being of the same expression as the ones in \eqref{eq:Phiplushighi:Schw:generall:tildePhiplusi} and satisfying $h_{\sfrak, i,j}=O(r^{-1})$ for all $j\in \{0,1,\ldots, i\}$.
\end{enumerate}
\end{prop}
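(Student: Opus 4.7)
The plan is to work by induction on $i$, with the core computational tool being the commutation relation \eqref{eq:commrela:withcurlVR}. The argument splits naturally into three stages, and throughout I shall focus on the spin $+\sfrak$ case, since the spin $-\sfrak$ case follows by entirely parallel computations once one observes that $\Phiminuss{2\sfrak}$ plays the role that $\PhiplussHigh{0}$ plays on the positive spin side.

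First I would derive the base equations \eqref{eq:Phi+1:Schw:generall} and \eqref{eq:Phiminus2:Schw:generall}. Starting from the TME \eqref{eq:TME} for $\psipluss$, I substitute $\psipluss = \mu^{\sfrak}r^{-1}\PhiplussHigh{0}$ and rewrite the principal part $-\mu^{-1}r^{2}\partial_{t}^{2} + \partial_{r}(\Delta\partial_{r})$ in terms of $Y$ and $\curlVR$. The factor $\mu^{\sfrak}$ combines with the first-order transport term $-2\sfrak((r-M)Y-2r\partial_{t})$ on the RHS of \eqref{eq:TME} to produce exactly the advection term $-2(\sfrak+1)(r-3M)r^{-2}\curlVR\PhiplussHigh{0}$ together with the potential $-2(\sfrak+1)(2\sfrak+1)Mr^{-1}\PhiplussHigh{0}$. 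Analogously, substituting $\psiminuss = \mu^{-\sfrak}r^{-1}\Phiminuss{0}$ yields a wave equation for $\Phiminuss{0}$; commuting with $\curlVR^{2\sfrak}$ via \eqref{eq:commrela:withcurlVR} then produces \eqref{eq:Phiminus2:Schw:generall}. The cancellation that makes this equation mirror the positive spin base equation is encoded in the repeated application of \eqref{eq:commrela:withcurlVR} and is verified by a short direct calculation.

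Next I iterate to obtain \eqref{eq:Phiplushighi:Schw:generall}: applying $\curlVR$ to \eqref{eq:Phi+1:Schw:generall} and using \eqref{eq:commrela:withcurlVR} yields the $i=1$ equation, and induction on $i$ advances the coefficients. At each step the commutator supplies a $-2(r-3M)r^{-2}\curlVR$ contribution that shifts $f_{\sfrak,i,2}$ by $-2$ (matching $f_{\sfrak,i,2} = -2(\sfrak+i+1)$), while the zero-order coefficient advances by $f_{\sfrak,i+1,1} - f_{\sfrak,i,1} = 2(i+\sfrak+1)$. The non-decaying source $g_{\sfrak,i}M\PhiplussHigh{i-1}$ arises from the mismatch between the $-6f_{\sfrak,i-1,3}Mr^{-1}$ potential and the $(2-12Mr^{-1})\curlVR$ piece produced by \eqref{eq:commrela:withcurlVR} at the previous level; tracking this is arithmetic bookkeeping with the closed form $g_{\sfrak,i} = 2i(i+\sfrak)(i+2\sfrak)$.

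For the final stage, establishing \eqref{eq:Phiplushighi:Schw:generall:tildePhiplusi}, the strategy is to absorb the non-decaying source $g_{\sfrak,i+1}M\PhiplussHigh{i}$ into the redefinition $\tildePhiplussHigh{i+1} = \PhiplussHigh{i+1} + \sum_{j=0}^{i} x_{\sfrak,i+1,j}M^{i+1-j}\tildePhiplussHigh{j}$. Applying the wave operator from \eqref{eq:Phiplushighi:Schw:generall} with index $i+1$ to this linear combination, each term $\tildePhiplussHigh{j}$ contributes a mismatch $(f_{\sfrak,i+1,1} - f_{\sfrak,j,1})x_{\sfrak,i+1,j}M^{i+1-j}\tildePhiplussHigh{j}$ from the differing zero-order coefficients at levels $i+1$ and $j$. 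Choosing $x_{\sfrak,i+1,i} = g_{\sfrak,i+1}/(f_{\sfrak,i+1,1}-f_{\sfrak,i,1})$ cancels the $g_{\sfrak,i+1}M\PhiplussHigh{i}$ source exactly, and the recursion $x_{\sfrak,i+1,j} = -g_{\sfrak,i+1}x_{\sfrak,i,j}/(f_{\sfrak,i+1,1}-f_{\sfrak,j,1})$ for $j<i$ propagates the cancellation downward through all lower levels so that no non-$O(r^{-1})$ coefficient in front of any $\PhiplussHigh{j}$ survives. The remaining inherited terms $(r-3M)r^{-2}\curlVR\tildePhiplussHigh{j}$ and $-6f_{\sfrak,j,3}Mr^{-1}\PhiplussHigh{j}$ are already $O(r^{-1})$, and rewriting them in the form $\sum_{j=0}^{i+1} h_{\sfrak,i+1,j}\PhiplussHigh{j}$ gives exactly the displayed recurrence with initial condition $h_{\sfrak,0,0} = -2(\sfrak+1)(2\sfrak+1)Mr^{-1}$. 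The hard part will be checking that the two recursions for $x_{\sfrak,i+1,j}$ and $h_{\sfrak,i+1,j}$ are mutually consistent at every level simultaneously, which amounts to verifying a combinatorial identity involving products of the mismatches $f_{\sfrak,i+1,1} - f_{\sfrak,j,1}$; this is tedious but entirely mechanical.
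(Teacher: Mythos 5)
Your proposal follows essentially the same route as the paper's proof: derive \eqref{eq:Phi+1:Schw:generall} from the TME by rescaling $\psipluss=\mu^{\sfrak}r^{-1}\PhiplussHigh{0}$ and rewriting $-r^2YV = -\mu Y\curlVR - 2(r-3M)r^{-2}\curlVR$, commute with $\curlVR^i$ via \eqref{eq:commrela:withcurlVR} to obtain \eqref{eq:Phiplushighi:Schw:generall}, and absorb the non-decaying source $g_{\sfrak,i+1}M\PhiplussHigh{i}$ by the tilded combination of Definition \ref{def:tildePhiplusandminusHigh}. One small correction, though: there is no residual combinatorial identity to verify at the end. Once $\PhiplussHigh{i}$ is replaced by $\tildePhiplussHigh{i}-\sum_{j<i}x_{\sfrak,i,j}M^{i-j}\tildePhiplussHigh{j}$, the coefficient of each $\tildePhiplussHigh{j}$ is $e_{\sfrak,i+1,i}=-x_{\sfrak,i+1,i}(f_{\sfrak,i+1,1}-f_{\sfrak,i,1})+g_{\sfrak,i+1}$ for $j=i$ and $e_{\sfrak,i+1,j}=-x_{\sfrak,i+1,j}(f_{\sfrak,i+1,1}-f_{\sfrak,j,1})-g_{\sfrak,i+1}x_{\sfrak,i,j}$ for $j<i$, and the recursion for $x_{\sfrak,i+1,j}$ in Definition \ref{def:tildePhiplusandminusHigh} is precisely chosen to set every $e_{\sfrak,i+1,j}$ to zero — the cancellation is built into the definition and needs no separate check. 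You also omit (but it is minor) the step of re-expanding $\curlVR\tildePhiplussHigh{j}$ as an $O(1)$-weighted combination of $\{\tildePhiplussHigh{j'}\}_{j'\le j+1}$, which is what lets you recognize the surviving terms as $\sum_j h_{\sfrak,i+1,j}\PhiplussHigh{j}$ with $h_{\sfrak,i+1,j}=O(r^{-1})$.
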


\begin{remark}
The wave equations \eqref{eq:Phiplushighi:Schw:generall} and \eqref{eq:Phiminusi:Schw:generall} for $\Phi_s^{(i)}$ are used to show that higher $\ell$ mode of the spin $s$ components decays faster. The significance of the wave equations \eqref{eq:Phiplushighi:Schw:generall:tildePhiplusi} and \eqref{eq:tildePhiminusi:Schw:generall:tildePhiminusi} for $\widetilde{\Phi}_s^{(i)}$ lies in the fact that the last term with $O(1)$ coefficient $g_{\sfrak,i}M$ in equations of $\Phi_{s}^{(i)}$ is removed with the only price of introducing new terms with purely $O(r^{-1})$ coefficients. This fact is essential in the latter sections to prove the almost Price's law, to define the Newman--Penrose constants and to derive the Price's law. 
\end{remark}

\begin{proof}
By the TME \eqref{eq:TME}, one has
\begin{align}
\big(-r^2 YV +\edthR'\edthR-2(s+1)Mr^{-1}\big)(r\psi_s)
={}&-2s((r-M)Y-2r\partial_t)(r\psi_s).
\end{align}
In view of the definition \eqref{def:PhiplussHigh0}, one therefore reaches
\begin{align}
\label{eq:Phi+1:Schw:generall:vvvv1}
-r^2 YV \PhiplussHigh{0}+\edthR'\edthR\PhiplussHigh{0}-{2\sfrak(r-3M)r^{-2}}\curlVR\PhiplussHigh{0} -{2(\sfrak+1)(2\sfrak+1)M}{r^{-1}}\PhiplussHigh{0}={}&0,
\end{align}
which is exactly equation \eqref{eq:Phi+1:Schw:generall}.

We shall inductively show 
\begin{align}
\label{eq:Phiplushighi:Schw:generall:v1}
&-r^2Y V  \PhiplussHigh{i} +(\edthR'\edthR+f_{\sfrak, i,1})\PhiplussHigh{i}
+{(f_{\sfrak, i,2}+2)(r-3M)r^{-2}}\curlVR\PhiplussHigh{i}\notag\\
&
-6f_{\sfrak, i,3}Mr^{-1}\PhiplussHigh{i}
+g_{\sfrak, i} M\PhiplussHigh{i-1}={}0,
\end{align}
 and equation \eqref{eq:Phiplushighi:Schw:generall}  then follows.
Equation \eqref{eq:Phiplushighi:Schw:generall:v1} clearly holds for $i=0$ in view of equation \eqref{eq:Phi+1:Schw:generall:vvvv1}. Assume this holds for $i=i_0\in \mathbb{N}$, and we verify that it also holds for $i=i_0+1$. Applying $\curlVR$ to equation \eqref{eq:Phiplushighi:Schw:generall:v1} with $i=i_0$ and using the formula $\PhiplussHigh{i+1}=\curlVR \PhiplussHigh{i}$, we arrive at
\begin{align}
&\curlVR(-r^2Y V  \PhiplussHigh{i_0} )+(\edthR'\edthR+f_{\sfrak, i_0,1})\PhiplussHigh{i_0+1}\notag\\
&
+(f_{\sfrak, i_0,2}+2)\big(r^2\partial_r ((r-3M)r^{-2})\PhiplussHigh{i_0+1} + (r-3M)r^{-2} \curlVR\PhiplussHigh{i_0+1}\big)\notag\\
&
-6f_{\sfrak, i_0,3}\big(Mr^2\partial_r(r^{-1})\PhiplussHigh{i_0} + Mr^{-1} \PhiplussHigh{i_0+1}\big)
+g_{\sfrak, i_0} M\PhiplussHigh{i_0}={}0.
\end{align}
Applying the formula \eqref{eq:commrela:withcurlVR} for the commutator $[\curlVR, -r^2 YV]$ and collecting the different terms in the above equation, 
 and in view of the expressions of the constants $f_{\sfrak,i,1}$, $f_{\sfrak,i,2}$, $f_{\sfrak,i,3}$ and $g_{\sfrak,i}$ in \eqref{eq:fgxconst}, one finds equation \eqref{eq:Phiplushighi:Schw:generall:v1} holds for $i=i_0+1$.

We prove equation \eqref{eq:Phiplushighi:Schw:generall:tildePhiplusi} also by induction. Assume it holds for $\tildePhiplussHigh{j}$ for all $1\leq j\leq i$, it suffices to show that it holds also for $\tildePhiplussHigh{i+1}$. By adding an $x_{\sfrak, i+1, j}M^{i+1-j}$ multiple of equation \eqref{eq:Phiplushighi:Schw:generall:tildePhiplusi} for $\tildePhiplussHigh{j}$ for all $j=0,1,\ldots,i$ to equation \eqref{eq:Phiplushighi:Schw:generall} of $\PhiplussHigh{i+1}$, one obtains
\begin{align}
\label{eq:tildePhipsHigh:choices}
&-\mu Y \curlVR  \tildePhiplussHigh{i+1} +(\edthR'\edthR+f_{i+1,1})\tildePhiplussHigh{i+1}
+{f_{\sfrak, i+1,2}(r-3M)r^{-2}}\curlVR\tildePhiplussHigh{i+1}
-6f_{\sfrak, i+1,3}Mr^{-1}\tildePhiplussHigh{i+1}\notag\\
&-\sum_{j=0}^i x_{\sfrak, i+1,j} M^{i+1-j}(f_{\sfrak, i+1,1}- f_{\sfrak, j,1})\tildePhiplussHigh{j}
+g_{\sfrak, i+1}M\PhiplussHigh{i}\notag\\
&
-\frac{r-3M}{r^2} \sum_{j=0}^i (f_{\sfrak, i+1,2}-f_{\sfrak, j,2})x_{\sfrak, i+1,j}M^{i+1-j}  \curlVR\tildePhiplussHigh{j}
+\sum_{j=0}^i x_{\sfrak, i+1,j} M^{i+1-j}\sum_{j'=0}^{j}h_{\sfrak, j,j'} \PhiplussHigh{j'}={}0.
\end{align}
We make the replacement $\PhiplussHigh{i}=\tildePhiplussHigh{i}
-\sum\limits_{j=0}^{i-1}x_{\sfrak, i,j}M^{i-j}\tildePhiplussHigh{j}$, which simply follows from formulas \eqref{eq:defscalar:extended} of $\tildePhiplussHigh{i}$, in the last term of the second line in the above equation and find the second line equals $\sum\limits_{j=1}^i e_{\sfrak, i+1,j}M^{i+1-j}\tildePhiplussHigh{j}$ with
\begin{subequations}
\begin{align}
e_{\sfrak, i+1,i}={}&-x_{\sfrak, i+1,i}(f_{\sfrak, i+1,1}-f_{\sfrak, i,1}) +g_{\sfrak, i+1},\\
e_{\sfrak, i+1,j}={}&-x_{\sfrak, i+1,j}(f_{\sfrak, i+1,1}-f_{\sfrak, j,1}) -g_{\sfrak, i+1}x_{\sfrak, i,j}, \quad \text{for} \quad 0\leq j\leq i-1.
\end{align}
\end{subequations}
By the choices of the constants $\{x_{\sfrak, i+1,j}\}_{j=0,1,\ldots, i}$ made in \eqref{eq:defconstants:extended}, the constants $\{e_{\sfrak, i+1,j}\}_{j=0,1,\ldots,i}$ all identically vanish,  hence the entire second line of \eqref{eq:tildePhipsHigh:choices} vanishes. 
One can rewrite $\curlVR\tildePhiplussHigh{j}$ using Definition \ref{def:tildePhiplusandminusHigh} as a weighted sum of $\{\tildePhiplussHigh{j'}\}|_{j'=0,1,\ldots, j+1}$ with all coefficients being $O(1)$,  and by denoting all the terms in the last line on the LHS of \eqref{eq:tildePhipsHigh:choices} as $\sum\limits_{j=0}^{i+1}h_{\sfrak, i+1,j} \PhiplussHigh{j}$, one finds  $h_{\sfrak, i+1,j}=O(r^{-1})$ for all $j\in \{0,1,\ldots, i+1\}$. All these together then prove equation \eqref{eq:Phiplushighi:Schw:generall:tildePhiplusi} for $\tildePhiplussHigh{i+1}$.

For  the spin $-\sfrak$ component,
equation \eqref{eq:Phiminus2:Schw:generall} comes directly from \eqref{eq:RWPhi21schwope} and \eqref{eq:RWPhi42schwope}.  Given that equations \eqref{eq:Phiminus2:Schw:generall} and \eqref{eq:Phi+1:Schw:generall} have the same form,
the derivation of  equations \eqref{eq:Phiminusi:Schw:generall} and \eqref{eq:tildePhiminusi:Schw:generall:tildePhiminusi} is immediate.
\end{proof}


\subsection{Initial energies}

Before proving any energy or pointwise decay estimates for the wave equations in the wave systems in Proposition \ref{prop:wave:Phihigh:pm1}, it is necessary for us to introduce in this subsection a few initial energies that will be utilized frequently in the rest of this work.

The first set of energies for the spin $\pm\sfrak$ components consists of the $r^p$-weighted energies which will show up in proving the global $r^p$ estimates and obtaining weak decay estimates in Section \ref{sect:weakdecayesti}. As stated in Proposition \ref{prop:weakdecay:summary}--the main statement of Section \ref{sect:weakdecayesti}, the weak (pointwise) decay estimates are in terms of this set of energies. 

\begin{definition}
\label{def:energyonSigmazero:Schw}
Let $i\in \mathbb{N}$ and $i\geq \min\{0, s\}$, and let $\reg\geq i$. Let $\tb\geq \tb_0$. Let $\tildePhiplussHigh{j}$ and $\tildePhiminuss{j}$ be defined as in Definition \ref{def:tildePhiplusandminusHigh}. Define on $\Sigmatb$ an energy  of  the spin $s=+\sfrak$ component
\begin{subequations}
\begin{align}
\InizeroEnergypluss{\tb}{i}{\reg}{p}={}\left\{
             \begin{array}{ll}
              \sum_{j=0}^{i-\sfrak}\norm{(r^2V)^j\Psipluss}^2_{W_{-2}^{\reg-j}(\Sigmatb)}+
\norm{rV\tildePhiplussHigh{i-\sfrak}}^2_{W_{p-2}^{\reg-i}(\Sigmatb\cap\{\rb\geq 4M\})}, & \hbox{$p>2$;} \\
              \sum_{j=0}^{i-\sfrak}\norm{(r^2V)^j\Psipluss}^2_{W_{-2}^{\reg-j}(\Sigmatb)}+
\norm{rV\PhiplussHigh{i-\sfrak}}^2_{W_{p-2}^{\reg-i}(\Sigmatb\cap\{\rb\geq 4M\})}, & \hbox{$0\leq p\leq 2$,}
             \end{array}
           \right.
\end{align}
and an energy of  the spin $s=-\sfrak$ component
\begin{align}
\InizeroEnergyminuss{\tb}{i}{\reg}{p}={}\left\{
             \begin{array}{ll}
              \sum_{j=0}^{i+\sfrak}\norm{(r^2V)^j\Psiminuss}^2_{W_{-2}^{\reg+\sfrak-j}(\Sigmatb)}+
\norm{rV\tildePhiminuss{i+\sfrak}}^2_{W_{p-2}^{\reg-i}(\Sigmatb\cap\{\rb\geq 4M\})}, & \hbox{$p>2$;} \\
               \sum_{j=0}^{i+\sfrak}\norm{(r^2V)^j\Psiminuss}^2_{W_{-2}^{\reg+\sfrak-j}(\Sigmatb)}+
\norm{rV\Phiminuss{i+\sfrak}}^2_{W_{p-2}^{\reg-i}(\Sigmatb\cap\{\rb\geq 4M\})}, & \hbox{$0\leq p\leq 2$.}
             \end{array}
           \right.
\end{align}
\end{subequations}
\end{definition}

Instead, the almost sharp energy and pointwise decay estimates for the $\ell\geq \ell_0$ modes of the spin $\pm\sfrak$ components will be stated in terms of the following two energies, the first one being the initial energy in the non-vanishing Newman--Penrose constant case and the second one being the initial energy in the vanishing Newman--Penrose constant case. Our main Theorems \ref{thm:almost:ext:ipm:nvv:higher} and \ref{thm:almost:int:ipm:nvv:hig}  in this entire section prove the almost Price's law for the spin $\pm\sfrak$ components in terms of these two energies.

\begin{definition}
\label{def:initialenergy:highMODEs}
Let $\ell_0\geq \sfrak$, and let $\vartheta\in(0,\frac{1}{2})$. Define for the spin $+\sfrak$ component the following initial energies:
\begin{align}
\label{initEner:NVNP}
\InizeroEnergyplussnv{\reg}{\vartheta}={}&\InizeroEnergyplusshigh{\tb_0}{\ell_0}{\reg}{3-\vartheta}{(\Psipluss)^{\ell=\ell_0}}+\sum_{\ell'=\ell_0+1}^{2\ell_0+1}\InizeroEnergyplusshigh{\tb_0}{\ell'}{\reg}{1+\vartheta}{(\Psipluss)^{\ell=\ell'}}
\notag\\
&
+\InizeroEnergyplusshigh{\tb_0}{2\ell_0+2}{\reg}{1+\vartheta}{(\Psipluss)^{\ell\geq 2\ell_0+2})}
,\\
\label{initEner:VNP}
\InizeroEnergyplussv{\reg}{\vartheta}={}&\InizeroEnergyplusshigh{\tb_0}{\ell_0}{\reg}{5-\vartheta}{(\Psipluss)^{\ell=\ell_0}}
+\InizeroEnergyplusshigh{\tb_0}{\ell_0+1}{\reg}{3-\vartheta}{(\Psipluss)^{\ell=\ell_0+1}}\notag\\
&+\sum_{\ell'=\ell_0+2}^{2\ell_0+2}\InizeroEnergyplusshigh{\tb_0}{\ell'}{\reg}{1+\vartheta}{(\Psipluss)^{\ell=\ell'}}
+\InizeroEnergyplusshigh{\tb_0}{2\ell_0+3}{\reg}{1+\vartheta}{(\Psipluss)^{\ell\geq 2\ell_0+3}}.
\end{align}
Similarly, we define the initial energies $\InizeroEnergyminussnv{\reg}{\vartheta}$ and $\InizeroEnergyminussv{\reg}{\vartheta}$ for the spin $-\sfrak$ component by simply replacing $\Psipluss$ by $\Psiminuss$ in formulas \eqref{initEner:NVNP} and \eqref{initEner:VNP}.
\end{definition}

\subsection{Weak decay estimates}
\label{sect:weakdecayesti}

We now use the wave systems derived in Sections \ref{sect:BEAM}--\ref{sect:extendedwavesystem} together with the energy and Morawetz estimates in Theorem \ref{thm:BEAM} to achieve decay estimates. These pointwise decay estimates are by no means optimal and will be improved in latter sections; hence we call them \textquotedblleft{\textit{weak decay estimates}.\textquotedblright}

The main statement of this subsection is included in the following proposition.

\begin{prop}[\textbf{Weak decay estimates for the spin $\pm\sfrak$ components}]
\label{prop:weakdecay:summary}
\begin{enumerate}
\item
Assume the spin $s=\pm \sfrak$ components are supported on a fixed $\ell$ mode, $\ell\geq \sfrak$. Then for any $j\in \mathbb{N}$ and $(i,p)\in \{\max\{0,s\}< i\leq \ell, 0\leq p<5\}\cup\{i=\max\{0,s\}, p>1\} $,  there exists a $\regl=\regl(j, i)$ such that
\begin{subequations}
\label{eq:weakdecay:ell:ipm}
\begin{align}
\absCDeri{\Lxi^j(r^{-1}\Psipluss)}{\reg}
\lesssim{}&(\InizeroEnergypluss{\tb}{i}{\reg+\regl}{p})^{\half}v^{-1}\tb^{-(p-1)/2-(i-\sfrak)-j},\\
\sum_{i=0}^{\sfrak}\absCDeri{\Lxi^j(r^{-1}(r^2 V)^i\Psiminuss)}{\reg}
\lesssim{}&(\InizeroEnergyminuss{\tb}{i}{\reg+\regl}{p})^{\half}v^{-1}\tb^{-(p-1)/2-i-j}.
\end{align}
\end{subequations}
\item\label{pt:WED:highmodes:+}
Assume the spin $s=\pm \sfrak$ components are supported on $\ell\geq \ell_0$ modes, $\ell_0\geq \sfrak$. Then for any $j\in \mathbb{N}$ and $ 0\leq p\leq 2$,  there exists a $\regl=\regl(j, \ell_0)$ such that
\begin{subequations}
\label{eq:weakdecay:geqell0:ipm}
\begin{align}
\absCDeri{\Lxi^j(r^{-1}\Psipluss)}{\reg}
\lesssim{}&(\InizeroEnergypluss{\tb}{\ell_0}{\reg+\regl}{p})^{\half}v^{-1}\tb^{-(p-1)/2-\ell_0-j},\\
\sum_{n=0}^{\sfrak}\absCDeri{\Lxi^j(r^{-1}(r^2 V)^n\Psiminuss)}{\reg}
\lesssim{}&(\InizeroEnergyminuss{\tb}{\ell_0}{\reg+\regl}{p})^{\half}v^{-1}\tb^{-(p-1)/2-\ell_0-j}.
\end{align}
\end{subequations}
\end{enumerate}
\end{prop}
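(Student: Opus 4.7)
The plan is to derive these weak pointwise bounds through the now-standard sequence: an $r^p$ hierarchy applied to the extended wave systems of Proposition \ref{prop:wave:Phihigh:pm1}, dyadic pigeonholing to convert the $r^p$ estimates into $\tb$-decay of weighted energies, commutations with $\Lxi$ to gain additional powers of $\tb$ at the cost of regularity, Sobolev embedding on the hyperboloidal slices via Lemma \ref{lem:Sobolev} to pass to pointwise estimates, and finally the Teukolsky--Starobinsky identities to transfer information between the two spin components. Throughout, the baseline energy-and-Morawetz estimate of Theorem \ref{thm:BEAM} absorbs the trapping and horizon contributions.

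The heart of the argument is the $r^p$ estimate for each equation \eqref{eq:tildePhiminusi:Schw:generall:tildePhiminusi}. Projecting onto a fixed $\ell$ mode so that $\edthR\edthR'$ acts as multiplication by $-(\ell-\sfrak)(\ell+\sfrak+1)$, the zeroth-order coefficient of $\tildePhiminuss{i+2\sfrak}$ in the resulting wave equation becomes $f_{\sfrak,i,1}-(\ell-\sfrak)(\ell+\sfrak+1)$; since $f_{\sfrak,i,1}=i(i+2\sfrak+1)$, this is nonpositive for $i\le\ell-\sfrak$ and vanishes exactly at the top level $i=\ell-\sfrak$. In the generic range $0\le i<\ell-\sfrak$ this yields coercive $r^p$ estimates for $p\in[0,2]$, and at the top level $i=\ell-\sfrak$ the extra cancellation allows me to push to $p<5$. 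A dyadic pigeonhole then produces $\tb^{-p}$ decay of the $p=0$ weighted energy at level $i$ in terms of the $p$-weighted initial flux. Since $\curlVR\tildePhiminuss{i+2\sfrak}$ differs from $\tildePhiminuss{i+2\sfrak+1}$ only by lower-order terms, the $p=2$ energy at one level controls the $p=0$ energy at the next, and iterating downward from $i=\ell-\sfrak$ to $i=0$ and then descending to $\Phiminuss{\sfrak}$ and $\Phiminuss{0}$ through the systems of Lemma \ref{lem:basicsystem} produces the claimed $\tb^{-(p-1)/2-(i-\sfrak)-j}$ rates; each $\Lxi^j$ contributes the $\tb^{-j}$ factor via commutation with the wave operator.

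Passing from energy decay to pointwise decay uses the hyperboloidal Sobolev inequalities \eqref{eq:Sobolev:1}--\eqref{eq:Sobolev:3}, after first absorbing angular commutations into $\regl$ by means of the elliptic gain \eqref{eq:ellipestis}--\eqref{eq:ellip:highermodes}; the $r^{-1}$ weight on the left-hand sides of \eqref{eq:weakdecay:ell:ipm}--\eqref{eq:weakdecay:geqell0:ipm} reflects the mismatch between the $W_{-2}^{\cdot}$ energies appearing in the initial data and the $W_{-3}^{3}$ Sobolev embedding. The spin $+\sfrak$ bound is obtained either by running the same hierarchy argument on $\tildePhiplussHigh{i}$ via \eqref{eq:Phiplushighi:Schw:generall:tildePhiplusi}, whose structure is identical, or more economically by converting the spin $-\sfrak$ estimate through the Teukolsky--Starobinsky identities, which cost $2\sfrak$ derivatives and are easily paid from $\regl$. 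For part \ref{pt:WED:highmodes:+}, the support assumption $\ell\ge\ell_0$ gives $(\ell-\sfrak)(\ell+\sfrak+1)-f_{\sfrak,i,1}\ge 0$ only up to $i=\ell_0-\sfrak$, and since at this top level the coefficient does not vanish one cannot extend beyond $p=2$; this is why the allowed range collapses to $p\in[0,2]$ while the decay exponent becomes $\tb^{-(p-1)/2-\ell_0-j}$.

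The main technical nuisance I expect is the coupling term $\sum_{j=0}^{i}h_{\sfrak,i,j}\Phiminuss{j+2\sfrak}$ on the right-hand side of \eqref{eq:tildePhiminusi:Schw:generall:tildePhiminusi}: although each $h_{\sfrak,i,j}$ is $O(r^{-1})$, it mixes level $i$ with every lower level of the hierarchy, so the $r^p$ identity must be closed by an induction in which the error at level $i$ is absorbed into the already-established estimates at levels $0,\ldots,i-1$ via Cauchy--Schwarz, relying on the better $\tb$-decay at those lower levels. The delicate point arises at the top level, where $p$ is pushed close to the critical value and the $O(r^{-1})$ coupling is only just integrable; this is the source both of the strict inequality $p<5$ in \eqref{eq:weakdecay:ell:ipm} and of the regularity loss encoded in $\regl(j,i)$.
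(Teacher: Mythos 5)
Your proposal follows essentially the same path as the paper: the $r^p$ hierarchy of Propositions \ref{prop:wave:rp}--\ref{prop:BEDC:Phiplus:0} applied to the $\tildePhiplussHigh{i}$/$\tildePhiminuss{i}$ systems, the dyadic pigeonhole (the paper cites a lemma from an earlier work) to turn $r^p$ into $\tb$-decay in Corollary \ref{cor:EnerDecay:0to4:ip:1} and Lemmas \ref{eq:EnerDecay:ip:ingeneral}, \ref{lem:EnerDec:0to5:im}, commutation with $\Lxi$, and the hyperboloidal Sobolev estimates of Lemma \ref{lem:Sobolev}; the restriction $p>1$ at the bottom level $i=\max\{0,s\}$ comes, as in the paper, from the $\alpha=\vartheta$ version of \eqref{eq:Sobolev:2}. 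One inessential difference: for the spin $+\sfrak$ side the paper runs the hierarchy on $\tildePhiplussHigh{i}$ directly rather than transferring from $-\sfrak$ via the TSI, which you offer only as an alternative, so you are aligned there too.

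Two points in your sketch are imprecise and worth flagging. First, for lower levels of the hierarchy (and for $p\le 2$ generally) the coupling errors $\sum_j h_{\sfrak,i,j}\Phiminuss{j+2\sfrak}=O(r^{-1})\cdot(\text{lower-level scalars})$ are absorbed inside the $r^p$ identity by taking $R_0$ large and forming a suitably weighted sum over levels, \emph{not} by appealing to better $\tb$-decay of those levels; at this stage no $\tb$-decay has yet been established, so an induction in $\tb$-decay would be circular. Second, the cancellation at the top level $i=\ell-\sfrak$ gives the $r^p$ hierarchy only for $p<4$ (this is \eqref{eq:rp:pleq4:2} and Proposition \ref{prop:BEDC:Phiplus:0}); the range $p\in[4,5)$ requires the separate estimate \eqref{eq:rp:pgeq4:2}, which carries a $\veps$--$1/\veps$ split with $\langle\tb-\tb_1\rangle^{\pm(1+\varsigma)}$ time weights, and then a genuine bootstrap (Lemma \ref{eq:EnerDecay:ip:ingeneral}) feeding the already-proven $p=3.5$ decay into the time integral. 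Saying ``the extra cancellation allows me to push to $p<5$'' conflates these two distinct steps; if one attempted to prove the $p\in[4,5)$ estimate directly from \eqref{eq:rp:pleq4:2} the error $\|\varphi\|^2_{W^{\reg}_{p-5}}$ would not be absorbable. Neither of these is a fatal gap in a sketch, but a complete write-up must address them.
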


In Sections \ref{subsect:WED:+} and \ref{subsect:WED:-minus} we show energy decay estimates for a fixed mode of the spin $+\sfrak$ and $-\sfrak$ components respectively, and these energy decay estimates are gathered in Section \ref{subsect:WED:ptw:plusminus} to complete the proof of the above Proposition \ref{prop:weakdecay:summary}.

\subsubsection{Energy decay estimates for a fixed $\ell$ mode of the spin $+\sfrak$ component}
\label{subsect:WED:+}

Throughout this subsubsection, we focus on a fixed $\ell$ mode and, unless otherwise stated, we drop the superscript, which indicates the $\ell$ mode, from the scalars constructed from the spin $+\sfrak$ component.

The energy decay estimates are derived by deducing global $r^p$ estimates for the wave systems in Sections \ref{sect:BEAM}--\ref{sect:extendedwavesystem}, thus it is of much convenience to introduce for the spin $+\sfrak$ component (as well as the spin $-\sfrak$ component in the latter Section \ref{subsect:WED:-minus}) a few weighted energies, which correspond to $r^p$-weighted energies for different subsystems of the ones in Proposition \ref{prop:wave:Phihigh:pm1} with different values of $p$. 

\begin{definition}
\label{def:Ffts:Phiplus:-1to5:all}
Define $F(\reg,i,p,\tb,\Psipluss)$\footnote{In this notation, $\reg$ is the regularity parameter, $i$ is the largest number of $j$ appearing in the upper index of $\tildePhiplussHigh{j}$ in the energy norms, $p$ is the parameter in $r^p$ estimates, $\tb$ suggests that the energy is integrated on $\Sigmatb$ hypersurface, and $\Psipluss$ indicates that the energy is for the spin $+\sfrak$ component.} as follows:
\begin{itemize}
\item if $i\leq\ell-\sfrak$,
\begin{subequations}
\label{eq:def:Ffts:Phiplus:-1to2}
\begin{align}
\label{def:Ffts:Phiplus:1:p-1}
F(\reg,i,p,\tb,\Psipluss)={}&\sum_{j=0}^{i} \norm{\tildePhiplussHigh{j}}^2_{W_{-3}^{\reg-\sfrak-j}(\Sigmatb^{\geq R_0})}
+\mathbf{BM}^{\reg}_{\Sigmatb}(\Psipluss), \qquad\text{for } p=-1,\\
\label{def:Ffts:Phiplus:1:p-10}
F(\reg,i,p,\tb,\Psipluss)={}&0, \qquad\text{for } p\in (-1,0),\\
\label{def:Ffts:Phiplus:1:p02}
F(\reg,i,p,\tb,\Psipluss)={}&\sum_{j=0}^{i} \Big(\norm{rV\tildePhiplussHigh{j}}^2_{W_{p-2}^{\reg-\sfrak-j-1}(\Sigmatb^{\geq R_0})}
+\norm{\tildePhiplussHigh{j}}^2_{W_{-2}^{\reg-\sfrak-j}(\Sigmatb^{\geq R_0})}\Big)\notag\\
&
+\mathbf{BE}^{\reg}_{\Sigmatb}(\Psipluss),
\qquad \text{for } p\in [0,2];
\end{align}
\end{subequations}

\item additionally, for $i=\ell-\sfrak$ and $p\in [2,5)$,
\begin{align}
\label{def:Ffts:Phiplus:1:p25:ell}
F(\reg,\ell-\sfrak,p,\tb,\Psipluss)={}& \sum_{j=0}^{\ell-\sfrak-1} \Big(\norm{rV\tildePhiplussHigh{j}}^2_{W_{0}^{\reg-\sfrak-j-1}(\Sigmatb^{\geq R_0})}
+\norm{\tildePhiplussHigh{j}}^2_{W_{-2}^{\reg-\sfrak-j}(\Sigmatb^{\geq R_0})}\Big)\notag\\
&+\norm{rV\tildePhiplussHigh{\ell-\sfrak}}^2_{W_{p-2}^{\reg-\ell-1}(\Sigmatb^{\geq R_0})}
+\norm{\tildePhiplussHigh{\ell-\sfrak}}^2_{W_{-2}^{\reg-\ell}(\Sigmatb^{\geq R_0})}
+\mathbf{BE}^{\reg}_{\Sigmatb}(\Psipluss).
\end{align}
\end{itemize}
We similarly define $F(\reg,i,p,\tb,\Lxi^j\Psipluss)$,  $j\in \mathbb{N}$, with all the scalars in the Sobolev norms acted by $\Lxi^j$.
\end{definition}

\begin{remark}
By Definition \ref{def:tildePhiplusandminusHigh}, one has $\tildePhiplussHigh{j}=\curlVR\PhiplussHigh{j-1}+\sum_{j'=0}^{j-1}O(1)\tildePhiplussHigh{j'}$, hence, in the case that $i=\ell-\sfrak$ and $p=2$, the two definitions \eqref{def:Ffts:Phiplus:1:p02} and \eqref{def:Ffts:Phiplus:1:p25:ell}  are equivalent.
\end{remark}

We shall now prove global $r^p$ estimates for the spin $+\sfrak$ component. This is achieved by applying the $r^p$ lemma \ref{lem:wave:rp} to each equation in the wave systems in Proposition \ref{prop:wave:Phihigh:pm1}. 

\begin{prop}\label{prop:BEDC:Phiplus:0}
For all $0\leq i\leq \ell-\sfrak$, $p\in [0,2]$ and $\tb_2>\tb_1\geq \tb_0$,
\begin{align}
\label{eq:BEDC:Phiplus:0}
F(\reg, i, p,\tb_2,\Psipluss)
+\int_{\tb_1}^{\tb_2}F(\reg-1, i, p-1,\tb,\Psipluss)\di \tb
\lesssim F(\reg, i, p,\tb_1,\Psipluss).
\end{align}
Moreover, this estimate is also valid in the case that $i= \ell-\sfrak$ and $p\in [2,4)$.
\end{prop}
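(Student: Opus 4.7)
The plan is to prove \eqref{eq:BEDC:Phiplus:0} by induction on $i \in \{0, 1, \ldots, \ell-\sfrak\}$. At each level, one combines the near-horizon control furnished by Theorem \ref{thm:BEAM} (which absorbs the bulk term $\mathbf{BE}^{\reg}_{\Sigmatb}(\Psipluss)$ and $\mathbf{BM}^{\reg}_{\Sigmatb}(\Psipluss)$ in $F$) with a far-region estimate obtained by applying Proposition \ref{prop:wave:rp} to the wave equation \eqref{eq:Phiplushighi:Schw:generall:tildePhiplusi} of $\tildePhiplussHigh{i}$. The cutoff transition region $[R_0-M, R_0]$ is harmless because its contributions are dominated by the BEAM Morawetz density, which is what the symbol $\lesssim_{[R_0-M,R_0]}$ in Proposition \ref{prop:wave:rp} already encodes.

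To invoke Proposition \ref{prop:wave:rp}, one first recasts \eqref{eq:Phiplushighi:Schw:generall:tildePhiplusi} in the canonical form \eqref{eq:wave:rp}. A direct computation using $\curlVR = r^2\mu^{-1}V$ and $Y = \mu^{-1}\partial_t - \partial_r$ yields $\mu Y\curlVR = r^2 YV - 2(r-3M)\mu^{-1} V$, so with $f_{\sfrak,i,2} = -2(\sfrak+i+1)$ the equation becomes
\begin{align*}
-r^2 YV\tildePhiplussHigh{i} - 2(\sfrak+i)(r-3M)\mu^{-1}\, V\tildePhiplussHigh{i} + (\edthR'\edthR + f_{\sfrak,i,1})\tildePhiplussHigh{i} + \sum_{j=0}^{i}\tilde h_{\sfrak,i,j}\,\tildePhiplussHigh{j} = 0,
\end{align*}
where each $\tilde h_{\sfrak,i,j} = O(r^{-1})$ is obtained by inverting $\PhiplussHigh{j}$ into a linear combination of $\{\tildePhiplussHigh{j'}\}_{j'\le j}$ via Definition \ref{def:tildePhiplusandminusHigh}. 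Using $\edthR'\edthR = \edthR\edthR' + 2\sfrak$ on spin $+\sfrak$ scalars, restriction to a fixed $\ell$ mode gives effective constant part $b_{0,0} = -2\sfrak - i(i+2\sfrak+1)$, and a direct algebraic check shows that the eigenvalue of $\edthR\edthR' - b_{0,0}$ on this mode equals $(\ell-\sfrak)(\ell+\sfrak+1) - i(i+2\sfrak+1)$, which is nonnegative for $i \le \ell-\sfrak$ and vanishes precisely at $i = \ell-\sfrak$. The coefficient $b_V = 2(\sfrak+i)(r-3M)\mu^{-1}$ has $b_{V,-1} = 2(\sfrak+i)\ge 0$, with strict positivity unless $\sfrak = i = 0$ (the scalar case, in which \eqref{eq:rp:p=0:v2} applies).

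The induction then proceeds level by level. At each $i$ one applies the relevant estimate from Proposition \ref{prop:wave:rp}: \eqref{eq:rp:p=0}/\eqref{eq:rp:p=0:v2} at $p=0$, \eqref{eq:rp:less2:2} for $p \in (0,2)$, and \eqref{eq:rp:lp=2:2}/\eqref{eq:rp:lp=2:2:v22} at $p=2$. The source $\vartheta_i = -\sum_{j\le i}\tilde h_{\sfrak,i,j}\tildePhiplussHigh{j}$ contributes $\norm{\vartheta_i}^2_{W_{p-3}^\reg(\Donetwo^{\geq R_0})} \lesssim \sum_{j\le i}\norm{\tildePhiplussHigh{j}}^2_{W_{p-5}^\reg(\Donetwo^{\geq R_0})}$ on the right. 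The self-term $j=i$ is absorbed into the LHS Morawetz once $R_0$ is large, the pointwise $r^{-1}$ defeating any fixed constant; at $p=2$ this absorption is executed in the time-weighted form \eqref{eq:rp:lp=2:2:v22} followed by a standard Gr\"onwall bootstrap in $\tb$. Each lower-level contribution $j<i$ is tamed by the inductive hypothesis at level $i-1$: since $r^{p-5} \le R_0^{-1} r^{p-4} \lesssim r^{-2}$ on $\{\rb \ge R_0\}$ for $p \in [0,2]$, it is dominated by the $W_{-2}^{\reg-\sfrak-j}$ slice norm that already sits inside $F(\reg-1, i-1, p-1, \tb)$, and the right-hand side $F(\reg, i-1, p, \tb_1)$ is in turn majorised by $F(\reg, i, p, \tb_1)$ by the monotonicity of $F$ in $i$.

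The main obstacle is the top case $i = \ell-\sfrak$ with $p \in [2,4)$: here the angular eigenvalue vanishes, so the stronger estimates \eqref{eq:rp:pleq4:2}/\eqref{eq:rp:pgeq4:2} apply, but they introduce the extra inhomogeneity $\norm{\tildePhiplussHigh{\ell-\sfrak}}^2_{W_{p-5}^\reg(\Donetwo^{\geq R_0})}$ on the right-hand side. One handles it via the Hardy estimate \eqref{eq:HardyIneqLHS} with $\alpha = p-4 \in [-2,0)$:
\begin{align*}
\int r^{p-5}\abs{\tildePhiplussHigh{\ell-\sfrak}}^2 \di\rb \;\lesssim\; \frac{4}{(4-p)^2}\int r^{p-3}\abs{\prb\tildePhiplussHigh{\ell-\sfrak}}^2\di\rb + \text{(boundary at }\rb = R_0),
\end{align*}
and the bulk piece is absorbed into the LHS Morawetz $\norm{rV\tildePhiplussHigh{\ell-\sfrak}}^2_{W_{p-3}^\reg(\Donetwo^{\geq R_0})}$ after writing $\prb = \VR - \Hhyp\Lxi$ with $\Hhyp = O(r^{-2})$, while the boundary term at $r = R_0$ is controlled by the BEAM bulk Morawetz. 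The restriction $p < 4$ is sharp: the Hardy constant $(4-p)^{-2}$ diverges at $p = 4$, reflecting the generic divergence of the $p = 4$ flux through null infinity in the NVNPC regime and precluding any naive extension beyond $p = 4$.
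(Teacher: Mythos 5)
Your architecture --- cast \eqref{eq:Phiplushighi:Schw:generall:tildePhiplusi} into the canonical form \eqref{eq:wave:rp}, verify the hypotheses on $b_V$ and $b_0$, invoke Proposition \ref{prop:wave:rp}, absorb the source $\vartheta$ --- matches the paper, but the absorption mechanism is not the paper's and that is where a gap appears. The paper does \emph{not} induct on $i$: it applies the relevant $r^p$ estimate to \emph{every} $\tildePhiplussHigh{i'}$, $i'=0,\ldots,i$, simultaneously and sums with hierarchical coefficients $C_0\gg\ldots\gg C_i$, so that the spacetime source error $\sum_{j<i'}\norm{\tildePhiplussHigh{j}}^2_{W_{p-5}^{\cdot}(\Donetwo^{\geq R_0})}$ from level $i'$ is absorbed by the bulk $\norm{\tildePhiplussHigh{j}}^2_{W_{p-3}^{\cdot}(\Donetwo^{\geq R_0})}$ produced on the left at level $j$, using $r^{p-5}\leq R_0^{-2}r^{p-3}$. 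Your induction instead absorbs the level-$j$ spacetime error using the \emph{conclusion} of the proposition at level $i-1$, i.e.\ the integrated bulk $\int_{\tb_1}^{\tb_2}F(\reg-1,i-1,p-1,\tb,\Psipluss)\,\di\tb$. This does not close for $p\in(0,1)$: by Definition \ref{def:Ffts:Phiplus:-1to5:all}, $F(\cdot,\cdot,p-1,\cdot)=0$ whenever $p-1\in(-1,0)$, so the inductive hypothesis contributes nothing with which to control $\sum_{j<i}\norm{\tildePhiplussHigh{j}}^2_{W_{p-5}^{\cdot}(\Donetwo^{\geq R_0})}$. (At $p=0$ your route works via the $W_{-3}^{\cdot}$ slice norms inside $F(\cdot,\cdot,-1,\cdot)$, not $W_{-2}^{\cdot}$ as you wrote; at $p\in[1,2]$ it works via the $W_{-2}^{\cdot}$ slice norms.) The same defect recurs for the lower-level terms in the top case $i=\ell-\sfrak$ with $p\in(3,4)$: there $p-5>-2$, so the $W_{-2}^{\cdot}(\Donetwo)$ control furnished by the inductive hypothesis applied at $p'=2$ is too weak, and your Hardy step is applied only to the self-term $j=\ell-\sfrak$. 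The weighted sum is precisely what keeps the $W_{p-3}^{\cdot}(\Donetwo)$ bulk (and at $p\geq 2$ the $W_{-1-\delta}^{\cdot}(\Donetwo)$ bulk from the $p=2$ estimates) in the ledger at \emph{every} level simultaneously, and inducting on $i$ discards that information.

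Your Hardy substitute, via \eqref{eq:HardyIneqLHS} with $\alpha=p-4$, for the paper's $R_0^{p+\delta-4}$ absorption of the inhomogeneity $\norm{\tildePhiplussHigh{\ell-\sfrak}}^2_{W_{p-5}^{\cdot}}$ is a legitimate alternative for the self-term: the $(4-p)^{-2}$ constant plays the role of the paper's $\delta$-dependence, and the $\int r^{p-3}\abs{\prb\varphi}^2$ bulk is absorbed by $\norm{rV\varphi}^2_{W_{p-3}^{\cdot}}$ via $\prb=\VR-\Hhyp\Lxi$ after taking $R_0$ large depending on $4-p$. Two smaller slips: the eigenvalue of $\edthR\edthR'-b_{0,0}$ on a fixed $\ell$ mode is $-\bigl[(\ell-\sfrak)(\ell+\sfrak+1)-i(i+2\sfrak+1)\bigr]$, the negative of what you wrote; nonpositivity of this eigenvalue is what Proposition \ref{prop:wave:rp} requires, so your expression being nonnegative is the correct condition, it is merely labelled the wrong way round. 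And the threshold characterising the NVNPC regime is the divergent $p=3$ flux, not $p=4$; the restriction $p<4$ here is a technical artefact of this particular argument and is later relaxed to $p<5$ in Lemma \ref{eq:EnerDecay:ip:ingeneral}.
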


\begin{proof}
We put equation \eqref{eq:Phiplushighi:Schw:generall:tildePhiplusi}  satisfied by the scalars constructed from the spin $+\sfrak$ component into the form of \eqref{eq:wave:rp}.  Since the eigenvalue of $\edthR'\edthR$ acting on an $\ell$ mode is $-(\ell-\sfrak)(\ell+\sfrak+1)$, one finds that the first assumption is trivially satisfied and the second assumption holds true as long as $i\leq \ell-\sfrak$. In particular, $b_{V,-1}=0$ happens only when $\sfrak=i=0$, and $b_{0,0}=0$ only when $i=\ell-\sfrak$. Based on these observations, we apply the corresponding estimates in Lemma \ref{lem:wave:rp} to equation \eqref{eq:Phiplushighi:Schw:generall:tildePhiplusi} in different cases.

Consider first the case that $i\leq\ell-\sfrak$ and $p\in [0,2]$. As discussed above, $b_{V,-1}=0$  holds only when $\sfrak=i=0$. If $\sfrak=i=0$, then the estimate \eqref{eq:BEDC:Phiplus:0} with  $p=0$ holds in view of the energy and Morawetz estimate \eqref{eq:BEAM}.  In the remaining cases, it holds that $b_{V,-1}>0$. We apply to equation \eqref{eq:Phiplushighi:Schw:generall:tildePhiplusi} with $\varphi=\tildePhiplussHigh{i'}$ the estimate  \eqref{eq:rp:p=0} for $p=0$,  the estimate \eqref{eq:rp:less2:2} for $p\in (0, 2)$, and the estimate \eqref{eq:rp:lp=2:2} for $p=2$ to achieve
\begin{subequations}\label{eq:rp:less2:=2:2:ip}
    \begin{align}
    \label{eq:rp:p=0:2:ip}
\hspace{2ex}&\hspace{-2ex}
\norm{\tildePhiplussHigh{i'}}^2_{W_{-2}^{\reg-\sfrak-i'}(\Sigmatwo^{\geq R_0})}
+\norm{\tildePhiplussHigh{i'}}^2_{W_{-3}^{\reg-\sfrak-i'}(\Donetwo^{\geq R_0})}
\notag\\
&\lesssim_{[R_0-M,R_0]} {}\norm{\tildePhiplussHigh{i'}}^2_{W_{-2}^{\reg-\sfrak-i'}(\Sigmaone^{\geq R_0})}
+\norm{\vartheta(\tildePhiplussHigh{i'})}^2_{W_{-3}^{\reg-\sfrak-i'-1}(\Donetwo^{\geq R_0})},\\
\label{eq:rp:less2:2:ip}
\hspace{2ex}&\hspace{-2ex}
\norm{rV\tildePhiplussHigh{i'}}^2_{W_{p-2}^{\reg-\sfrak-i'-1}(\Sigmatwo^{\geq R_0})}
+\norm{\tildePhiplussHigh{i'}}^2_{W_{-2}^{\reg-\sfrak-i'}(\Sigmatwo^{\geq R_0})}\notag\\
\hspace{2ex}&\hspace{-2ex}
+\norm{\tildePhiplussHigh{i'}}^2_{W_{p-3}^{\reg-\sfrak-i'}(\Donetwo^{\geq R_0})}
+\norm{Y\tildePhiplussHigh{i'}}^2_{W_{-1-\delta}^{\reg-\sfrak-i'-1}(\Donetwo^{\geq R_0})}
\notag\\
&\lesssim_{[R_0-M,R_0]} {}\norm{rV\tildePhiplussHigh{i'}}^2_{W_{p-2}^{\reg-\sfrak-i'-1}(\Sigmaone^{\geq R_0})}
+\norm{\tildePhiplussHigh{i'}}^2_{W_{-2}^{\reg-\sfrak-i'}(\Sigmaone^{\geq R_0})}
+\norm{\vartheta(\tildePhiplussHigh{i'})}^2_{W_{p-3}^{\reg-\sfrak-i'-1}(\Donetwo^{\geq R_0})},\\
\label{eq:rp:lp=2:2:ip}
\hspace{2ex}&\hspace{-2ex}
\norm{rV\tildePhiplussHigh{i'}}^2_{W_{0}^{\reg-\sfrak-i'-1}(\Sigmatwo^{\geq R_0})}
+\norm{\tildePhiplussHigh{i'}}^2_{W_{-2}^{\reg-\sfrak-i'}(\Sigmatwo^{\geq R_0})}
\notag\\
\hspace{2ex}&\hspace{-2ex}
+\norm{\tildePhiplussHigh{i'}}^2_{W_{-1-\delta}^{\reg-\sfrak-i'}(\Donetwo^{\geq R_0})}
+\norm{rV\tildePhiplussHigh{i'}}^2_{W_{-1}^{\reg-\sfrak-i'-1}(\Donetwo^{\geq R_0})}
\notag\\
&\lesssim_{[R_0-M,R_0]} {}\norm{rV\tildePhiplussHigh{i'}}^2_{W_{0}^{\reg-\sfrak-i'-1}(\Sigmaone^{\geq R_0})}
+\norm{\tildePhiplussHigh{i'}}^2_{W_{-2}^{\reg-\sfrak-i'}(\Sigmaone^{\geq R_0})}
+\norm{\vartheta(\tildePhiplussHigh{i'})}^2_{W_{-1}^{\reg-\sfrak-i'-1}(\Donetwo^{\geq R_0})},
\end{align}
\end{subequations}
respectively.
In view of the expression $\vartheta(\tildePhiplussHigh{i'})=\sum_{j=0}^{i'}O(r^{-1})\tildePhiplussHigh{j}$, one finds on the RHS,
\begin{align}
\label{eq:rp:pleq2:2:ip:e1}
\norm{\vartheta(\tildePhiplussHigh{i'})}^2_{W_{p-3}^{\reg-\sfrak-i'-1}(\Donetwo^{\geq R_0})}\lesssim{}& \sum_{j=0}^{i'}\norm{\tildePhiplussHigh{j}}^2_{W_{p-5}^{\reg-\sfrak-j-1}(\Donetwo^{\geq R_0})}\notag\\
\lesssim{}&R_0^{-2}\norm{\tildePhiplussHigh{i'}}^2_{W_{p-3}^{\reg-\sfrak-i'-1}(\Donetwo^{\geq R_0})}+\sum_{j=0}^{i'-1}\norm{\tildePhiplussHigh{j}}^2_{W_{p-5}^{\reg-\sfrak-j-1}(\Donetwo^{\geq R_0})} ,
\end{align}
where the term with $R_0^{-2}$ coefficient can be absorbed by choosing $R_0\geq \hat{R}_0$ sufficiently large.
We can then take a weighted sum of the estimates \eqref{eq:rp:less2:=2:2:ip} for $i'\in \{0,1,\ldots, i\}$ such that the last term in \eqref{eq:rp:pleq2:2:ip:e1} is aslo absorbed, and the error terms which are supported on $[R_0-M, R_0]$ and implicit in the symbol $\lesssim_{[R_0-M, R_0]}$ can be controlled by adding in a sufficient multiple of the energy and Morawetz estimate  \eqref{eq:BEAM}. This thus completes the proof of \eqref{eq:BEDC:Phiplus:0} for $p\in [0,2]$.

Consider next the case that $i=\ell-\sfrak$ and $2\leq p<4$. The steps are the same as the above for $p\in [0,2]$ except that we are now applying the estimate \eqref{eq:rp:pleq4:2} for $p\in [2,4)$ to equation \eqref{eq:Phiplushighi:Schw:generall:tildePhiplusi} with $i=\ell-\sfrak$; we arrive at
\begin{align}
\hspace{2ex}&\hspace{-2ex}
\norm{rV\tildePhiplussHigh{\ell-\sfrak}}^2_{W_{p-2}^{\reg-\ell-1}(\Sigmatwo^{\geq R_0})}
+\norm{\tildePhiplussHigh{\ell-\sfrak}}^2_{W_{-2}^{\reg-\ell}(\Sigmatwo^{\geq R_0})}
\notag\\
\hspace{2ex}&\hspace{-2ex}
+\norm{\tildePhiplussHigh{\ell-\sfrak}}^2_{W_{-1-\delta}^{\reg-\ell}(\Donetwo^{\geq R_0})}
+\norm{rV\tildePhiplussHigh{\ell-\sfrak}}^2_{W_{p-3}^{\reg-\ell-1}(\Donetwo^{\geq R_0})}
\notag\\
&\lesssim_{[R_0-M,R_0]} {}\norm{rV\tildePhiplussHigh{\ell-\sfrak}}^2_{W_{p-2}^{\reg-\ell-1}(\Sigmaone^{\geq R_0})}
+\norm{\tildePhiplussHigh{\ell-\sfrak}}^2_{W_{-2}^{\reg-\ell}(\Sigmaone^{\geq R_0})}
+\norm{\vartheta(\tildePhiplussHigh{\ell-\sfrak})}^2_{W_{p-3}^{\reg-\ell-1}(\Donetwo^{\geq R_0})}.
\end{align}
It remains to estimate $\norm{\vartheta(\tildePhiplussHigh{\ell-\sfrak})}^2_{W_{p-3}^{\reg-\ell-1}(\Donetwo^{\geq R_0})}$ by
\begin{align}
\label{eq:rp:p24:2:ip:e1}
\norm{\vartheta(\tildePhiplussHigh{\ell-\sfrak})}^2_{W_{p-3}^{\reg-\ell-1}(\Donetwo^{\geq R_0})}\lesssim{}& \sum_{j=0}^{\ell-\sfrak}\norm{\tildePhiplussHigh{j}}^2_{W_{p-5}^{\reg-\sfrak-j-1}(\Donetwo^{\geq R_0})}\notag\\
\lesssim{}&R_0^{p+\delta-4}\Big(\norm{\tildePhiplussHigh{\ell-\sfrak}}^2_{W_{-1-\delta}^{\reg-\ell-1}(\Donetwo^{\geq R_0})}+\sum_{j=0}^{\ell-\sfrak-1}\norm{\tildePhiplussHigh{j}}^2_{W_{-1-\delta}^{\reg-\sfrak-j-1}(\Donetwo^{\geq R_0})}\Big) .
\end{align}
Thus, by adding a suitable weighted sum of the estimate \eqref{eq:rp:lp=2:2:ip} for $i\in \{0,1,\ldots, \ell-\sfrak\}$ (e.g. by taking the coefficients of this weighted sum to satisfy $C_0\gg C_1 \gg\ldots \gg C_{\ell-\sfrak}\gg 1$) to the estimate \eqref{eq:rp:p24:2:ip:e1} and taking $\delta$ sufficiently small compared to $4-p$,
the error terms $\sum_{i'=0}^{\ell-\sfrak}C_{i'}\norm{\vartheta(\tildePhiplussHigh{i'})}^2_{W_{-1}^{\reg-\sfrak-i'-1}(\Donetwo^{\geq R_0})}+ \norm{\vartheta(\tildePhiplussHigh{\ell-\sfrak})}^2_{W_{p-3}^{\reg-\ell-1}(\Donetwo^{\geq R_0})}$ are absorbed. In the end, we add the energy and Morawetz estimate \eqref{eq:BEAM} to bound the error terms implicit in the symbol $\lesssim_{[R_0-M, R_0]}$,  thus proving the estimate \eqref{eq:BEDC:Phiplus:0} in the case that $i=\ell-\sfrak$ and $p\in [2,4)$.
\end{proof}

The above global $r^p$ estimates are readily employed to deduce energy decay estimates.

\begin{cor}
\label{cor:EnerDecay:0to4:ip:1}
Let $i_1,i_2\in \mathbb{N}$ and $p_1, p_2\in \mathbb{R}^+\cup \{0\}$ be such that either of the following  holds:
\begin{itemize}
\item $i_1=i_2<\ell-\sfrak$ and $p_1\leq p_2\leq 2$;
\item $i_1<i_2< \ell-\sfrak$, $p_1\leq 2$, $p_2\leq 2$;
\item $i_1= i_2=\ell-\sfrak$, $p_1\leq p_2<4$;
\item $i_1< i_2=\ell-\sfrak$, $p_1\leq 2$, $p_2<4$.
\end{itemize}
Then there exists a constant $\regl=\regl(j,i_2-i_1)$, which grows linearly in its arguments, such that for any $\tb_2>\tb_1\geq \tb_0$,
\begin{align}
\label{eq:EnerDecay:0to4:ip:1}
\hspace{4ex}&\hspace{-4ex}
F(\reg, i_1, p_1,\tb_2,\Lxi^j\Psipluss)
+\int_{\tb_2}^{\infty}F(\reg-1, i_1, p_1-1,\tb',\Lxi^j\Psipluss)\di \tb'\notag\\
\lesssim{}& \langle\tb_2-\tb_1\rangle^{-2(i_2-i_1)-(p_2-p_1)-2j}F(\reg+\regl, i_2, p_2,\tb_1,\Psipluss).
\end{align}
\end{cor}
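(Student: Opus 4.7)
The plan is to iterate the $r^p$ hierarchy of Proposition~\ref{prop:BEDC:Phiplus:0} via standard Dafermos--Rodnianski mean-value (pigeonhole) arguments, to use the structural identities of Definition~\ref{def:tildePhiplusandminusHigh} in order to trade $p$-steps for $i$-steps, and finally to leverage commutation with the Killing field $\Lxi$ together with the Teukolsky master equation to extract the extra $(\tb_2-\tb_1)^{-2j}$ factor.

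The basic one-step decay follows from Proposition~\ref{prop:BEDC:Phiplus:0} by pigeonholing on the half-interval $[\tb_1,(\tb_1+\tb_2)/2]$: there exists $\tb^\star$ in this subinterval with $F(\reg-1,i,p-1,\tb^\star,\Psipluss)\lesssim(\tb_2-\tb_1)^{-1}F(\reg,i,p,\tb_1,\Psipluss)$, and propagating the $(p-1)$-level estimate of Proposition~\ref{prop:BEDC:Phiplus:0} from $\tb^\star$ up to $\tb_2$ yields $F(\reg-1,i,p-1,\tb_2,\Psipluss)\lesssim(\tb_2-\tb_1)^{-1}F(\reg,i,p,\tb_1,\Psipluss)$. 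Iterating along decreasing integer values of $p$, with dyadic subdivisions of $[\tb_1,\tb_2]$ to cover non-integer gaps, produces the factor $(\tb_2-\tb_1)^{-(p_2-p_1)}$ at a fixed $i$-level, with one derivative of regularity lost at each step.

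To change the index from $i_2$ down to $i_1$, I would exploit the identity $\tildePhiplussHigh{i+1}=\curlVR\PhiplussHigh{i}+\sum_{j=0}^iO(1)\tildePhiplussHigh{j}$ of Definition~\ref{def:tildePhiplusandminusHigh}, which gives the pointwise comparison $|\tildePhiplussHigh{i+1}|\lesssim r|rV\tildePhiplussHigh{i}|+\sum_{j\le i}|\tildePhiplussHigh{j}|$ and hence
\[
\norm{\tildePhiplussHigh{i+1}}^2_{W_{-2}^{\reg-\sfrak-i-1}(\Sigmatb^{\geq R_0})}\lesssim\norm{rV\tildePhiplussHigh{i}}^2_{W_{0}^{\reg-\sfrak-i-1}(\Sigmatb^{\geq R_0})}+(\text{Morawetz-absorbable terms}).
\]
Thus the $p=2$ quantity inside $F(\reg,i,2,\tb,\Psipluss)$ dominates the $p=0$ quantity inside $F(\reg,i+1,0,\tb,\Psipluss)$; the passage $(i,2)\to(i+1,0)$ is therefore free in the hierarchy and each $i$-increment costs two $p$-decay steps, contributing the $(\tb_2-\tb_1)^{-2(i_2-i_1)}$ factor.

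For the $\Lxi^j$ factor, since $\Lxi$ commutes with $Y$, $V$, $\edthR$, $\edthR'$ and with the Teukolsky operator, $\Lxi^j\Psipluss$ satisfies the same wave system \eqref{eq:Phiplushighi:Schw:generall:tildePhiplusi} as $\Psipluss$ and the above $j=0$ argument applies to it verbatim. The additional $(\tb_2-\tb_1)^{-2j}$ gain is then obtained by using the TME \eqref{eq:TME} to substitute each occurrence of $\Lxi^2\Psipluss$ in the hierarchy by $\mu r^{-2}$ times a second-order spatial operator acting on $\Psipluss$ modulo lower-order terms, and by Cauchy--Schwarz in $\tb$ to interpolate between $F(\Psipluss)$ and $F(\Lxi^2\Psipluss)$ to extract $(\tb_2-\tb_1)^{-2}$ per single $\Lxi$ derivative. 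Iterating this pair-and-interpolate scheme $j$ times yields the cumulative $(\tb_2-\tb_1)^{-2j}$ factor.

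The main technical obstacle is the bookkeeping of regularity losses, which grow linearly in $j$ and $i_2-i_1$ and thereby produce the claimed $\regl(j,i_2-i_1)$; additional care is required to absorb the lower-order error terms $\sum_j h_{\sfrak,i,j}\PhiplussHigh{j}$ in \eqref{eq:Phiplushighi:Schw:generall:tildePhiplusi} by the Morawetz norm $\mathbf{BM}^\reg_{\Sigmatb}$ at each iteration stage, and to handle the transition between the two definitions \eqref{def:Ffts:Phiplus:1:p02} and \eqref{def:Ffts:Phiplus:1:p25:ell} of $F$ at $p=2$ when $i=\ell-\sfrak$.
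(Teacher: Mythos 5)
Your overall outline matches the paper's approach in the first two bullet points: the paper also cites the mean-value/pigeonhole machinery (via \cite[Lemma~5.2]{andersson2019stability}) to go from the integrated hierarchy of Proposition~\ref{prop:BEDC:Phiplus:0} to $(\tb_2-\tb_1)^{-(p_2-p_1)}$ decay at fixed $i$, and it uses precisely the equivalence $F(\reg,i-1,2,\tb,\Psipluss)\sim F(\reg,i,0,\tb,\Psipluss)$ coming from $\tildePhiplussHigh{i}=\curlVR\PhiplussHigh{i-1}+\sum_{j<i}O(1)\tildePhiplussHigh{j}$ to accumulate the $(\tb_2-\tb_1)^{-2(i_2-i_1)}$ factor. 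A small caution: you only record one direction of this comparison, but both directions (the full $\sim$) are needed to thread the chain $F(i_1,p_1,\tb_2)\lesssim\langle\cdot\rangle^{-(2-p_1)}F(i_1,2,\tb')\lesssim F(i_1+1,0,\tb')\lesssim\cdots$.

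The genuine gap is in your mechanism for the $(\tb_2-\tb_1)^{-2j}$ factor. You propose to replace $\Lxi^2\Psipluss$ via the TME~\eqref{eq:TME} by $\mu r^{-2}$ times a second-order spatial operator and then ``interpolate between $F(\Psipluss)$ and $F(\Lxi^2\Psipluss)$ by Cauchy--Schwarz in $\tb$.'' This does not produce additional decay. The $r^{-2}$ gained from the $\mu r^{-2}$ prefactor in the TME is exactly cancelled by the $r^2$ inside the leading spatial term $\partial_r(\Delta\partial_r)$, so $F(\Lxi^2\Psipluss)$ has at best the same $r$-weight and hence the same decay rate as $F(\Psipluss)$ (just at higher regularity); interpolating between two quantities with the same decay rate yields the same decay rate, not a gain of $(\tb_2-\tb_1)^{-2}$. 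The paper's actual argument (in the inductive step of Corollary~\ref{cor:EnerDecay:0to4:ip:1}) is different and essential: it exploits that equation~\eqref{eq:Phiplushighi:Schw:generall:tildePhiplusi}, once one $V$ is absorbed into $\curlVR$, is \emph{first} order in $\Lxi$. Concretely, writing $\mu Y=2\Lxi-V$, one rewrites $r^2V\Lxi\tildePhiplussHigh{i'}$ in the region $r\geq R_0$ as a weighted sum of $(rV)^2\tildePhiplussHigh{i'}$, $\tildePhiplussHigh{i'}$, $rV\tildePhiplussHigh{i'}$, and $\sum_{j\leq i'}O(r^{-1})\tildePhiplussHigh{j}$ with $O(1)$ coefficients, which immediately gives $F(\reg,i,2,\tb,\Lxi^{j+1}\Psipluss)\lesssim F(\reg+1,i,0,\tb,\Lxi^{j}\Psipluss)$ (and $F(\reg,\ell-\sfrak,p,\tb,\Lxi^{j+1}\Psipluss)\lesssim F(\reg+1,\ell-\sfrak,p-2,\tb,\Lxi^{j}\Psipluss)$ for $p\in[2,4)$, since there the constant coefficient of $\tildePhiplussHigh{\ell-\sfrak}$ vanishes). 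Each such step converts one $\Lxi$ into a $p$-gap of $2$, so applying the $p$-hierarchy to $\Lxi^{j+1}\Psipluss$ and this trade once, then invoking the induction hypothesis for $\Lxi^j\Psipluss$, yields $-(2-p_1)-2j-2(i_2-i_1)-p_2$, which is the claimed exponent with $j$ replaced by $j+1$. Your TME-plus-interpolation route does not substitute for this step; I would replace it with the wave-equation rewriting above.
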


\begin{proof}
For $\Psipluss$ and a fixed $i$, the proven estimate \eqref{eq:BEDC:Phiplus:0} can be put into the form of \eqref{eq:Rev:HierarchyToDecayReal:EvolutionHypothesis} with $D=0$. Function $F(\reg, i, p,\tb,\Psipluss)$ clearly satisfies assumption \ref{pt:implydecay:1} and, in view of Definition \ref{def:Ffts:Phiplus:-1to5:all} for and using the H\"older's inequality,  $F(\reg, i, p,\tb,\Psipluss)$ also satisfies assumption \ref{pt:implydecay:2}. Consequently, Lemma \ref{lem:hierarchyImpliesDecay} can be applied to the estimates in Proposition \ref{prop:BEDC:Phiplus:0} to yield that for any $0\leq i\leq \ell-\sfrak$, $0\leq p_1\leq p_2\leq 2$, and $\tb_2>\tb_1\geq \tb_0$,
\begin{align}
F(\reg-2,i, p_1,\tb_2,\Psipluss)\lesssim {}&\langle\tb_2-\tb_1\rangle^{-(p_2-p_1)}F(\reg,i, p_2,\tb_1,\Psipluss),
\end{align}
and for $i=\ell-\sfrak$, $0\leq p_1\leq p_2<4$, and $\tb_2>\tb_1\geq\tb_0$,
\begin{align}
F(\reg-4,\ell-\sfrak, p_1,\tb_2,\Psipluss)\lesssim {}&\langle\tb_2-\tb_1\rangle^{-(p_2-p_1)}F(\reg,\ell-\sfrak, p_2,\tb_1,\Psipluss),
\end{align}
Note that by Definition \ref{def:tildePhiplusandminusHigh}, we have for any $0\leq j\leq \ell-\sfrak$ that $\tildePhiplussHigh{i}=\curlVR\PhiplussHigh{i-1}+\sum_{i'=0}^{i-1}O(1)\tildePhiplussHigh{i'}$, therefore,
\begin{align}
F(\reg, i-1, 2,\tb,\Psipluss)\sim F(\reg, i, 0,\tb,\Psipluss).
\end{align}
Combining the above estimates then prove the estimate \eqref{eq:EnerDecay:0to4:ip:1} for $j=0$.

We prove the general $j$ case of the estimate \eqref{eq:EnerDecay:0to4:ip:1} by induction. Assume it holds for $j$, and it suffices to prove the $j+1$ case.
Since $\Lxi$ is a Killing vector field and commutes with the wave equations of $\tildePhiplussHigh{i}$, the above estimates in this proof are still valid by replacing $\Psipluss$ with $\Lxi^j\Psipluss$, which thus yields
\begin{align}\label{eq:BEDC:Phiplus:0:1234}
F(\reg, i_1, p_1,\tb_2,\Lxi^{j+1}\Psipluss)
\lesssim {}&\langle\tb_2-\tb_1\rangle^{-(2-p_1)}F(\reg+\regl, i_1, 2,\tb_2-(\tb_2-\tb_1)/3,\Lxi^{j+1}\Psipluss).
\end{align}
We use equation \eqref{eq:Phiplushighi:Schw:generall:tildePhiplusi} and the expression $Y=\mu^{-1}\big(2\Lxi-V\big)$ away from horizon to rewrite $r^2V\Lxi \tildePhiplussHigh{i'}$ as a weighted sum of $(rV)^2\tildePhiplussHigh{i'}$,  $((\ell-\sfrak)(\ell+\sfrak+1)-i'(i'+2\sfrak+1))\tildePhiplussHigh{i'}$,
 $rV\tildePhiplussHigh{i'}$,  and $\sum_{j=0}^{i'}O(r^{-1})\tildePhiplussHigh{j}$ all with $O(1)$ coefficients. Therefore, in the case that $0\leq i\leq \ell-\sfrak$,\begin{align}
F(\reg, i, 2,\tb,\Lxi^{j+1}\Psipluss)
\lesssim{}&
\sum_{i'=0}^{i} \norm{r^2V\Lxi^{j+1}\tildePhiplussHigh{i'}}^2_{W_{-2}^{\reg-\sfrak-i'-1}(\Sigmatb^{\geq R_0})}
+F(\reg+1, i, 0,\tb,\Lxi^{j}\Psipluss)\notag\\
\lesssim{}&F(\reg+1, i, 0,\tb,\Lxi^{j}\Psipluss);
\end{align}
and in the other case that $i=\ell-\sfrak$, $2\leq p<4$,
\begin{align}
F(\reg, \ell-\sfrak, p,\tb,\Lxi^{j+1}\Psiplus)
\lesssim{}&
 \norm{r^2V\Lxi^{j+1}\tildePhiplussHigh{i'}}^2_{W_{p-4}^{\reg-\sfrak-i'-1}(\Sigmatb^{\geq R_0})}
+F(\reg+1, \ell-\sfrak, 0,\tb,\Lxi^{j}\Psiplus)\notag\\
\lesssim{}&F(\reg+1, i, p-2,\tb,\Lxi^{j}\Psiplus)
\end{align}
where we have used in the last step that  the coefficient $(\ell-\sfrak)(\ell+\sfrak+1)-i(i+2\sfrak+1)=0$.
Consequently, the RHS of \eqref{eq:BEDC:Phiplus:0:1234} is bounded by
\begin{align}
\hspace{4ex}&\hspace{-4ex}
\langle\tb_2-\tb_1\rangle^{-(2-p_1)}F(\reg+\regl, i_1, 0,\tb_2-\frac{2}{3}(\tb_2-\tb_1),\Lxi^{j}\Psipluss)\notag\\
\lesssim{}&\langle\tb_2-\tb_1\rangle^{-2(i_2-i_1)-(p_2-p_1)-2(j+1)}F(\reg+\regl, i_2, p_2,\tb_1,\Psipluss),
\end{align}
where the last step follows by induction. This closes the proof.
\end{proof}

We can further extend the $p$-range in the above corollary by improving the upper bound from $4$ to $5$, hence obtain faster energy decay. 

\begin{lemma}
\label{eq:EnerDecay:ip:ingeneral}
Let $i_1,i_2\in \mathbb{N}$ and $p_1, p_2\in \mathbb{R}^+\cup \{0\}$ be such that either of the following  holds:
\begin{itemize}
\item $i_1=i_2<\ell-\sfrak$ and $p_1\leq p_2\leq 2$;
\item $i_1<i_2< \ell-\sfrak$, $p_1\leq 2$, $p_2\leq 2$;
\item $i_1= i_2=\ell-\sfrak$, $p_1\leq p_2<5$;
\item $i_1< i_2=\ell-\sfrak$, $p_1\leq 2$, $p_2<5$.
\end{itemize}
Then there exists a constant $\regl=\regl(j,i_2-i_1)$, which grows linearly in its arguments, such that for any $\tb_2>\tb_1\geq \tb_0$,
\begin{align}
\label{eq:EnerDecay:0to4:ip:1:0to5}
\hspace{4ex}&\hspace{-4ex}
F(\reg, i_1, p_1,\tb_2,\Lxi^j\Psipluss)
+\int_{\tb_2}^{\infty}F(\reg-1, i_1, p_1-1,\tb',\Lxi^j\Psipluss)\di \tb'\notag\\
\lesssim{}& \langle\tb_2-\tb_1\rangle^{-2(i_2-i_1)-(p_2-p_1)-2j}F(\reg+\regl, i_2, p_2,\tb_1,\Psipluss).
\end{align}
\end{lemma}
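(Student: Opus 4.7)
The new content of this lemma compared to Corollary \ref{cor:EnerDecay:0to4:ip:1} is the extension of the allowed range of $p_2$ from $[0,4)$ to $[0,5)$ in the two cases involving $i_2=\ell-\sfrak$. Thus my plan is to follow the same template as the proof of Corollary \ref{cor:EnerDecay:0to4:ip:1}, but with Proposition \ref{prop:BEDC:Phiplus:0} replaced by an extended version, valid for $i=\ell-\sfrak$ and $p\in[2,5)$, of the $r^p$-hierarchy estimate
\begin{align*}
F(\reg,\ell-\sfrak,p,\tb_2,\Psipluss)+\int_{\tb_1}^{\tb_2}F(\reg-1,\ell-\sfrak,p-1,\tb,\Psipluss)\di\tb\lesssim F(\reg,\ell-\sfrak,p,\tb_1,\Psipluss).
\end{align*}
Once this is established, Lemma 5.2 of \cite{andersson2019stability} and the same $\Lxi$-commutation induction in $j$ performed in the proof of Corollary \ref{cor:EnerDecay:0to4:ip:1} yield the asserted pointwise bound.

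To establish this extended hierarchy for $p\in[4,5)$, I would apply estimate \eqref{eq:rp:pgeq4:2} of Proposition \ref{prop:wave:rp} to equation \eqref{eq:Phiplushighi:Schw:generall:tildePhiplusi} with $\varphi=\tildePhiplussHigh{\ell-\sfrak}$: the eigenvalue condition is met because at $i=\ell-\sfrak$ the coefficient of $\tildePhiplussHigh{\ell-\sfrak}$ in its equation vanishes on $\ell$-mode scalars. The bulk $\veps$-term $\veps\int\langle\tb-\tb_1\rangle^{-1-\varsigma}\norm{rV\tildePhiplussHigh{\ell-\sfrak}}^2_{W_{p-2}^\reg(\Sigmatb^{\geq R_0})}\di\tb$ can be absorbed into the integrated version of the LHS by the standard device of partitioning $[\tb_1,\tb_2]$ dyadically and taking $\veps$ small.

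The heart of the argument, and the main obstacle, is to control the weighted error
\begin{align*}
\frac{1}{\veps}\int_{\tb_1}^{\tb_2}\langle\tb-\tb_1\rangle^{1+\varsigma}\left(\norm{\vartheta(\tildePhiplussHigh{\ell-\sfrak})}^2_{W_{p-4}^\reg(\Sigmatb^{\geq R_0})}+\norm{\tildePhiplussHigh{\ell-\sfrak}}^2_{W_{p-6}^\reg(\Sigmatb^{\geq R_0})}\right)\di\tb.
\end{align*}
Since $\vartheta(\tildePhiplussHigh{\ell-\sfrak})=\sum_{j=0}^{\ell-\sfrak}O(r^{-1})\tildePhiplussHigh{j}$, this reduces to bounding $\int\langle\tb-\tb_1\rangle^{1+\varsigma}\norm{\tildePhiplussHigh{j}}^2_{W_{p-6}^\reg}\di\tb$ for $j=0,\ldots,\ell-\sfrak$ (the $W_{p-6}^\reg$ self-term being included). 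For $j<\ell-\sfrak$, I would invoke Corollary \ref{cor:EnerDecay:0to4:ip:1} with $i_2=\ell-\sfrak$ and $p_2$ chosen in $[2,4)$ arbitrarily close to $4$ to get $\tb$-pointwise decay of rate strictly faster than $\tb^{-2}$ on these energies, after first converting $W_{p-6}^\reg$ (with $p-6<-1$) to the natural $W_{-2}^\reg$ using $R_0^{p-4}$ as a finite constant; picking $\varsigma$ small makes the time integral converge. For $j=\ell-\sfrak$, Hardy's inequality \eqref{eq:Hardy:trivial:1} converts $\norm{\tildePhiplussHigh{\ell-\sfrak}}^2_{W_{p-6}^\reg(\Sigmatb^{\geq R_0})}$ into $\norm{rV\tildePhiplussHigh{\ell-\sfrak}}^2_{W_{p-4}^\reg(\Sigmatb^{\geq R_0})}$ plus a boundary term at $r=R_0$ (absorbed by the Morawetz estimate on bounded-$r$ regions), and $W_{p-4}^\reg\hookrightarrow R_0^{-1}W_{p-3}^\reg$ on the far region matches the bulk LHS $\norm{rV\tildePhiplussHigh{\ell-\sfrak}}^2_{W_{p-3}^\reg(\Donetwo^{\geq R_0})}$ with a small prefactor, completing the absorption.

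Putting these bounds together gives the extended hierarchy, and the final step — promoting it to the claimed decay \eqref{eq:EnerDecay:0to4:ip:1:0to5} and commuting with arbitrarily many $\Lxi$'s — is carried out exactly as in the proof of Corollary \ref{cor:EnerDecay:0to4:ip:1}, using Lemma 5.2 of \cite{andersson2019stability} on the $[\tb_1,\tb_2]$ hierarchy and noting again that, because the mass coefficient of $\tildePhiplussHigh{\ell-\sfrak}$ vanishes, one $\Lxi$-derivative still costs only a $W_{p-4}^\reg$ weight trade rather than a $W_{p-2}^\reg$ one.
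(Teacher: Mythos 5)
Your proof follows the paper's general strategy — apply \eqref{eq:rp:pgeq4:2} to the equation of $\tildePhiplussHigh{\ell-\sfrak}$, absorb the small-$\veps$ bulk term into the left-hand side, and control the large-$\tfrac{1}{\veps}$ error integral — and your treatment of the $j<\ell-\sfrak$ contributions is the correct one. However, your treatment of the $j=\ell-\sfrak$ contribution contains a genuine gap.

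The issue is that the term
\begin{align*}
\frac{1}{\veps}\int_{\tb_1}^{\tb_2}\langle\tb-\tb_1\rangle^{1+\varsigma}\norm{\tildePhiplussHigh{\ell-\sfrak}}^2_{W_{p-6}^{\reg}(\Sigmatb^{\geq R_0})}\di\tb
\end{align*}
carries a \emph{growing} time-weight $\langle\tb-\tb_1\rangle^{1+\varsigma}$ and a \emph{large} prefactor $\tfrac{1}{\veps}$, whereas the bulk $\norm{rV\tildePhiplussHigh{\ell-\sfrak}}^2_{W_{p-3}^{\reg}(\Donetwo^{\geq R_0})}$ that you intend to absorb it into has neither a time-weight nor a small prefactor. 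A Hardy inequality followed by a spatial weight gain of $R_0^{-1}$ cannot compensate this mismatch: for large $\tb_2-\tb_1$, the factor $\tfrac{1}{\veps}\langle\tb-\tb_1\rangle^{1+\varsigma}R_0^{-1}$ is unbounded, so the absorption fails no matter how $R_0$ and $\veps$ are tuned. (The weight bookkeeping in the Hardy step is also off — differentiation in $r$ costs $r^{-2}$ relative to $rV$, so you arrive at $W_{p-6}^{\reg}$ of $rV\tildePhiplussHigh{\ell-\sfrak}$ rather than $W_{p-4}^{\reg}$ — but that is secondary to the time-weight obstruction.)

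The fix is to treat the $j=\ell-\sfrak$ term exactly the way you already treat $j<\ell-\sfrak$: using decay, not absorption. Since $p-6\leq -2$ for $p<5$, one has $\norm{\tildePhiplussHigh{\ell-\sfrak}}^2_{W_{p-6}^{\reg}(\Sigmatb^{\geq R_0})}\lesssim F(\reg,\ell-\sfrak,p-4,\tb,\Psipluss)$, and Corollary \ref{cor:EnerDecay:0to4:ip:1} with $i_1=i_2=\ell-\sfrak$, $p_1=p-4$, and $p_2$ a fixed number close to but less than $4$ (the paper takes $p_2=3.5$) gives the decay $\langle\tb-\tb_1\rangle^{-(p_2-(p-4))}$, whose rate exceeds $2$ whenever $p<5$; hence the time integral with the $\langle\tb-\tb_1\rangle^{1+\varsigma}$ weight converges for $\varsigma$ small, and the total is bounded by $F(\reg+\regl,\ell-\sfrak,3.5,\tb_1,\Psipluss)\lesssim F(\reg+\regl,\ell-\sfrak,p,\tb_1,\Psipluss)$. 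The paper's proof is precisely this, packaged by bounding the \emph{entire} error term $\sum_{i=0}^{\ell-\sfrak}\norm{\tildePhiplussHigh{i}}^2_{W_{p-6}^{\cdots}}$ at once by $F(\reg,\ell-\sfrak,p-4,\tb,\Psipluss)$ and invoking Corollary \ref{cor:EnerDecay:0to4:ip:1} a single time, rather than splitting by the index $j$. The remaining steps of your proof — combining with the range $[0,4)$ hierarchy, applying Lemma 5.2 of \cite{andersson2019stability}, and the $\Lxi$-commutation induction — match the paper's.
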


\begin{proof}
As can be seen from the proof of Corollary \ref{cor:EnerDecay:0to4:ip:1}, in order to prove this lemma, it suffices to extend the estimate \eqref{eq:BEDC:Phiplus:0} to the case that $i= \ell-\sfrak$ and $p\in [2,5)$. Specifically, we aim to prove that for any $p\in [4,5)$ and $\tb_2>\tb_1\geq \tb_0$,
\begin{align}
\label{eq:BEDC:Phiplus:0:extend}
F(\reg-\regl, \ell-\sfrak, p,\tb_2,\Psipluss)
+\int_{\tb_1}^{\tb_2}F(\reg-\regl-1, \ell-\sfrak, p-1,\tb,\Psipluss)\di \tb
\lesssim F(\reg, \ell-\sfrak, p,\tb_1,\Psipluss),
\end{align}
where $\regl$ is a finite universal constant.

By applying \eqref{eq:rp:pgeq4:2} to  equation \eqref{eq:Phiplushighi:Schw:generall:tildePhiplusi} with $i=\ell-\sfrak$, and together with the expression $\vartheta(\tildePhiplussHigh{\ell-\sfrak})=\sum_{i=0}^{\ell-\sfrak}O(r^{-1})\tildePhiplussHigh{i}$ which follows from equation \eqref{eq:Phiplushighi:Schw:generall:tildePhiplusi}, we get for  $p\in [4,5)$ and any $\varsigma>0$ that
\begin{align}
\hspace{2ex}&\hspace{-2ex}
\norm{rV\tildePhiplussHigh{\ell-\sfrak}}^2_{W_{p-2}^{\reg-\ell-1}(\Sigmatwo^{\geq R_0})}
+\norm{\tildePhiplussHigh{\ell-\sfrak}}^2_{W_{-2}^{\reg-\ell}(\Sigmatwo^{\geq R_0})}
+\norm{\tildePhiplussHigh{\ell-\sfrak}}^2_{W_{-1-\delta}^{\reg-\ell}(\Donetwo^{\geq R_0})}
+\norm{rV\tildePhiplussHigh{\ell-\sfrak}}^2_{W_{p-3}^{\reg-\ell-1}(\Donetwo^{\geq R_0})}
\notag\\
&\lesssim_{[R_0-M,R_0]} {}\norm{rV\tildePhiplussHigh{\ell-\sfrak}}^2_{W_{p-2}^{\reg-\ell-1}(\Sigmaone^{\geq R_0})}
+\norm{\tildePhiplussHigh{\ell-\sfrak}}^2_{W_{-2}^{\reg-\ell}(\Sigmaone^{\geq R_0})}
\notag\\
&+\veps\int_{\tb_1}^{\tb_2}\langle\tb-\tb_1\rangle^{-1-\varsigma}\norm{rV\tildePhiplussHigh{\ell-\sfrak}}^2_{W_{p-2}^{\reg-\ell-1}(\Sigmatb^{\geq R_0})}\di \tb\notag\\
&+\veps^{-1}\int_{\tb_1}^{\tb_2}\langle\tb-\tb_1\rangle^{1+\varsigma}\Big(\norm{\vartheta(\tildePhiplussHigh{\ell-\sfrak})}^2_{W_{p-4}^{\reg-\ell-s}(\Sigmatb^{\geq R_0})}
+\norm{\tildePhiplussHigh{\ell-\sfrak}}^2_{W_{p-6}^{\reg-\ell-1}(\Sigmatb^{\geq R_0})}\Big)\di \tb\notag\\
&\lesssim_{[R_0-M,R_0]} {}\norm{rV\tildePhiplussHigh{\ell-\sfrak}}^2_{W_{p-2}^{\reg-\ell-1}(\Sigmaone^{\geq R_0})}
+\norm{\tildePhiplussHigh{\ell-\sfrak}}^2_{W_{-2}^{\reg-\ell}(\Sigmaone^{\geq R_0})}
\notag\\
&
+\veps^{-1}\sum_{i=0}^{\ell-\sfrak}\int_{\tb_1}^{\tb_2}\langle\tb-\tb_1\rangle^{1+\varsigma}\norm{\tildePhiplussHigh{i}}^2_{W_{p-6}^{\reg-\ell-1}(\Sigmatb^{\geq R_0})}\di \tb
+\frac{\veps}{\varsigma}\sup_{\tb\in[\tb_1,\tb_2]}\norm{rV\tildePhiplussHigh{\ell-\sfrak}}^2_{W_{p-2}^{\reg-\ell-1}(\Sigmatb^{\geq R_0})}.
\end{align}
The last term can be absorbed by taking a supreme norm over the LHS and choosing $\veps$ sufficiently small compared to $\varsigma$, thus
\begin{align}
\label{eq:BEDC:Phiplus:0:extend:11}
\hspace{1ex}&\hspace{-1ex}
\norm{rV\tildePhiplussHigh{\ell-\sfrak}}^2_{W_{p-2}^{\reg-\ell-1}(\Sigmatwo^{\geq R_0})}
+\norm{\tildePhiplussHigh{\ell-\sfrak}}^2_{W_{-2}^{\reg-\ell}(\Sigmatwo^{\geq R_0})}
+\norm{\tildePhiplussHigh{\ell-\sfrak}}^2_{W_{-1-\delta}^{\reg-\ell}(\Donetwo^{\geq R_0})}
+\norm{rV\tildePhiplussHigh{\ell-\sfrak}}^2_{W_{p-3}^{\reg-\ell-1}(\Donetwo^{\geq R_0})}
\notag\\
&\lesssim_{[R_0-M,R_0]} {}\norm{rV\tildePhiplussHigh{\ell-\sfrak}}^2_{W_{p-2}^{\reg-\ell-1}(\Sigmaone^{\geq R_0})}
+\norm{\tildePhiplussHigh{\ell-\sfrak}}^2_{W_{-2}^{\reg-\ell}(\Sigmaone^{\geq R_0})}\notag\\
&\qquad\qquad\quad\,\,
+\frac{1}{\veps}\sum_{i=0}^{\ell-\sfrak}\int_{\tb_1}^{\tb_2}\langle\tb-\tb_1\rangle^{1+\varsigma}\norm{\tildePhiplussHigh{i}}^2_{W_{p-6}^{\reg-\ell-1}(\Sigmatb^{\geq R_0})}\di \tb.
\end{align}
From the Hardy's inequality \eqref{eq:HardyIneqLHS} and  the proven inequality \eqref{eq:EnerDecay:0to4:ip:1} with $i_1=i_2=\ell-\sfrak$, $p_1=p-4$ and $p_2=3.5$,  we have for $p\in [4,5)$ that
\begin{align}
\hspace{4ex}&\hspace{-4ex}\sum_{i=0}^{\ell-\sfrak}\int_{\tb_1}^{\tb_2}\langle\tb-\tb_1\rangle^{1+\varsigma}\norm{\tildePhiplussHigh{i}}^2_{W_{p-6}^{\reg-\ell-1}(\Sigmatb^{\geq R_0})}\di\tb\notag\\
\lesssim{}& \int_{\tb_1}^{\tb_2}\langle\tb-\tb_1\rangle^{1+\varsigma}F(\reg, \ell-\sfrak, p-4, \tb, \Psipluss)\di \tb\notag\\
\lesssim {}& \int_{\tb_1}^{\tb_2}\langle\tb-\tb_1\rangle^{1+\varsigma}\langle\tb-\tb_1\rangle^{p-15/2}\di \tb \times F(\reg+4, \ell-\sfrak, 3.5, \tb_1, \Psipluss)\notag\\
\lesssim{}&F(\reg+4, \ell-\sfrak, 3.5, \tb_1, \Psipluss).
\end{align}
Plugging this estimate into \eqref{eq:BEDC:Phiplus:0:extend:11} and adding the estimate \eqref{eq:BEDC:Phiplus:0} together, we prove the estimate \eqref{eq:BEDC:Phiplus:0:extend} for any $p\in [4,5)$ and $\tb_2>\tb_1\geq \tb_0$, with $\regl=4$.
\end{proof}

\subsubsection{Energy decay estimates for a fixed $\ell$ mode of the spin $-\sfrak$ component}
\label{subsect:WED:-minus}

Throughout this subsubsection, we focus on a fixed $\ell$ mode and, unless otherwise stated, we drop the subscript indicating the $\ell$ mode in the scalars constructed from the spin $-\sfrak$ component.

The presentation in this subsubsection \ref{subsect:WED:+} for the spin $-\sfrak$ component is in the same spirit of the one in the earlier subsubsection for the spin $+\sfrak$ component. 

Again, the energy decay estimates are proved by deriving global $r^p$ estimates for the wave systems in Sections \ref{sect:BEAM}--\ref{sect:extendedwavesystem} for the spin $-\sfrak$ component. In order to do so, we first introduce for the spin $-\sfrak$ component a few weighted energies that correspond to $r^p$-weighted energies with different values of $p$. 

\begin{definition}
\label{def:Ffts:Phiminus:-1to5}
Define $F(\reg,i,p,\tb,\Psiminuss)$ as follows:
\begin{itemize}
\item if $i\leq\ell$,
\begin{align*}
F(\reg,i,p,\tb,\Psiminuss)={}&0, \qquad\text{for } p\in [-1,0),\\
F(\reg,i,p,\tb,\Psiminuss)={}&\sum_{j=0}^{i+\sfrak} \Big(\norm{rV\tildePhiminuss{j}}^2_{W_{p-2}^{\reg-\sfrak-j-1}(\Sigmatb^{\geq R_0})}
+\norm{\tildePhiminuss{j}}^2_{W_{-2}^{\reg-\sfrak-j}(\Sigmatb^{\geq R_0})}\Big)\notag\\
&
+\mathbf{BE}^{\reg}_{\Sigmatb}(\Psiminuss),
\quad \text{for } p\in [0,2];
\end{align*}
\item additionally, for $i=\ell$ and $p\in [2,5)$,
\begin{align*}
F(\reg,\ell,p,\tb,\Psiminuss)={}& \sum_{j=0}^{\ell+\sfrak-1} \Big(\norm{rV\tildePhiminuss{j}}^2_{W_{0}^{\reg-\sfrak-j-1}(\Sigmatb^{\geq R_0})}
+\norm{\tildePhiminuss{j}}^2_{W_{-2}^{\reg-\sfrak-j}(\Sigmatb^{\geq R_0})}\Big)\notag\\
&+\norm{rV\tildePhiminuss{\ell+\sfrak}}^2_{W_{p-2}^{\reg-\ell-2\sfrak-1}(\Sigmatb^{\geq R_0})}
+\norm{\tildePhiminuss{\ell+\sfrak}}^2_{W_{-2}^{\reg-\ell-2\sfrak}(\Sigmatb^{\geq R_0})}
+\mathbf{BE}^{\reg}_{\Sigmatb}(\Psiminuss).
\end{align*}
\end{itemize}
For $j\in \mathbb{N}$, we similarly define $F(\reg,i,p,\tb,\Lxi^j\Psiminus)$ with all the scalars in the Sobolev norms acted by $\Lxi^j$.
\end{definition}

Next, we derive global $r^p$ estimates for the spin $-\sfrak$ component by applying the $r^p$ lemma \ref{lem:wave:rp} to each equation of the spin $-\sfrak$ component in the wave systems in Proposition \ref{prop:wave:Phihigh:pm1}, and then deduce energy decay estimates from the global $r^p$ estimates. Note that the upper bound of the range of value of $p$ is $2$, hence the achieved energy decay estimates are fairly weak. 

\begin{prop}
\label{prop:rp:im:lower:general}
For all $i< \sfrak$, $p\in [0,2]$ and $\tb_2>\tb_1\geq \tb_0$, there exists a nonnegative universal constant $\regl=\regl(\sfrak)$ such that
\begin{align}
\label{eq:BEDC:Phiminus:0:11}
F(\reg, i, p,\tb_2,\Psiminuss)
+\int_{\tb_1}^{\tb_2}F(\reg-1, i, p-1,\tb,\Psiminuss)\di \tb
\lesssim F(\reg+\regl, i, p,\tb_1,\Psiminuss).
\end{align}
Moreover, in both the case that $0\leq i_1<i_2<\sfrak$, $0\leq p_1,p_2\leq 2$ and the case that $0\leq i_1=i_2<\sfrak$, $0\leq p_1\leq p_2\leq 2$, there exists a nonnegative universal constant $\regl=\regl(j, i_2-i_1)$, which grows linearly in its arguments, such that for any $j\in \mathbb{N}$ and $\tb_2>\tb_1\geq \tb_0$,
\begin{align}\label{eq:BEDC:Phiminus:0:11:1}
F(\reg, i_1, p_1,\tb_2,\Lxi^j\Psiminuss)
\lesssim \langle \tb_2-\tb_1\rangle^{-2(i_2-i_1)-(p_2-p_1)-2j}F(\reg+\regl, i_2, p_2,\tb_1,\Psiminuss).
\end{align}
\end{prop}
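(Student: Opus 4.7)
My plan is to follow closely the strategy of Proposition \ref{prop:BEDC:Phiplus:0} and Corollary \ref{cor:EnerDecay:0to4:ip:1} for the spin $+\sfrak$ component, but now applied to the coupled wave system \eqref{eq:RWschwPhi012ope}--\eqref{eq:RWschwPhi01234ope} of Lemma \ref{lem:basicsystem} governing the scalars $\Phiminuss{j}$ for $0\le j\le i+\sfrak<2\sfrak$ (identifying $\tildePhiminuss{j}$ with $\Phiminuss{j}$ for $j<2\sfrak$ in Definition \ref{def:Ffts:Phiminus:-1to5}). First I would apply the $r^p$ estimates of Proposition \ref{prop:wave:rp} to each wave equation in the system. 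For $\sfrak\le j\le i+\sfrak$ the LHS carries a $\curlVR$-term, so $b_{V,-1}>0$ and the full $p\in[0,2]$ hierarchy \eqref{eq:rp:less2:2}--\eqref{eq:rp:lp=2:2} is available; for $j<\sfrak$ one has $b_V=0$ and only the $p=0$ version \eqref{eq:rp:p=0:v2} applies directly, but this suffices because the identity $rV\Phiminuss{j}=r^{-1}\mu\Phiminuss{j+1}$ converts each $rV\Phiminuss{j}$-norm appearing in $F$ into a weaker weighted norm of $\Phiminuss{j+1}$ that is already controlled either by the $r^p$ estimate one index higher or by the basic energy $\mathbf{BE}^{\reg}_{\Sigmatb}(\Psiminuss)$. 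The eigenvalue hypothesis in Proposition \ref{prop:wave:rp} holds on every $\ell\ge\sfrak$ mode by Lemma \ref{lem:eigenvalue:edthandprime}.

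Next I will take a weighted sum of these individual estimates over $j=0,\ldots,i+\sfrak$ with hierarchical coefficients $C_0\gg C_1\gg\cdots\gg C_{i+\sfrak}\gg 1$, chosen so that the source couplings---both the $O(r^{-1})$ couplings, absorbed by choosing $R_0$ sufficiently large, and the genuinely $O(1)$ lower-order coupling such as the $6M\Phiminustwo{0}$-term on the RHS of \eqref{eq:RWPhi12schwope}---can be absorbed into the principal bulk integrals on the LHS via Cauchy--Schwarz. The residual bounded-$r$ error terms implicit in $\lesssim_{[R_0-M,R_0]}$ will be controlled by adding a sufficiently large multiple of the energy and Morawetz estimate \eqref{eq:BEAM}, which produces the loss $\regl=\regl(\sfrak)$ of regularity on the RHS of \eqref{eq:BEDC:Phiminus:0:11}. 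Combining these gives the hierarchy \eqref{eq:BEDC:Phiminus:0:11}.

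The decay estimate \eqref{eq:BEDC:Phiminus:0:11:1} then follows, in the base case $j=0$, by interpolation via \cite[Lemma 5.2]{andersson2019stability} applied to \eqref{eq:BEDC:Phiminus:0:11}, using also the equivalence $F(\reg,i_1,2,\tb,\Psiminuss)\sim F(\reg,i_1+1,0,\tb,\Psiminuss)$ coming from $\Phiminuss{i_1+\sfrak+1}=\curlVR\Phiminuss{i_1+\sfrak}$. For general $j\in\mathbb{N}$ I will induct on $j$: since $\Lxi$ is Killing and commutes with all the wave equations in Lemma \ref{lem:basicsystem}, \eqref{eq:BEDC:Phiminus:0:11} applied to $\Lxi^j\Psiminuss$ provides the inductive step, and each additional $\Lxi$-derivative is traded for radial derivatives by rewriting $r^2V\Lxi\Phiminuss{i'}$ via the corresponding wave equation, exactly as in the proof of Corollary \ref{cor:EnerDecay:0to4:ip:1}.

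The main technical obstacle will be the genuinely $O(1)$ (rather than $O(r^{-1})$) coupling on the RHS of \eqref{eq:RWPhi12schwope} present only for $\sfrak=2$: it cannot be absorbed by large-$R_0$ smallness and must instead be dominated by the hierarchical weights $C_j$ together with the basic energy and Morawetz estimate \eqref{eq:BEAM}. This absorption mechanism is precisely what forces the regularity loss $\regl=\regl(\sfrak)$ in \eqref{eq:BEDC:Phiminus:0:11}, a loss that is absent in the corresponding spin $+\sfrak$ estimate \eqref{eq:BEDC:Phiplus:0}.
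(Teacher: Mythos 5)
Your overall strategy---applying the $r^p$ estimates of Proposition \ref{prop:wave:rp} to the coupled system, summing with hierarchical weights $C_0\gg C_1\gg\cdots$, and then obtaining decay via \cite[Lemma 5.2]{andersson2019stability} and a $j$-induction as in Corollary \ref{cor:EnerDecay:0to4:ip:1}---is indeed what the paper does, and it works for most of the range. However, there are two concrete problems.

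First, a misreading of Proposition \ref{prop:wave:rp}: you claim that for $j<\sfrak$ the vanishing of $b_V$ restricts you to the $p=0$ estimate \eqref{eq:rp:p=0:v2}. In fact the $b_{V,-1}>0$ versus $b_{V,-1}=0$ dichotomy only appears in the $p=0$ statement; the estimates \eqref{eq:rp:less2:2}, \eqref{eq:rp:lp=2:2}, \eqref{eq:rp:lp=2:2:v22} are stated and hold for any $b_{V,-1}\geq 0$. The paper applies them directly to \eqref{eq:RWPhi01schwope}, \eqref{eq:RWPhi02schwope}, \eqref{eq:RWPhi12schwope}, all of which have $b_V=0$. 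Your workaround via the identity $rV\Phiminuss{j}=\mu r^{-1}\Phiminuss{j+1}$ is therefore unnecessary, although the identity itself is correct.

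Second, and more seriously, your proposed absorption mechanism fails at the $p=2$ endpoint for $\sfrak=2$, $i=1$. The source of the $\Phiminustwo{3}$-equation \eqref{eq:RWPhi32schwope} is $6M\Phiminustwo{2}$, a genuine $O(1)$ coupling, and applying the $p=2$ estimate \eqref{eq:rp:lp=2:2} produces an error $\norm{\Phiminustwo{2}}^2_{W_{-1}^{\reg'}(\Donetwo^{\geq R_0})}$, with weight $r^{-1}$. This cannot be absorbed: the $p=2$ bulk for $\Phiminustwo{2}$ from \eqref{eq:rp:lp=2:2} is only $\norm{\Phiminustwo{2}}^2_{W_{-1-\delta}^{\reg''}}$ (weaker weight $r^{-1-\delta}$), and the basic Morawetz density $\mathbf{BM}^{\reg}_{\Sigmatb}(\Psiminuss)$ controls $\Phiminustwo{2}$ only at weight $r^{-3}$ or $r^{-4}$. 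No amount of hierarchical weighting, regularity loss, or enlargement of $R_0$ closes this gap; the Hardy inequality \eqref{eq:HardyIneqLHS} used for $p\in(0,2)$ degenerates precisely at $p=2$ (it requires $\alpha=p-2\neq0$). The paper instead switches to the time-weighted variant \eqref{eq:rp:lp=2:2:v22}, which replaces the troublesome $\norm{\vartheta}^2_{W_{-1}^{\reg}(\Donetwo)}$ error by $\frac{1}{\veps}\int\langle\tb-\tb_1\rangle^{1+\varsigma}\norm{\vartheta}^2_{W_{-2}^{\reg}(\Sigmatb)}\di\tb$, and then bootstraps against the interpolated decay estimate \eqref{eq:rp:less2:2:3m:112} (proved from the $p<2$ cases) to render the time-weighted integral finite. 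This bootstrap, not the basic Morawetz absorption you propose, is the actual source of the regularity loss $\regl(\sfrak)$. Relatedly, your diagnosis that the $6M\Phiminustwo{0}$ coupling in \eqref{eq:RWPhi12schwope} forces $\regl>0$ is not what the paper claims: the paper asserts $\regl=0$ for $\sfrak=2$, $i=0$.
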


\begin{proof}
In the case of $\sfrak=0$, this has been proven in Proposition \ref{prop:BEDC:Phiplus:0}. It remains to prove the estimates for $\sfrak=1,2$, and we consider the $\sfrak= 1$ and $\sfrak=2$ cases separately.

For $\sfrak=1$ case, we first consider the system of equations \eqref{eq:RWPhi01schwope} and
\eqref{eq:RWPhi11schwope}. These two subequations can be both put into the form of equation \eqref{eq:wave:rp} and the assumptions in Lemma \ref{lem:wave:rp} are satisfied. In particular, $\vartheta(\Phiminus{0})=2(r-3M)r^{-2}\Phiminus{1}$ and $\vartheta(\Phiminus{1})=0$. We apply to these two subequations the estimates of both \eqref{eq:rp:less2:2} and \eqref{eq:rp:lp=2:2} and sum them up, and the error term $\norm{\vartheta(\Phiminus{0})}^2_{W_{p-3}^{\reg}(\Donetwo^{\geq R_0})}\lesssim \norm{\Phiminus{1}}^2_{W_{-3}^{\reg}(\Donetwo^{\geq R_0})}$ can clearly be absorbed by taking $R_0\geq \hat{R}_0$ sufficiently large. Thus, this proves the estimate \eqref{eq:BEDC:Phiminus:0:11} with $\regl=0$ for all $i<1$, $p\in [0,2]$ and $\tb_2>\tb_1\geq \tb_0$, where the $p=0$ case follows from the energy and Morawetz estimate \eqref{eq:BEAM}.

Consider next the $\sfrak=2$ case. By exactly the same argument as for the $\sfrak=1$ case, one can show the estimate \eqref{eq:BEDC:Phiminus:0:11} with $\regl=0$  for all $i=0$, $p\in [0,2]$ and $\tb_2>\tb_1\geq \tb_0$.  Moreover, the estimate \eqref{eq:BEDC:Phiminus:0:11} with $\regl=0$  for $i=1$, $p=0$ and $\tb_2>\tb_1\geq \tb_0$ trivially holds due to the relation
\begin{align}\label{eq:equi:im:10}
F(\reg, 1, 0,\tb,\Psiminustwo)\sim F(\reg, 0, 2,\tb,\Psiminustwo)
\end{align}
 which is valid by Definition \ref{def:PsiPhietc}.
By putting quation \eqref{eq:RWPhi32schwope} of $\Phiminustwo{3}$ in the form of \eqref{eq:wave:rp}, one finds $\vartheta(\Phiminustwo{3})=-6M\Phiminustwo{2}$. We first apply the estimate \eqref{eq:rp:less2:2} and achieve for any $p\in (0,2)$ and $\tb_2>\tb_1\geq \tb_0$ that
  \begin{align}\label{eq:rp:less2:2:3m}
\hspace{2ex}&\hspace{-2ex}
\norm{rV\Phiminustwo{3}}^2_{W_{p-2}^{\reg-6}(\Sigmatwo^{\geq R_0})}
+\norm{\Phiminustwo{3}}^2_{W_{-2}^{\reg-5}(\Sigmatwo^{\geq R_0})}
+\norm{\Phiminustwo{3}}^2_{W_{p-3}^{\reg-5}(\Donetwo^{\geq R_0})}
\notag\\
&\lesssim_{[R_0-M,R_0]} {}\norm{rV\Phiminustwo{3}}^2_{W_{p-2}^{\reg-6}(\Sigmaone^{\geq R_0})}
+\norm{\Phiminustwo{3}}^2_{W_{-2}^{\reg-5}(\Sigmaone^{\geq R_0})}
+\norm{\Phiminustwo{2}}^2_{W_{p-3}^{{\reg-6}}(\Donetwo^{\geq R_0})}.
\end{align}
By the Hardy's inequality \eqref{eq:HardyIneqLHS}, we have for the last term
\begin{align}
\norm{\Phiminustwo{2}}^2_{W_{p-3}^{{\reg-6}}(\Donetwo^{\geq R_0})}
\lesssim_{[R_0-M,R_0]} {}\norm{rV\Phiminustwo{2}}^2_{W_{p-3}^{{\reg-6}}(\Donetwo^{\geq R_0})}\lesssim_{[R_0-M,R_0]}\norm{\Phiminustwo{3}}^2_{W_{p-5}^{{\reg-6}}(\Donetwo^{\geq R_0})}
\end{align}
which can thus be absorbed. Hence, we prove the estimate  \eqref{eq:BEDC:Phiminus:0:11} with $\regl=0$  for all $i=1$, $p\in [0,2)$ and $\tb_2>\tb_1\geq \tb_0$. Similarly to the proof of Corollary \ref{cor:EnerDecay:0to4:ip:1}, applying Lemma \ref{lem:hierarchyImpliesDecay} yields that there exists a nonnegative universal constant $\regl$ such that for all $p=2$ and $\tb_2>\tb_1\geq \tb_0$,
\begin{align}\label{eq:rp:less2:2:3m:112}
F(\reg, 0, 0,\tb_2,\Psiminustwo)\lesssim {}
\langle\tb_2-\tb_1\rangle^{-2-p}F(\reg+\regl, 1, p,\tb_1,\Psiminustwo).
\end{align}
We then apply the estimate \eqref{eq:rp:lp=2:2:v22} to equation \eqref{eq:RWPhi32schwope} of $\Phiminustwo{3}$ and achieve for any $p\in (0,2)$ and $\tb_2>\tb_1\geq \tb_0$ that
  \begin{align}\label{eq:rp:less2:2:3m:22}
\hspace{2ex}&\hspace{-2ex}
\norm{rV\Phiminustwo{3}}^2_{W_{0}^{\reg-6}(\Sigmatwo^{\geq R_0})}
+\norm{\Phiminustwo{3}}^2_{W_{-2}^{\reg-5}(\Sigmatwo^{\geq R_0})}
+\norm{\Phiminustwo{3}}^2_{W_{-1}^{\reg-5}(\Donetwo^{\geq R_0})}
\notag\\
&\lesssim_{[R_0-M,R_0]} {}\norm{rV\Phiminustwo{3}}^2_{W_{0}^{\reg-6}(\Sigmaone^{\geq R_0})}
+\norm{\Phiminustwo{3}}^2_{W_{-2}^{\reg-5}(\Sigmaone^{\geq R_0})}
\notag\\
&
{}+\veps\int_{\tb_1}^{\tb_2}\langle\tb-\tb_1\rangle^{-1-\varsigma}\norm{rV\Phiminustwo{3}}^2_{W_{0}^{\reg-6}(\Sigmatb^{\geq R_0})}\di \tb
+\frac{1}{\veps}\int_{\tb_1}^{\tb_2}\langle\tb-\tb_1\rangle^{1+\varsigma}\norm{\Phiminustwo{2}}^2_{W_{-2}^{\reg-6}(\Sigmatb^{\geq R_0})}
\di \tb.
\end{align}
The second last term on the RHS of \eqref{eq:rp:less2:2:3m:22} is absorbed by taking a supreme norm over the LHS and taking $\veps$ small enough, and, using  the estimate \eqref{eq:rp:less2:2:3m:112} with $p=1.5$, the last term is bounded by $\frac{1}{\veps}F(\reg+\regl, 1, 1.5,\tb_1,\Psiminustwo)$. Together with the proven  estimate  \eqref{eq:BEDC:Phiminus:0:11} with $\regl=0$  for all $i=1$, $p\in [0,2)$ and $\tb_2>\tb_1\geq \tb_0$, we conclude the $i=1$, $p=2$ case of  \eqref{eq:BEDC:Phiminus:0:11} and hence complete the proof of \eqref{eq:BEDC:Phiminus:0:11}.

The estimate \eqref{eq:BEDC:Phiminus:0:11:1} for the $j=0$ case follows simply by applying Lemma \ref{lem:hierarchyImpliesDecay} to \eqref{eq:BEDC:Phiminus:0:11} and the equivalence relation \eqref{eq:equi:im:10}.
The general $j$ case holds by the same way of arguing as in Corollary \ref{cor:EnerDecay:0to4:ip:1}.
\end{proof}

The final step is to extend the $p$-range in the above lemma, thus obtaining improved energy decay. 

\begin{lemma}
\label{lem:EnerDec:0to5:im}
Let $i_1,i_2\in \mathbb{N}$ and $p_1, p_2\in \mathbb{R}^+\cup \{0\}$ be such that either of the following  holds:
\begin{itemize}
\item $i_1=i_2<\ell$ and $p_1\leq p_2\leq 2$;
\item $i_1<i_2< \ell$, $p_1\leq 2$, $p_2\leq 2$;
\item $i_1= i_2=\ell$, $p_1\leq p_2<5$;
\item $i_1< i_2=\ell$, $p_1\leq 2$, $p_2<5$.
\end{itemize}
Then there exists a constant $\regl=\regl(j,i_2-i_1)$, which grows linearly in its arguments, such that for any $\tb_2>\tb_1\geq \tb_0$,
\begin{align}
\label{eq:EnerDecay:0to4:im:1}
\hspace{4ex}&\hspace{-4ex}
F(\reg, i_1, p_1,\tb_2,\Lxi^j\Psiminuss)
+\int_{\tb_2}^{\infty}F(\reg-1, i_1, p_1-1,\tb',\Lxi^j\Psiminuss)\di \tb'\notag\\
\lesssim {}&\langle\tb_2-\tb_1\rangle^{-2(i_2-i_1)-(p_2-p_1)-2j}F(\reg+\regl, i_2, p_2,\tb_1,\Psiminuss).
\end{align}
\end{lemma}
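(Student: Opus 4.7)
\medskip

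\noindent\textbf{Proof proposal.} The statement for the spin $-\sfrak$ component is the exact mirror of Lemma \ref{eq:EnerDecay:ip:ingeneral} for the spin $+\sfrak$ component, and the plan is to follow that argument verbatim after the required wave equations have been lined up. The key observation is that equation \eqref{eq:tildePhiminusi:Schw:generall:tildePhiminusi} for $\tildePhiminuss{i+2\sfrak}$ ($i\in\{0,\ldots,\ell-\sfrak\}$) has the same structural form as equation \eqref{eq:Phiplushighi:Schw:generall:tildePhiplusi} for $\tildePhiplussHigh{i}$, with $\edthR\edthR'$ in place of $\edthR'\edthR$. On an $\ell$ mode these two angular operators have eigenvalues $-(\ell-\sfrak)(\ell+\sfrak+1)$ and $-(\ell+\sfrak)(\ell-\sfrak+1)$ respectively; both are majorised by $-f_{\sfrak,i,1}=-i(i+2\sfrak+1)$ for $i\in\{0,\ldots,\ell-\sfrak\}$, with equality for the \emph{top-level} case $i=\ell-\sfrak$ exactly when the operator is $\edthR\edthR'$. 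Hence the hypotheses of Proposition \ref{prop:wave:rp}, in particular $b_{V,-1}>0$ and $b_{0,0}\leq 0$, hold at every level, and $b_{0,0}=0$ precisely at $i=\ell-\sfrak$, just as in the $+\sfrak$ case.

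The first step is to extend Proposition \ref{prop:rp:im:lower:general} from $i<\sfrak$ to the full range $i\leq\ell$ with $p\in[0,2]$. For $i\geq\sfrak$ one applies \eqref{eq:rp:p=0}, \eqref{eq:rp:less2:2} and \eqref{eq:rp:lp=2:2} to the wave equation \eqref{eq:tildePhiminusi:Schw:generall:tildePhiminusi} of $\tildePhiminuss{j}$ for $j=2\sfrak,\ldots,i+\sfrak$, exactly as in Proposition \ref{prop:BEDC:Phiplus:0}; the inhomogeneous terms $\sum_{j'} O(r^{-1})\Phiminuss{j'+2\sfrak}$ are absorbed by taking a suitable weighted sum over $j$ (with descending constants $C_0\gg C_1\gg\cdots$) and choosing $R_0$ sufficiently large, together with the equivalence $F(\reg,i-1,2,\tb,\Psiminuss)\sim F(\reg,i,0,\tb,\Psiminuss)$ coming from Definition \ref{def:tildePhiplusandminusHigh}. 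The lower-level terms $i<\sfrak$ are already controlled by Proposition \ref{prop:rp:im:lower:general} and are folded in at the end together with a sufficient multiple of the energy and Morawetz estimate \eqref{eq:BEAM} in order to absorb the error integrals supported on $[R_0-M,R_0]$.

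The second step is to extend from $p\in[0,2]$ to $p\in[2,5)$ in the top-level case $i=\ell$. Since the eigenvalue of $\edthR\edthR'-b_{0,0}$ acting on the $\ell$ mode of $\tildePhiminuss{\ell+\sfrak}$ vanishes, estimates \eqref{eq:rp:pleq4:2} and \eqref{eq:rp:pgeq4:2} apply. The range $p\in[2,4)$ is obtained directly from \eqref{eq:rp:pleq4:2} together with the bound
\begin{equation*}
\|\vartheta(\tildePhiminuss{\ell+\sfrak})\|^2_{W_{p-3}^{\reg-\ell-2\sfrak-1}(\Donetwo^{\geq R_0})}\lesssim\sum_{j=0}^{\ell-\sfrak}\|\tildePhiminuss{j+2\sfrak}\|^2_{W_{p-5}^{\reg-\sfrak-j-1}(\Donetwo^{\geq R_0})},
\end{equation*}
which is absorbed by a weighted sum of \eqref{eq:rp:lp=2:2:ip}-type estimates for $j=0,\ldots,\ell-\sfrak$ after choosing $\delta$ small relative to $4-p$. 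The range $p\in[4,5)$ then follows by applying \eqref{eq:rp:pgeq4:2}, absorbing the $\veps$-term by a supremum on the LHS, and controlling the resulting $\int\langle\tb-\tb_1\rangle^{1+\varsigma}\|\tildePhiminuss{j+2\sfrak}\|^2_{W_{p-6}^{\cdot}(\Sigmatb^{\geq R_0})}\di\tb$ through the already-proven $[2,4)$ estimate applied with $p_1=p-4$, $p_2=3.5$, together with the Hardy inequality \eqref{eq:HardyIneqLHS}—this reproduces verbatim the argument of Lemma \ref{eq:EnerDecay:ip:ingeneral}.

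The final step is to pass from fundamental $r^p$ estimates to the decay statement \eqref{eq:EnerDecay:0to4:im:1} and to extend to $j\geq 1$. The $j=0$ case is immediate by iterating on the hierarchy via \cite[Lemma 5.2]{andersson2019stability}; the general $j$ case follows by induction using that $\Lxi$ is Killing and by trading one $\Lxi$ derivative for a combination of $V$, angular and lower-order terms through equation \eqref{eq:tildePhiminusi:Schw:generall:tildePhiminusi}, exactly as in Corollary \ref{cor:EnerDecay:0to4:ip:1}. The main obstacle, as in the $+\sfrak$ case, is verifying that the lower-order coupling terms arising in the $\tildePhiminuss{i+2\sfrak}$ equation can be fully absorbed when $p$ approaches $5$; this is resolved by the bootstrap through the already-proven weaker $[2,4)$ bound, and no genuinely new difficulty appears beyond the one handled in Lemma \ref{eq:EnerDecay:ip:ingeneral}.
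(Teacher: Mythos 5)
Your overall plan would deliver the result, but one of your intermediate claims is factually wrong and your route is heavier than the paper's.

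On the eigenvalues: you assert that, on an $\ell$ mode, $\edthR'\edthR$ and $\edthR\edthR'$ have \emph{different} eigenvalues, $-(\ell-\sfrak)(\ell+\sfrak+1)$ and $-(\ell+\sfrak)(\ell-\sfrak+1)$, and that $b_{0,0}=0$ at $i=\ell-\sfrak$ ``exactly when the operator is $\edthR\edthR'$''. This is a mix-up of spin weights. In equation \eqref{eq:Phiplushighi:Schw:generall:tildePhiplusi} the operator $\edthR'\edthR$ acts on the spin $+\sfrak$ scalar $\tildePhiplussHigh{i}$, while in \eqref{eq:tildePhiminusi:Schw:generall:tildePhiminusi} the operator $\edthR\edthR'$ acts on the spin $-\sfrak$ scalar $\tildePhiminuss{i+2\sfrak}$; by \eqref{eq:l=l0mode:eigenvalue} both eigenvalues equal $-(\ell-\sfrak)(\ell+\sfrak+1)$. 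Equality with $-f_{\sfrak,\ell-\sfrak,1}$ at the top level therefore holds for \emph{both} chains, which is precisely why the paper can say the two wave equations are literally the \emph{same}, not merely structurally similar. The value $-(\ell+\sfrak)(\ell-\sfrak+1)$ you quote is what $\edthR\edthR'$ gives on a spin $+\sfrak$ scalar, which is not the case at hand.

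On the route: the paper does not re-derive the basic $r^{p}$ hierarchy over the whole range $i\le\ell$ as you propose. It splits into three cases. For $i_1\ge\sfrak$ the decay statement is transferred verbatim from Lemma \ref{eq:EnerDecay:ip:ingeneral}, since $\tildePhiminuss{j+2\sfrak}$ and $\tildePhiplussHigh{j}$ satisfy the same equation. For $i_1,i_2<\sfrak$ the statement is exactly Proposition \ref{prop:rp:im:lower:general}. For the mixed range $i_1<\sfrak\le i_2$, rather than proving a new $r^{p}$ estimate, the paper chains the two decay estimates at the intermediate time $\tb_1+(\tb_2-\tb_1)/2$, using $F(\reg,\sfrak-1,2,\tb,\Psiminuss)\sim F(\reg,\sfrak,0,\tb,\Psiminuss)$ as the bridge. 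Your alternative — re-run the absorption argument of Proposition \ref{prop:BEDC:Phiplus:0} over the full chain $\Phiminuss{0},\ldots,\Phiminuss{2\sfrak-1},\tildePhiminuss{2\sfrak},\ldots,\tildePhiminuss{\ell+\sfrak}$ and then iterate — is also correct (the upper chain is decoupled so the lower chain can be estimated afterwards), and it avoids the chaining step; but it duplicates work that the paper's ``same equation'' observation renders unnecessary, and it needs care in stating how the lower-chain $p$-weighted energies inside $F(\reg,i,p,\tb,\Psiminuss)$ for $i\ge\sfrak$ are to be dominated. The paper's chaining is the cleaner path.
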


\begin{proof}
For each $j\in \mathbb{N}$,
$\tildePhiplussHigh{j}$ and $\tildePhiminuss{j+2\sfrak}$ satisfy the same equations, therefore, analogous to the discussions for the spin $-\sfrak$ component, in particular, analogous to the statement in Lemma \ref{eq:EnerDecay:ip:ingeneral}, the statements in Lemma \ref{lem:EnerDec:0to5:im} are valid but under an additional, common assumption that $i_1\geq \sfrak$. Meanwhile, the case that $i_1<\sfrak$ and $i_2<\sfrak$ has been treated in Proposition \ref{prop:rp:im:lower:general}.  In the remainder case that $i_1<\sfrak$ and $i_2\geq \sfrak$, we have from Proposition \ref{prop:rp:im:lower:general} that there exists a $\regl=\regl(j,\sfrak-i_1)$ such that
\begin{align*}
F(\reg, i_1, p_1,\tb_2,\Lxi^j\Psiminuss)
\lesssim{}& \langle \tb_2-\tb_1\rangle^{-2(\sfrak-1-i_1)-(2-p_1)-2j}F(\reg+\regl, \sfrak-1, 2,\tb_1+(\tb_2-\tb_1)/2,\Psiminuss)\\
\lesssim{}&\langle \tb_2-\tb_1\rangle^{-2(\sfrak-i_1)+p_1-2j}F(\reg+\regl, \sfrak, 0,\tb_1+(\tb_2-\tb_1)/2,\Psiminuss),
\end{align*}
where the last step is due to the relation $F(\reg, \sfrak-1, 2,\tb,\Psiminuss)\sim F(\reg, \sfrak, 0,\tb,\Psiminuss)$ by Definition \ref{def:PsiPhietc}.
On the other hand, the above discussions yield that there exists a $\regl=\regl(i_2-\sfrak)$ such that
\begin{align*}
F(\reg, \sfrak, 0,\tb_1+(\tb_2-\tb_1)/2,\Psiminuss)\lesssim{}&
\langle \tb_2-\tb_1\rangle^{-2(i_2-\sfrak)-p_2}
F(\reg+\regl, i_2, p_2,\tb_1,\Psiminuss)
\end{align*}
Combining these two estimates then implies the desired estimate.
\end{proof}

\subsubsection{Proof of the weak decay estimates in Proposition \ref{prop:weakdecay:summary}}
\label{subsect:WED:ptw:plusminus}

Given the energy decay estimates above, we now give a proof for Proposition \ref{prop:weakdecay:summary}. 

First, we notice by definition
that there exists a universal constant $\regl\geq 0$ such that for any $p\geq 0$, the following holds
\begin{subequations}
\label{eq:initialdata:equirela:pm}
\begin{align}
F(\reg-\regl, i-s, p,\tb,\Psipluss)\lesssim \InizeroEnergypluss{\tb}{i}{\reg}{p}&\lesssim F(\reg+\regl, i, p,\tb,\Psipluss) , \quad \text{for } \sfrak\leq i\leq \ell,\\
F(\reg-\regl, i, p,\tb,\Psiminuss)\lesssim \InizeroEnergyminuss{\tb}{i}{\reg}{p}&\lesssim F(\reg+\regl, i, p,\tb,\Psiminuss), \quad \text{for } 0\leq i\leq \ell.
\end{align}
\end{subequations}

Suppose  the spin $+\sfrak$ component is supported on a single $\ell$ mode. For any  $j\in \mathbb{N}$ and $\tb_2>\tb_1\geq \tb_0$, we utilize the estimates in Lemma \ref{eq:EnerDecay:ip:ingeneral} and conclude
\begin{itemize}
\item
the estimate \eqref{eq:Sobolev:3}  implies that for any $0\leq i\leq \ell-\sfrak$,  $0\leq p<5$,
\begin{subequations}
\begin{align}
\absCDeri{\Lxi^j(r^{-1}\Psipluss)}{\reg}
\lesssim{}&\bigg(\int_{\tb}^{\infty}F(\reg+\regl, 0, -1,\tb',\Lxi^j\Psipluss)\di \tb'
\int_{\tb}^{\infty}F(\reg+\regl, 0, -1,\tb',\Lxi^{j+1}\Psipluss)\di \tb'\bigg)^{\frac{1}{4}}\notag\\
\lesssim{}&(F(\reg+\regl, i, p,\tb_0,\Psipluss))^{\half}\tb^{-(p+1)/2-i-j}.
\end{align}
\item
the estimate \eqref{eq:Sobolev:1}  implies that for any $0\leq i\leq \ell-\sfrak$,  $0\leq p<5$,
\begin{align}
\absCDeri{\Lxi^j(r^{-1/2}\Psipluss)}{\reg}
\lesssim{}&\big(F(\reg+\regl, 0, 0,\tb,\Lxi^j\Psipluss)\big)^{\frac{1}{2}}
\lesssim(F(\reg+\regl, i, p,\tb_0,\Psipluss))^{\half}\tb^{-p/2-i-j},
\end{align}
\item
the estimate \eqref{eq:Sobolev:2} with $\alpha=\vartheta$, $\vartheta$ sufficiently small, implies that for any $(i,p)\in \{0< i\leq \ell-\sfrak, 0\leq p<5\}\cup\{i=0, p>1\} $
\begin{align}
\absCDeri{\Lxi^j\Psipluss}{\reg}
\lesssim{}&(F(\reg+\regl, 0, 1-\vartheta,\tb,\Psipluss)F(\reg+\regl, 0, 1+\vartheta,\tb,\Psipluss))^{\frac{1}{4}}\notag\\
\lesssim {}&\big(F(\reg+\regl, 0, 0,\tb,\Lxi^j\Psipluss)\big)^{\frac{1}{2}}
\tb^{-(p-1)/2-i-j},
\end{align}
\end{subequations}
\end{itemize}
Combining the above three estimates together yields that there exists a $\regl=\regl(j, i)$ such that for any $j\in \mathbb{N}$ and $(i,p)\in \{0< i\leq \ell-\sfrak, 0\leq p<5\}\cup\{i=0, p>1\} $,
\begin{align}
\absCDeri{\Lxi^j(r^{-1}\Psipluss)}{\reg}
\lesssim{}(F(\reg+\regl, i, p,\tb_0,\Psipluss))^{\half}v^{-1}\tb^{-(p-1)/2-i-j}.
\end{align}

Suppose  the spin $-\sfrak$ component is supported on a single $\ell$ mode.  Similarly, we employ the estimates in Lemma \ref{lem:EnerDec:0to5:im} together with the estimates \eqref{eq:Sobolev:1}--\eqref{eq:Sobolev:3} to conclude that that there exists a $\regl=\regl(j, i)$ such that for any $j\in \mathbb{N}$ and $(i,p)\in \{0< i\leq \ell, 0\leq p<5\}\cup\{i=0, p>1\} $,
\begin{align}
\sum_{n=0}^{\sfrak}\absCDeri{\Lxi^j(r^{-1}(r^2 V)^n\Psiminuss)}{\reg}
\lesssim{}(F(\reg+\regl, i, p,\tb_0,\Psiminuss))^{\half}v^{-1}\tb^{-(p-1)/2-i-j}.
\end{align}
Taking into account the relation \eqref{eq:initialdata:equirela:pm}, the above thus proves the estimate \eqref{eq:weakdecay:ell:ipm}.

In the end, we consider the case that the spin $+ \sfrak$ component is supported on $\ell\geq \ell_0$ modes with $\ell_0\geq \sfrak$, and the case for the spin $-\sfrak$ component can be analogously treated. In fact, the estimates in Lemma \ref{lem:wave:rp} can be applied to equation \eqref{eq:Phiplushighi:Schw:generall:tildePhiplusi}, hence the estimate \eqref{eq:BEDC:Phiplus:0} holds for all $0\leq i\leq \ell-\sfrak$, $p\in [0,2]$ and $\tb_2>\tb_1\geq \tb_0$. By going through the remaining proof in Section \ref{subsect:WED:+}, this yields
point \eqref{pt:WED:highmodes:+} of Proposition \ref{prop:weakdecay:summary}.

\begin{remark}
\label{rem:PhiplussHigh:almostsharp}
The above proof also yields that if the spin $s=\pm \sfrak$ components are supported on a fixed $\ell$ mode, $\ell\geq \sfrak$, then for any $j$, $0\leq i\leq \ell-\sfrak$ and $0\leq p<5$,  there exists a $\regl=\regl(j, i)$ such that in the region $r\geq 4M$,
\begin{align}
\absCDeri{\Lxi^j(r^{-1}\PhiplussHigh{i})}{\reg}
\lesssim{}&(\InizeroEnergypluss{\tb}{\ell}{\reg+\regl}{p})^{\half}v^{-1}\tb^{-(p-1)/2-(\ell-\sfrak-i)-j}.
\end{align}
\end{remark}

\subsection{Newman--Penrose constants}
\label{sect:NPconsts:Maxwell}

We construct the N--P constants for the spin $\pm \sfrak$ components in this subsection. As will be manifest soon, these N--P constants are in fact in terms of fixed modes of the components. The vanishing or non-vanishing property of these constants determines the sharp decay rate for the energy of the spin $\pm \sfrak$ components. Further, these constants are of paramount importance in characterising the precise asymptotics of any fixed mode or the field itself in proving the Price's law.

We start with defining the N--P constants.

\begin{definition}
\label{def:NPCs}
Let $\ell\in \mathbb{N}$ and $\ell\geq \sfrak$. Define the $(m,\ell)$-th N--P constants of  the spin $s=+ \sfrak$ and $s=-\sfrak$ components to be $\NPCPs{\ell}=\lim\limits_{\rb\to\infty}\curlVR(\tildePhiplussHigh{\ell-\sfrak})_{m,\ell}$ and $\NPCNs{\ell}=\lim\limits_{\rb\to\infty}\curlVR
(\tildePhiminuss{\ell+\sfrak})_{m,\ell}$, respectively, where $(\tildePhiplussHigh{\ell-\sfrak})_{m,\ell}$ and $(\tildePhiminuss{\ell+\sfrak})_{m,\ell}$ are defined as in Definition \ref{def:tildePhiplusandminusHigh}  from the $(m,\ell)$ mode of the spin $+\sfrak$ and $-\sfrak$ components, respectively. \end{definition}

\begin{remark}
As will be shown in Proposition \ref{prop:NPCsindepentonu}, these N--P constants are independent of $\tb$ under some general conditions, a fact which justifies they are indeed  real constants.
\end{remark}

These N--P constants are defined in such a way that only the $(m,\ell)$ mode is relevant to the $(m,\ell)$-th N--P constant. Hence, \textbf{in the rest of this subsection, we will always assume that the spin $\pm \sfrak$ components are supported only on the $(m,\ell)$ mode.}

The following two propositions are to discuss the properties of the N--P constants of the spin $\pm \sfrak$ components.

\begin{prop}
\label{prop:nullinfBeha:PhiplusiandPhinsHighi} Let $\ell\geq \sfrak$ and $\reg\in \mathbb{N}$.  Let $\regl=\regl(\ell)>0$ be suitably large.
\begin{enumerate}
\item Assume $\InizeroEnergypluss{\tb_0}{\ell}{\reg+\regl}{0}<\infty$
which is defined as in Definition \ref{def:energyonSigmazero:Schw}.
\begin{enumerate}
  \item[$(i)$] If
     $\lim\limits_{r\to\infty}\sum\limits_{j=0}^{\ell-\sfrak}\absCDeri{
     \PhiplussHigh{j}}{\reg}\vert_{\Sigmazero}
      <\infty$, then for any $\tb\geq \tb_0$, $\lim\limits_{r\to\infty}\sum\limits_{j=0}^{\ell-\sfrak} \absCDeri{\PhiplussHigh{j}}{\reg}\vert_{\Sigmatb}<\infty$. The same statement holds if one replaces all $\PhiplussHigh{j}$ by $\tildePhiplussHigh{j}$;

  \item[$(ii)$]  If $\lim\limits_{r\to\infty}\Big(\sum\limits_{j=0}^{\ell-\sfrak}\absCDeri{
      \PhiplussHigh{j}}{\reg}\vert_{\Sigmazero}
      +r^{-\alpha}\absCDeri{\PhiplussHigh{\ell-\sfrak+1}}{\reg}\vert_{\Sigmazero}\Big)
      <\infty$ for some $\alpha\in [0,2]$, then for any $\tb\geq \tb_0$, $\lim\limits_{r\to\infty}
      \Big(\sum\limits_{j=0}^{\ell-\sfrak}
      \absCDeri{\PhiplussHigh{j}}{\reg}\vert_{\Sigmatb}
      +r^{-\alpha}\absCDeri{\PhiplussHigh{\ell-\sfrak+1}}{\reg}\vert_{\Sigmatb}\Big)<\infty$.
      The same statement holds if one replaces all $\PhiplussHigh{j}$ and $\PhiplussHigh{\ell-\sfrak+1}$ by $\tildePhiplussHigh{j}$ and $\tildePhiplussHigh{\ell-\sfrak+1}$, respectively;
           \end{enumerate}
\item Assume $\InizeroEnergyminuss{\tb_0}{\ell}{\reg+\regl}{0}<\infty$
which is defined as in Definition \ref{def:energyonSigmazero:Schw}.
\begin{enumerate}
  \item[$(iii)$] If $\lim\limits_{r\to\infty}\sum\limits_{j=0}^{\ell+\sfrak}\absCDeri{
     \Phiminuss{j}}{\reg}\vert_{\Sigmazero}
      <\infty$, then for any $\tb\geq \tb_0$, $\lim\limits_{r\to\infty}\sum\limits_{j=0}^{\ell+\sfrak}      \absCDeri{\Phiminuss{j}}{\reg}\vert_{\Sigmatb}<\infty$.
      The same statement holds if one replaces all $\Phiminuss{j}$ by $\tildePhiminuss{j}$;
  \item[$(iv)$] If $\lim\limits_{r\to\infty}\Big(\sum\limits_{j=0}^{\ell+\sfrak}
      \absCDeri{
      \Phiminuss{j}}{\reg}\vert_{\Sigmazero}
      +r^{-\alpha}\absCDeri{
      \Phiminuss{\ell+\sfrak+1}}{\reg}\vert_{\Sigmazero}\Big)
      <\infty$  for some $\alpha\in [0,2]$, then for any $\tb\geq \tb_0$, $\lim\limits_{r\to\infty}\Big(\sum\limits_{j=0}^{\ell+\sfrak}
      \absCDeri{\Phiminuss{j}}{\reg}\vert_{\Sigmatb}
      +r^{-\alpha}\absCDeri{\Phiminuss{\ell+\sfrak+1}}{\reg}\vert_{\Sigmatb}\Big)<\infty$.
      The same statement holds if one replaces all $\Phiminuss{j}$ and $\Phiminuss{\ell+\sfrak+1}$ by $\tildePhiminuss{j}$ and $\tildePhiminuss{\ell+\sfrak+1}$, respectively.
\end{enumerate}
\end{enumerate}
\end{prop}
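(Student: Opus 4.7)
The strategy I would adopt is to read the wave equations of Proposition \ref{prop:wave:Phihigh:pm1} as transport equations along outgoing null directions and propagate pointwise bounds at $\Scri$ from $\Sigmazero$ to $\Sigmatb$ by integration in $u$, then bootstrap with the weak decay estimates of Proposition \ref{prop:weakdecay:summary} and Remark \ref{rem:PhiplussHigh:almostsharp} to control all source terms. Since $(\PhiplussHigh{i},\tildePhiplussHigh{i})$ and $(\Phiminuss{i+2\sfrak},\tildePhiminuss{i+2\sfrak})$ obey structurally identical equations and the tilded scalars differ from the untilded ones only by constant $M$-weighted combinations of lower-index siblings (Definition \ref{def:tildePhiplusandminusHigh}), it is enough to carry out the argument for the spin $+\sfrak$ component and for the untilded scalars; the other three cases follow by relabeling.

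First I would rewrite \eqref{eq:Phiplushighi:Schw:generall} using $-\mu Y=-2\pu$ and the abbreviation $\Psi_i\doteq\curlVR\PhiplussHigh{i}$ as a pointwise linear transport equation along $\pu$ on a fixed $(m,\ell)$ mode,
\begin{align*}
-2\pu\Psi_i+f_{\sfrak,i,2}(r-3M)r^{-2}\Psi_i
=-(\edthR'\edthR+f_{\sfrak,i,1}-6f_{\sfrak,i,3}Mr^{-1})\PhiplussHigh{i}-g_{\sfrak,i}M\PhiplussHigh{i-1},
\end{align*}
and analogously for $\tildePhiplussHigh{i}$ with $\sum_{j\le i}h_{\sfrak,i,j}\PhiplussHigh{j}$ on the right-hand side, where each $h_{\sfrak,i,j}=O(r^{-1})$. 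The coefficient of $\Psi_i$ on the left-hand side is integrable in $u$ along any finite $u$-segment uniformly in $v\ge R$ large, and on a fixed $\ell$-mode the term $\edthR'\edthR+f_{\sfrak,i,1}$ produces only a bounded algebraic multiple of $\PhiplussHigh{i}$.

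The proof then proceeds by induction on $i\in\{0,1,\ldots,\ell-\sfrak\}$. For the base case $i=0$ I would integrate the transport equation along a constant-$v$ outgoing null ray from $\Sigmazero$ to $\Sigmatb$. The weak decay estimates of Proposition \ref{prop:weakdecay:summary} combined with Remark \ref{rem:PhiplussHigh:almostsharp} give uniform pointwise control of $\PhiplussHigh{0}$ (and its angular/$\CDeri$-derivatives) on the integration path, so a Grönwall estimate yields
\begin{align*}
\lim_{r\to\infty}\absCDeri{\PhiplussHigh{0}}{\reg}\Big|_{\Sigmatb}
\lesssim\lim_{r\to\infty}\absCDeri{\PhiplussHigh{0}}{\reg}\Big|_{\Sigmazero}+C(\tb),
\end{align*}
with $C(\tb)$ finite. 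The inductive step runs identically: once the stated finiteness is known for all $j<i$, the right-hand source in the transport equation for $\Psi_i$ is uniformly bounded on the relevant null ray (using the $\InizeroEnergypluss{\tb_0}{\ell}{\reg+\regl}{0}<\infty$ hypothesis to feed Proposition \ref{prop:weakdecay:summary}), and the same integration closes the bound for $\PhiplussHigh{i}$. To obtain the full $\absCDeri{\cdot}{\reg}$ norm rather than the bare limit, I would commute the equation with $\edthR$, $\edthR'$, $rV$, $\Lxi$; these preserve the eigenvalue structure on a fixed $\ell$-mode, the relation \eqref{def:vectorVRintermsofprb} lets one trade $Y$ for $\Lxi$ and $\prb$ away from the horizon, and $\Lxi$ is Killing, so each commuted quantity satisfies a transport equation of the same shape with lower-order source.

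For points (ii) and (iv), the scalar $\PhiplussHigh{\ell-\sfrak+1}$ (resp.\ $\Phiminuss{\ell+\sfrak+1}$) lies outside the favorable range where the potential in \eqref{eq:Phiplushighi:Schw:generall} is nonpositive, so I would treat it directly as the derived quantity $\curlVR\PhiplussHigh{\ell-\sfrak}$ and run one additional transport integration with the $r^{-\alpha}$ weight attached, using the assumed bound $r^{-\alpha}\absCDeri{\PhiplussHigh{\ell-\sfrak+1}}{\reg}\vert_{\Sigmazero}<\infty$ as initial data and the already-established control of $\PhiplussHigh{\ell-\sfrak}$ on $\Sigmatb$ to supply the source; the restriction $\alpha\in[0,2]$ is precisely what is needed so that the $r^{p}$-weighted energies of Proposition \ref{prop:BEDC:Phiplus:0} remain finite along the characteristic and the Grönwall coefficient stays integrable. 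The main technical obstacle I anticipate is the linear coupling of $\PhiplussHigh{\ell-\sfrak+1}$ to $\PhiplussHigh{\ell-\sfrak}$ through the $g_{\sfrak,\ell-\sfrak+1}M$ term; this is exactly the coupling that the tilded construction of Definition \ref{def:tildePhiplusandminusHigh} is designed to cancel, so working with $\tildePhiplussHigh{\ell-\sfrak+1}$ (and pulling the conclusion back to $\PhiplussHigh{\ell-\sfrak+1}$ via the defining linear relation) trivializes the coupling and closes the argument.
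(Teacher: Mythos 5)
Your approach — characteristic integration along constant-$v$ null cones, bootstrapping with the a priori pointwise bounds — is the same family of argument the paper uses (the proof explicitly defers to Propositions 3.4 and 3.5 of \cite{angelopoulos2018vector}, which are proved by exactly this kind of transport argument). The paper differs only in that it extracts the pointwise source bound from $\InizeroEnergypluss{\tb_0}{\ell}{\reg+\regl}{0}<\infty$ via the $r^p$ energy hierarchy and Sobolev embedding \eqref{eq:Sobolev:1} (giving a $r^{1/2}$-type bound on $\Sigmatb$), rather than invoking Proposition \ref{prop:weakdecay:summary} and Remark \ref{rem:PhiplussHigh:almostsharp}, which are built on the same ingredients anyway. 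That difference is cosmetic.

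There is, however, a genuine gap in the way you close the base case and the induction. The $u$-transport you write down controls $\Psi_i=\curlVR\PhiplussHigh{i}=\PhiplussHigh{i+1}$, not $\PhiplussHigh{i}$ itself: it is a first-order ODE in $u$ for the quantity $\curlVR\PhiplussHigh{i}$, and integrating it from $\Sigmazero$ to $\Sigmatb$ yields control of $\curlVR\PhiplussHigh{i}\vert_{\Sigmatb}$ in terms of $\curlVR\PhiplussHigh{i}\vert_{\Sigmazero}$ and the bulk source. Your base-case conclusion ``a Grönwall estimate yields $\lim_{r\to\infty}\absCDeri{\PhiplussHigh{0}}{\reg}\vert_{\Sigmatb}\lesssim \lim_{r\to\infty}\absCDeri{\PhiplussHigh{0}}{\reg}\vert_{\Sigmazero}+C(\tb)$'' therefore does not follow from that transport estimate. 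What the transport gives you is information about $V\PhiplussHigh{0}=\mu r^{-2}\Psi_0$; to recover the limit of $\PhiplussHigh{0}$ itself one still has to integrate $V\PhiplussHigh{0}$ in $v$ from a fixed $r=R_0$ out to $\Scri$, and one needs the propagated bound on $\Psi_0$ to know that this $v$-integral converges. This $v$-integration step is what actually produces the statement $\lim_{r\to\infty}\PhiplussHigh{j}\vert_{\Sigmatb}<\infty$ in the AAG argument, and it is entirely absent from your sketch. Relatedly, the weak decay estimates you cite only yield boundedness in $r$ at each fixed $\tb$ (and only for $\ell\geq\sfrak+1$ at $p=0$); they do not give convergence as $r\to\infty$, which is what the proposition asserts and what the downstream Proposition \ref{prop:NPCsindepentonu} requires.

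For the same reason the induction is more naturally run downward, not upward. The key structural feature is that only at the top index $i=\ell-\sfrak$ does the angular eigenvalue cancel $f_{\sfrak,\ell-\sfrak,1}$, making the source in \eqref{eq:Phiplushighi:Schw:generall:tildePhiplusi} genuinely $O(r^{-1})\tildePhiplussHigh{j}$ (hence $o(1)$ at $\Scri$ after the $u$-integral, using the Sobolev bound $\tildePhiplussHigh{j}=O(r^{1/2})$). One first propagates the finiteness of $\lim\curlVR\tildePhiplussHigh{\ell-\sfrak}$ via the $u$-transport, then integrates $V\tildePhiplussHigh{\ell-\sfrak}=\mu r^{-2}\curlVR\tildePhiplussHigh{\ell-\sfrak}$ in $v$ to get $\lim\tildePhiplussHigh{\ell-\sfrak}\vert_{\Sigmatb}<\infty$, and then descends: for $j<\ell-\sfrak$, once $\tildePhiplussHigh{j+1}$ is known to be bounded, $V\tildePhiplussHigh{j}=O(r^{-2})$ and the $v$-integral converges. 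Your upward version would require $\PhiplussHigh{0}$ to have a finite limit at the start, which is exactly what is not yet established on $\Sigmatb$. The handling of (ii) and (iv) via the tilded scalars to kill the $g_{\sfrak,i}M\PhiplussHigh{i-1}$ coupling is correct, and the reduction to the spin $+\sfrak$ case is fine, but the core iteration needs to be rearranged along the lines above.
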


\begin{proof}
The assumption $\InizeroEnergypluss{\tb_0}{\ell}{\reg+\regl}{0}<\infty$ in particular yields that for any $\tb\geq \tb_0$ and any $1\leq j\leq \ell$,
\begin{align}
\label{eq:NPconstantProp:energy}
\norm{\Psipluss}^2_{W_{-2}^{\reg}(\Sigmatb)}
+\sum_{j=0}^{\ell-\sfrak}\norm{\PhiplussHigh{j}}^2_{W_{-2}^{\reg}(\Sigmatb^{\geq 4M})}
<\infty
\end{align}
and
\begin{align}
\label{eq:NPconstantProp:ptw}
\sup_{\Sigmatb}\int_{\Sphere} r^{-1}\absCDeri{\Psipluss}{\reg}^2\di^2\mu
+\sup_{\Sigmatb\cap\{\rb\geq 4M\}}\sum_{j=0}^{\ell-\sfrak}\int_{\Sphere} r^{-1}\absCDeri{\PhiplussHigh{j}}{\reg}^2\di^2\mu <\infty.
\end{align}
Note that the first estimate \eqref{eq:NPconstantProp:energy} follows from Lemma \ref{eq:EnerDecay:ip:ingeneral} and the relation  \eqref{eq:initialdata:equirela:pm}, and the second estimate \eqref{eq:NPconstantProp:ptw} follows from the Sobolev-type estimate \eqref{eq:Sobolev:1} together with the estimate \eqref{eq:NPconstantProp:energy}.
The rest of the proof is similar to the one of \cite[Propositions 3.4 and 3.5]{angelopoulos2018vector} and we omit it.
\end{proof}

\begin{prop}
\label{prop:NPCsindepentonu}
Let $\ell\geq \sfrak$, and let $\regl=\regl(\ell)>0$ be suitably large.
  \begin{itemize}
      \item[$(i)$] Assume $\InizeroEnergypluss{\tb_0}{\ell}{\regl}{0}<\infty$, and assume $\lim\limits_{r\to\infty}
          \sum\limits_{j=0}^{\ell-\sfrak}
          (\abs{
      \tildePhiplussHigh{j}}+\abs{
      \curlVR\tildePhiplussHigh{j}})\vert_{\Sigmazero}
      <\infty$, then the $(m,\ell)$-th N--P constant $\NPCPs{\ell}$ is finite and independent of $\tb$;
       \item[$(ii)$] Assume $\InizeroEnergyminuss{\tb_0}{\ell}{\regl}{0}<\infty$, and assume $\lim\limits_{r\to\infty}\sum\limits_{j=0}^{\ell+\sfrak}
          (\abs{
      \tildePhiminus{j}}+\abs{
      \curlVR\tildePhiminuss{j}})\vert_{\Sigmazero}
      <\infty$, then the $(m,\ell)$-th N--P constant $\NPCNs{\ell}$ is finite and independent of $\tb$.
      \end{itemize}
\end{prop}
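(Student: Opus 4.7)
The plan is to argue (i) and (ii) in parallel; we describe (i) in detail and indicate below how (ii) follows. The proof splits into (a) finiteness of $\lim_{\rb\to\infty}\curlVR\tildePhiplussHigh{\ell-\sfrak}|_{\Sigmatb}$ for each $\tb\geq\tb_0$ (so that $\NPCPs{\ell}(\tb)$ is well defined), and (b) its independence of $\tb$. For (a), Definition \ref{def:tildePhiplusandminusHigh} shows that the hypothesis $\lim_{r\to\infty}\sum_{j=0}^{\ell-\sfrak}(|\tildePhiplussHigh{j}|+|\curlVR\tildePhiplussHigh{j}|)|_{\Sigmazero}<\infty$ is equivalent to $\lim_{r\to\infty}\bigl(\sum_{j=0}^{\ell-\sfrak}|\PhiplussHigh{j}|+|\PhiplussHigh{\ell-\sfrak+1}|\bigr)|_{\Sigmazero}<\infty$, with $\PhiplussHigh{\ell-\sfrak+1}=\curlVR\PhiplussHigh{\ell-\sfrak}$. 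Applying Proposition \ref{prop:nullinfBeha:PhiplusiandPhinsHighi}(ii) with $\alpha=0$ propagates this bound to every $\Sigmatb$, and unwinding the recursion in Definition \ref{def:tildePhiplusandminusHigh} delivers the finiteness of $\lim_{\rb\to\infty}|\curlVR\tildePhiplussHigh{\ell-\sfrak}||_{\Sigmatb}$.

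For (b), the crucial structural observation is that on the $(m,\ell)$ mode the wave equation \eqref{eq:Phiplushighi:Schw:generall:tildePhiplusi} at $i=\ell-\sfrak$ loses its zeroth-order term in $\tildePhiplussHigh{\ell-\sfrak}$ itself, because $f_{\sfrak,\ell-\sfrak,1}=(\ell-\sfrak)(\ell+\sfrak+1)$ exactly cancels the eigenvalue $-(\ell-\sfrak)(\ell+\sfrak+1)$ of $\edthR'\edthR$ on that mode. Writing $\mu Y=2\pu$ and setting $\psi=\curlVR\tildePhiplussHigh{\ell-\sfrak}$, the equation reduces to
\begin{align*}
2\pu\psi=f_{\sfrak,\ell-\sfrak,2}(r-3M)r^{-2}\psi+\sum_{j=0}^{\ell-\sfrak}h_{\sfrak,\ell-\sfrak,j}\PhiplussHigh{j},
\end{align*}
where each coefficient on the right is $O(r^{-1})$ by Proposition \ref{prop:wave:Phihigh:pm1}. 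Since $\tb=v-\hhyp$ with $\hhyp\sim 2r$ at large $r$, the limit $\rb\to\infty$ along $\Sigmatb$ corresponds in double null coordinates to $v\to\infty$ with $u\to\tb$, so $\NPCPs{\ell}(\tb)=\lim_{v\to\infty}\psi(\tb,v)$. For $\tb_0\leq\tb_1<\tb_2$, part (a) applied over the compact family $\{\Sigmatb\}_{\tb\in[\tb_1,\tb_2]}$ supplies a bound for $\psi$ and each $\PhiplussHigh{j}$ near $\Scri$ that is uniform in $u\in[\tb_1,\tb_2]$; hence $\pu\psi=O(r^{-1})$ uniformly. Integrating $\psi(\tb_2,v)-\psi(\tb_1,v)=\int_{\tb_1}^{\tb_2}\pu\psi\,du$ and passing to the limit $v\to\infty$ by dominated convergence yields $\NPCPs{\ell}(\tb_2)-\NPCPs{\ell}(\tb_1)=0$.

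The principal obstacle is precisely upgrading the $\Sigmatb$-by-$\Sigmatb$ finiteness of (a) to the uniform-in-$u$ bound near $\Scri$ used in the dominated-convergence step; this is handled by noting that the implicit constants in Proposition \ref{prop:nullinfBeha:PhiplusiandPhinsHighi}(ii) depend continuously on the data and on $\tb$ and are therefore uniform on any compact $\tb$-interval. Case (ii) follows by a verbatim replacement of \eqref{eq:Phiplushighi:Schw:generall:tildePhiplusi} by \eqref{eq:tildePhiminusi:Schw:generall:tildePhiminusi}, of $\tildePhiplussHigh{i}$ by $\tildePhiminuss{i+2\sfrak}$, and of Proposition \ref{prop:nullinfBeha:PhiplusiandPhinsHighi}(ii) by (iv), using that $\edthR\edthR'$ has eigenvalue $-(\ell-\sfrak)(\ell+\sfrak+1)$ on the spin $-\sfrak$ $(m,\ell)$ mode, so the same cancellation makes the zeroth-order term vanish at $i=\ell-\sfrak$.
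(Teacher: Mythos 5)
Your proposal is correct and follows essentially the same route as the paper's own proof: you appeal to Proposition \ref{prop:nullinfBeha:PhiplusiandPhinsHighi} for finiteness at each $\Sigmatb$, exploit the cancellation of the zeroth-order term in equation \eqref{eq:Phiplushighi:Schw:generall:tildePhiplusi} on the $(m,\ell)$ mode at $i=\ell-\sfrak$ (so that only $O(r^{-1})$ terms and the $\pu$ term survive), and then pass to the limit at null infinity by a dominated/bounded convergence argument. The paper phrases the final step as $\lim_{\rb\to\infty}\Lxi\curlVR\tildePhiplussHigh{\ell-\sfrak}|_{\Sigmatb}=0$ followed by "bounded convergence," whereas you spell it out as a $u$-integral from $\tb_1$ to $\tb_2$ followed by $v\to\infty$; these are the same mechanism, with yours being slightly more explicit about the uniformity needed on compact $\tb$-intervals.
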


\begin{proof}
Recall that we consider only the $(m,\ell)$ mode of the spin $\pm \sfrak$ component.
We have from Proposition \ref{prop:wave:Phihigh:pm1} the following equations for $\tildePhiplussHigh{\ell-\sfrak}$ and $ \tildePhiminuss{\ell+\sfrak} $:
\begin{subequations}
\begin{align}
\label{eq:Phiplushighi:Schw:generall:tildePhiplusi:223223}
&-2\pu \curlVR  \tildePhiplussHigh{\ell-\sfrak} -2(\ell+1)(r-3M)r^{-2}\curlVR\tildePhiplussHigh{\ell-\sfrak}
+\sum_{j=0}^{\ell-\sfrak}O(r^{-1})\tildePhiplussHigh{j}={}0,\\
\label{eq:tildePhiminusi:Schw:generall:tildePhiminusi:223223}
&-2\pu \curlVR  \tildePhiminuss{\ell+\sfrak} -2(\ell+1)(r-3M)r^{-2}\curlVR\tildePhiminuss{\ell+\sfrak}
+\sum_{j=2\sfrak}^{\ell+\sfrak}O(r^{-1})\tildePhiminuss{j}={}0.
\end{align}
\end{subequations}
Consider the spin $+\sfrak$ component case, the spin $-\sfrak$ component case being treated in the same fashion.
Since by Proposition \ref{prop:nullinfBeha:PhiplusiandPhinsHighi},  $\lim\limits_{r\to \infty}\big(\sum_{j=0}^{\ell-\sfrak}\abs{\tildePhiplussHigh{j}}+\absCDeri{\curlVR\tildePhiplussHigh{\ell-\sfrak}}{1}\big)\vert_{\Sigmatb}<\infty$
for any $\tb\geq \tb_0$, we conclude from \eqref{eq:Phiplushighi:Schw:generall:tildePhiplusi:223223} that
 \begin{align}
 \label{eq:LxicurlVRgotozero}
 \lim\limits_{\rb\to \infty}\Lxi \curlVR \tildePhiplussHigh{\ell-\sfrak}\vert_{\Sigmatb}=0.
 \end{align}
  By the bounded convergence theorem, the statement follows.
\end{proof}

Given the above results, and using the TSI in Lemma \ref{lem:TSI:gene}, we can show that the $(m,\ell)$-th N--P constants for the spin $+\sfrak$ and $-\sfrak$ components are in fact related to each other by a nonzero constant factor depending only on $\ell$ and $\sfrak$.

\begin{cor}
\label{cor:relationoftwoNPconsts}
Let $\ell\geq \sfrak$.
 Assume $\InizeroEnergypluss{\tb_0}{\ell}{\regl}{0}<\infty$ for a suitably large $\regl=\regl(\ell)>0$, and assume $\lim\limits_{r\to\infty}
          \sum\limits_{j=0}^{\ell-\sfrak}
          (\abs{
      \tildePhiplussHigh{j}}+\abs{
      \curlVR\tildePhiplussHigh{j}})\vert_{\Sigmazero}
      <\infty$. Then
      \begin{itemize}
      \item[$(i)$]
$\NPCNs{\ell}=\prod_{i=\ell-\sfrak+1}^{\ell+\sfrak}i\NPCPs{\ell}$;
\item[$(ii)$]if $\NPCNs{\ell}$ vanishes, then $\NPCPs{\ell}$ vanishes, and vice versa.
\end{itemize}
\end{cor}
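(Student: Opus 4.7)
The strategy is to iterate the Teukolsky--Starobinsky identities \eqref{eq:TSI:simpleform:v2} and \eqref{eq:TSIspin2:Phiminustwo4} by applying $\curlVR^{\ell-\sfrak}$, then pass from the $\Phi$--hierarchies to the $\tildePhi$--hierarchies, and finally extract the limit at $\Scri$ using the vanishing of $\Lxi\curlVR\tildePhiplussHigh{\ell-\sfrak}$ already proved inside Proposition \ref{prop:NPCsindepentonu}.

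\textbf{Step 1 (TSI on a fixed $(m,\ell)$ mode).} Restricting to the $(m,\ell)$ mode and using \eqref{eq:ellipticop:eigenvalue:fixedmode}, the $2\sfrak$--fold composition of $\edthR'$ acting on a spin $+\sfrak$ scalar supported on the $\ell$ mode is multiplication by $\prod_{i=\ell-\sfrak+1}^{\ell+\sfrak}i$. Combining with \eqref{eq:TSI:simpleform:v2}, \eqref{eq:TSIspin2:Phiminustwo4}, and the trivial identity $\Psiminus=\Psiplus$ for $\sfrak=0$, we obtain in the three cases a uniform statement
\[
\Phiminuss{2\sfrak}=\Big(\prod_{i=\ell-\sfrak+1}^{\ell+\sfrak}i\Big)\PhiplussHigh{0}-\delta_{\sfrak,2}\cdot 12M\,\overline{\Lxi\PhiplussHigh{0}}.
\]
Since $\curlVR$ commutes with $\edthR'$ and with $\Lxi$, applying $\curlVR^j$ yields for every $0\le j\le \ell-\sfrak$
\[
\Phiminuss{j+2\sfrak}=\Big(\prod_{i=\ell-\sfrak+1}^{\ell+\sfrak}i\Big)\PhiplussHigh{j}-\delta_{\sfrak,2}\cdot 12M\,\overline{\Lxi\PhiplussHigh{j}}.
\]

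\textbf{Step 2 (Transfer to the $\tildePhi$--hierarchy).} By Definition \ref{def:tildePhiplusandminusHigh}, the recursions defining $\tildePhiplussHigh{i}$ and $\tildePhiminuss{i+2\sfrak}$ use the \emph{same} coefficients $x_{\sfrak,i,j}M^{i-j}$. Thus, proceeding by induction on $j$, the very same linear combination that converts $\PhiplussHigh{\cdot}$ into $\tildePhiplussHigh{\cdot}$ simultaneously converts $\Phiminuss{\cdot+2\sfrak}$ into $\tildePhiminuss{\cdot+2\sfrak}$, and the above pointwise identity upgrades to
\[
\tildePhiminuss{j+2\sfrak}=\Big(\prod_{i=\ell-\sfrak+1}^{\ell+\sfrak}i\Big)\tildePhiplussHigh{j}-\delta_{\sfrak,2}\cdot 12M\,\overline{\Lxi\tildePhiplussHigh{j}},\qquad 0\le j\le\ell-\sfrak.
\]
The hypotheses on the spin $+\sfrak$ component together with Proposition \ref{prop:nullinfBeha:PhiplusiandPhinsHighi} now transfer through this identity to yield the spin $-\sfrak$ hypotheses of Proposition \ref{prop:NPCsindepentonu}, so $\NPCNs{\ell}$ is a well-defined, $\tb$-independent, finite constant.

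\textbf{Step 3 (Pass to $\Scri$).} Setting $j=\ell-\sfrak$, applying $\curlVR$, and taking $\rb\to\infty$ along $\Sigmatb$,
\[
\NPCNs{\ell}=\Big(\prod_{i=\ell-\sfrak+1}^{\ell+\sfrak}i\Big)\NPCPs{\ell}-\delta_{\sfrak,2}\cdot 12M\lim_{\rb\to\infty}\overline{\Lxi\curlVR\tildePhiplussHigh{\ell-\sfrak}}\big|_{\Sigmatb}.
\]
The last limit vanishes by \eqref{eq:LxicurlVRgotozero}, so $(i)$ follows. For $(ii)$, observe that $\ell\ge\sfrak\ge 0$ forces every factor $i\ge\ell-\sfrak+1\ge 1$ to be a positive integer (and the product is the empty product $1$ when $\sfrak=0$), hence the prefactor is nonzero; thus $\NPCNs{\ell}=0$ if and only if $\NPCPs{\ell}=0$. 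The only delicate point in the argument is verifying that the $\tildePhi$--recursions on the $\pm\sfrak$ sides match so that the extra $12M\overline{\Lxi\tildePhiplustwoHigh{\ell-2}}$ term from the $\sfrak=2$ TSI is precisely the one annihilated by \eqref{eq:LxicurlVRgotozero}; this is immediate from Definition \ref{def:tildePhiplusandminusHigh}.
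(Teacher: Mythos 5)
Your proof is correct and follows essentially the same route as the paper: restrict the Teukolsky--Starobinsky identities \eqref{eq:TSI:simpleform:l=1}, \eqref{eq:TSI:simpleform:l=2} to the $(m,\ell)$ mode, propagate through the $\curlVR$-hierarchy and the matching $\tildePhi$-recursions of Definition \ref{def:tildePhiplusandminusHigh}, and dispose of the extra $12M\overline{\Lxi\,\cdot\,}$ term at $\Scri$ via \eqref{eq:LxicurlVRgotozero}. You have merely made explicit the inductive step that the paper states in one line, and stated the prefactor $\prod_{i=\ell-\sfrak+1}^{\ell+\sfrak}i$ uniformly in $\sfrak$ (fixing in passing an apparent typographical omission of a factor $\ell$ in the paper's first displayed equation).
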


\begin{proof}
Equation \eqref{eq:TSI:simpleform:l=1}
 is $\ell(\ell+1)\Phiplus=\Phiminus{2}$ and  equation \eqref{eq:TSI:simpleform:l=2} is
$(\ell-1)\ell (\ell+1)(\ell+2)\Phiplustwo=\Phiminustwo{4}+12 M \overline{\Lxi\Phiplustwo}$, which thus yield
\begin{align}
(\ell+1)\curlVR\tildePhiplusHigh{\ell-1}={}&\curlVR\tildePhiminus{\ell+1},\\
(\ell-1)\ell (\ell+1)(\ell+2)\curlVR\tildePhiplustwoHigh{\ell-2}={}&\curlVR\tildePhiminustwo{\ell+2}+12 M \overline{\Lxi\curlVR\tildePhiplustwoHigh{\ell-2}}.
\end{align}
By Proposition \ref{prop:NPCsindepentonu} and \eqref{eq:LxicurlVRgotozero}, the statement $(i)$ follows.  The statement $(ii)$ is manifest true.
\end{proof}

In the end, we discuss some further properties of the spin $\pm \sfrak$ components in the case that the N--P constant vanishes.

\begin{prop}
\label{prop:vanishingNPC:betterasymnearscri}
Let $\ell\geq \sfrak$ and $\reg\in \mathbb{N}$, and let $\alpha\in [0,1]$ be arbitrary. Assume the $(m,\ell)$-th N--P constant $\NPCNs{\ell}$  vanishes.
\begin{itemize}
\item[$(i)$] If $\InizeroEnergyminuss{\tb_0}{\ell}{\reg+\regl}{0}  +\lim\limits_{r\to\infty}\Big(\sum\limits_{j=0}^{\ell+\sfrak}
    \absCDeri{
      \tildePhiminuss{j}}{\reg}
      +\absCDeri{
      r^{1-\alpha}\curlVR\tildePhiminuss{\ell+\sfrak}}{\reg}
    \Big)\Big\vert_{\Sigmazero}
      <\infty$ for a suitably large $\regl=\regl(\ell)$,
      then there is a  constant $C_{\ell}(\tb)<\infty$ such that for any $\tb\geq \tb_0$, $\lim\limits_{r\to\infty}
\absCDeri{r^{1-\alpha}\curlVR\tildePhiminuss{\ell+\sfrak}}{\reg}
\vert_{\Sigmatb}<C_{\ell}(\tb)$.
In particular, if $\alpha>0$, then $\lim\limits_{r\to\infty}\absCDeri{r^{1-\alpha}
\curlVR\tildePhiminuss{\ell+\sfrak}}{\reg}
\vert_{\Sigmatb}$ is independent of $\tb$;

\item[$(ii)$] If $\InizeroEnergypluss{\tb_0}{\ell}{\reg+\regl}{0}  +\lim\limits_{r\to\infty}\Big(\sum\limits_{j=0}^{\ell-\sfrak}
    \absCDeri{
      \tildePhiplussHigh{j}}{\reg}
      +\absCDeri{
      r^{1-\alpha}\curlVR\tildePhiplussHigh{\ell-\sfrak}}{\reg}
    \Big)\Big\vert_{\Sigmazero}
      <\infty$ for a suitably large $\regl=\regl(\ell)$,
      then there is a  constant $C_{\ell}(\tb)<\infty$ such that for any $\tb\geq \tb_0$, $\lim\limits_{r\to\infty}
\absCDeri{r^{1-\alpha}\curlVR\tildePhiplussHigh{\ell-\sfrak}}{\reg}
\vert_{\Sigmatb}<C_{\ell}(\tb)$.
In particular, if $\alpha>0$, then $\lim\limits_{r\to\infty}\absCDeri{r^{1-\alpha}
\curlVR\tildePhiplussHigh{\ell-\sfrak}}{\reg}
\vert_{\Sigmatb}$ is independent of $\tb$.
\end{itemize}
\end{prop}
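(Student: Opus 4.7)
The plan is to reduce to a single $(m,\ell)$ mode, extract a first-order outgoing transport equation for $X:=\curlVR\tildePhiminuss{\ell+\sfrak}$ from equation \eqref{eq:tildePhiminusi:Schw:generall:tildePhiminusi}, and integrate it backward from $\Scri$, using the vanishing Newman--Penrose constant to supply a decisive algebraic cancellation in the source. Since $\NPCNs{\ell}$ depends only on the $(m,\ell)$ mode I may assume the data is supported there; statements $(i)$ and $(ii)$ are parallel, so I focus on $(i)$. On the $(m,\ell)$ mode one has $\edthR\edthR' = -(\ell-\sfrak)(\ell+\sfrak+1) = -f_{\sfrak,\ell-\sfrak,1}$, so equation \eqref{eq:tildePhiminusi:Schw:generall:tildePhiminusi} with $i=\ell-\sfrak$, together with $f_{\sfrak,\ell-\sfrak,2}=-2(\ell+1)$, reduces to
\[
-\mu Y X - \frac{2(\ell+1)(r-3M)}{r^{2}}\,X + \sum_{j=0}^{\ell-\sfrak}h_{\sfrak,\ell-\sfrak,j}(r)\,\Phiminuss{j+2\sfrak} = 0.
\]
Writing $\mu Y = 2\pu$ and $\pu = \Lxi - \pv$ in the exterior yields the outgoing transport equation
\[
\pv X = \Lxi X + \frac{(\ell+1)(r-3M)}{r^{2}}\,X - \frac{1}{2}\sum_{j=0}^{\ell-\sfrak}h_{\sfrak,\ell-\sfrak,j}(r)\,\Phiminuss{j+2\sfrak}.
\]

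Next I will apply Proposition \ref{prop:nullinfBeha:PhiplusiandPhinsHighi}(iii) to propagate the hypothesis that each $\tildePhiminuss{j}$, $j\le\ell+\sfrak$, is bounded at $\Scri$ on $\Sigmazero$ to each $\Sigmatb$; Definition \ref{def:tildePhiplusandminusHigh} transfers this to the $\Phiminuss{j+2\sfrak}$, which therefore admit finite Bondi-time limits $\phi_j(u):=\lim_{v\to\infty}\Phiminuss{j+2\sfrak}(u,v)$. Because $\NPCNs{\ell}$ vanishes, Proposition \ref{prop:NPCsindepentonu} gives $\lim_{v\to\infty}X(u,v)=0$ for all $u$. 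Integrating the outgoing transport equation along constant $u$ from $v$ to $v'=\infty$ produces the Volterra representation
\[
X(u,v) = -\int_v^{\infty}\Big[\Lxi X + \tfrac{(\ell+1)(r-3M)}{r^{2}}X - \tfrac{1}{2}\sum_{j}h_{\sfrak,\ell-\sfrak,j}\,\Phiminuss{j+2\sfrak}\Big](u,v')\,dv'.
\]
By the recursion of Proposition \ref{prop:wave:Phihigh:pm1}, each coefficient satisfies $h_{\sfrak,\ell-\sfrak,j}(r) = \hat h_j/r + O(r^{-2})$ for explicit constants $\hat h_j$; together with $\Phiminuss{j+2\sfrak}=\phi_j(u)+O(r^{-1})$, the integrand has leading asymptotic $\tfrac{1}{2r}\sum_j\hat h_j\phi_j(u)$. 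Substituting this expansion into Definition \ref{def:NPCs} shows that $\sum_j\hat h_j\phi_j(u)$ is a nonzero multiple of $\NPCNs{\ell}$; its vanishing therefore forces the integrand to be $O(r^{-2})=O(v^{-2})$ along constant $u$, whence the integral converges and yields $X(u,v)=O(r^{-1})$ uniformly in $\tb$. Since $\alpha\in[0,1]$, this gives the claimed uniform bound on $r^{1-\alpha}X$.

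For the $\tb$-independence of the limit when $\alpha>0$, I commute $\Lxi$ with the transport equation: as $\Lxi$ is Killing, $\Lxi X$ obeys the analogous equation with source $\sum h_j\Lxi\Phiminuss{j+2\sfrak}$, vanishes at $\Scri$ (by $\Lxi$-differentiating $X|_{\Scri}=0$), and inherits the cancellation $\sum\hat h_j\Lxi\phi_j\equiv 0$. Repeating the Volterra argument gives $\Lxi X=O(r^{-1})$, so $\Lxi(r^{1-\alpha}X)=r^{1-\alpha}\Lxi X=O(r^{-\alpha})\to 0$ as $r\to\infty$ for $\alpha>0$, forcing $\tb$-independence of the limit. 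The higher-regularity statement is obtained by iteratively commuting with the operators in $\CDeri=\{Y,rV,\edthR,\edthR'\}$: $\Lxi$ and the angular operators commute with the wave operator, while a $\curlVR$-commutator introduces $\tildePhiminuss{j'}$ with $j'\le \ell+\sfrak+1$, controlled by the same Volterra argument applied recursively. Statement $(ii)$ is proved identically using equation \eqref{eq:Phiplushighi:Schw:generall:tildePhiplusi} for $\tildePhiplussHigh{\ell-\sfrak}$ and Proposition \ref{prop:nullinfBeha:PhiplusiandPhinsHighi}(i). The main technical obstacle will be the explicit algebraic verification, via Definitions \ref{def:tildePhiplusandminusHigh}--\ref{def:NPCs} and the recursion in Proposition \ref{prop:wave:Phihigh:pm1}, that $\sum_j\hat h_j\phi_j(u)$ coincides (up to a nonzero multiplicative constant) with the $\Scri$-value of the NP functional $\curlVR\tildePhiminuss{\ell+\sfrak}$, so that the vanishing hypothesis genuinely supplies the cancellation that drives the improved decay.
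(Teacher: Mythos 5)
Your approach is a genuinely different one from the paper's, but it has a gap that I do not think closes. The paper's argument is a single step: starting from the wave equation \eqref{eq:tildePhiminusi:Schw:generall:tildePhiminusi:223223} for $\tildePhiminuss{\ell+\sfrak}$, one multiplies by $r^{1-\alpha}$ and isolates $-2\Lxi(r^{1-\alpha}\curlVR\tildePhiminuss{\ell+\sfrak})$ on one side, leaving on the other side terms of the form $O(r^{-\alpha})\,rV\curlVR\tildePhiminuss{\ell+\sfrak}$, $O(r^{-\alpha})\,\curlVR\tildePhiminuss{\ell+\sfrak}$, and $O(r^{-\alpha})\,\tildePhiminuss{j}$. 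By Propositions \ref{prop:nullinfBeha:PhiplusiandPhinsHighi} and \ref{prop:NPCsindepentonu} these all have finite (or, for $\alpha>0$, zero) limits on each $\Sigmatb$, so $\lim_{r\to\infty}\Lxi(r^{1-\alpha}\curlVR\tildePhiminuss{\ell+\sfrak})\vert_{\Sigmatb}$ is bounded (resp.\ zero), and the bounded convergence theorem gives the $\tb$-dependent bound (resp.\ $\tb$-independence). No integration of a transport equation is required, and no rate of decay of the scalar at $\Scri$ is being derived --- only propagation of the finiteness of a limit from $\Sigmazero$ to $\Sigmatb$.

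Your Volterra representation along constant $u$,
\begin{align*}
X(u,v)=-\int_v^\infty \Big[\Lxi X+\tfrac{(\ell+1)(r-3M)}{r^2}X-\tfrac{1}{2}\sum_j h_{\sfrak,\ell-\sfrak,j}\Phiminuss{j+2\sfrak}\Big](u,v')\,\di v',
\end{align*}
has two serious problems. First, the equation is genuinely a $\pu$-transport; passing to $\pv X$ via $\pu=\Lxi-\pv$ moves $\Lxi X$ into the source, and $\Lxi X$ is of the same order as $X$ --- you give no bound on it along constant $u$, and the available $\reg$-regularity hypothesis is only stated on $\Sigmazero$, not throughout the exterior. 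Second, the asserted cancellation is not correct as stated: projecting the wave equation at $\Scri$ gives $\sum_j\hat h_j\phi_j(u)=2\lim r\pu X+2(\ell+1)\NPCNs{\ell}$, so under $\NPCNs{\ell}=0$ the sum equals $2\lim r\pu X$, a quantity that has no a priori reason to vanish and whose finiteness already involves the decay rate of $X$, $\Lxi X$, and $\pv X$ you are trying to establish --- a circularity. Your appeal to Definition \ref{def:NPCs} conflates the NP constant $\lim\curlVR\tildePhiminuss{\ell+\sfrak}$ with the $\Scri$-limit of the lower-order source; these are not related by a constant multiple. Without both a bound on $\Lxi X$ and the claimed cancellation the integral cannot be shown to converge, so the uniform bound on $r^{1-\alpha}X$ does not follow. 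You should instead simply identify the $\Lxi$-derivative of $r^{1-\alpha}\curlVR\tildePhiminuss{\ell+\sfrak}$ from the wave equation and take limits termwise, as above.
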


\begin{proof}
By Corollary \ref{cor:relationoftwoNPconsts}, the $(m,\ell)$-th N--P constant $\NPCPs{\ell}$ of the spin $+\sfrak$ component vanishes as well.
We show  the statements only for the spin $-\sfrak$ component, the proof of spin $+\sfrak$ component being the same. 

The scalar $\tildePhiminuss{\ell+\sfrak}$  satisfies equation \eqref{eq:tildePhiminusi:Schw:generall:tildePhiminusi:223223}, and hence performing an $r^{1-\alpha}$ rescaling gives
\begin{align}
-2\Lxi (r^{1-\alpha}\curlVR  \tildePhiminuss{\ell+\sfrak})={}& O(r^{-\alpha})rV\curlVR\tildePhiminuss{\ell+\sfrak}
+O(r^{-\alpha})\curlVR\tildePhiminuss{\ell+\sfrak}
+\sum_{j=2\sfrak}^{\ell+\sfrak}O(r^{-\alpha})\tildePhiminuss{j}.
\end{align}
In the case that $\alpha>0$, we have from the assumption of vanishing $(m,\ell)$-th N--P constant and Propositions \ref{prop:nullinfBeha:PhiplusiandPhinsHighi}  and \ref{prop:NPCsindepentonu} that the limit of the RHS on $\Sigmatb$ as $\rb\to \infty$ is zero for any $\tb\geq \tb_0$, hence one obtains
$\lim\limits_{r\to \infty}\Lxi (r^{1-\alpha}\curlVR \tildePhiminuss{\ell+\sfrak})\vert_{\Sigmatb}=0$. The statements for $\alpha>0$ then follow  from the bounded convergence theorem. For $\alpha=0$, the limit of the RHS on $\Sigmatb$ as $\rb\to \infty$ is now bounded by a $\tb$-dependent constant, hence $\lim\limits_{r\to\infty}
\absCDeri{r
\curlVR\tildePhiminuss{\ell+\sfrak}}{\reg}
\vert_{\Sigmatb}<C_\ell(\tb)$.
\end{proof}

\subsection{Almost Price's law in the exterior region $\{\rb\geq \tb\}$}
\label{sect:APL:ext}

We now turn to proving the almost Price's law for the spin $\pm\sfrak$ components in the region $\{\rb\geq \tb\}$ in this subsection. 

To begin with, we derive the almost sharp decay estimates for a single mode of the spin $\pm \sfrak$ components in the region $\{\rb\geq \tb\}$. This is contained in the following proposition.

\begin{prop}
\label{prop:almost:ext:ipm:nvv}
Assume the  the spin $\pm \sfrak$ components are supported on a single $(m,\ell)$ mode, $\ell\geq \sfrak$ and $m\in \{-\ell,-\ell+1,\ldots, \ell\}$. Let the $(m,\ell)$-th N--P constants $\NPCNs{\ell}$  be defined as in Definition \ref{def:NPCs}. Let $\vartheta\in (0,1/2)$ be arbitrary, and let $j\in \mathbb{N}$.
\begin{enumerate}
\item[$(i)$] \label{pt:NPCneq0:l=l0:decay:ext:Schw}
If the $\ell$-th N--P constant $\NPCNs{\ell}$  is nonzero, there exist universal constants $C=C(\vartheta, j, \reg, \ell)$ and $\regl=\regl(j,\ell)>0$ such that in the exterior region $\{\rb\geq \tb\}$,
\begin{subequations}
\label{eq:weakdecay:ell:ipm:ext:nv}
\begin{align}
\label{eq:weakdecay:ell:ipm:ext:nv:p}
 \absCDeri{\Lxi^j(r^{-2\sfrak-1}\Psipluss)}{\reg-\regl}\leq{}& Cv^{-1-2\sfrak}
  \tb^{-\frac{2\ell-2\sfrak-\vartheta}{2}-j-1}(\InizeroEnergypluss{\tb_0}{\ell}{\reg}{3-\vartheta}
 )^{\half},\\
 \label{eq:weakdecay:ell:ipm:ext:nv:m}
  \absCDeri{\Lxi^j(r^{-1}\Psiminuss)}{\reg-\regl}\leq{}& Cv^{-1}
  \tb^{-\frac{2\ell+2\sfrak-\vartheta}{2}-j-1}(\InizeroEnergypluss{\tb_0}{\ell}{\reg}{3-\vartheta})^{\half},
\end{align}
\end{subequations}
and for any $0\leq i\leq \ell-\sfrak$,
\begin{align}
\label{rem:PhiplussHigh:almostsharp:exterior}
 \absCDeri{\Lxi^j(r^{-1}\PhiplussHigh{i})}{\reg-\regl}\leq{}& Cv^{-1}
  \tb^{-\frac{2\ell-2\sfrak-2i-\vartheta}{2}-j-1}(\InizeroEnergypluss{\tb_0}{\ell}{\reg}{3-\vartheta}
 )^{\half}.
 \end{align}
\item[$(ii)$] \label{pt:NPCeq0:l=l0:decay:ext:Schw}
If the $\ell$-th N--P constant $\NPCNs{i}$ equals zero, there exist universal constants $C=C(\vartheta, j, \reg, \ell)$ and $\regl=\regl(j,\ell)>0$ such that in the exterior region $\{\rb\geq \tb\}$,
\begin{subequations}
\label{eq:weakdecay:ell:ipm:ext:v}
\begin{align}
\label{eq:weakdecay:ell:ipm:ext:v:p}
 \absCDeri{\Lxi^j(r^{-2\sfrak-1}\Psipluss)}{\reg-\regl}\leq{}& Cv^{-1-2\sfrak}
  \tb^{-\frac{2\ell-2\sfrak-\vartheta}{2}-j-2}(\InizeroEnergypluss{\tb_0}{\ell}{\reg}{5-\vartheta}
 )^{\half},\\
 \label{eq:weakdecay:ell:ipm:ext:v:m}
  \absCDeri{\Lxi^j(r^{-1}\Psiminuss)}{\reg-\regl}\leq{}& Cv^{-1}
  \tb^{-\frac{2\ell+2\sfrak-\vartheta}{2}-j-2}(\InizeroEnergypluss{\tb_0}{\ell}{\reg}{5-\vartheta})^{\half}.
\end{align}
\end{subequations}
\end{enumerate}
\end{prop}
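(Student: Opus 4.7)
The plan is to derive the exterior-region decay rates in \eqref{eq:weakdecay:ell:ipm:ext:nv} and \eqref{eq:weakdecay:ell:ipm:ext:v} by combining the energy-decay hierarchies of Lemma \ref{eq:EnerDecay:ip:ingeneral} and Lemma \ref{lem:EnerDec:0to5:im} with the Sobolev-type embeddings of Lemma \ref{lem:Sobolev}, then to upgrade the resulting \emph{global} rates in $\{\rb\geq\tb\}$ by an elliptic-type inversion of the low-rung wave systems of Lemma \ref{lem:basicsystem}, and finally to transfer the bounds between spin weights through the Teukolsky--Starobinsky identities of Lemma \ref{lem:TSI:gene}. The NVNPC and VNPC cases differ only in the admissible weight: one uses $p_2=3-\vartheta$ under hypothesis $(i)$, whereas hypothesis $(ii)$ together with Proposition \ref{prop:vanishingNPC:betterasymnearscri} and Corollary \ref{cor:relationoftwoNPconsts} unlocks the extended range up to $p_2=5-\vartheta$ provided by \eqref{eq:rp:pgeq4:2}.

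I would first handle the spin $-\sfrak$ component. Running Lemma \ref{lem:EnerDec:0to5:im} from $(i_2,p_2)=(\ell,3-\vartheta)$, respectively $(\ell,5-\vartheta)$, down to $(i_1,0)$ and feeding the result into \eqref{eq:Sobolev:3} produces a global $v^{-1}\tb^{-\cdots}$ pointwise bound on each $\Lxi^j\tildePhiminuss{k}$, and Proposition \ref{prop:weakdecay:summary} covers the remaining low-rung scalars $\{\Phiminuss{i}\}_{i<2\sfrak}$. To upgrade these rates by the missing $\tb^{-\sfrak}$ factor in $\{\rb\geq\tb\}$, the strategy is to read the wave equations \eqref{eq:RWschwPhi012ope} for $\sfrak=1$ and \eqref{eq:RWschwPhi01234ope} for $\sfrak=2$, projected onto the fixed $(m,\ell)$-mode, as a strongly elliptic system for $(\Phiminuss{0},\ldots,\Phiminuss{2\sfrak-1})$: the constant parts of the angular operator, namely $\edthR\edthR'$ on the $(m,\ell)$-mode plus the $r$-independent parts of the coefficients, yield a strictly negative multiplier on each low-rung scalar. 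Rewriting $\mu Y=2\Lxi-V$ in the exterior region puts this system into the schematic form
\begin{align*}
\mathbf{A}(\Phiminuss{0},\ldots,\Phiminuss{2\sfrak-1})^T={}&O(r^{-1})(rV\Phiminuss{1},\ldots,rV\Phiminuss{2\sfrak})+O(r^{-1})(\Phiminuss{1},\ldots,\Phiminuss{2\sfrak})\\
&+O(1)(\Lxi\Phiminuss{1},\ldots,\Lxi\Phiminuss{2\sfrak})
\end{align*}
with $\mathbf{A}$ invertible. Each algebraic inversion then trades a scalar either for an $rV$-derivative, which sits one rung higher in the hierarchy of Proposition \ref{prop:wave:Phihigh:pm1} and is therefore already one $\tb^{-1}$ better, or for a $\Lxi$-derivative, which by Proposition \ref{prop:weakdecay:summary} also supplies an extra $\tb^{-1}$. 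Iterating $\sfrak$ times supplies the $\tb^{-\sfrak}$ gain needed for \eqref{eq:weakdecay:ell:ipm:ext:nv:m} and \eqref{eq:weakdecay:ell:ipm:ext:v:m}, while reading the iteration off at the rung corresponding to $i$ gives \eqref{rem:PhiplussHigh:almostsharp:exterior}.

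The estimates for the spin $+\sfrak$ component then follow from the fixed-mode Teukolsky--Starobinsky identities \eqref{eq:TSI:simpleform:l=1} and \eqref{eq:TSI:simpleform:l=2}, which express $\PhiplussHigh{0}$ algebraically as $\Phiminus{2}$ up to a nonzero $\ell$-dependent factor (for $\sfrak=1$) or as an $\ell$-dependent multiple of $\Phiminustwo{4}+12M\overline{\Lxi\PhiplussHigh{0}}$ (for $\sfrak=2$); in the latter case the remainder $M\overline{\Lxi\PhiplussHigh{0}}$ is reabsorbed using the global weak decay of $\Lxi\PhiplussHigh{0}$ from Proposition \ref{prop:weakdecay:summary}. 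Unravelling the $r^{2\sfrak}$-rescaling built into the definition of $\Psipluss$ produces the $v^{-1-2\sfrak}$ prefactor in \eqref{eq:weakdecay:ell:ipm:ext:nv:p} and \eqref{eq:weakdecay:ell:ipm:ext:v:p}, and Lemma \ref{lem:Relation:pmenergies} exchanges the spin $-\sfrak$ initial energy on the right-hand side for $\InizeroEnergypluss{\tb_0}{\ell}{\reg}{p}$. I anticipate the main technical obstacle to be the elliptic step: for each $\sfrak\in\{1,2\}$ one must verify that the $O(r^{-1})$ remainders generated at each inversion remain perturbative against $\mathbf{A}$ once projected onto the $(m,\ell)$-mode, and that the cumulative $\tb$-losses through the $\sfrak$-fold inversion do not erode the $\vartheta$-surplus built into the Sobolev estimate, so that the final $\tb$-exponents match those claimed in \eqref{eq:weakdecay:ell:ipm:ext:nv} and \eqref{eq:weakdecay:ell:ipm:ext:v} exactly.
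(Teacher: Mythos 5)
Your high-level plan overlaps with the paper on the spin $-\sfrak$ side but diverges substantially on the spin $+\sfrak$ side, and one step as written would not give what it claims.

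For the spin $+\sfrak$ component, the paper does not invoke the Teukolsky--Starobinsky identities at all. Since $\psipluss=r^{2\sfrak}\NPRpluss$ already carries a built-in $r^{2\sfrak}$ weight, the weak decay estimate of Proposition \ref{prop:weakdecay:summary} with $p=3-\vartheta$ bounds $\Lxi^j(r^{-1-2\sfrak}\Psipluss)$ by $r^{-2\sfrak}v^{-1}\tb^{-(2-\vartheta)/2-(\ell-\sfrak)-j}$, and in $\{\rb\geq\tb\}$ one simply uses $r\sim v$ to trade $r^{-2\sfrak}$ for $v^{-2\sfrak}$. That is the whole proof of \eqref{eq:weakdecay:ell:ipm:ext:nv:p}. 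Your TSI route from $\Phiminuss{2\sfrak}$ to $\Phiplus$ would require you to first produce a pointwise estimate for $\Phiminuss{2\sfrak}$ (and $\Phiminustwo{4}$ for $\sfrak=2$) at the top decay rate; Proposition \ref{prop:weakdecay:summary} as stated only controls $(r^2V)^n\Psiminuss$ for $n\leq\sfrak$, so you would need to first establish the spin $-\sfrak$ analogue of Remark \ref{rem:PhiplussHigh:almostsharp} covering $\Phiminuss{j}$ for $j$ up to $2\sfrak$. This is doable but is extra machinery the paper avoids by the simple $r\sim v$ observation. Similarly, \eqref{rem:PhiplussHigh:almostsharp:exterior} is a statement about the spin $+\sfrak$ scalars $\PhiplussHigh{i}$; the paper obtains it directly from Remark \ref{rem:PhiplussHigh:almostsharp} and $r\sim v$, not from any elliptic iteration on the spin $-\sfrak$ hierarchy, so your claim that it is read off from the $-\sfrak$ inversion is misplaced.

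For the spin $-\sfrak$ elliptic step, your system is the wrong size. You propose a $2\sfrak\times2\sfrak$ system in $(\Phiminuss{0},\ldots,\Phiminuss{2\sfrak-1})$, which for $\sfrak=2$ would have to include equations \eqref{eq:RWPhi22schwope} and \eqref{eq:RWPhi32schwope}; the latter carries the leading-order $\curlVR\Phiminustwo{3}$ term (hence $\Phiminustwo{4}$), and the $6M$ off-diagonal couplings in rows $2$ and $4$ are $O(1)$, so positive definiteness of the enlarged $\mathbf A$ is not evident and is not addressed. The paper instead works with the $\sfrak\times\sfrak$ block $(\Phiminuss{0},\ldots,\Phiminuss{\sfrak-1})$: for $\sfrak=1$ a single algebraic equation inverting $\ell(\ell+1)\Phiminus{0}$ against $O(r^{-1})\Phiminus{1}+O(r^{-1})rV\Phiminus{1}+O(1)\Lxi\Phiminus{1}$, and for $\sfrak=2$ the explicit $2\times2$ matrix whose determinant is bounded below by $24-12Mr^{-1}+36M^2r^{-2}$, with the source controlled at the Regge--Wheeler level $\Phiminuss{\sfrak}$; the $\tb^{-\sfrak}$ gain then comes from feeding the improved $\Phiminustwo{1}$ back into \eqref{eq:RWPhi02schwope}. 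If you want to pursue your version, you must verify strong ellipticity of the larger matrix and spell out the iteration giving the $\tb^{-\sfrak}$ gain, neither of which is currently present; otherwise you should shrink the system to size $\sfrak$ as the paper does.
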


\begin{remark}
\begin{itemize}
\item
These decay estimates, compared to the predicted Price's law, have only a $\tb^{\vartheta/2}$ loss, with $\vartheta>0$ arbitrarily small. This explains why they are called \emph{almost Price's law}.
\item In the case $(i)$ that the N--P constant $\NPCNs{\ell}$ is nonzero, the energies $\InizeroEnergypluss{\tb_0}{\ell}{\reg}{p}=\infty$ and $\InizeroEnergyminuss{\tb_0}{\ell}{\reg}{p}=\infty$ for $p\geq 3$ by Definition \ref{def:energyonSigmazero:Schw}. This is why these decay estimates are almost sharp.
\end{itemize}
\end{remark}

\begin{proof}
Consider first the case that the $\ell$-th N--P constant $\NPCNs{\ell}$ is non-vanishing.
 We have from Proposition \ref{prop:weakdecay:summary} with $p=3-\vartheta$ that
\begin{subequations}
\label{eq:weakdecay:ell:ipm:ext}
\begin{align}
\label{eq:weakdecay:ell:ipm:ext:p}
\absCDeri{\Lxi^j(r^{-1-2\sfrak}\Psipluss)}{\reg}
\leq{}&C(\vartheta, j, \reg, \ell)(\InizeroEnergypluss{\tb}{\ell}{\reg+\regl}{3-\vartheta})^{\half}r^{-2\sfrak}v^{-1}\tb^{-(2-\vartheta)/2-(\ell-\sfrak)-j},\\
\label{eq:weakdecay:ell:ipm:ext:m}
\sum_{n=0}^{\sfrak}\absCDeri{\Lxi^j(r^{-1}(r^2 V)^n\Psiminuss)}{\reg}
\leq{}&C(\vartheta, j, \reg, \ell)(\InizeroEnergyminuss{\tb}{\ell}{\reg+\regl}{3-\vartheta})^{\half}v^{-1}\tb^{-(2-\vartheta)/2-\ell-j}.
\end{align}
\end{subequations}
This already implies the $\sfrak=0$ case of \eqref{eq:weakdecay:ell:ipm:ext:nv}.
The estimate \eqref{eq:weakdecay:ell:ipm:ext:m} for the spin $-\sfrak$ component can be improved in the following way. For $\sfrak=1$, we have from equation \eqref{eq:RWPhi01schwope} that in the exterior region,
\begin{align*}
\ell(\ell+1)\Phiminus{0} ={}&{2(r-3M)}{r^{-2}}\Phiminus{1}-r^2YV\Phiminus{0}=O(r^{-1})\Phiminus{1}+O(r^{-1})rV\Phiminus{1}+O(1)\Lxi\Phiminus{1}.
\end{align*}
Since the RHS has further $\tb^{-1}$ decay in the exterior region compared to $\Phiminus{1}$, the estimate \eqref{eq:weakdecay:ell:ipm:ext:nv:m} thus follows. For $\sfrak=2$, we have from \eqref{eq:RWPhi02schwope} and \eqref{eq:RWPhi12schwope} that in the exterior region,
\begin{subequations}
\label{eq:RWPhi0212schwope:ext}
\begin{align}
\label{eq:RWPhi02schwope:ext}
\hspace{1ex}&\hspace{-1ex}((\ell-1)(\ell+2)+6M r^{-1})\Phiminustwo{0}-{4(r-3M)}{r^{-2}}\Phiminustwo{1} \notag\\
&=
-r^2YV\Phiminustwo{0}\notag\\
&=O(r^{-1})\Phiminustwo{1}+ O(r^{-1})rV\Phiminustwo{1}
+O(1)\Lxi\Phiminustwo{1},\\
\label{eq:RWPhi12schwope:ext}
\hspace{1ex}&\hspace{-1ex}(\ell(\ell+1)-6M r^{-1})\Phiminustwo{1} -6M\Phiminustwo{0}\notag\\
&={2(r-3M)}{r^{-2}}\Phiminustwo{2}-r^2YV\Phiminustwo{1}\notag\\
&=O(r^{-1})\Phiminustwo{2}+ O(r^{-1})rV\Phiminustwo{2}+O(1)\Lxi\Phiminustwo{2}.
\end{align}
\end{subequations}
When viewing equations \eqref{eq:RWPhi0212schwope:ext} as a coupled system of equations of $(\Phiminustwo{0}, \Phiminustwo{1})$,
the matrix operator of the LHS,  $\begin{pmatrix}
  (\ell-1)(\ell+2)+6M r^{-1} & -{4(r-3M)}{r^{-2}}\\
  -6M & \ell(\ell+1)-6M r^{-1}
\end{pmatrix}$, has determinant not less than $24-12Mr^{-1}+36M^2r^{-2}$ and is clearly positive definite for $r\geq 2M$, and the RHS has further $\tb^{-1}$ decay compared to $\Phiminustwo{2}$. An elliptic estimate then yields
\begin{align}
\sum_{n=0}^{1}\absCDeri{\Lxi^j(r^{-1}\Phiminustwo{n})}{\reg}
\lesssim{}&(\InizeroEnergyminuss{\tb}{\ell}{\reg+\regl}{3-\vartheta})^{\half}v^{-1}\tb^{-(2-\vartheta)/2-1-\ell-j}.
\end{align}
Plugging this estimate back into \eqref{eq:RWPhi02schwope:ext} and moving the term $-{4(r-3M)}{r^{-2}}\Phiminustwo{1}$ in \eqref{eq:RWPhi02schwope:ext} from the LHS to the RHS, we then conclude \eqref{eq:weakdecay:ell:ipm:ext:nv:m}.

The estimate \eqref{eq:weakdecay:ell:ipm:ext:nv:p}, in the exterior region $\{\rb\geq \tb\}$ is trivial in view of the estimate \eqref{eq:weakdecay:ell:ipm:ext:p} since $r\sim v$. Meanwhile, the estimate \eqref{rem:PhiplussHigh:almostsharp:exterior} holds true in the exterior region $\{\rb\geq \tb\}$ in view of  Remark \ref{rem:PhiplussHigh:almostsharp}.

Consider next the case that the $\ell$-th N--P constant $\NPCNs{\ell}$ vanishes. By Proposition \ref{prop:weakdecay:summary} with $p=5-\vartheta$ instead, and together with the following Lemma \ref{lem:Relation:pmenergies}, the proof proceeds in the same way as in the above case.
\end{proof}

\begin{lemma}
\label{lem:Relation:pmenergies}
Assume the  the spin $\pm \sfrak$ components are supported on $\ell\geq\ell_0$ mode, $\ell_0\geq \sfrak$. There exists universal constants $C=C(\vartheta, j, \reg, \ell_0)>0$ and $\regl=\regl(j,\ell_0)>0$ such that for any $\tb\geq \tb_0$ and $p\in [0, 5)$,
\begin{align}
C^{-1}\InizeroEnergypluss{\tb}{\ell_0}{\reg-\regl}{p}\leq
\InizeroEnergyminuss{\tb}{\ell_0}{\reg}{p}
\leq
C\InizeroEnergypluss{\tb}{\ell_0}{\reg+\regl}{p}.
\end{align}
\end{lemma}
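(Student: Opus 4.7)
The strategy is to use the Teukolsky--Starobinsky identities (TSI) from Lemma \ref{lem:TSI:gene} together with the mode-restricted angular elliptic estimate \eqref{eq:ellip:highermodes}; for $\sfrak=0$ there is nothing to show, so the focus is on $\sfrak\in\{1,2\}$. Commuting $\curlVR^i$ through \eqref{eq:TSI:simpleform:v2} (for $\sfrak=1$) or \eqref{eq:TSIspin2:Phiminustwo4} (for $\sfrak=2$), and using that $\curlVR$ commutes with both $\edthR'$ and $\Lxi$, one derives for all $i\in\mathbb{N}$
\begin{equation*}
\Phiminuss{2\sfrak+i}=(\edthR')^{2\sfrak}\PhiplussHigh{i}-12M\delta_{\sfrak,2}\overline{\Lxi\PhiplussHigh{i}}.
\end{equation*}
Since Definition \ref{def:tildePhiplusandminusHigh} defines $\tildePhiminuss{2\sfrak+i}$ and $\tildePhiplussHigh{i}$ via the same linear combinations with identical coefficients, the same identity persists with all quantities tilded. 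A parallel manipulation of \eqref{eq:otherTSI:simpleform}/\eqref{eq:TSIspin2:Y+2} yields $(\edthR)^{2\sfrak}\psiminuss=Y^{2\sfrak}\psipluss-12M\delta_{\sfrak,2}\overline{\Lxi\psiminuss}$. By Lemma \ref{lem:eigenvalue:edthandprime} and the assumption $\ell_0\geq\sfrak$, both $(\edthR')^{2\sfrak}$ and $(\edthR)^{2\sfrak}$ acting on spin-weighted scalars supported on $\ell\geq\ell_0$ modes are bounded below in $L^2(S^2)$ by a positive constant $\Lambda_{\ell_0,\sfrak}$ depending only on $\ell_0,\sfrak$, hence are invertible on those subspaces.

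For the forward inequality $\InizeroEnergyminuss{\tb}{\ell_0}{\reg}{p}\lesssim\InizeroEnergypluss{\tb}{\ell_0}{\reg+\regl}{p}$, one estimates each summand of $\InizeroEnergyminuss$ from the tilde identity at an appropriate index $i$. Since $\edthR'\in\CDeri$ and $\Lxi=\tfrac{1}{2}(\mu Y+r^{-1}(rV))$ is a smooth bounded combination of elements of $\CDeri$ on $\Sigmatb$, each factor costs one order of regularity: the top-order piece $\|rV\tildePhiminuss{\ell_0+\sfrak}\|$ is controlled by at most $2\sfrak+1$ extra $\CDeri$-derivatives of $rV\tildePhiplussHigh{\ell_0-\sfrak}$, and the lower-order pieces $\sum_i\|(r^2V)^i\Psiminuss\|$ are treated similarly after rewriting $(r^2V)^i\Psiminuss=\mu^{i-\sfrak}\Phiminuss{i}$ (modulo smooth $\mu$-factors) for $i\geq 2\sfrak$, or by inverting $(\edthR)^{2\sfrak}$ in the second identity for $0\leq i<2\sfrak$ to express $\psiminuss$ via $2\sfrak$ $Y$-derivatives of $\psipluss$ plus a small $\Lxi$-correction handled as in the paragraph below.

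The reverse inequality $\InizeroEnergypluss{\tb}{\ell_0}{\reg-\regl}{p}\lesssim\InizeroEnergyminuss{\tb}{\ell_0}{\reg}{p}$ requires inverting $(\edthR')^{2\sfrak}$. For $\sfrak=1$ the identity $(\edthR')^2\tildePhiplussHigh{i}=\tildePhiminuss{2+i}$ carries no $\Lxi$-correction and the argument proceeds as above with the roles of $\pm\sfrak$ interchanged. For $\sfrak=2$, restricting to the $\ell$-th mode gives $\Lambda_{\ell,2}\tildePhiplussHigh{i}-12M\overline{\Lxi\tildePhiplussHigh{i}}=\tildePhiminuss{4+i}$ with $\Lambda_{\ell,2}=(\ell-1)\ell(\ell+1)(\ell+2)$, and iterating by applying $\Lxi$ and resubstituting yields, for each $J\geq 1$,
\begin{equation*}
\tildePhiplussHigh{i}=\sum_{j=0}^{J-1}\Lambda_{\ell,2}^{-j-1}(12M)^j T^j\tildePhiminuss{4+i}+\Lambda_{\ell,2}^{-J}(12M)^J T^J\tildePhiplussHigh{i},
\end{equation*}
with $T=\overline{\Lxi\,\cdot\,}$ antilinear. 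The remainder, a $J$-fold time derivative of a spin-$+\sfrak$ scalar, is rewritten through the TME \eqref{eq:TME} for $\psipluss$ (which expresses $\Lxi^2\psipluss$ in terms of second-order spatial differential operators acting on $\psipluss$ plus a single $\Lxi$-derivative) to trade excess time derivatives for spatial ones that can be controlled by the TSI at higher indices; the iteration terminates at a finite $J=J(\ell_0)$ and contributes the regularity loss $\regl$.

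The principal difficulty is this $\Lxi$-correction for $\sfrak=2$: a direct $L^2$ inversion of $(\edthR')^4$ produces a circular estimate involving $\tildePhiplussHigh{i}$ on both sides, and only the combined iteration-plus-TME substitution above closes the loop with finite regularity loss. All other steps are direct $\CDeri$-based Sobolev manipulations on $\Sigmatb$.
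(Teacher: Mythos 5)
Your overall strategy — push the TSI of Lemma \ref{lem:TSI:gene} through the $\curlVR$-hierarchy and use the mode-restricted angular lower bound \eqref{eq:ellip:highermodes} — is exactly what the paper's one-line citation points to, and you correctly isolate the real subtlety: the $12M\,\overline{\Lxi(\cdot)}$ correction in the $\sfrak=2$ identity makes a naive elliptic inversion of $(\edthR')^4$ circular. The gap is in the iteration you propose to resolve it. After $J$ substitutions the remainder $\Lambda_{\ell,2}^{-J}(12M)^J T^J \tildePhiplussHigh{i}$ \emph{still contains the plus-side quantity}: rewriting $\Lxi^J$ through the TME converts $J$ time derivatives into spatial operators of order up to $J$ (plus one residual $\Lxi$), but all of these act on $\tildePhiplussHigh{i}$. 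What you actually obtain is, schematically, $\norm{\tildePhiplussHigh{i}}_{W^k}\lesssim(\text{minus terms})+(12M\,C_{\mathrm{TME}}/\Lambda_{\ell_0,2})^J\norm{\tildePhiplussHigh{i}}_{W^{k+J}}$, so the circular term reappears at a \emph{strictly higher} Sobolev index. It can therefore not be absorbed into the left-hand side no matter how small the geometric prefactor is, and the target inequality — plus-energy at regularity $\reg-\regl$ controlled by minus-energy at regularity $\reg$ — does not follow, since the right side still carries a plus-energy at regularity $\reg-\regl+J$. The claim that "the iteration terminates at a finite $J=J(\ell_0)$" is not justified as written.

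A repaired version of your idea does exist. Since complex conjugation sends the $(m,\ell)$ mode to the $(-m,\ell)$ mode, the tilded TSI yields a coupled $2\times 2$ linear system for $(\tildePhiplussHigh{i})_{m,\ell}$ and $\overline{(\tildePhiplussHigh{i})_{-m,\ell}}$ with off-diagonal entries $\pm 12M\Lxi$. Eliminating one unknown gives $\bigl(\Lambda_{\ell,2}^2-(12M)^2\Lxi^2\bigr)(\tildePhiplussHigh{i})_{m,\ell}=$ (minus data). Using the TME \eqref{eq:TME} to rewrite $\Lxi^2$ as a spatial differential operator on $\Sigmatb$ — whose principal symbol has the favorable sign — turns $\Lambda_{\ell,2}^2-(12M)^2\Lxi^2$ into a genuinely elliptic second-order spatial operator, which can be inverted once, at the cost of a fixed, finite number of $\CDeri$-derivatives, with no geometric series. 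This is the closeable form of the TME-substitution idea you were reaching for. (Alternatively, for the reverse direction the $Y^{2\sfrak}$ form \eqref{eq:TSIspin2:Y+2} of the TSI places the $\Lxi$-correction on the \emph{minus} side and avoids circularity outright, at the cost of a hypersurface change of variables between $Y$- and $\curlVR$-derivatives, which may be what the authors had in mind.)
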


\begin{proof}
This follows easily from Definition \ref{def:energyonSigmazero:Schw} and the TSI between the spin $+\sfrak$ and $-\sfrak$ components in  Lemma \ref{lem:TSI:gene}.
\end{proof}

Next, we combine the estimates for one single mode in Proposition \ref{prop:almost:ext:ipm:nvv} with the proven weak decay estimates in Proposition \ref{prop:weakdecay:summary} to conclude the almost sharp decay for the spin $\pm \sfrak$ components that are supported on $\ell\geq \ell_0$ modes. This is the main statement in this subsection. 

\begin{thm}[Almost Price's law for the spin $\pm\sfrak$ components in the exterior region $\{\rb\geq \tb\}$]
\label{thm:almost:ext:ipm:nvv:higher}
Assume the  the spin $\pm \sfrak$ components are supported on $\ell\geq \ell_0$ modes, $\ell_0\geq \sfrak$. Let the $(m,\ell_0)$-th N--P constants $\NPCNs{\ell_0}$  for the $(m,\ell_0)$ mode of the spin $+\sfrak$ component be defined as in Definition \ref{def:NPCs}. Let $\vartheta\in (0,1/2)$ be arbitrary, and let $j\in \mathbb{N}$.
\begin{enumerate}
\item\label{pt:NPCneq0:l=l0:decay:ext:Schw:higher}
If not all of the $(m,\ell_0)$-th N--P constants $\NPCNs{\ell_0}$, $m=-\ell_0, \ldots,\ell_0$, are nonzero, there exist universal constants $C=C(\vartheta, j, \reg, \ell_0)$ and $\regl=\regl(j,\ell_0)>0$ such that in the exterior region $\{\rb\geq \tb\}$,
\begin{subequations}
\label{eq:weakdecay:ell:ipm:ext:nv:higher}
\begin{align}
\label{eq:weakdecay:ell:ipm:ext:nv:p:higher}
 \absCDeri{\Lxi^j(r^{-2\sfrak-1}\Psipluss)}{\reg-\regl}\leq{}& Cv^{-1-2\sfrak}
  \tb^{-\frac{2\ell-2\sfrak-\vartheta}{2}-j-1}(\InizeroEnergyplussnv{\reg}{\vartheta}
 )^{\half},\\
 \label{eq:weakdecay:ell:ipm:ext:nv:m:higher}
  \absCDeri{\Lxi^j(r^{-1}\Psiminuss)}{\reg-\regl}\leq{}& Cv^{-1}
  \tb^{-\frac{2\ell_0+2\sfrak-\vartheta}{2}-j-1}(\InizeroEnergyplussnv{\reg}{\vartheta})^{\half},
\end{align}
\end{subequations}
and
\begin{subequations}
\label{eq:weakdecay:ell:ipm:ext:nv:higher:even}
\begin{align}
\label{eq:weakdecay:ell:ipm:ext:nv:p:higher:even}
 \absCDeri{\Lxi^j(r^{-2\sfrak-1}(\Psipluss)^{\ell\geq \ell_0+1})}{\reg-\regl}\leq{}& Cv^{-1-2\sfrak}
  \tb^{-\frac{2\ell-2\sfrak+\vartheta}{2}-j-1}(\InizeroEnergyplussnv{\reg}{\vartheta}
 )^{\half},\\
 \label{eq:weakdecay:ell:ipm:ext:nv:m:higher:even}
  \absCDeri{\Lxi^j(r^{-1}(\Psiminuss)^{\ell\geq \ell_0+1})}{\reg-\regl}\leq{}& Cv^{-1}
  \tb^{-\frac{2\ell_0+2\sfrak+\vartheta}{2}-j-1}(\InizeroEnergyplussnv{\reg}{\vartheta})^{\half};
\end{align}
\end{subequations}
\item\label{pt:NPCeq0:l=l0:decay:ext:Schw:higher}
If all the $(m,\ell_0)$-th N--P constant $\NPCNs{\ell_0}$, $m=-\ell_0, \ldots,\ell_0$, are zero, there exist universal constants $C=C(\vartheta, j, \reg, \ell_0)$ and $\regl=\regl(j,\ell_0)>0$ such that in the exterior region $\{\rb\geq \tb\}$,
\begin{subequations}
\label{eq:weakdecay:ell:ipm:ext:v:higher}
\begin{align}
\label{eq:weakdecay:ell:ipm:ext:v:p:higher}
 \absCDeri{\Lxi^j(r^{-2\sfrak-1}\Psipluss)}{\reg-\regl}\leq{}& Cv^{-1-2\sfrak}
  \tb^{-\frac{2\ell_0-2\sfrak-\vartheta}{2}-j-2}(\InizeroEnergyplussv{\reg}{\vartheta}
 )^{\half},\\
 \label{eq:weakdecay:ell:ipm:ext:v:m:higher}
  \absCDeri{\Lxi^j(r^{-1}\Psiminuss)}{\reg-\regl}\leq{}& Cv^{-1}
  \tb^{-\frac{2\ell_0+2\sfrak-\vartheta}{2}-j-2}(\InizeroEnergyplussv{\reg}{\vartheta})^{\half},\end{align}
\end{subequations}
and
\begin{subequations}
\label{eq:weakdecay:ell:ipm:ext:v:higher:even}
\begin{align}
\label{eq:weakdecay:ell:ipm:ext:v:p:higher:even}
 \absCDeri{\Lxi^j(r^{-2\sfrak-1}(\Psipluss)^{\ell\geq \ell_0+2})}{\reg-\regl}\leq{}& Cv^{-1-2\sfrak}
  \tb^{-\frac{2\ell-2\sfrak+\vartheta}{2}-j-2}(\InizeroEnergyplussnv{\reg}{\vartheta}
 )^{\half},\\
 \label{eq:weakdecay:ell:ipm:ext:v:m:higher:even}
  \absCDeri{\Lxi^j(r^{-1}(\Psiminuss)^{\ell\geq \ell_0+2})}{\reg-\regl}\leq{}& Cv^{-1}
  \tb^{-\frac{2\ell_0+2\sfrak+\vartheta}{2}-j-2}(\InizeroEnergyplussnv{\reg}{\vartheta})^{\half}.
\end{align}
\end{subequations}
\end{enumerate}
\end{thm}

\begin{proof}
If the spin $+\sfrak$ component is supported on $\ell\geq \ell_0$ modes, we can decompose it as in Definition \ref{def:fixedmode} into
\begin{align}
\label{eq:ellgelell0:p:modedecomp}
(\Psipluss)_{\ell\geq \ell_0}=\sum_{i=\ell_0}^{\infty} (\Psipluss)^{\ell=i}=\sum_{i=\ell_0}^{\infty} \sum\limits_{m=-i}^{i}(\Psipluss)_{(m,i)}Y_{m,i}^{s}(\cos\theta)e^{im\pb},
\end{align}
 which holds true in $L^2(S^2)$. Consider first  the case \ref{pt:NPCneq0:l=l0:decay:ext:Schw:higher}. For each $(\Psipluss)_{m,\ell_0}$, the estimates in Proposition \ref{prop:almost:ext:ipm:nvv} can be applied; while for the remainder $(\Psipluss)^{\ell\geq \ell_0+1}$, we can use the estimate \eqref{eq:weakdecay:geqell0:ipm} with $p=1+\vartheta$ in Proposition \ref{prop:weakdecay:summary}  to achieve the decay estimate \eqref{eq:weakdecay:ell:ipm:ext:nv:p:higher:even}. These two together then yield the estimate \eqref{eq:weakdecay:ell:ipm:ext:nv:p:higher}. Consider next the case \ref{pt:NPCeq0:l=l0:decay:ext:Schw:higher}. We apply Proposition \ref{prop:almost:ext:ipm:nvv} to each $(\Psipluss)_{m,\ell_0}$, the estimate \eqref{eq:weakdecay:ell:ipm:ext:nv:p:higher} to each $(\Psipluss)_{m,\ell_0+1}$ and the estimate \eqref{eq:weakdecay:ell:ipm:ext:nv:p:higher:even} to $(\Psipluss)^{\ell\geq\ell_0+2}$, which proves \eqref{eq:weakdecay:ell:ipm:ext:v:p:higher} and \eqref{eq:weakdecay:ell:ipm:ext:v:p:higher:even}.
 The estimates for the spin $-\sfrak$ component are similarly derived.
\end{proof}

\subsection{Wave equations for a fixed mode of the spin $s$ components}

It then remains to prove the almost Price's law in the interior region $\{\rb\leq \tb\}$. Before passing to the proof, we shall introduce some preliminaries that include wave equations for a couple of rescaled scalars constructed from a fixed mode of the spin $s=\pm\sfrak$ components. The equations for these rescaled scalars are indispensable in the latter Section \ref{sect:APL:IntRe} where the almost Price's law in the interior region is shown and Section \ref{sect:VNPC:PL} where the time integral is defined. 

The first rescaled scalar is a pure $r$-rescaling of a fixed mode of the spin $s$ components.

\begin{lemma}
\label{lem:radialeq:ell:S1}
Let the spin $s=\pm \sfrak$ components be supported on an $(m,\ell)$ mode, $\ell\geq \sfrak$.
Let
\begin{align}
\hatvarphisell=r^{-(\ell+s)}\psi_{s,\ell}.
\end{align}
 Then its governing equation is
\begin{align}
\label{eq:radialeq:ell:S1}
 \prb\left(r^{2\ell+2}\mu^{1-s} \prb\hatvarphisell\right)
- 2\ell (\ell+s)M \mu^{-s}r^{2\ell-1}\hatvarphisell={}\Lxi \Hhatvarphisell,
\end{align}
where
\begin{align}
\label{def:H:radialeq:ell:S1}
\Hhatvarphisell={}&-\mu^{-s+1}r^{2\ell+2} (\Hhyp-\partial_r\hhyp)\prb\hatvarphisell
+\mu^{-s+1}r^{2\ell+2}\Hhyp\partial_r\hhyp\Lxi\hatvarphisell\notag\\
&
+\mu^{-s}r^{2\ell}[
2(\ell+1)\mu r\partial_r\hhyp-2(s-1)M\partial_r\hhyp
+\mu r^2 \partial_r^2\hhyp-2(\ell-s+1)r]\hatvarphisell\notag\\
={}&r^{\ell-s+1}\big\{\mu \partial_r\hhyp \VR (\mu^{-s} r\psi_{s,\ell})-\mu \Hhyp \prb(\mu^{-s} r\psi_{s,\ell})\notag\\
&\qquad \qquad+([2sr-2M(3s+1)]\Hhyp r^{-2}-\mu \partial_r \Hhyp)\mu^{-s} r\psi_{s,\ell}\big\}.
\end{align}
\end{lemma}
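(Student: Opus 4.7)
The plan is to derive \eqref{eq:radialeq:ell:S1} directly from the TME \eqref{eq:TME} in three steps: (i) project to the $(m,\ell)$ mode using the angular eigenvalue, (ii) substitute $\psi_{s,\ell}=r^{\ell+s}\hatvarphisell$, and (iii) pass to hyperboloidal coordinates, collecting every $\Lxi$-derivative on the right-hand side. By \eqref{eq:l=l0mode:eigenvalue}, $\edthR\edthR'$ acts on $\psi_{s,\ell}$ as the constant $-(\ell+s)(\ell-s+1)$, so \eqref{eq:TME} reduces to a $(1+1)$-dimensional equation in $(t,r)$ that splits as $L_{\text{rad}}\psi_{s,\ell}+L_t\psi_{s,\ell}=0$, where
\begin{align*}
L_{\text{rad}}&:=\partial_r(\mu r^2\partial_r)-2s(r-M)\partial_r-(\ell+s)(\ell-s+1),\\
L_t&:=-\mu^{-1}r^2\partial_t^2+2s(r-M)\mu^{-1}\partial_t-4sr\partial_t.
\end{align*}

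The first key step is the purely algebraic identity
\begin{align*}
L_{\text{rad}}(r^{\ell+s}\hatvarphisell)=\mu^s r^{s-\ell}\bigl[\partial_r(r^{2\ell+2}\mu^{1-s}\partial_r\hatvarphisell)-2\ell(\ell+s)M\mu^{-s}r^{2\ell-1}\hatvarphisell\bigr],
\end{align*}
which I would verify by direct expansion using $\partial_r\mu=2Mr^{-2}$. The main cancellation is that, in the coefficient of $\hatvarphisell$, all pure $r$-power contributions vanish via $(\ell+s)(\ell+s+1)-(\ell-s+1)(\ell+s)-2s(\ell+s)=0$, leaving only the mass term $-2\ell(\ell+s)Mr^{\ell+s-1}$; the first-order coefficient similarly reproduces that of $\partial_r(r^{2\ell+2}\mu^{1-s}\partial_r)$. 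This identity encodes the fact, recorded in Remark \ref{rem:mainthm:intro}, that $\hell r^{\ell-\sfrak}$ is a zero-energy solution to the TME on the $(m,\ell)$ mode.

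For the coordinate change, I would use $\partial_t=\Lxi$ and $\partial_r|_{t}=\prb+\alpha\Lxi$ where $\alpha:=\mu^{-1}-\tHhyp$. Expanding $\partial_r(\mu r^2\partial_r\psi_{s,\ell})$ using $[\prb,\Lxi]=0$ generates, besides the target $\prb(\mu r^2\prb\psi_{s,\ell})$, extra terms $2\mu r^2\alpha\prb\Lxi\psi_{s,\ell}+\prb(\mu r^2\alpha)\Lxi\psi_{s,\ell}+\mu r^2\alpha^2\Lxi^2\psi_{s,\ell}$. Combined with $L_t\psi_{s,\ell}$, the total coefficient of $\Lxi^2\psi_{s,\ell}$ simplifies, via $\alpha^2=\mu^{-2}-2\mu^{-1}\tHhyp+\tHhyp^2$ and $\Hhyp=2\mu^{-1}-\tHhyp$, to $\mu r^2\tHhyp\Hhyp$. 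Since every prefactor is $\tb$-independent, the full combination of $\Lxi^2$, $\prb\Lxi$, and $\Lxi$ contributions collapses to $\Lxi F$ for a single $F$ linear in $\psi_{s,\ell}$, $\prb\psi_{s,\ell}$, $\Lxi\psi_{s,\ell}$. Rescaling by $\mu^{-s}r^{\ell-s}$ and substituting back $\psi_{s,\ell}=r^{\ell+s}\hatvarphisell$ then yields the target equation with $\Hhatvarphisell=\mu^{-s}r^{\ell-s}F|_{\psi_{s,\ell}=r^{\ell+s}\hatvarphisell}$.

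The main obstacle will be the routine but lengthy bookkeeping required to match the identified $\Hhatvarphisell$ with the two explicit forms stated in \eqref{def:H:radialeq:ell:S1}. The $\Lxi\hatvarphisell$ and $\prb\hatvarphisell$ coefficients come out immediately using $2\mu^{-1}=\Hhyp+\tHhyp$; the zeroth-order coefficient, however, requires one to collect contributions from $-2(\ell+s)\mu r\alpha$ (the piece produced when $\prb$ in the extracted $-2\mu r^2\alpha\prb$ hits the $r^{\ell+s}$ prefactor), $-\prb(\mu r^2\alpha)$, $2s(r-M)\alpha$, $-2s(r-M)\mu^{-1}$, and $4sr$, then simplify using $\mu r=r-2M$ and verify that all explicit $\mu^{-1}$ pieces cancel exactly, so that no $\mu^{-s-1}$ term survives in the final formula. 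The equivalent second form of $\Hhatvarphisell$ in terms of $\VR$ and $\prb$ acting on $\mu^{-s}r\psi_{s,\ell}$ then follows by purely algebraic substitution, using $\VR=\prb+\Hhyp\Lxi$ and $\psi_{s,\ell}=r^{\ell+s}\hatvarphisell$.
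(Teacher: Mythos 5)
Your proposal is correct and takes essentially the same route as the paper's proof: project the TME onto the $(m,\ell)$ mode via the eigenvalue $-(\ell+s)(\ell-s+1)$, substitute $\psi_{s,\ell}=r^{\ell+s}\hatvarphisell$ so that the radial part factors into divergence form with only the $-2\ell(\ell+s)Mr^{\ell+s-1}$ zeroth-order term surviving, and then convert $\partial_r$ to $\prb+(\mu^{-1}-\tHhyp)\Lxi$ and collect all $\Lxi$-derivatives on the right; the paper simply phrases the intermediate step in terms of $V,Y$ rather than $\partial_t,\partial_r$. One small slip: the total $\Lxi^2\hatvarphisell$ coefficient you compute should be $-\mu r^2\tHhyp\Hhyp$ (not $+\mu r^2\tHhyp\Hhyp$), which is what, after moving to the right-hand side and multiplying by $\mu^{-s}r^{\ell-s}$, produces the $+\mu^{-s+1}r^{2\ell+2}\Hhyp\tHhyp\Lxi\hatvarphisell$ term inside $\Hhatvarphisell$.
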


\begin{remark}
We have incorporated all the $\Lxi$ derivatives into the RHS of the wave equation \eqref{eq:radialeq:ell:S1} since the $\Lxi$ derivative yields extra $\tb^{-1}$ decay as shown in the weak decay estimates in Proposition \ref{prop:weakdecay:summary}. This fact is fundamental in proving the almost Price's law in the interior region $\{\rb\leq \tb\}$ in Section \ref{sect:APL:IntRe}.
\end{remark}

\begin{proof}
For a fixed $(m,\ell)$ mode, the TME \eqref{eq:TME} can be simplified to
\begin{align}
-\mu^{-1}V(\mu r^2 Y\psi_{s,\ell})-(\ell(\ell+1)-(s^2-s))\psi_{s,\ell}
+2s(r-M)Y\psi_{s,\ell}-(4s-2)r\Lxi\psi_{s,\ell}
={}&0.
\end{align}
This is equivalent to
\begin{align}
-r^2VY\psi_{s,\ell}-(\ell+s)(\ell-s+1)\psi_{s,\ell}+2(s-1)(r-M)Y\psi_{s,\ell}
-(4s-2)r\Lxi\psi_{s,\ell}
={}&0.
\end{align}
Let $\psi_{s,\ell}=r^{\lambda}\hatvarphisell$, then
\begin{align*}
-r^2VY\psi_{s,\ell}={}&-r^{2+\lambda}VY\hatvarphisell
+\lambda r^{\lambda+1}(V-\mu Y)\hatvarphisell
+\lambda(\lambda-1)\mu r^{\lambda}\hatvarphisell,\\
2(s-1)(r-M)Y\psi_{s,\ell}={}&2(s-1)(r-M)r^{\lambda}Y\hatvarphisell
- 2\lambda (s-1)(r-M)r^{\lambda-1}\hatvarphisell.
\end{align*}
Hence by letting $\lambda=\ell+s$, one obtains the following equation for $\hatvarphisell$
\begin{align}
&- r^{3}VY\hatvarphisell
+(\ell+s) r^{2}(V-\mu Y)\hatvarphisell
+2(s-1)(r-M)rY\hat{\varphi}_{s,\ell}-(4s-2)r^{2}\Lxi\hatvarphisell \notag\\
&+[-(\ell+s)(\ell-s+1)r+(\ell+s)(\ell+s-1)\mu r
-2(\ell+s)(s-1)(r-M)
]\hatvarphisell={}0.
\end{align}
Calculating the coefficients and plugging in the expressions \eqref{def:vectorVRintermsofprb}, we arrive at
\begin{align}
&\mu r^3 \prb^2 \hatvarphisell
+[2(\ell+s)\mu r^2 -2(s-1)(r-M)r]\prb\hatvarphisell
- 2M\ell (\ell+s)\hatvarphisell\notag\\
&+\mu r^3 (\Hhyp-\partial_r\hhyp)\Lxi\prb\hatvarphisell
-\mu r^3\Hhyp\partial_r\hhyp\Lxi^2\hatvarphisell\notag\\
&
+[-2(\ell+s)\mu r^2 \partial_r\hhyp +2(s-1)(r-M)r\partial_r\hhyp-\mu r^3 \partial_r^2\hhyp+2(\ell-s+1)r^2]\Lxi\hatvarphisell={}0.
\end{align}
A further rescaling yields
 equation \eqref{eq:radialeq:ell:S1}.
\end{proof}

It is convenient and important in latter discussions to introduce a further rescaled quantity of $\hatvarphisell$ such that the PDE of this new rescaled quantity has no zeroth order term.

 Let us first define a function $\hell$.
\begin{definition}
\label{def:hell:pm}
Define
\begin{align}
\label{def:hell:nega}
\hell=\sum\limits_{i=0}^{\ell-\sfrak} \hellh{i}M^i r^{-i}
\end{align}
with
\begin{subequations}
\label{eq:totalderieq:generall:nega:def}
\begin{align}
\hellh{0}={}&1, \\
\hellh{i}={}&-2\hellh{i-1}\frac{(\ell-i-\sfrak+1)(\ell-i+1)}{i(2\ell+1-i)},\qquad i=1,2,\ldots,\ell-\sfrak.
\end{align}
\end{subequations}
\end{definition}

We can now introduce this rescaled scalar $\varphisell$ that satisfies a PDE of divergence form. The most fundamental property for the scalar $\varphisell$ is that $\prb \varphisell$ has faster $\tb^{-1}$ decay than $\varphisell$ in the region $\{\rb\leq \tb\}$, a fact that will be shown in Proposition \ref{prop:almost:int:ipm:nvv} and does not hold for any other $r$-rescaled scalar constructed from $\hatvarphisell$. Note that this PDE of divergence form is also essential in defining the time integral in Section \ref{subsect:TI:v}.

\begin{prop}
\label{prop:totalderieq:generall:nega}
Let the spin $\pm \sfrak$ components be supported on an $(m,\ell)$ mode, $\ell\geq \sfrak$.
Define
\begin{align}\label{def:varphiell:-1}
\varphisell=(\mu^{\frac{s+\sfrak}{2}}\hell)^{-1}\hatvarphisell=(\mu^{\frac{s+\sfrak}{2}}\hell r^{\ell+s})^{-1}\psi_{s,\ell}.
\end{align}
Then the scalar $\varphisell$ satisfies
\begin{align}
\label{eq:totalderieq:generall:nega}
\prb\left(r^{2\ell+2}\mu^{1+\sfrak}\hell^2 \prb\varphisell\right)
 ={}& \Lxi H_{{\varphi}_{s,\ell}},
\end{align}
with  $\Hvarphisell=\mu^{\frac{s+\sfrak}{2}}\hell \Hhatvarphisell$ and $\Hhatvarphisell$ as defined in \eqref{def:H:radialeq:ell:S1}.
\end{prop}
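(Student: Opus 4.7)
The plan is straightforward substitution. Set $w := \mu^{(s+\sfrak)/2}\hell$, which depends only on $r$, so that $\hatvarphisell = w\varphisell$. Then $\prb\hatvarphisell = w'\varphisell + w\prb\varphisell$, and using the elementary identity
\begin{align*}
w\,\prb(Aw) + wAw' = \prb(Aw^2),
\end{align*}
with $A = r^{2\ell+2}\mu^{1-s}$ and the observation $\mu^{1-s}w^2 = \mu^{1+\sfrak}\hell^2$, one finds that multiplying the LHS of \eqref{eq:radialeq:ell:S1} by $w$ collects precisely into
\begin{align*}
\prb\bigl(r^{2\ell+2}\mu^{1+\sfrak}\hell^2 \prb\varphisell\bigr) + \bigl[w\,\prb(r^{2\ell+2}\mu^{1-s}w') - 2\ell(\ell+s)M\mu^{-s}r^{2\ell-1}w^2\bigr]\varphisell.
\end{align*}
On the RHS, since $\Lxi w = 0$, multiplying by $w$ produces $\Lxi(w\Hhatvarphisell) = \Lxi\Hvarphisell$, matching the claimed definition.

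It therefore suffices to show that the bracketed zeroth-order coefficient vanishes, i.e.\ that $w=\mu^{(s+\sfrak)/2}\hell$ is a stationary (time-independent) solution of \eqref{eq:radialeq:ell:S1}, equivalently that $r^{\ell+s}w$ is a static zero mode of the TME on the $(m,\ell)$ subspace. This is consistent with the $s=-\sfrak$ statement in Remark \ref{rem:mainthm:intro} that $r^{\ell-\sfrak}\hell$ is a static zero mode of the spin $-\sfrak$ TME; the $\mu^{\sfrak}$ prefactor in the $s=+\sfrak$ case is the standard regularization at the horizon expected from the spin-weighted scaling.

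To verify the stationary equation, I would plug the polynomial ansatz $\hell = \sum_{i=0}^{\ell-\sfrak}\hellh{i}M^ir^{-i}$ with $\hellh{0}=1$ into $\prb(r^{2\ell+2}\mu^{1-s}\prb w) - 2\ell(\ell+s)M\mu^{-s}r^{2\ell-1}w = 0$, expand by Leibniz, and match coefficients of each $M^ir^{2\ell-1-i}$. A short bookkeeping computation yields the two-term recurrence
\begin{align*}
i(2\ell+1-i)\hellh{i} + 2(\ell-i-\sfrak+1)(\ell-i+1)\hellh{i-1} = 0,
\end{align*}
which is exactly the prescription in Definition \ref{def:hell:pm}; the series truncates at $i=\ell-\sfrak$ because the factor $(\ell-i-\sfrak+1)$ vanishes at $i=\ell-\sfrak+1$. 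The main (but purely algebraic) obstacle is tracking how the derivatives of $\mu^{(s+\sfrak)/2}$ combine with those of $\hell$ and with the zeroth-order source: the extra $Mr^{-2}$-type contributions from differentiating the $\mu$-prefactor must assemble exactly so that the recurrence comes out independently of the sign of $s$, and this is what justifies the $s$-independent definition of $\hellh{i}$. Once the recurrence is verified, the identification $\Hvarphisell = \mu^{(s+\sfrak)/2}\hell\,\Hhatvarphisell$ is immediate from multiplying \eqref{def:H:radialeq:ell:S1} by $w$.
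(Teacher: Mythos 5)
Your proof is correct and follows essentially the same approach as the paper's: identify the conjugating factor $h_1$ (your $w=\mu^{(s+\sfrak)/2}\hell$) so that the zeroth-order coefficient vanishes, which reduces to checking that $r^{\ell+s}w$ is a static zero mode, and then verify the polynomial recurrence for $\hellh{i}$. The only cosmetic difference is that you unify the two signs of $s$ from the start via the $\mu^{(s+\sfrak)/2}$ prefactor and claim an $s$-independent recurrence, whereas the paper first verifies the recurrence for $s=-\sfrak$ (where the $\mu$-prefactor is absent, so the Leibniz bookkeeping is cleanest) and then observes that $h_1=\mu^{\sfrak}\hell$ intertwines to solve the $s=+\sfrak$ stationary equation; both routes leave the same one-line cancellation to direct computation, and your claimed recurrence $i(2\ell+1-i)\hellh{i}+2(\ell-i-\sfrak+1)(\ell-i+1)\hellh{i-1}=0$ is indeed what comes out in both cases.
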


\begin{proof}
Given a general second order ODE
\begin{align}
\prb(f_1\prb \hat\varphi)+f_2\hat\varphi={}F_1,
\end{align}
our aim is to define $\varphi=h_1^{-1}\hat\varphi$ such that it satisfies an ODE with no zeroth order term. By this definition of $\varphi$, the above equation becomes
\begin{align}
\prb(f_1h_1\prb\varphi) +f_1\prb h_1 \prb \varphi +[\prb(f_1\prb h_1)+f_2 h_1]\varphi={}F_1,
\end{align}
or, equivalently
\begin{align}
\label{eq:radialeq:ell:S2:111}
\prb(f_1h_1^2\prb\varphi)  +h_1[\prb(f_1\prb h_1)+f_2 h_1]\varphi={}h_1F_1.
\end{align}
Recall our goal is to make the coefficient of $\varphi$ term vanishes, hence, it suffices that the function $h_1$ satisfies a second order ODE
\begin{align}
\prb(f_1\prb h_1)+f_2 h_1={}0.
\end{align}

Consider first $s\leq 0$ case. In view of the above argument and equation \eqref{eq:radialeq:ell:S1}, the function $\hell$ in the definition \eqref{def:varphiell:-1} is required to satisfy
\begin{align}
\label{eq:totalderieq:generall:nega:2:aaa}
 \prb\left(r^{2\ell+2}\mu^{1+\sfrak} \prb\hell\right)- 2\ell (\ell-\sfrak)M \mu^{\sfrak}r^{2\ell-1}\hell={}0,
 \end{align}
and by doing so, equation \eqref{eq:radialeq:ell:S1}  reduces to
 \begin{align}
 \prb\left(r^{2\ell+2}\mu^{1+\sfrak}\hell^2 \prb{\varphisell}\right)
 ={}& \Lxi \left(\hell \Hhatvarphisell \right)
 \end{align}
 which is exactly \eqref{eq:totalderieq:generall:nega}.

 Turn to $s>0$ case. Similarly, we need to choose a function $h_1$ in the expression $\varphiplussell=h_1^{-1}\hatvarphiplussell$ such that
 \begin{align}
\label{eq:totalderieq:generall:posi:2}
 \prb\left(r^{2\ell+2}\mu^{1-\sfrak} \prb h_1\right)- 2\ell (\ell+\sfrak)M \mu^{-\sfrak}r^{2\ell-1}h_1={}0,
 \end{align}
 Some direct calculations show that if $\hell$ solves equation \eqref{eq:totalderieq:generall:nega:2:aaa}, then $h_1=\mu^{\sfrak}\hell$ solves equation \eqref{eq:totalderieq:generall:posi:2}. Therefore, upon making such a choice of $h_1=\mu^{\sfrak}\hell$, equation \eqref{eq:radialeq:ell:S2:111} becomes \eqref{eq:radialeq:ell:S1}, which in turn takes the form of \eqref{eq:totalderieq:generall:nega}.

It remains to check that the function $\hell$ in Definition \ref{def:hell:pm} solves equation \eqref{eq:totalderieq:generall:nega:2:aaa}. By plugging the expression \eqref{def:hell:nega} of $\hell$ into equation \eqref{eq:totalderieq:generall:nega:2:aaa}, it becomes
\begin{align}
\label{eq:totalderieq:generall:nega:3}
\sum_{i=0}^{\ell-\sfrak}\Big( i(2\ell+1-i) \hellh{i} M^i r^{2\ell-i}
+2(\ell-i)(\ell-i-\sfrak) \hellh{i} M^{i+1} r^{2\ell-i-1}\Big)={}0.
\end{align}
This equation is clearly satisfied upon the choices of $\hell$  made in Definition \ref{def:hell:pm}.
\end{proof}

We end this subsection with listing a few properties of the function $\hell$.

\begin{prop}
\label{prop:totalderieq:propofh:generall:nega}
Let function $\hell$ be defined as in Definition \ref{def:hell:pm}. Then,
\begin{enumerate}
\item\label{pt:hellproperty:1}
the function $\hell$ satisfies
\begin{align}
\label{eq:totalderieq:generall:nega:2}
 \prb\left(r^{2\ell+2}\mu^{1+\sfrak} \prb\hell\right)- 2\ell (\ell-\sfrak)M \mu^{\sfrak}r^{2\ell-1}\hell={}0;
 \end{align}

\item\label{pt:hellproperty:2}
$r^{\ell-\sfrak}\hell$ is a stationary solution to the TME \eqref{eq:TME} for an $(m,\ell)$ mode of the spin $-\sfrak$ component $\psiminuss$, and $\mu^{\sfrak}r^{\ell+\sfrak}\hell$ is a stationary solution to the TME \eqref{eq:TME} for an $(m,\ell)$ mode of the spin $+\sfrak$ component $\psipluss$;

\item\label{pt:hellproperty:3} the function $\hell$ satisfies
\begin{align}\label{TSI:hell}
(r^2 \prb)^{2\sfrak}(\mu^{\sfrak}r^{\ell-\sfrak+1}\hell)=(\ell-\sfrak+1)\cdots(\ell+\sfrak) r^{\ell+\sfrak+1}\hell;
\end{align}

\item
the function $\hell$ in Proposition \ref{prop:totalderieq:generall:nega}  has no zero root  in $r\in [2M,\infty)$.
\end{enumerate}
\end{prop}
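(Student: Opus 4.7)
The plan is to address the four claims in sequence, all of them built on the second-order ODE \eqref{eq:totalderieq:generall:nega:2}. I expect point (4), the non-vanishing statement, to be the main obstacle, and I will handle it through a monotonicity argument for an auxiliary quantity built from $\hell$ and $\prb\hell$.

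Points (1) and (2) are essentially bookkeeping. For (1), substituting the polynomial ansatz \eqref{def:hell:nega} into \eqref{eq:totalderieq:generall:nega:2} reduces the ODE to the algebraic identity \eqref{eq:totalderieq:generall:nega:3}, which is satisfied by the coefficients $\hellh{i}$ chosen in \eqref{eq:totalderieq:generall:nega:def}. For (2), a zero energy mode is a static ($\Lxi\psi=0$) solution of the TME; taking $\hatvarphiell=\hell$ (so that $\psi_{-\sfrak,\ell}=r^{\ell-\sfrak}\hell$) in Lemma \ref{lem:radialeq:ell:S1} makes the right-hand side $\Lxi\Hhatvarphiell$ vanish, and what remains on the left is exactly \eqref{eq:totalderieq:generall:nega:2} from (1).

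For (3), I would invoke the Teukolsky--Starobinsky identities (Lemma \ref{lem:TSI:gene}) at the level of static zero modes. From the proof of Proposition \ref{prop:totalderieq:generall:nega} the function $\mu^\sfrak\hell$ solves \eqref{eq:totalderieq:generall:posi:2}, so $\psi_{+\sfrak,\ell}:=\mu^\sfrak r^{\ell+\sfrak}\hell$ is a companion static zero mode for the spin $+\sfrak$ TME. In the static regime $\curlVR$ collapses to $r^2\prb$, and the associated scalars are $\Phiminuss{0}=\mu^\sfrak r^{\ell-\sfrak+1}\hell$ and $\PhiplussHigh{0}=r^{\ell+\sfrak+1}\hell$. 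Restricting the TSI \eqref{eq:TSI:simpleform:l=1}, \eqref{eq:TSI:simpleform:l=2} to the $(m,\ell)$ mode, and noting that the $12M\overline{\Lxi\Phiplustwo}$ correction in \eqref{eq:TSI:simpleform:l=2} drops out for a static mode, one obtains $\Phiminuss{2\sfrak}=\prod_{j=\ell-\sfrak+1}^{\ell+\sfrak} j\cdot \PhiplussHigh{0}$, which is precisely \eqref{TSI:hell}. The one subtlety is that the TSI use the full Maxwell/Bianchi system, not the TME alone; this is easily resolved by choosing the middle component from the system \eqref{eq:TSIsSpin1Kerr} (analogously for $\sfrak=2$) so that our pair of static modes embeds in an honest static Maxwell (respectively linearized-gravity) solution.

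The hardest part is (4), which I would attack via the following monotonicity argument. Define
\[
F(r):=r^{2\ell+2}\mu^{1+\sfrak}\,\hell\,\prb\hell .
\]
Differentiating and invoking \eqref{eq:totalderieq:generall:nega:2} from (1),
\[
\prb F = 2\ell(\ell-\sfrak)M\mu^\sfrak r^{2\ell-1}\hell^{2} + r^{2\ell+2}\mu^{1+\sfrak}(\prb\hell)^{2}\geq 0 \quad\text{on } [2M,\infty),
\]
since $\ell\geq\sfrak$. Moreover $F(2M)=0$ because $\mu^{1+\sfrak}$ vanishes at $r=2M$ while $\hell$ and $\prb\hell$ remain finite (polynomials in $M/r$). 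If $\hell(r_0)=0$ for some $r_0\in(2M,\infty)$, then $F(r_0)=0$, and monotonicity forces $F\equiv 0$ on $[2M,r_0]$, hence $\hell\,\prb\hell\equiv 0$ on $(2M,r_0)$. By analyticity, one of the factors must vanish identically on $[2M,\infty)$: the former contradicts $\hellh{0}=1$, while the latter forces $\hell$ to be the constant $\hellh{0}=1$, again contradicting $\hell(r_0)=0$. The trivial case $\ell=\sfrak$ gives $\hell\equiv 1$ directly, so the conclusion holds.
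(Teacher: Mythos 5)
Points (1) and (2) are correct and match the paper's argument.

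Point (3) takes a genuinely different route. The paper disposes of \eqref{TSI:hell} by "direct calculations" for $\sfrak=1,2$, while you deduce it from the TSI applied to static zero modes; this is conceptually appealing and explains the combinatorial prefactor. However, there is a normalization gap in your argument. You take for granted that the static spin $+\sfrak$ component paired with $\psiminuss=r^{\ell-\sfrak}\hell$ in a full Maxwell/Bianchi solution is \emph{exactly} $\mu^\sfrak r^{\ell+\sfrak}\hell$, but each of these is only fixed up to an independent scalar multiple as a solution of its own ODE, and the constant relating them inside a single Maxwell/Bianchi solution is not determined by anything you wrote. What the TSI actually yields is $(r^2\prb)^{2\sfrak}(\mu^\sfrak r^{\ell-\sfrak+1}\hell)=\prod_{j=\ell-\sfrak+1}^{\ell+\sfrak}j\cdot c\, r^{\ell+\sfrak+1}\hell$ for some unknown $c$, and you still owe a step showing $c=1$. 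This is easy to close: both sides of \eqref{TSI:hell} are polynomials in $r$ of degree $\ell+\sfrak+1$ solving the same static second-order ODE (equations \eqref{eq:Phiminus2:Schw:generall} and \eqref{eq:Phi+1:Schw:generall} restricted to a fixed mode), and using $\hell\to1$ as $r\to\infty$ one finds both have leading coefficient $(\ell-\sfrak+1)\cdots(\ell+\sfrak)$, which pins down $c=1$. Adding this line makes your argument complete, and as a bonus it shows you never actually need the Maxwell/Bianchi embedding: the identity of the two ODEs from Proposition \ref{prop:wave:Phihigh:pm1} already suffices.

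Point (4) is again a genuinely different approach. You use the Wronskian-type quantity $F=r^{2\ell+2}\mu^{1+\sfrak}\hell\prb\hell$, whose monotonicity $\prb F\ge 0$ you compute correctly, whereas the paper argues via a maximum principle for the ODE. Your mechanism for zeros in the open interval is sound (and in fact can be streamlined: once $F\equiv0$ on $[2M,r_0]$ you have $\prb F=0$ there, and for $\ell>\sfrak$ the first nonnegative term forces $\hell\equiv0$ on $(2M,r_0)$ outright, so no appeal to analyticity of the product is needed). But there is a genuine gap at the endpoint: the Proposition claims no zero on $[2M,\infty)$, including $r=2M$, and your argument is restricted to $r_0\in(2M,\infty)$. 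Monotonicity of $F$ gives nothing at $r=2M$ because $F(2M)=0$ automatically from the $\mu^{1+\sfrak}$ factor, regardless of the value of $\hell(2M)$. To exclude $\hell(2M)=0$ you need a separate local argument, as the paper does: evaluating \eqref{eq:totalderieq:generall:nega:2:2} at $r=2M$ gives $(1+\sfrak)\prb\hell(2M)=\tfrac{\ell(\ell-\sfrak)}{2M}\hell(2M)$, so $\hell(2M)=0$ forces $\prb\hell(2M)=0$, and iterating derivatives of the ODE then forces $\prb^i\hell(2M)=0$ for all $i$, contradicting that $\hell$ is a nonzero polynomial. You should add this horizon case to close the argument.
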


\begin{remark}
Equation \eqref{TSI:hell} can in fact be derived from the TSI. We will not discuss this in more details but give a simple remark that equation \eqref{TSI:hell}  holds true for any $\sfrak\in \mathbb{N}$.
\end{remark}

\begin{proof}
Equation \eqref{eq:totalderieq:generall:nega:2} in the first point is already shown in the proof of Proposition \ref{prop:totalderieq:generall:nega}.

Turn to the second point. Since $\hell$ solves \eqref{eq:totalderieq:generall:nega:2}, it is a solution to equation \eqref{eq:radialeq:ell:S1}, which thus yields that $r^{\ell-\sfrak}\hell$ is a stationary solution to the TME \eqref{eq:TME} for an $(m,\ell)$ mode of $\psiminuss$. The proof of the $s>0$ case in Proposition \ref{prop:totalderieq:generall:nega} has shown that $h_1=\mu^{\sfrak}\hell$ is a solution to equation \eqref{eq:totalderieq:generall:posi:2}, hence $\hat{\varphi}_{+\sfrak,\ell}=\mu^{\sfrak}\hell$ is a stationary solution to equation \eqref{eq:radialeq:ell:S1}. Consequently, $\mu^{\sfrak}r^{\ell+\sfrak}\hell$ is a stationary solution to the TME \eqref{eq:TME} for an $(m,\ell)$ mode of  $\psipluss$.

For the third point,
equation \eqref{TSI:hell} can be verified by direct calculations using equation \eqref{eq:totalderieq:generall:nega:2} for $\sfrak=1,2$.

It remains to show the fourth point. Recall that $\lim\limits_{r\to \infty} \hell=1$, hence if $\ell=\sfrak$, then clearly $\hell =1$ and the claim trivially holds. As a result, it suffices to show the non-vanishing property of function $\hell$ in $[2M, \infty)$ in the case that $\ell\geq \sfrak+1$.

By equation \eqref{eq:totalderieq:generall:nega:2} satisfied by function $\hell$, we have
\begin{align}
\label{eq:totalderieq:generall:nega:2:2}
\mu r^{2\ell+2}{\prb^2 \hell}+[2(\ell+1)r^{2\ell+1}-2M(2\ell+1-\sfrak) r^{2\ell}]{\prb \hell}-2M \ell(\ell-\sfrak)r^{2\ell-1}{\hell}=0.
\end{align}
As a result, function $\hell$ cannot reach its nonnegative maximum or nonpositive minimum in $\rb\in(2M,+\infty)$.

In addition, we claim $\hell(2M)> 0$. Suppose $\hell(2M)<0$, then the above equation \eqref{eq:totalderieq:generall:nega:2:2} implies $\prb\hell(2M)<0$, which means that $\hell$ must reach its negative minimum in $(2M,+\infty)$. This is in contradiction with the above argument. This thus yields the claim $\hell(2M)\geq 0$.
Instead, assume $\hell(2M)=0$, then we have $\prb\hell(2M)=0$ by equation \eqref{eq:totalderieq:generall:nega:2:2}. We can take further derivatives on  \eqref{eq:totalderieq:generall:nega:2:2} to obtain for
any $i\geq0$,
\begin{align*}
\mu r^3\prb^{i+2}\hell+[(2\ell+2+3i)r^2-2M(2\ell+1-\sfrak+2i)r]\prb^{i+1}\hell+\text{lower order derivatives}=0.
\end{align*}
We thus have $\prb^i \hell (2M)=0$ for all $i\geq 0$, which is clearly in contradiction with the expression of $\hell$ in Proposition \ref{prop:totalderieq:generall:nega}. Hence, $\hell(2M)>0$, and $\prb\hell(2M)>0$ by \eqref{eq:totalderieq:generall:nega:2:2}.

In the end, since $\hellh{1}=-(\ell-1)<0$, we have $\prb\hell>0$ for sufficiently large $\rho$. By the above conclusions that $\prb\hell(2M)>0$ and $\hell$ cannot reach its nonnegative maximum in $(2M, \infty)$, function $\hell$ must be monotonically increasing in $[2M, \infty)$. The non-vanishing property of function $\hell$ in $[2M,\infty)$ then follows easily from $\hell(2M)>0$.
\end{proof}

\subsection{Almost Price's law in the interior region  $\{\rb\leq \tb\}$}
\label{sect:APL:IntRe}

The main content of this subsection is devoted to obtaining in the interior region $\{\rb\leq \tb\}$ the almost Price's law for the spin $\pm \sfrak$ components.

Recall that the scalar $\hatvarphiell$ satisfies equation \eqref{eq:radialeq:ell:S1}  which reads
\begin{align}
\label{eq:radialeq:ell:S1:nega}
 \prb\left(r^{2\ell+2}\mu^{1+\sfrak} \prb\hatvarphiell\right)
- 2M \mu^{\sfrak}\lambda_{\sfrak} r^{2\ell-1}\hatvarphiell={}\Lxi \Hhatvarphiell,
\end{align}
with $\hatvarphiell=r^{-\ell+\sfrak}\psiminuss$, $\lambda_{\sfrak}=\ell(\ell-\sfrak)$, and
$\Hhatvarphiell$ being defined as in \eqref{def:H:radialeq:ell:S1} for $s=-\sfrak$.
A key estimate in achieving the almost Price's law in region $\{\rb\leq \tb\}$ is the following one for equation \eqref{eq:radialeq:ell:S1:nega}. 

\begin{lemma}
\label{lem:apl:int:n:ee}
Let $\reg\in \mathbb{N}$ and $\beta\in [0,(2\ell+1)/2]$ be arbitraty. Let $\hatvarphiell=r^{-\ell+\sfrak}\psiminuss$. Define $\CDeris=\{\Lxi, \rb\prb\}$. Then there exists a constant $C=C(\reg, j)$ such that
\begin{align}
\label{eq:key:almost:int:m:gene}
\hspace{4ex}&\hspace{-4ex}
\int_{2M}^{\tb}\left(r^{2\beta}\absCDeris{\hatvarphiell}{\reg}^2
+r^{2\beta+2}\absCDeris{\prb\hatvarphiell}{\reg}^2
+\mu^{2}r^{2\beta+4}\absCDeris{\prb^2 \hatvarphiell}{\reg}^2
\right)\di\rb\notag\\
\leq{}&C\bigg(\int_{2M}^{\tb} \left(r^{2\beta+4}\absCDeris{\Lxi\prb\hatvarphiell}{\reg}^2 + r^{2\beta}\absCDeris{\Lxi^2\hatvarphiell}{\reg}^2+r^{2\beta+2}\absCDeris{\Lxi\hatvarphiell}{\reg}^2\right)
\di \rb\notag\\
&\qquad
+\Big(r^{2\beta+1}\absCDeris{\hatvarphiell}{\reg}^2 +r^{2\beta+2}\absCDeris{\prb\hatvarphiell}{\reg}^2\Big)\Big\vert_{\rb=\tb}\bigg).
\end{align}
\end{lemma}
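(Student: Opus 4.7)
The plan is to view equation \eqref{eq:radialeq:ell:S1:nega} as a degenerate (at $\rb=2M$) second-order elliptic ODE on $[2M,\tb]$ with source $\Lxi\Hhatvarphiell$, then run a weighted multiplier / integration-by-parts argument at the base level $\reg=0$, recover the $\prb^2$-term algebraically from the equation, and finally iterate by commuting with the operators in $\CDeris=\{\Lxi,\rb\prb\}$.

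\textbf{Base step $(\reg=0)$.} The plan is to multiply \eqref{eq:radialeq:ell:S1:nega} by $\rb^{2\beta-2\ell}\overline{\hatvarphiell}$, take real parts, and integrate over $\rb\in[2M,\tb]$. Two integrations by parts in the principal term $\prb(\rb^{2\ell+2}\mu^{1+\sfrak}\prb\hatvarphiell)$ — using $\mathrm{Re}(\overline{\hatvarphiell}\prb\hatvarphiell)=\tfrac12\prb|\hatvarphiell|^2$ and the fact that the $\mu^{1+\sfrak}$ weight annihilates the boundary contribution at $\rb=2M$ — produce the identity
\begin{align*}
&\int_{2M}^{\tb}\!\rb^{2\beta+2}\mu^{1+\sfrak}|\prb\hatvarphiell|^{2}\,d\rb
+\tfrac{(2\ell-2\beta)(2\beta+1)}{2}\int_{2M}^{\tb}\!\rb^{2\beta}\mu^{1+\sfrak}|\hatvarphiell|^{2}\,d\rb \\
&\qquad +M\bigl[2\lambda_{\sfrak}+(2\ell-2\beta)(1+\sfrak)\bigr]\int_{2M}^{\tb}\!\rb^{2\beta-1}\mu^{\sfrak}|\hatvarphiell|^{2}\,d\rb
=\mathrm{BT}(\rb=\tb)-\mathrm{Re}\!\int_{2M}^{\tb}\!\rb^{2\beta-2\ell}\overline{\hatvarphiell}\,\Lxi\Hhatvarphiell\,d\rb.
\end{align*}
On the range $\beta\in[0,\ell]$ all three coefficients are nonnegative and give coercivity directly; on the narrower strip $\beta\in(\ell,(2\ell+1)/2]$ the middle coefficient turns negative and is compensated by combining the identity with the Hardy estimate \eqref{eq:HardyIneqRHS} applied to $\mu^{(1+\sfrak)/2}\rb^{\beta}\hatvarphiell$, using the numerical gap $\lambda_{\sfrak}=\ell(\ell-\sfrak)$ of Proposition \ref{prop:totalderieq:propofh:generall:nega}\ref{pt:hellproperty:2} to leave a strictly positive residue. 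A further Hardy inequality near $\rb=2M$ upgrades $\rb^{2\beta-1}\mu^{\sfrak}|\hatvarphiell|^{2}$ to the nondegenerate $\rb^{2\beta}|\hatvarphiell|^{2}$ appearing in the claim. The source $\rb^{2\beta-2\ell}\overline{\hatvarphiell}\,\Lxi\Hhatvarphiell$ is estimated by expanding $\Hhatvarphiell$ via \eqref{def:H:radialeq:ell:S1}, where each summand is $\mu^{\sfrak+1}\rb^{2\ell+2}O(1)\,\prb\hatvarphiell$, $\mu^{\sfrak+1}\rb^{2\ell+2}O(1)\,\Lxi\hatvarphiell$, or $\mu^{\sfrak}\rb^{2\ell}O(1)\,\hatvarphiell$; Cauchy--Schwarz with weights matched to the LHS then produces exactly the $\rb^{2\beta+4}|\Lxi\prb\hatvarphiell|^2$, $\rb^{2\beta+2}|\Lxi\hatvarphiell|^2$, $\rb^{2\beta}|\Lxi^2\hatvarphiell|^2$ terms on the right of \eqref{eq:key:almost:int:m:gene}, after one factor of $\Lxi$ is pulled outside.

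\textbf{Second-derivative term and commutation.} The weighted $\prb^{2}$-term is extracted algebraically rather than by a new multiplier: solving \eqref{eq:radialeq:ell:S1:nega} for $\rb^{2\ell+2}\mu^{1+\sfrak}\prb^{2}\hatvarphiell$ gives
\[
\mu^{2}\rb^{2\beta+4}|\prb^{2}\hatvarphiell|^{2}\lesssim \mu^{2}\rb^{2\beta+2}|\prb\hatvarphiell|^{2}+\rb^{2\beta-2}|\hatvarphiell|^{2}+\mu^{-2\sfrak}\rb^{2\beta-4\ell-2}|\Lxi\Hhatvarphiell|^{2},
\]
and one verifies from \eqref{def:H:radialeq:ell:S1} that every term in $\Hhatvarphiell$ carries at least $\mu^{\sfrak}$ at the horizon, so $\mu^{-2\sfrak}|\Lxi\Hhatvarphiell|^{2}$ stays regular there and is absorbed into the terms already present on the RHS of \eqref{eq:key:almost:int:m:gene}. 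To promote the estimate to $\reg\geq 1$, observe that $\Lxi$ is Killing and commutes with the whole ODE, while $[\rb\prb,\,\cdot\,]$ applied to the principal operator $\prb(\rb^{2\ell+2}\mu^{1+\sfrak}\prb\,\cdot)-2M\mu^{\sfrak}\lambda_{\sfrak}\rb^{2\ell-1}\,\cdot$ yields only lower-order operators of the same type acting on $\hatvarphiell$; hence one inductively applies the base estimate to the commuted equations and absorbs the commutator errors using the $(\reg{-}1)$-estimate.

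\textbf{Main obstacle.} The genuinely subtle point is the upper end $\beta\in(\ell,(2\ell+1)/2]$ of the allowed range, where the coercive coefficient of the mass term $\int\rb^{2\beta}\mu^{1+\sfrak}|\hatvarphiell|^{2}$ changes sign; closing the estimate there will require combining the base identity with a second test function (either $\mu^{-\sfrak}$-weighted, or derived by pairing with $\prb\hatvarphiell$) and using the explicit zero-mode $\rb^{\ell-\sfrak}\hell$ of Proposition \ref{prop:totalderieq:propofh:generall:nega} to isolate a manifestly positive quadratic form at the endpoint $\beta=(2\ell+1)/2$. Tracking how the $\mu$-degeneracy interacts with the horizon Hardy inequality of Lemma \ref{lem:HardyIneq}\ref{point:lem:HardyIneqRHS} is the principal technical hurdle.
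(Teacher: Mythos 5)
Your base identity from the $\overline{\hatvarphiell}$-multiplier is correct and matches what one gets from two integrations by parts, but you have correctly identified, and then left unresolved, the decisive difficulty: the zeroth-order coefficient $(\ell-\beta)(2\beta+1)$ changes sign at $\beta=\ell$, and your proposed repairs (an extra Hardy step, a second test function, invoking $\lambda_{\sfrak}$) are only sketched. They are not obviously sufficient — the negative coefficient at $\beta=(2\ell+1)/2$ has size $\ell+1$, which the Hardy constant does not absorb for free, and when $\ell=\sfrak$ the gap $\lambda_{\sfrak}=\ell(\ell-\sfrak)$ you lean on vanishes identically, so that mechanism cannot help. Since you explicitly label this "the principal technical hurdle" and leave it as a \emph{plan} rather than an argument, the proof as written has a genuine gap over the upper part of the stated $\beta$-range.

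The paper avoids this sign problem entirely by choosing a different multiplier: it does not pair the ODE against $\overline{\hatvarphiell}$ but against \emph{itself}. Concretely, it takes the modulus square of both sides of \eqref{eq:radialeq:ell:S1:nega} and multiplies by $\mu^{-2\sfrak}r^{-4\ell+2\beta}$, producing $I_1=\mu^{-2\sfrak}r^{-4\ell+2\beta}\abs{\prb(\mu^{1+\sfrak}r^{2\ell+2}\prb\hatvarphiell)}^{2}$, the manifestly nonnegative $I_2=4M^2\lambda_{\sfrak}^2 r^{2\beta-2}\abs{\hatvarphiell}^{2}$, and a cross term $I_3$. Two applications of the Leibniz rule rewrite $I_3$ as a sum of total $\rb$-derivatives plus $4M\lambda_{\sfrak}\mu r^{2\beta+1}\abs{\prb\hatvarphiell}^2$ and a zeroth-order term whose coefficient, $2M\lambda_{\sfrak}\bigl(2\beta(2\ell+1-2\beta)\mu r^{2\beta-1}+2M[(2\ell+1-2\beta)+\sfrak(2\beta-1)]r^{2\beta-2}\bigr)$, is nonnegative on \emph{all} of $\beta\in[0,(2\ell+1)/2]$ (the first factor vanishes only at the endpoints, and the bracket equals $2\ell+1-\sfrak\geq 0$ at $\beta=0$ and $2\ell\sfrak\geq 0$ at $\beta=(2\ell+1)/2$). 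So coercivity is uniform across the full range without any auxiliary multiplier, and the nondegenerate $r^{2\beta}\abs{\hatvarphiell}^2$ term is then recovered from $I_1$ via elementary one-dimensional Hardy estimates, not from the mass term as you propose. For the higher-order step, note that the paper commutes with $\rb\prb$ (after first dividing by $\mu^{\sfrak}$) and tracks exactly how the weights $\mu^{1+\sfrak}\mapsto\mu^{1+\sfrak+n}$ shift, rather than arguing abstractly that the commutator errors are "of the same type"; your outline there is plausible but would need that bookkeeping made explicit. Your observation that $\mu^{-2\sfrak}\abs{\Lxi\Hhatvarphiell}^2$ is regular at the horizon is correct and is indeed used by the paper.
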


\begin{proof}
Taking a modulus square of both sides and multiplying by $r^{-4\ell}f^2$, we arrive at
\begin{align}
r^{-4\ell}f^2 \abs{\Lxi \Hhatvarphiell}^2
={}&
f^2 r^{-4\ell}\abs{\prb(\mu^{1+\sfrak} r^{2\ell+2}\prb \hatvarphiell)}^2
+4M^2 \lambda_{\sfrak}^2 \mu ^{2\sfrak}f^2r^{-2}\abs{\hatvarphiell}^2\notag\\
&
-4M\lambda_{\sfrak} \mu^{\sfrak} f^2r^{-2\ell-1}\Re\big(\prb(\mu^{1+\sfrak} r^{2\ell+2}\prb\hatvarphiell)\overline{\hatvarphiell}\big)\notag\\
\triangleq{}& I_1+I_2+I_3.
\end{align}
Consider the term $I_3$. We use the Leibniz rule to obtain
\begin{align}
\label{eq:ellipesti:range:I3:generall}
I_3={}&\prb\Big(-4M\lambda_{\sfrak} f^2\mu^{1+2\sfrak} r\Re(\prb\hatvarphiell\overline{\hatvarphiell})\Big)
+4M\lambda_{\sfrak} f^2\mu^{1+2\sfrak} r\abs{\prb\hatvarphiell}^2\notag\\
&
+4M\lambda_{\sfrak} \partial_r (\mu^{\sfrak} f^2r^{-2\ell-1})\mu^{1+\sfrak} r^{2\ell+2}\Re(\prb\hatvarphiell\overline\hatvarphiell)\notag\\
={}&\prb\Big(-4M\lambda_{\sfrak} f^2\mu^{1+2\sfrak} r\Re(\prb\hatvarphiell\overline{\hatvarphiell})\Big)
+\prb\Big(
2M\lambda_{\sfrak} \partial_r (\mu^{\sfrak} f^2r^{-2\ell-1})\mu^{1+\sfrak}r^{2\ell+2}\abs{\hatvarphiell}^2
\Big)\notag\\
&
+4M\lambda_{\sfrak} f^2\mu^{1+2\sfrak} r\abs{\prb\hatvarphiell}^2
-2M\lambda_{\sfrak}\partial_r \Big(\partial_r (\mu^{\sfrak} f^2r^{-2\ell-1})\mu^{1+\sfrak} r^{2\ell+2}\Big)\abs{\hatvarphiell}^2.
\end{align}
Choose $f^2 =\mu^{-2\sfrak} r^{2\beta}$, with $\beta$ to be fixed.
For the last term, it equals
\begin{align}
\hspace{4ex}&\hspace{-4ex}2M\lambda_{\sfrak}\partial_r\Big((2\ell+1-2\beta)\mu r^{\beta}
+2M\sfrak r^{2\beta-1}\Big)\abs{\hatvarphiell}^2\notag\\
={}&2M\lambda_{\sfrak}\Big(
\beta(2\ell+1-2\beta) \mu r^{2\beta-1}
+2M((2\ell+1-2\beta)+\sfrak(2\beta-1))r^{2\beta-2} \Big)\abs{\hatvarphiell}^2,
\end{align}
and the coefficient is nonnegative for any $\beta\in [0,(2\ell+1)/2]$.
As a result, we obtain
\begin{align}
\label{eq:almost:int:m:key}
\hspace{2.5ex}&\hspace{-2.5ex}
\mu^{-2\sfrak}r^{-4\ell}r^{2\beta} \abs{\Lxi \Hhatvarphiell}^2\notag\\
={}&\prb\Big(-4M\lambda_{\sfrak}\mu r^{2\beta+1}\Re(\prb\hatvarphiell\overline{\hatvarphiell})\Big)
+\prb\Big(
2M\lambda_{\sfrak} \partial_r (\mu^{-\sfrak} r^{-2\ell-2\beta-1})\mu^{1+\sfrak}r^{2\ell+2}\abs{\hatvarphiell}^2
\Big)\notag\\
&
+\mu^{-2\sfrak}r^{-4\ell+2\beta}\abs{\prb(\mu^{1+\sfrak} r^{2\ell+2}\prb \hatvarphiell)}^2
+4M\lambda_{\sfrak} \mu r^{2\beta+1}\abs{\prb\hatvarphiell}^2\notag\\
&+2M\lambda_{\sfrak}\Big(
2\beta(2\ell+1-2\beta) \mu r^{2\beta-1}
+2M((2\ell+1-2\beta)+\sfrak(2\beta-1)+\lambda_{\sfrak})r^{2\beta-2} \Big)\abs{\hatvarphiell}^2.
\end{align}
Note in particular that the coefficients of the last three terms are all nonnegative.

On any $\Sigmatb$, we integrate this equation \eqref{eq:almost:int:m:key} in $\rb$ from $2M$ to $\tb$. By the Cauchy--Schwarz inequality, the integral of the LHS is bounded by
\begin{align}
C\int_{2M}^{\tb} \left(r^{2\beta+4}\abs{\Lxi\prb\hatvarphiell}^2 + r^{2\beta}\abs{\Lxi^2\hatvarphiell}^2+r^{2\beta+2}\abs{\Lxi\hatvarphiell}^2\right)
\di \rb;
\end{align}
for the integral of the RHS,  the boundary term $(-4M\lambda_{\sfrak}\mu r^{2\beta+1}\Re(\prb\hatvarphiell\overline{\hatvarphiell}))\vert_{\rb=2M}$  vanishes due to the presence of the factor $\mu$, hence it is bounded below by
\begin{align}
\label{eq:almost:int:m:inter:12}
&c\int_{2M}^{\tb}\left(\mu^{-2\sfrak}r^{-4\ell+2\beta}\abs{\prb(\mu^{1+\sfrak} r^{2\ell+2}\prb \hatvarphiell)}^2+\mu M\lambda_\sfrak r^{2\beta+1}\abs{\prb\hatvarphiell}^2
+M^2 \lambda_{\sfrak}r^{2\beta-2}\abs{\hatvarphiell}^2\right) \di \rb\notag\\
&+c\lambda_{\sfrak}(\abs{\hatvarphiell}^2)\vert_{\rb=2M}
-CM\lambda_{\sfrak}(r^{2\beta}\abs{\hatvarphiell}^2+r^{2\beta+2}\abs{\prb\hatvarphiell}^2)\vert_{\rb=\tb}.
\end{align}
Furthermore, in view of the following inequality
\begin{align*}
\int_{2M}^{\rb'}r^{2\beta+2}\abs{\prb\hatvarphiell}^2\di\rb
+(\mu r^{2\beta+3}\abs{\prb\hatvarphiell}^2)\vert_{\rb=\rb'}
\leq C \int_{2M}^{\rb'}\mu^{-2\sfrak}r^{-4\ell+2\beta}\abs{\prb(\mu^{1+\sfrak} r^{2\ell+2}\prb \hatvarphiell)}^2\di\rb
\end{align*}
which holds for any $\rb'>2M$ and follows simply by integrating
\begin{align*}
\hspace{4ex}&\hspace{-4ex}
\prb(\mu^{-2\sfrak-1}r^{-4\ell+2\beta-1}\abs{\mu^{1+\sfrak}r^{2\ell+2}\prb\hatvarphiell}^2)\notag\\
={}&-[(2\sfrak+1)\mu^{-2\sfrak-2}2Mr^{-1}+(4\ell-2\beta+1)\mu^{-2\sfrak-1}]r^{-4\ell+2\beta-2}\abs{\mu^{1+\sfrak}r^{2\ell+2}\prb\hatvarphiell}^2\notag\\
&
+2\mu^{-2\sfrak-1}r^{-4\ell+2\beta-1}\Re\big(\prb(\mu^{1+\sfrak}r^{2\ell+2}\prb\hatvarphiell)\mu^{1+\sfrak}r^{2\ell+2}\overline{\prb\hatvarphiell}\big)
\end{align*}
in $\rb$ from $2M$ to $\rb'$, and the following Hardy-type inequality
\begin{align*}
\int_{2M}^{\rb'}r^{2\beta}\abs{\hatvarphiell}^2\di\rb \leq C\int_{2M}^{\rb'}\mu^2 r^{2\beta+2}\abs{\prb\hatvarphiell}^2 \di\rb+C(\mu  r^{2\beta+1}\abs{\hatvarphiell}^2 )\vert_{\rb=\rb'}
\end{align*}
which follows from integrating  in $\rb$ from $2M$ to $\rb'$ for the following equality
\begin{align*}
\prb(\mu r^{2\beta+1}\abs{\hatvarphiell}^2 )= (2Mr^{-1}+(2\beta+1)\mu )r^{2\beta}\abs{\hatvarphiell}^2+2\mu r^{2\beta+1}\Re(\prb\hatvarphiell\overline{\hatvarphiell}),
\end{align*}
the first line of \eqref{eq:almost:int:m:inter:12} is bounded below by
\begin{align}
c\int_{2M}^{\tb}\left(r^{2\beta}\abs{\hatvarphiell}^2
+r^{2\beta+2}\abs{\prb\hatvarphiell}^2
+\mu^2 r^{2\beta+4}\abs{\prb^2\hatvarphiell}^2
\right)\di\rb
-C(r^{2\beta+1}\abs{\hatvarphiell}^2 )\vert_{\rb=\tb}.
\end{align}
Consequently, we arrive at
\begin{align*}
\hspace{4ex}&\hspace{-4ex}
\int_{2M}^{\tb}\left(r^{2\beta}\abs{\hatvarphiell}^2
+r^{2\beta+2}\abs{\prb\hatvarphiell}^2
+\mu^{-2\sfrak}r^{-4\ell+2\beta}\abs{\prb(\mu^{1+\sfrak} r^{2\ell+2}\prb \hatvarphiell)}^2
\right)\di\rb\notag\\
\lesssim{}&\int_{2M}^{\tb} \left(r^{2\beta+4}\abs{\Lxi\prb\hatvarphiell}^2 + r^{2\beta}\abs{\Lxi^2\hatvarphiell}^2+r^{2\beta+2}\abs{\Lxi\hatvarphiell}^2\right)
\di \rb\notag\\
&
+(r^{2\beta+1}\abs{\hatvarphiell}^2 +r^{2\beta+2}\abs{\prb\hatvarphiell}^2)\vert_{\rb=\tb}.
\end{align*}
This thus proves \eqref{eq:key:almost:int:m:gene} for $\reg=0$.

Since $\Lxi$ commutes with equation \eqref{eq:radialeq:ell:S1:nega},  it suffices to prove  the estimate \eqref{eq:key:almost:int:m:gene} with $\CDeris$ replaced by $\{\rb\prb\}$. We prove it by induction in $\reg$, that is,
assuming it holds for $\reg=n-1$, $n\in \mathbb{N}^+$,   we prove for $\reg=n$. We multiply both sides of equation \eqref{eq:radialeq:ell:S1:nega} by $\mu^{-\sfrak}$ and then commute $\rb \prb$, and since
\begin{align}
r\prb\Big(\mu^{-\sfrak}\prb(r^{2\ell+2}\mu^{1+\sfrak} \prb\hatvarphiell)\Big)={}&
\mu^{-(\sfrak+1)}\prb\Big(r^{2\ell+2}\mu^{1+(\sfrak+1)} \prb (r\prb\hatvarphiell)\Big)\notag\\
&+\big(O_{\infty}(1)\mu r^{2\ell+2}\prb^2+O_{\infty}(1)r^{2\ell+1}\prb +O_{\infty}(1)r^{2\ell}\big)\hatvarphiell,
\end{align}
where $O_{\infty}(1)$ are $O(1)$ functions and smooth everywhere in $\rb\in [2M,\infty)$,
we obtain for any $n\in \mathbb{N}^+$ that
\begin{align}
\label{eq:almost:int:m:highorder:ge}
\hspace{4ex}&\hspace{-4ex}\mu^{-(\sfrak+n)}\prb\Big(r^{2\ell+2}\mu^{1+(\sfrak+n)} \prb ((r\prb)^n\hatvarphiell)\Big)- 2M\lambda_{\sfrak} r^{2\ell-1}(r\prb)^n\hatvarphiell\notag\\
={}&\Lxi (r\prb)^n\Hhatvarphiell
+r^{2\ell}\bigg(\sum_{i=0}^n O_{\infty}(1)(r\prb)^i\hatvarphiell +O_{\infty}(1)\mu (r\prb)^{n+1}\hatvarphiell\bigg).
\end{align}
Similarly to the $\reg=0$ case, we take a modulus square of both sides, multiply by $r^{-4\ell+2\beta}$, and integrate in $\rb$ from $2M$ to $\tb$. The integral arising from the last term of \eqref{eq:almost:int:m:highorder:ge} is  controlled by the assumption in the induction, and the analysis for the integral coming from the LHS of \eqref{eq:almost:int:m:highorder:ge} is exactly the same as the above one in treating the $\reg=0$ case, which thus completes the proof.
\end{proof}

The above estimate \eqref{eq:key:almost:int:m:gene} is readily  applied to trade $r$-decay for $\tb$-decay and thus yield the almost Price's law for a fixed mode of the spin $-\sfrak$ component in the interior region $\{\rb\leq \tb\}$. The key observation is that on the RHS of the estimate \eqref{eq:key:almost:int:m:gene}, the integral terms all contain $\Lxi$ derivatives and hence satisfy faster $\tb$-decay, and the terms evaluated at $\rb=\tb$ enjoy extra $\tb$-decay when decreasing the parameter $\beta$, and due to this observation, it allows us to improve the $\tb$-decay for the spin $-\sfrak$ component upon the weak decay estimates in Proposition \ref{prop:weakdecay:summary}. 

\begin{prop}[Almost Price's law for a single mode of the spin $\pm \sfrak$ components in the interior region $\{\rb\leq \tb\}$]
\label{prop:almost:int:ipm:nvv}
Assume the  the spin $s=\pm \sfrak$ components are supported on a single $(m,\ell)$ mode, $\ell\geq \sfrak$ and $m\in \{-\ell,-\ell+1,\ldots, \ell\}$. Let the N--P constants $\NPCNs{\ell}$  be defined as in Definition \ref{def:NPCs}, and let $\varphisell$ be defined as in Proposition \ref{prop:totalderieq:generall:nega}.  Let $\vartheta\in (0,1/2)$ be arbitrary, and let $j\in \mathbb{N}$ and $\reg\in \mathbb{N}$.
\begin{enumerate}
\item[$(i)$] \label{pt:NPCneq0:l=l0:decay:int:Schw}
If the $\ell$-th N--P constant $\NPCNs{\ell}$  is nonzero,  there exist universal constants $C=C(\reg, j,\ell,\vartheta)$  and  $\regl=\regl(\reg, j, \ell,\vartheta)$ such that
in the interior region $\{\rb\leq \tb\}$,
\begin{subequations}
\label{eq:almost:int:pm:allorder:nv:gene}
\begin{align}
\label{eq:almost:int:p:allorder:nv:gene}
 \absCDeri{\Lxi^j(r^{-\ell-\sfrak-1}\Psipluss)}{\reg}\leq{}& C
  \tb^{ -2-2\ell-j+\frac{\vartheta}{2}}(\InizeroEnergypluss{\tb_0}{\ell_0}{\reg+\regl}{3-\vartheta}
 )^{\half},\\
\label{eq:almost:int:m:allorder:nv:gene}
\absCDeri{\Lxi^j\varphiminussell}{\reg}
\leq{}&C\tb^{ -2-2\ell-j+\frac{\vartheta}{2}}(\InizeroEnergyminuss{\tb_0}{\ell}{\reg+\regl}{3-\vartheta})^{\half},\\
\label{eq:almost:int:m:allorder:nv:gene:prbderi:be}
\absCDeri{\Lxi^j\prb\varphiminussell}{\reg}\leq{}& C \tb^{ -3-2\ell-j+\frac{\vartheta}{2}}(\InizeroEnergyminuss{\tb_0}{\ell}{\reg+\regl}{3-\vartheta})^{\half}.
\end{align}
\end{subequations}
\item[$(ii)$] \label{pt:NPCeq0:l=l0:decay:int:Schw}
If the $\ell$-th N--P constant $\NPCNs{\ell}$  is zero,  there exist universal constants $C=C(\reg, j,\ell,\vartheta)$  and  $\regl=\regl(\reg, j, \ell,\vartheta)$ such that
in the interior region $\{\rb\leq \tb\}$,
\begin{subequations}
\label{eq:almost:int:pm:allorder:v:gene}
\begin{align}
\label{eq:almost:int:p:allorder:v:gene}
 \absCDeri{\Lxi^j(r^{-\ell-\sfrak-1}\Psipluss)}{\reg}\leq{}& C
  \tb^{ -3-2\ell-j+\frac{\vartheta}{2}}(\InizeroEnergypluss{\tb_0}{\ell_0}{\reg+\regl}{5-\vartheta}
 )^{\half},\\
\label{eq:almost:int:m:allorder:v:gene}
\absCDeri{\Lxi^j\varphiminussell}{\reg}
\leq{}&C\tb^{ -3-2\ell-j+\frac{\vartheta}{2}}(\InizeroEnergyminuss{\tb_0}{\ell}{\reg+\regl}{5-\vartheta})^{\half},\\
\label{eq:almost:int:m:allorder:v:gene:prbderi:be}
\absCDeri{\Lxi^j\prb\varphiminussell}{\reg}\leq{}& C \tb^{ -4-2\ell-j+\frac{\vartheta}{2}}(\InizeroEnergyminuss{\tb_0}{\ell}{\reg+\regl}{5-\vartheta})^{\half}.
\end{align}
\end{subequations}\end{enumerate}
\end{prop}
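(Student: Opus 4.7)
The plan is to derive the interior decay of $\hatvarphiell=r^{-\ell+\sfrak}\psiminuss$---equivalently of $\varphiminussell=\hell^{-1}\hatvarphiell$, since $\hell$ is smooth and nonvanishing on $[2M,\infty)$ by Proposition \ref{prop:totalderieq:propofh:generall:nega}---by iterating the degenerate elliptic estimate of Lemma \ref{lem:apl:int:n:ee} in the weight parameter $\beta$, and then to extract the bounds for $\psipluss$ via the Teukolsky--Starobinsky identities of Lemma \ref{lem:TSI:gene}. Writing $\mathcal{E}_{\reg,j}(\beta,\tb)$ for the LHS of \eqref{eq:key:almost:int:m:gene} with $\hatvarphiell$ replaced by $\Lxi^j\hatvarphiell$ (licit because $\Lxi$ commutes with the ODE \eqref{eq:radialeq:ell:S1:nega}), Lemma \ref{lem:apl:int:n:ee} yields, for $\beta\in[0,(2\ell+1)/2]$, the recursion
$$\mathcal{E}_{\reg,j}(\beta,\tb)\lesssim \mathcal{E}_{\reg,j+1}(\beta+1,\tb)+\mathcal{E}_{\reg,j+2}(\beta,\tb)+\mathbf{B}_{\reg,j}(\beta,\tb),$$
where $\mathbf{B}_{\reg,j}(\beta,\tb)=\bigl(r^{2\beta+1}\absCDeris{\Lxi^j\hatvarphiell}{\reg}^2+r^{2\beta+2}\absCDeris{\Lxi^j\prb\hatvarphiell}{\reg}^2\bigr)\big|_{\rb=\tb}$.

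Each application of this recursion either raises $\beta$ by one while adding one $\Lxi$-derivative, or keeps $\beta$ fixed and adds two $\Lxi$'s; iterating up to $\beta=\ell-\sfrak$ (well within the admissible window) leaves a terminal bulk piece $\mathcal{E}_{\reg,j+\ell-\sfrak}(\ell-\sfrak,\tb)$ together with an assembly of boundary contributions. I would then convert the terminal piece---whose top weight $r^{2(\ell-\sfrak)}|\Lxi^{j+\ell-\sfrak}\hatvarphiell|^2=r^{-2\sfrak}|\Lxi^{j+\ell-\sfrak}\psiminuss|^2$ is precisely what a Hardy chain absorbs into the global $r^p$-energy of $\tildePhiminuss{\ell+\sfrak}$ captured by $F(\reg+\regl,\ell,p,\tb,\Lxi^{j+\ell-\sfrak}\Psiminuss)$ of Definition \ref{def:Ffts:Phiminus:-1to5}---and apply Lemma \ref{lem:EnerDec:0to5:im} with $p=3-\vartheta$ in case $(i)$, resp.\ $p=5-\vartheta$ in case $(ii)$ (where the hypothesis $\NPCNs{\ell}=0$ unlocks the $p\in[2,5)$ branch), to propagate $\InizeroEnergyminuss{\tb_0}{\ell}{\reg+\regl}{p}$ from $\Sigmazero$ to $\Sigmatb$ with the gain $\tb^{-2(\ell-\sfrak)-(p-2)-2(j+\ell-\sfrak)}$. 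The boundary terms at $\rb=\tb$ (where $v\sim\tb$) fall under the pointwise exterior bounds \eqref{eq:weakdecay:ell:ipm:ext:nv:m}, \eqref{eq:weakdecay:ell:ipm:ext:v:m} of Proposition \ref{prop:almost:ext:ipm:nvv} once $\prb$ is expressed via $Y,V,\Lxi$ as in Definition \ref{def:basic:vectorfields}, and a direct power count shows they match the bulk rate.

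A Sobolev embedding on $\Sigmatb$ via \eqref{eq:Sobolev:1}--\eqref{eq:Sobolev:3}, applied after commuting angular derivatives $\edthR,\edthR'$ through \eqref{eq:radialeq:ell:S1:nega} (their commutators produce only lower-order terms already controlled by the same iteration), upgrades the $L^2$ bound for $\mathcal{E}_{\reg,j}(0,\tb)$ to the pointwise estimates \eqref{eq:almost:int:m:allorder:nv:gene}, \eqref{eq:almost:int:m:allorder:v:gene}; the bonus $\prb$-decay in \eqref{eq:almost:int:m:allorder:nv:gene:prbderi:be}, \eqref{eq:almost:int:m:allorder:v:gene:prbderi:be} is immediate from the built-in extra $r^2$ weight on the $|\prb\hatvarphiell|^2$ summand of $\mathcal{E}_{\reg,j}(0,\tb)$. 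The spin $+\sfrak$ estimates \eqref{eq:almost:int:p:allorder:nv:gene}, \eqref{eq:almost:int:p:allorder:v:gene} then follow from the fixed-mode TSI \eqref{eq:otherTSI:simpleform:l=1}, \eqref{eq:otherTSI:simpleform:l=2}, which express $Y^{2\sfrak}\psipluss$ in terms of $\psiminuss$ (plus, when $\sfrak=2$, an $\Lxi\psiminustwo$ contribution of the same or better size that is absorbed by a further iteration in $j$), together with $Y=-\prb+(2\mu^{-1}-\Hhyp)\Lxi$ and the bounds just obtained for $\prb\varphiminussell$.

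The main obstacle I anticipate is the bookkeeping of the recursion: in particular, absorbing the $\mathcal{E}_{\reg,j+2}(\beta,\tb)$ summand---which gains two $\Lxi$'s but no $r$-weight---through a secondary induction on $j$, and tracking the regularity losses when $\CDeri$-commutators pass through \eqref{eq:radialeq:ell:S1:nega}; these losses account for the dependence $\regl=\regl(\reg,j,\ell,\vartheta)$ in the statement.
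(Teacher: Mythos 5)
Your high-level strategy matches the paper's: iterate the degenerate elliptic estimate of Lemma~\ref{lem:apl:int:n:ee} in $\beta$, Sobolev-embed, and transfer to the spin $+\sfrak$ component via TSI. However there are several places where the argument you sketch does not close.

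\emph{Termination of the iteration.} You terminate at $\beta=\ell-\sfrak$, claim the top-weight bulk piece equals $r^{-2\sfrak}\abs{\Lxi^{j+\ell-\sfrak}\psiminuss}^2$ (in fact $r^{2(\ell-\sfrak)}\abs{\hatvarphiell}^2 = \abs{\psiminuss}^2$, no $r^{-2\sfrak}$), and propose to control it by Lemma~\ref{lem:EnerDec:0to5:im}. But the rate that Lemma supplies for $F(\reg,0,0,\tb,\Lxi^{j'}\Psiminuss)$ with $j'=j+\ell-\sfrak$ and $p=3-\vartheta$ is $\tb^{-2\ell-(3-\vartheta)-2j'} = \tb^{-4\ell+2\sfrak-3-2j+\vartheta}$, which is slower than the target $\tb^{-3-4\ell-2j+\vartheta}$ by $\tb^{2\sfrak}$. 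The paper avoids this: it writes the elliptic recursion \eqref{eq:almost:int:m:highorder:ge:iter} so that, besides a \emph{single} bulk term, the remaining contribution is already a decaying term whose rate $\tb^{-3+2\beta-4\ell-2j+\vartheta}$ comes from the \emph{boundary} term $(r^{2\beta+1}\abs{\hatvarphiell}^2+r^{2\beta+2}\abs{\prb\hatvarphiell}^2)\big|_{\rb=\tb}$ evaluated with the \emph{exterior} pointwise estimate of Proposition~\ref{prop:almost:ext:ipm:nvv}, which carries the extra $\tb^{-\sfrak}$ gain over the global weak decay and is exactly what makes the exponent balance. Your secondary induction on $j$ to absorb $\mathcal{E}_{\reg,j+2}(\beta,\tb)$ cannot by itself supply this missing $\tb^{-2\sfrak}$: the target and the weak-decay source both scale as $\tb^{-2j}$, so there is no slack in $j$.

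\emph{The bonus $\prb$-decay.} It is not ``immediate from the built-in extra $r^2$ weight''---at $\rb\sim 2M$ the weight gives nothing. The paper derives \eqref{eq:almost:int:m:allorder:nv:gene:prbderi:be} by integrating the ODE $\prb(r^{2\ell+2}\mu^{1+\sfrak}\hell^2\prb\varphiminussell)=\hell\Lxi\Hhatvarphiell$ outward from $\rb=2M$, where the factor $\mu^{1+\sfrak}$ kills the boundary term, and the $\Lxi$ on the right-hand side is what produces the extra $\tb^{-1}$; the derivative estimate thus comes from the structure of the equation, not from $L^2$ weights.

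\emph{The TSI you invoke is the wrong one.} Equations \eqref{eq:otherTSI:simpleform:l=1}, \eqref{eq:otherTSI:simpleform:l=2} express $Y^{2\sfrak}\psipluss$ in terms of $\psiminuss$, and you cannot deduce decay for $\psipluss$ itself by ``integrating'' $Y^{2\sfrak}$ in the interior. The proof uses \eqref{eq:TSI:simpleform:l=1}, \eqref{eq:TSI:simpleform:l=2} (equivalently \eqref{eq:TSI:simpleform:v2}, \eqref{eq:TSIspin2:Phiminustwo4}), which give $\PhiplussHigh{0}=\mu^{-\sfrak}r\psipluss$ directly in terms of $\Phiminuss{2\sfrak}=\curlVR^{2\sfrak}(\mu^{\sfrak}r\psiminuss)$, i.e.\ in terms of $\psiminuss$ and its $\curlVR$-derivatives, which are precisely what the $\prb\varphiminussell$-bound controls.
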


\begin{proof}
We consider only the case that the $\ell$-th N--P constant $\NPCNs{\ell}\neq 0$, and the other case that this constant vanishes can be analogously treated.

By the pointwise estimates in Proposition \ref{prop:almost:ext:ipm:nvv}, the estimate \eqref{eq:key:almost:int:m:gene} implies that for any $j\in \mathbb{N}$, $\reg\in \mathbb{N}$, $\beta\in [0,\ell+\half]$ and $\vartheta\in (0,\half)$, there exist universal constants $C=C(\reg, j,\vartheta)$  and  $\regl=\regl(\reg, j, \ell,\vartheta)$ such that
\begin{align}
\label{eq:almost:int:m:highorder:ge:iter}
\norm{\Lxi^j\psiminuss}^2_{W_{2(-\ell+\sfrak+\beta)}^{\reg}(\Sigmatb^{\leq \tb})}
\leq{}&C\norm{\Lxi^{j+1}\psiminuss}^2_{W_{2(-\ell+\sfrak+\beta+1)}^{\reg}(\Sigmatb^{\leq \tb})}
+C\tb^{-3+2\beta-4\ell-2j+\vartheta}\InizeroEnergyminuss{\tb_0}{\ell}{\reg+\regl}{3-\vartheta}.
\end{align}
We use this estimate for all $\beta\in \{0,1,\ldots, \ell-\sfrak-1\}$  iteratively: we first prove the estimate for the term on the LHS with $\beta=\ell-\sfrak-1$; then iteratively, the estimate for the LHS with $\beta=\beta_0$, which gives an estimate for the first term on the RHS with $\beta=\beta_0-1$, yields the estimate for the LHS with $\beta=\beta_0-1$.  Specifically,  the pointwise estimates in Proposition \ref{prop:almost:ext:ipm:nvv} imply that the first term on the RHS of \eqref{eq:almost:int:m:highorder:ge:iter} is bounded by $C(\reg, j,\ell,\vartheta) \tb^{-3+2\beta-4\ell-2j+\vartheta}\InizeroEnergyminuss{\tb_0}{\ell}{\reg+\regl}{3-\vartheta}$, hence the term on the LHS is bounded by $C(\reg, j,\ell,\vartheta) \tb^{-3+2\beta-4\ell-2j+\vartheta}\InizeroEnergyminuss{\tb_0}{\ell}{\reg+\regl}{3-\vartheta}$ as well.  Iteratively, we eventually conclude
\begin{align}
\label{eq:almost:int:m:allorder:nv:gene:energy}
\norm{\Lxi^j\Psiminuss}^2_{W_{2(-\ell+\sfrak-1)}^{\reg}(\Sigmatb^{\leq \tb})}
\leq{}&C\tb^{-3-4\ell-2j+\vartheta}\InizeroEnergyminuss{\tb_0}{\ell}{\reg+\regl}{3-\vartheta}.
\end{align}
Applying the Sobolve-type inequality \eqref{eq:Sobolev:1} to this energy decay estimate, one obtains in the interior region $\{\rb\leq \tb\}$ that
\begin{align}\label{eq:almost:int:m:allorder:nv:gene:s1}
\absCDeri{\Lxi^j\hatvarphiell}{\reg}\leq C(\reg, j,\ell,\vartheta)r^{-\half} \tb^{ -\frac{3-\vartheta}{2}-2\ell-j}(\InizeroEnergyminuss{\tb_0}{\ell}{\reg+\regl(\reg, j, \ell,\vartheta)}{3-\vartheta})^{\half}.
\end{align}

From Proposition \ref{prop:totalderieq:generall:nega}, with $\hell$ defined as in Definition \ref{def:hell:pm}, the scalar $\varphiminussell=\hell^{-1}\hatvarphiell$ satisfies
 \begin{align}
\prb(r^{2\ell+2}\mu^{1+\sfrak}\hell^2\prb\varphiminussell)=\hell\Lxi \Hhatvarphisell.
\end{align}
Recall from Proposition \ref{prop:totalderieq:propofh:generall:nega} that there exist two positive universal constant $c$ and $C$ such that
$c\leq \hell\leq C$. We integrate this equation from $\rb=2M$ to any $\rb$ with $\rb\leq \tb$, and the boundary term at horizon vanishes because of the degenerate factor $\mu^{1+\sfrak}$, thus arriving at
\begin{align}
\mu^{1+\sfrak}r^{2\ell+2}\absCDeri{\Lxi^j\prb\varphiminussell}{\reg}\lesssim_{\reg} \mu^{1+\sfrak}r^{2\ell+2}\absCDeri{\Lxi^{j+1}\hatvarphiell}{\reg}.
\end{align}
By utilizing the  pointwise decay estimate \eqref{eq:almost:int:m:allorder:nv:gene:s1} to estimate the RHS, we achieve a better decay estimate for $\Lxi^j\prb\varphiminussell$:
\begin{align}\label{eq:almost:int:m:allorder:nv:gene:prbderi}
\absCDeri{\Lxi^j\prb\varphiminussell}{\reg}\leq C(\reg, j,\ell,\vartheta)r^{-\half} \tb^{ -\frac{5-\vartheta}{2}-2\ell-j}(\InizeroEnergyminuss{\tb_0}{\ell}{\reg+\regl(\reg, j, \ell,\vartheta)}{3-\vartheta})^{\half}.
\end{align}
We can now integrate $\prb\varphiminussell$ from $\rb=\tb$ to $\rb$ and use the estimate \eqref{eq:almost:int:m:allorder:nv:gene:s1} to estimate the boundary term at $\rb=\tb$ and the estimate \eqref{eq:almost:int:m:allorder:nv:gene:prbderi} to estimate the integral of $\prb\varphiminussell$. By doing so, we prove \eqref{eq:almost:int:m:allorder:nv:gene}.  Finally, we repeat this step of proving better decay estimate \eqref{eq:almost:int:m:allorder:nv:gene:prbderi} for $\Lxi^j\prb\varphiminussell$ by using the estimate \eqref{eq:almost:int:m:allorder:nv:gene}, which then proves \eqref{eq:almost:int:m:allorder:nv:gene:prbderi:be}.

In the end,  the approach of showing the estimate \eqref{eq:almost:int:p:allorder:nv:gene} for the spin $+\sfrak$ component is the same as the one in the proof of Proposition \ref{prop:almost:ext:ipm:nvv} where the decay of the spin $+\sfrak$ is proven by the estimate of the spin $-\sfrak$ together with an application of the TSI \eqref{eq:TSI:simpleform:l=2}, hence we omit it.
\end{proof}

Finally, we combine the above estimates and achieve the almost Price's law for the spin $\pm \sfrak$ components in the region $\{\rb\leq\tb\}$. This is contained in the following theorem. 

\begin{thm}[Almost Price's law for the spin $\pm \sfrak$ components in  the interior region $\{\rb\leq \tb\}$]
\label{thm:almost:int:ipm:nvv:hig}
Assume the spin $s=\pm \sfrak$ components are supported on $\ell\geq\ell_0$ modes, $\ell_0\geq \sfrak$. For each $m\in \{-\ell_0,-\ell_0+1,\ldots, \ell_0\}$,  let the N--P constants $\NPCNs{\ell_0}$ for the $(m,\ell_0)$ mode of the spin $+\sfrak$ component be defined as in Definition \ref{def:NPCs}.  Let $\vartheta\in (0,1/2)$ be arbitrary, and let $j\in \mathbb{N}$ and $\reg\in \mathbb{N}$.
\begin{enumerate}
\item[$(i)$] \label{pt:NPCneq0:l=l0:decay:int:Schw:highigh}
If not all of the $(m,\ell_0)$-th N--P constant $\NPCNs{\ell_0}$  are zero,  there exist universal constants $C=C(\reg, j,\ell_0,\vartheta)$  and  $\regl=\regl(\reg, j, \ell_0,\vartheta)$ such that
in the interior region $\{\rb\leq \tb\}$,
\begin{align}
\label{eq:almost:int:pm:allorder:nv:gene:hig}
 \absCDeri{\Lxi^j(r^{-\ell_0-s-1}\Psipms)}{\reg}\leq{}& C
  \tb^{ -2-2\ell_0-j+\frac{\vartheta}{2}}(\InizeroEnergyplussnv{\reg+\regl}{\vartheta}
 )^{\half}.
\end{align}
\item[$(ii)$] \label{pt:NPCeq0:l=l0:decay:int:Schw:hig}
If all of the $(m,\ell_0)$-th N--P constant $\NPCNs{\ell_0}$  are zero, there exist universal constants $C=C(\reg, j,\ell_0,\vartheta)$  and  $\regl=\regl(\reg, j, \ell_0,\vartheta)$ such that
in the interior region $\{\rb\leq \tb\}$,
\begin{align}
\label{eq:almost:int:pm:allorder:v:gene:hig}
 \absCDeri{\Lxi^j(r^{-\ell_0-s-1}\Psipms)}{\reg}\leq{}& C
  \tb^{ -3-2\ell_0-j+\frac{\vartheta}{2}}(\InizeroEnergyplussv{\reg+\regl}{\vartheta}
 )^{\half}.
\end{align}
\end{enumerate}
\end{thm}

\begin{proof}
It suffices to prove the estimate for the spin $-\sfrak$ component in the case that not all of the $(m,\ell_0)$-th N--P constant $\NPCNs{\ell_0}$  are zero, since the estimate for the spin $+\sfrak$ component can be obtained via the TSI and the proof for the other case that all of the $(m,\ell_0)$-th N--P constant $\NPCNs{\ell_0}$  are zero is analogous.
For each $(m,\ell_0)$ mode, its estimate   has been obtained in Proposition \ref{prop:almost:int:ipm:nvv}.
  For each $(m,\ell)$ mode, $\ell_0+1\leq \ell\leq 2\ell_0+1$, the proof in the previous sections also implies a slightly different decay estimate from \eqref{eq:weakdecay:ell:ipm:ext:nv:m}
  \begin{align}
  \absCDeri{\Lxi^j(r^{-1}(\Psiminuss)_{m,\ell})}{\reg-\regl}\leq{}& Cv^{-1}
  \tb^{-\frac{2\ell+2\sfrak-2+\vartheta}{2}-j-1}(\InizeroEnergypluss{\tb_0}{\ell}{\reg}{1+\vartheta})^{\half}.
  \end{align}
By applying Lemma \ref{lem:apl:int:n:ee} and going through the proof of Proposition \ref{prop:almost:int:ipm:nvv}, we obtain an analogous estimate as \eqref{eq:almost:int:m:allorder:nv:gene:energy}
 \begin{align}
\norm{\Lxi^j(\Psiminuss)_{m,\ell}}^2_{W_{2(-\ell+\sfrak-1)}^{\reg}(\Sigmatb^{\leq \tb})}
\leq{}&C\tb^{-1-4\ell-2j-\vartheta}\InizeroEnergyminuss{\tb_0}{\ell}{\reg+\regl}{1+\vartheta},
\end{align}
and thus an analog of the estimate \eqref{eq:almost:int:m:allorder:nv:gene}:
\begin{align}
\absCDeri{\Lxi^j(r^{-\ell+\sfrak-1}(\Psiminuss)_{m,\ell})}{\reg}
\leq{}&C\tb^{ -1-2\ell-j-\frac{\vartheta}{2}}(\InizeroEnergyminuss{\tb_0}{\ell}{\reg+\regl}{1+\vartheta})^{\half}.
\end{align}
Together with the weak decay estimate \eqref{eq:weakdecay:geqell0:ipm} applied to the remainder $(\Psiminuss)^{\ell\geq 2\ell_0+2}$, and using the Plancherel's lemma, this yields in the  interior region $\{\rb\leq \tb\}$,
\begin{subequations}
\label{eq:HighermodeshaveFasterdecayininterior}
\begin{align}
\absCDeri{\Lxi^j(r^{-\ell_0+\sfrak-1}(\Psiminuss)^{\ell\geq \ell_0+1})}{\reg}
\leq{}&C\tb^{ -3-2\ell_0-j-\frac{\vartheta}{2}}(\InizeroEnergyplussnv{\reg+\regl}{\vartheta}
 )^{\half},
\end{align}
the decay of which is faster than the one of $r^{-\ell_0+\sfrak-1}(\Psiminuss)_{m,\ell_0}$; Similarly, one has for the spin $+\sfrak$ component that in the interior region $\{\rb\leq \tb\}$,
\begin{align}
\absCDeri{\Lxi^j(r^{-\ell_0-\sfrak-1}(\Psipluss)^{\ell\geq \ell_0+1})}{\reg}
\leq{}&C\tb^{ -3-2\ell_0-j-\frac{\vartheta}{2}}(\InizeroEnergyplussnv{\reg+\regl}{\vartheta}
 )^{\half},
\end{align}
\end{subequations}
Adding this estimate for $\ell\geq \ell_0+1$ to the one of the $(m,\ell_0)$ mode $r^{-\ell_0+\sfrak-1}(\Psiminuss)_{m,\ell_0}$ then yields the desired estimate.
\end{proof}

\section{Price's law under non-vanishing Newman--Penrose constant condition}
\label{sect:NVNPC:PL}

This section derives the precise late-time asymptotics of a single $(m,\ell)$-th mode of the $\pm \sfrak$ components under the condition that the  N--P constant $\NPCPs{\ell}$ defined in Definition \ref{def:NPCs} is nonzero. First, in Section \ref{subsect:nv:ext:PL}, we compute the asymptotics 
of the $+\sfrak$ component in a region $\{v-u\geq v^\alpha\}$ for some $\alpha\in (\half,1)$, and the precise late-time asymptotics of the $-\sfrak$ component in this region is obtained by using the TSI.  Afterwards, in Section \ref{subsect:nv:int:PL} which treats  the remaining region $\{v-u\leq v^\alpha\}$, we however first compute the late-time asymptotics of the $-\sfrak$ component and then use the TSI to get the late-time asymptotics of $+\sfrak$ component. These estimates in different regions are collected together in Section \ref{subsect:GPL:nv:MT} to derive the global late-time asymptotics of the $\pm \sfrak$ components which are supported on $\ell\geq \ell_0$ modes. 

We will frequently use the double null coordinates $(u,v,\theta,\phi)$, and the DOC is divided into different regions as depicted in Figure \ref{fig:4}, where $\gamma_\alpha=\{v-u=v^\alpha\}$ for an $\alpha\in (\half,1)$. 

The following lemma lists all relations and estimates among $u$, $v$, $r$, and $\tb$ that will be utilized in these different regions. The proof is simple and omitted.
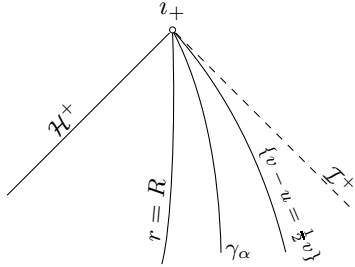
\begin{figure}[htbp]
\begin{center}
\begin{tikzpicture}[scale=0.8]
\draw[thin]
(0,0)--(2.45,2.45);
\draw[thin]
(0,0)--(-0.25,-0.25);
\draw[very thin]
(2.5,2.5) circle (0.05);
\coordinate [label=90:$i_+$] (a) at (2.5,2.5);
\draw[dashed]
(2.55,2.45)--(5.5,-0.5);
\node at (0.68,1.02) [rotate=45] {\small $\mathcal{H}^+$};
\node at (5.3,0) [rotate=-45] {\small $\mathcal{I}^+$};
\draw[thin]
(2.5,2.45) arc (20:-70:0.3 and 3);
\node at  (2.2,-0.5) [rotate=85] {\small $r=R$};
\draw[thin]
(2.52,2.45) arc (50:2:2.2 and 5);
\draw[thin]
(2.53,2.46) arc (50:18:6 and 8);
\node at (3.6,-1.2)  {\small $\gamma_{\alpha}$};
\node at (4.45,-0.4) [rotate=-67] {\scriptsize $\{v-u=\half v\}$};
\end{tikzpicture}
\end{center}
\caption{Useful hypersurfaces}
\label{fig:4}
\end{figure}

\begin{lemma}
For any $\alpha\in (\half, 1)$, let $\gamma_{\alpha}=\{v-u=v^\alpha\}$. For any $u$ and $v$, let $u_{\gamma_\alpha}(v)$ and $v_{\gamma_{\alpha}}(u)$ be such that $(u_{\gamma_\alpha}(v),v), (u, v_{\gamma_{\alpha}}(u))\in \gamma_{\alpha}$.
In the region $v-u\geq v^{\alpha}$,
\begin{subequations}
\begin{align}
\label{eq:rela:a}
&r\gtrsim{} v^{\alpha}+u^{\alpha},\\
\label{eq:rela:b}
&\abs{u-v_{\gamma_{\alpha}}(u)}\lesssim{}u^{\alpha},\\
\label{eq:rela:c}
&\abs{2r-(v-u)}\lesssim{}\log (r-2M);
\end{align}
in the region $\{v-u\geq v^\alpha\}\cap\{v-u\geq\frac{v}{2}\}$,
\begin{align}
\label{eq:rela:d}
&v+u\lesssim r\lesssim v;
\end{align}
in the region $\{v-u\geq v^{\alpha}\}\cap\{v-u\leq\frac{v}{2}\}$,
\begin{align}
\label{eq:rela:e}
&u \sim v, \quad r\gtrsim v^{\alpha};
\end{align}
in the region $\{r\geq R\}\cap\{v-u\leq v^{\alpha}\}$,
\begin{align}
\label{eq:rela:f}
&r\lesssim\min\{v^{\alpha}, u^{\alpha}\};
\end{align}
in the region $\{2M\leq r\leq R\}$, there is a constant $C_R$ depending on $R$ such that
\begin{align}
\label{eq:rela:g}
\abs{v|_{\Sigma_\tau}(R)-v|_{\Sigma_\tau}(r)}+\abs{v-\tb}\leq C_R.
\end{align}
\end{subequations}
On $\Sigmazero$, for $r$ large,
\begin{align}\label{asymp-factor}
\abs{r^{-1}v- 2-4Mr^{-1}\log(r-2M)}\lesssim r^{-1}.
\end{align}
\end{lemma}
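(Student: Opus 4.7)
All of the relations (a)--(g) and the final asymptotic identity on $\Sigmazero$ rest on two fundamental observations. The first is the double null identity
\begin{equation*}
v - u = 2r^*.
\end{equation*}
The second is the asymptotic formula obtained by integrating $\di r^* = \mu^{-1}\di r = (1 + 2M/(r-2M))\di r$ with the normalization $r^*(3M) = 0$, giving
\begin{equation*}
r^* = r + 2M\log(r-2M) + C_*, \qquad C_* = -3M - 2M\log M.
\end{equation*}
In particular, for $r$ bounded away from $2M$ one has $r^* = r + O(\log r)$, and for $r$ close to $2M$ one has $r^* \to -\infty$ logarithmically.

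The proof then proceeds by case analysis. Item (c), $|2r - (v-u)| \lesssim \log(r-2M)$, is immediate from the two displayed facts, noting that in the relevant range $r$ is bounded below. Item (a) follows because $v - u \geq v^\alpha$ together with $v - u = 2r^*$ and $r \sim r^*$ (for large $r^*$) forces $r \gtrsim v^\alpha$; then using $v \geq u + v^\alpha$ when $u$ is positive large one deduces $r \gtrsim u^\alpha$ as well, and small-$u$/small-$v$ cases are trivial since $r \geq 2M$. Item (b) is obtained by observing that $v_{\gamma_\alpha}(u)$ satisfies $v_{\gamma_\alpha}(u) - u = v_{\gamma_\alpha}(u)^\alpha$, so $v_{\gamma_\alpha}(u) \sim u$ for $u$ large and hence $v_{\gamma_\alpha}(u)^\alpha \lesssim u^\alpha$. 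Items (d) and (e) follow by combining the constraints $v - u \geq v/2$ (resp.\ $v - u \leq v/2$) with (a) and (c): in (d) one gets $r \sim v - u \sim v$ and $u+v \lesssim v$; in (e) one gets $u \sim v$ automatically while the lower bound $r \gtrsim v^\alpha$ is inherited from (a). For (f), $v - u \leq v^\alpha$ together with $v - u = 2r^*$ and $r \sim r^*$ on $\{r \geq R\}$ gives $r \lesssim v^\alpha$; similarly $r \lesssim u^\alpha$ since $u \leq v$ on this region.

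For (g), note that on $\{2M \leq r \leq R\}$ the function $r^*$ is bounded (by a constant depending on $R$), so $|v - u|_{\Sigma_\tau}(R) - v|_{\Sigma_\tau}(r)| = |2(r^*(R) - r^*(r))|$ is bounded by a constant $C_R$; and $|v - \tb| = h_{\text{hyp}}(r)$, which is bounded on any compact $r$-interval by smoothness of $h_{\text{hyp}}$. Finally, the asymptotic expansion on $\Sigmazero$ follows from $\tb = v - h_{\text{hyp}}(r)$: setting $\tb = \tb_0$ gives $v = \tb_0 + h_{\text{hyp}}(r)$, and the assumption $1 \lesssim \lim_{\rb\to\infty}r^2(\partial_r h_{\text{hyp}} - 2\mu^{-1}) < \infty$ from Section~\ref{sect:foliation} implies $h_{\text{hyp}}(r) = 2r^* + O(1) = 2r + 4M\log(r-2M) + O(1)$ for large $r$. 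Dividing by $r$ yields exactly \eqref{asymp-factor}.

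\textbf{Main obstacle.} There is no substantive difficulty; the only mild care required is to keep track of the regime in which $r$ is close to $2M$ (where the $\log(r-2M)$ terms blow up) versus the large-$r$ regime, and to verify that the hypotheses $v - u \geq v^\alpha$ with $\alpha > 1/2$ really do confine one to the large-$r^*$ (hence either large-$r$ or near-horizon) regimes consistently across (a)--(f). All estimates then reduce to elementary algebraic manipulations of the two displayed identities at the start.
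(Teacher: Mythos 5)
The paper explicitly omits its own proof of this lemma, so the comparison here is purely about whether your argument is sound. Your overall strategy --- reduce everything to the two identities $v-u=2r^{*}$ and $r^{*}=r+2M\log(r-2M)+C_{*}$, then do a case analysis --- is exactly the right one, and items (a)--(e) and the final asymptotic expansion on $\Sigmazero$ are handled correctly. Two points, though, need repair.

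For (g), the first sentence is wrong in two ways. You assert that $r^{*}$ is bounded on $\{2M\leq r\leq R\}$: it is not, since $r^{*}(r)\to-\infty$ logarithmically as $r\to 2M^{+}$. You also assert $v|_{\Sigma_{\tau}}(R)-v|_{\Sigma_{\tau}}(r)=2\big(r^{*}(R)-r^{*}(r)\big)$: this is false, because on a constant-$\tb$ slice one has $v|_{\Sigma_{\tau}}(r)=\tau+\hhyp(r)$, so the correct identity is $v|_{\Sigma_{\tau}}(R)-v|_{\Sigma_{\tau}}(r)=\hhyp(R)-\hhyp(r)=\int_{r}^{R}\big(2\mu^{-1}-\Hhyp\big)\di r'$, and it is precisely the near-horizon cancellation $\Hhyp\approx 2\mu^{-1}$ (i.e.\ $|\Hhyp-2\mu^{-1}|\lesssim 1$ from the paper) that keeps this integral bounded even though $r^{*}$ and $\int\Hhyp$ individually diverge. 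Had your stated identity been correct, the conclusion would in fact be false. Fortunately your second sentence --- $|v-\tb|=|\hhyp(r)|$ is bounded on $[2M,R]$ by smoothness (regularity of the foliation across $\Horizon$) --- is right and immediately yields the first bound as well, via $v|_{\Sigma_{\tau}}(R)-v|_{\Sigma_{\tau}}(r)=\hhyp(R)-\hhyp(r)$. So you should simply drop the $r^{*}$-based sentence and run the $\hhyp$ argument for both summands.

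For (f), the justification ``$r\lesssim u^{\alpha}$ since $u\leq v$'' is a non-sequitur: $u\leq v$ gives $u^{\alpha}\leq v^{\alpha}$, which is the wrong direction. What you actually need is that $v-u\leq v^{\alpha}$ with $\alpha<1$ forces $u\geq v-v^{\alpha}\gtrsim v$ for $v$ large, hence $u\sim v$ and $u^{\alpha}\sim v^{\alpha}\gtrsim r$; the bounded-$v$ cases are absorbed into the implicit constant as usual. This is a one-line fix but the stated reason is genuinely incorrect.
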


Last, we remark that \textit{throughout subsections \ref{subsect:nv:ext:PL} and \ref{subsect:nv:int:PL},  only the sharp decay (i.e., the Price's law) for a fixed $(m,\ell)$ mode of the spin $\pm \sfrak$ components is considered, and the dependence on $m, \ell$ may be suppressed.} The last subsection \ref{subsect:GPL:nv:MT} is to gather these estimates to show global Price's law for the spin $\pm \sfrak$ component in the case of non-vanishing Newman--Penrose constant.

\subsection{Price's law in the region $\{v-u\geq v^{\alpha}\}$}
\label{subsect:nv:ext:PL}

Let us begin with making an assumption for the asymptotics of the initial data.

\begin{assump}[Initial data assumption to order $i$] \label{assump:initialdata:non-vanishing:pm1} Let $i\in \mathbb{N}$. Assume the $(m,\ell)$-th N--P constant of the spin $+\sfrak$ component satisfies $\NPCPs{\ell}\neq 0$. Assume on $\Sigma_{\tau_0}$ that  there are constants  $\beta\in (0,\half)$ and $0\leq D_0<\infty$ such that for all $0\leq i'\leq i$ and $r\geq 10M$,
\begin{align}
\Big|\prb^{i'}\Big(\VR\tildePhiplussHigh{\ell-\sfrak}(\tau_0,v)-{\rb^{-2}\NPCPs{\ell}}\Big)\Big|\lesssim D_0\rb^{-2-\beta-i'}.
\end{align}
\end{assump}

We next define the asymptotic profile of the spin $\pm \sfrak$ components. Roughly speaking, the asymptotic profile is the leading order term in the evolution if one expands the solution into Taylor series with respect to the time function $\tb$. 

Given a $\delta\in (0,\half)$ that will be fixed,  we define \begin{align}
\FBT=(\InizeroEnergypluss{\tb_0}{\ell}{\reg}{3-\delta})^{\half}+\abs{\NPCPs{\ell}}+ D_0,
\end{align}
where  $\InizeroEnergypluss{\tb_0}{\ell}{\reg}{3-\delta}$ is defined as in Definition \ref{def:energyonSigmazero:Schw} with $\delta$ is to be fixed in the proof,  $D_0$ is the constant appearing in Assumption \ref{assump:initialdata:non-vanishing:pm1},  and we have suppressed from this notation $\FBT$ the dependence on the mode parameter $m$, the regularity parameter $\reg$ which only depends on $\ell$, and the constant $\delta$. This scalar is necessary in determining the late-time asymptotics of the spin $\pm \sfrak$ components.

 The definition of asymptotic profile is given below.

\begin{definition}[Asymptotic profile]
For a spin $s$ scalar $\varphi$, we call $(\varphi)_{\text{AP}}$ the \textquotedblleft{asymptotic profile\textquotedblright}
 of $\varphi$ if there exists a $\veps>0$ such that
\begin{align}
\abs{\varphi-(\varphi)_{\text{AP}}}\lesssim \tb^{-\veps}(\NPCPs{\ell})^{-1}{\FBT}\cdot(\varphi)_{\text{AP}}.
\end{align}
\end{definition}

\begin{remark}
In the latter discussions, $\varphi$ is either the spin $+\sfrak$ or the spin $-\sfrak$ component and the function $(\varphi)_{\text{AP}}$ will be shown to always contain a factor $\NPCPs{\ell}$; this is the reason for the presence of the factor $ (\NPCPs{\ell})^{-1}$ on the RHS here.
\end{remark}

The proof of obtaining the Price's law in this region is divided into three steps.

\textbf{Step 1. Decay of $V\tildePhiplussHigh{\ell-\sfrak}$ by integrating its wave equation over constant $v$}

\begin{prop}
\label{prop:VtildePhil-1:nearinf:pm1}
Assume the initial data assumption \ref{assump:initialdata:non-vanishing:pm1}  holds to order $i_0$, then for $\delta$ sufficiently small and $\alpha=\alpha(\delta)$ sufficiently close to $1$, there exists an $\veps>0$ such that in the region $v-u\geq v^\alpha$,
\begin{align}
\label{eq:VPhiplusHigh1:generalell:largeregion}
\Big|
V \tildePhiplussHigh{\ell-\sfrak}
-4(v-u)^{2\ell} v^{-2\ell-2}\NPCPs{\ell}
\Big|
\lesssim{}&(v-u)^{2\ell} v^{-2\ell-2-\veps}\FBT.
\end{align}
\end{prop}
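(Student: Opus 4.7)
The plan is to view the wave equation \eqref{eq:Phiplushighi:Schw:generall:tildePhiplusi:223223} for $\tildePhiplussHigh{\ell-\sfrak}$ as a first-order transport equation along the outgoing null direction $\pu$ at constant $v$. Setting $G:=\curlVR\tildePhiplussHigh{\ell-\sfrak}$ and using $\mu Y=2\pu$, the equation takes the form
\begin{align*}
2\pu G + 2(\ell+1)\frac{r-3M}{r^{2}}G = S, \qquad S:=\sum_{j=0}^{\ell-\sfrak}O(r^{-1})\tildePhiplussHigh{j}.
\end{align*}
Solving $\partial_{r}\ln H = -2(\ell+1)(r-3M)/(r(r-2M))$ yields the integrating factor $H(r):=r^{-3(\ell+1)}(r-2M)^{\ell+1}$; combined with $\pu r|_{v}=-\mu/2$, this gives the clean transport identity $\pu(HG)|_{v}=HS/2$. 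I then integrate this identity along $\{v=\text{const}\}$ from its initial intersection $(u_{0}(v),v)\in\Sigmazero$ (where, for $v$ large, $r_{0}(v)\approx (v-\tb_{0})/2$) down to $(u,v)$, and unwind the definition via $V\tildePhiplussHigh{\ell-\sfrak}=(\mu/r^{2})H^{-1}\cdot HG = r^{3\ell}(r-2M)^{-\ell}\cdot HG$ to read off the asymptotic.

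For the leading term, the initial-data assumption \ref{assump:initialdata:nonvanishing:pm1} gives $G(u_{0},v)=\NPCPs{\ell}(1+O(v^{-\beta}))$ and hence $HG(u_{0},v)=2^{2\ell+2}v^{-2\ell-2}\NPCPs{\ell}(1+O(v^{-\beta})+O(v^{-1}\log v))$. In the region $r\gtrsim v^{\alpha}$, the estimate \eqref{eq:rela:c} yields $(v-u)/2=r+O(\log r)$, whence $r^{3\ell}(r-2M)^{-\ell}\cdot 2^{2\ell+2}=4(v-u)^{2\ell}(1+O((\log v)/v^{\alpha}))$. Multiplying these produces the claimed profile $4(v-u)^{2\ell}v^{-2\ell-2}\NPCPs{\ell}$, modulo relative error $O(v^{-\beta})+O((\log v)/v^{\alpha})$.

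The hard part is controlling the source error $\int_{r}^{r_{0}(v)}HS/(2\mu)\,dr'$. The plan is to use Proposition \ref{prop:weakdecay:summary}, via Remark \ref{rem:PhiplussHigh:almostsharp}, to bound $|\tildePhiplussHigh{j}|\lesssim r\,v^{-1}\tb^{-(2-\delta)/2-(\ell-\sfrak-j)}\FBT$ for $0\leq j\leq \ell-\sfrak$ in $\{v-u\geq v^{\alpha}\}$. Since $H/\mu\sim r^{-2\ell-2}$ and $|S|\lesssim r^{-1}\sum_{j}|\tildePhiplussHigh{j}|$, a split of the $r'$-integral into Region A $\{r\leq r'\lesssim v/4\}$ and Region B $\{v/4\lesssim r'\leq r_{0}\}$, together with the change of variables $\tb'=v-\hhyp(r')\approx v-2r'$, will produce
\begin{align*}
\int_{r}^{r_{0}}\frac{H|S|}{\mu}\,dr' \lesssim \bigl(v^{-\alpha(2\ell+2)-1+\delta/2}+v^{-2\ell-3+\delta/2}\bigr)\FBT,
\end{align*}
where Region A (small $r'$, $\tb'\sim v$) contributes the first term and Region B (large $r'$, $\tb'$ ranging from $\tb_{0}$ up to $\sim v$) the second. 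Requiring both to beat $v^{-2\ell-2}$ by a strictly positive gain $\veps$ forces the borderline inequality $\alpha(2\ell+2)\geq 2\ell+1+\delta/2+\veps$. The principal obstacle lies exactly in this delicate tuning: because the weak decay rate for $\tildePhiplussHigh{\ell-\sfrak}$ is itself only $\delta/2$-better than the sharp rate $\tb^{-1}$, the Region A gain is borderline and one must take $\delta$ small first and then $\alpha$ sufficiently close to $1$ to produce the desired $\veps>0$. The $j<\ell-\sfrak$ contributions carry extra $\tb'$-decay and close easily, while the $O((\log v)/v^{\alpha})$ loss from the main-term matching is absorbed into the final $v^{-\veps}$ factor.
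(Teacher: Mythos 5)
Your proposal is correct and follows essentially the same approach as the paper: rewriting the wave equation for $\tildePhiplussHigh{\ell-\sfrak}$ as a transport equation along constant $v$ with an integrating factor (your $H(r)=r^{-3(\ell+1)}(r-2M)^{\ell+1}$ is exactly the paper's $\mu^{\ell+1}r^{-2\ell-2}$, so $HG=\mu^{\ell}r^{-2\ell}V\tildePhiplussHigh{\ell-\sfrak}$), integrating from $\Sigmazero$, extracting the leading term from Assumption \ref{assump:initialdata:nonvanishing:pm1} together with \eqref{asymp-factor}/\eqref{eq:rela:c}, and controlling the source via the almost-Price's-law estimates with the same $\delta$-then-$\alpha$ tuning, yielding precisely the paper's exponent $\eta=(2\ell+2)\alpha-(2\ell+1)-\delta/2>0$. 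Your explicit Region~A/B split of the $r'$-integral is a slight refinement of the paper's uniform bound $r^{-2\ell-2}\lesssim v^{-\alpha(2\ell+2)}$ and $\int\tb^{-1+\delta/2}\di\tb\lesssim v^{\delta/2}$, but the substance is identical.
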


\begin{proof}
Recall the wave equations of $\tildePhiplussHigh{i}$ in Proposition \ref{prop:wave:Phihigh:pm1}: for $i=\ell-\sfrak$,
\begin{align}
-\mu Y \curlVR  \tildePhiplussHigh{\ell-\sfrak}
-2(\ell+1)(r-3M)r^{-2}\curlVR\tildePhiplussHigh{\ell-\sfrak}
+\sum_{j=0}^{\ell-\sfrak}O(r^{-1}) \PhiplussHigh{j}={}0.
\end{align}
One can rewrite it as
\begin{align}
-\mu^{-\ell}r^{2\ell +2} Y (\mu^{\ell}r^{-2\ell}  V\tildePhiplussHigh{\ell-\sfrak})
+\sum_{j=0}^{\ell-\sfrak}O(r^{-1}) \PhiplussHigh{j}={}0,
\end{align}
or equivalently,
\begin{align}
\label{eq:puVtildePhil-1:pm1}
\pu \big(\mu^{\ell}r^{-2\ell}  V\tildePhiplussHigh{\ell-\sfrak}\big)
={}&\sum_{j=0}^{\ell-\sfrak}O(r^{-2\ell-3}) \PhiplussHigh{j}.
\end{align}
Integrating this equation along constant $v$ from the intersection point with $\Sigmazero$ and using the estimates in Propositions \ref{prop:almost:ext:ipm:nvv} and \ref{prop:almost:int:ipm:nvv},  we achieve
\begin{align}
\hspace{4ex}&\hspace{-4ex}\Big|(\mu^\ell r^{-2\ell}v^{2\ell +2}V \tildePhiplussHigh{\ell-\sfrak})(u,v)
- (\mu^\ell r^{-2\ell}v^{2\ell +2}V \tildePhiplussHigh{\ell-\sfrak})(u_{\Sigmazero}(v),v)\Big|\notag\\
\lesssim{}&v^{2\ell +2} \int_{u_{\Sigmazero}(v)}^u\big(
r^{-2\ell-2}v^{-1}\tb^{-1+\delta/2} \big)(u',v)\di u' (\InizeroEnergypluss{\tb}{\ell}{\reg}{3-\delta})^{\half}\notag\\
\lesssim{}&v^{-\eta} (\InizeroEnergypluss{\tb}{\ell}{\reg}{3-\delta})^{\half},
\end{align}
with $\eta=(2\ell+2)\alpha-(2\ell+1)-\delta/2$.
Meanwhile, we utilize \eqref{asymp-factor} to obtain
\begin{align}
\label{eq:VPhinsHigh1:awayregion:2}
|\mu^\ell r^{-2\ell}v^{2\ell +2}V\tildePhiplussHigh{\ell-\sfrak}(u_{\Sigma_{\tau_0}}(v),v)-2^{2\ell+2}\NPCPs{\ell}|
\lesssim v^{-\beta}D_0  +v^{-\alpha}\log v \abs{\NPCPs{\ell}}.
\end{align}
The above two estimates together then yield
\begin{align}
\label{eq:VPhiplusHigh1:generalell:largeregion:pre}
\Big|
V \tildePhiplussHigh{\ell-\sfrak}
-4(v-u)^{2\ell} v^{-2\ell-2}\NPCPs{\ell}
\Big|
\lesssim{}&\big(r^{2\ell-1}\log r v^{-2\ell-2}+r^{2\ell} v^{-2\ell-2}(v^{-\beta}+v^{-\eta})\big)\FBT.
\end{align}
By taking $\delta$ sufficiently small and $\alpha$ sufficiently close to $1$, this proves the estimate \eqref{eq:VPhiplusHigh1:generalell:largeregion}.
\end{proof}

\textbf{Step 2. Decay of  $\tildePhiplussHigh{\ell-\sfrak}$ and its derivatives in $\{v-u \geq v^{\alpha}\}$.}

We then derive the asymptotics of $\tildePhiplussHigh{\ell-\sfrak}$ and its derivatives. To obtain the asymptotics for $\tildePhiplussHigh{\ell-\sfrak}$,
one integrates along $u=const$ as in Figure \ref{fig:6}
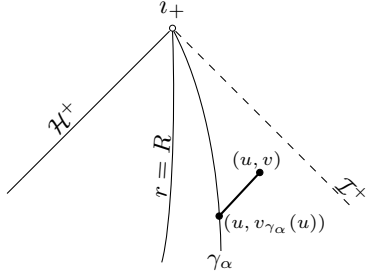
\begin{figure}[htbp]
\begin{center}
\begin{tikzpicture}[scale=0.8]
\draw[thin]
(0,0)--(2.45,2.45);
\draw[thin]
(0,0)--(-0.25,-0.25);
\draw[very thin]
(2.5,2.5) circle (0.05);
\coordinate [label=90:$i_+$] (a) at (2.5,2.5);
\draw[dashed]
(2.55,2.45)--(5.5,-0.5);
\node at (0.68,1.02) [rotate=45] {\small $\mathcal{H}^+$};
\node at (5.5,-0.2) [rotate=-45] {\small $\mathcal{I}^+$};
\draw[thin]
(2.5,2.45) arc (20:-70:0.3 and 3);
\node at  (2.3,0.2) [rotate=86] {\small $r=R$};
\draw[thin]
(2.52,2.45) arc (50:2:2.2 and 5);
\node at (3.3,-1.4)  {\small $\gamma_{\alpha}$};
\draw[thick]
(3.275,-0.625)--(3.95,0.1);
\draw[fill] (3.275,-0.625) circle (1.5pt);
\draw[fill] (3.95,0.1) circle (1.5pt);
\node at (4.2,-0.75) {\scriptsize $(u,v_{\gamma_\alpha}(u))$};
\node at (3.9,0.3) {\scriptsize $(u,v)$};
\end{tikzpicture}
\end{center}
\caption{For any point $(u,v)$ in $\{r\geq R\}\cap\{v-u\geq v^\alpha\}$, i.e.  $v\geq v_{\gamma_\alpha}(u)$, one integrates along $u=const$ from $(u, v_{\gamma_\alpha}(u))\in \gamma_\alpha$.}
\label{fig:6}
\end{figure}
 to obtain
\begin{align}\label{eq-v-hatPhips}
\tildePhiplussHigh{\ell-\sfrak}(u,v)={}&\tildePhiplussHigh{\ell-\sfrak}(u,v_{\gamma_\alpha}(u))
+\half \int_{v_{\gamma_\alpha}(u)}^{v}
 V\tildePhiplussHigh{\ell-\sfrak} (u,v')\di v'\notag\\
 ={}&\tildePhiplussHigh{\ell-\sfrak}(u,v_{\gamma_\alpha}(u))
+\half \int_{v_{\gamma_\alpha}(u)}^{v}
 \Big(V\tildePhiplussHigh{\ell-\sfrak}
-4(v-u)^{2\ell} v^{-2\ell-2}\NPCPs{\ell}\Big)(u,v')\di v'\notag\\
&
+2\NPCPs{\ell}\int_{v_{\gamma_\alpha}(u)}^{v}
 (v'-u)^{2\ell} (v')^{-2\ell-2}\di v'.
 \end{align}
By an integration by parts, one has
\begin{align*}
\int_{v_{\gamma_{\alpha}}(u)}^v (v'-u)^{2\ell} (v')^{-2\ell-1}
\di v'
={}&\frac{1}{2\ell+1}(v'-u)^{2\ell+1}(v')^{-2\ell-1}\vert_{v_{\gamma_{\alpha}}(u)}^v\notag\\
&
+\int_{v_{\gamma_{\alpha}}(u)}^v (v'-u)^{2\ell+1} (v')^{-2\ell-2}
\di v',
\end{align*}
and moving the last term to the LHS yields
\begin{align}\label{eq:PL:nv:p:integral:constu}
\int_{v_{\gamma_\alpha}(u)}^{v}
2 (v'-u)^{2\ell} (v')^{-2\ell-2}\di v'
={}&\frac{2}{2\ell+1}(v'-u)^{2\ell+1}u^{-1}(v')^{-2\ell-1}\vert_{v_{\gamma_{\alpha}}(u)}^v\notag\\
={}&\frac{2}{2\ell+1}
\frac{(v-u)^{2\ell+1}}{u v^{2\ell+1}}
+O(u^{-2\ell-2+(2\ell+1)\alpha}).
\end{align}
Hence, we conclude
\begin{align*}
\bigg|
2\NPCPs{\ell}\int_{v_{\gamma_\alpha}(u)}^{v}
 ((v-u)^{2\ell} v^{-2\ell-2})(u,v')\di v'
 -\frac{2}{2\ell+1}
\frac{(v-u)^{2\ell+1}}{u v^{2\ell+1}}\NPCPs{\ell}
 \bigg|
 \lesssim{}&u^{-2\ell-2+(2\ell+1)\alpha}
\abs{\NPCPs{\ell}}.
\end{align*}
The coefficient of the RHS is of lower order than the $\frac{(v-u)^{2\ell+1}}{u v^{2\ell+1}}$ behaviour by requiring $\alpha$ sufficiently close to $1$. Meanwhile, the second last integral on the RHS of
\eqref{eq-v-hatPhips} is of lower order than $\frac{(v-u)^{2\ell+1}}{u v^{2\ell+1}}$ as well from \eqref{eq:VPhiplusHigh1:generalell:largeregion}, and $\abs{\tildePhiplussHigh{\ell-\sfrak}(u,v_{\gamma_\alpha}(u))}\lesssim r v^{-1} \tb^{-1+\delta/2}$ which is also of lower order than $\frac{(v-u)^{2\ell+1}}{u v^{2\ell+1}}$ by choosing $\delta$ sufficiently small and $\alpha$ sufficiently close to $1$.
In total, there exists an $\veps>0$ such that
\begin{align}
\label{esti:PL:nv:p:ell-sfrak}
\bigg|
\tildePhiplussHigh{\ell-\sfrak}
 -\frac{2}{2\ell+1}
\frac{(v-u)^{2\ell+1}}{u v^{2\ell+1}}\NPCPs{\ell}
 \bigg|
 \lesssim{}&
\frac{(v-u)^{2\ell+1}}{u v^{2\ell+1}}\tb^{-\veps}\FBT.
\end{align}

By further commuting \eqref{eq:puVtildePhil-1:pm1} with $\pv^i$ $(i\leq i_0)$ implies
\begin{align}
\label{eq:puVtildePhil-1:pms:highpv}
\pu \Big(\pv^i\big(\mu^{\ell}r^{-2\ell}  V\tildePhiplussHigh{\ell-\sfrak}\big)\Big)
={}&\sum_{x=0}^i \sum_{j=0}^{\ell-\sfrak}O(r^{-2\ell-3-i}) (r\pv)^x\PhiplussHigh{j}.
\end{align}
We integrate along constant $v$ from $\Sigmazero$, and from \eqref{eq:rela:c} and the almost Price's law estimates in Propositions \ref{prop:almost:ext:ipm:nvv} and \ref{prop:almost:int:ipm:nvv},
we can choose $\delta$ sufficiently small and $\alpha=\alpha(i,\delta)$ sufficiently close to $1$ such that there exists an $\veps=\veps(\delta,\alpha,i)>0$
\begin{align}
\Big|\Big(\pv^i((v-u)^{-2\ell}V \tildePhiplussHigh{\ell-\sfrak})\Big)(u,v)
- \Big(\pv^i((v-u)^{-2\ell}V \tildePhiplussHigh{\ell-\sfrak})\Big)(u_{\Sigmazero}(v),v)\Big|
\lesssim{}&v^{-i-2\ell -2-\veps}\FBT.
\end{align}
By the initial data assumption \ref{assump:initialdata:non-vanishing:pm1} to order $i_0$,  for any $i\leq i_0$, we can take $\alpha=\alpha(i,\beta)$ sufficiently close to $1$ and achieve
\begin{align}
\Big|\Big(\pv^i((v-u)^{-2\ell}V \tildePhiplussHigh{\ell-\sfrak})\Big)(u_{\Sigmazero}(v),v)-4\pv^i(v^{-2\ell-2})\NPCPs{\ell}\Big|
\lesssim v^{-i-2\ell-2-\veps}\FBT.
\end{align}
As a result,
\begin{align}
\label{eq:PL:nv:p:far:pvhigh}
\Big|\Big(\pv^i((v-u)^{-2\ell}V \tildePhiplussHigh{\ell-\sfrak})-4\pv^i(v^{-2\ell-2})\NPCPs{\ell}\Big)(u,v)
\Big|
\lesssim{}&v^{-i-2\ell-2-\veps}\FBT.
\end{align}
This together with \eqref{esti:PL:nv:p:ell-sfrak} implies for any $i\leq i_0$,
\begin{align}
\label{eq:PL:nv:p:far:pvhighi:13}
\bigg|\pv^i\bigg((v-u)^{-2\ell} \tildePhiplussHigh{\ell-\sfrak}-\frac{2}{2\ell+1}\frac{v-u}{uv^{2\ell+1}}\NPCPs{\ell}\bigg)
\bigg|
\lesssim{}&v^{-i-2\ell-1-\veps}\FBT.
\end{align}

In the above estimate \eqref{eq:PL:nv:p:far:pvhigh}, we can replace one $\pv$ using $\pv=\Lxi-\pu$ and utilize equation \eqref{eq:puVtildePhil-1:pms:highpv} to estimate $\pu \pv^{i-1}((v-u)^{-2\ell}  V\tildePhiplussHigh{\ell-\sfrak})$, and this enables us to obtain
for any $i\leq i_0$ that
\begin{align}
\Big|\Big(\pv^{i-1}((v-u)^{-2\ell}V\Lxi \tildePhiplussHigh{\ell-\sfrak})-4\pv^{i-1}\Lxi(v^{-2\ell-2})\NPCPs{\ell}\Big)(u,v)
\Big|
\lesssim{}&v^{-i-2\ell-2-\veps}\FBT.
\end{align}
We can repeat this and conclude that for any $0\leq j\leq i$,
\begin{align}
\Big|\Big(\pv^{i-j}((v-u)^{-2\ell}V\Lxi^j \tildePhiplussHigh{\ell-\sfrak})-4\pv^{i-j}\Lxi^j(v^{-2\ell-2})\NPCPs{\ell}\Big)(u,v)
\Big|
\lesssim{}&v^{-i-2\ell-2-\veps}\FBT,
\end{align}
that is,
\begin{align}
\Big|\Big(\pv^{i-j}\big((v-u)^{-2\ell}\Lxi^j\big[\pv \tildePhiplussHigh{\ell-\sfrak}-2(v-u)^{2\ell}v^{-2\ell-2}\NPCPs{\ell}\big]\big)\Big)(u,v)
\Big|
\lesssim{}&v^{-i-2\ell-2-\veps}\FBT.
\end{align}
In particular, for $j=i$, we have
\begin{align}
\label{eq:PL:nv:p:far:puhighLxi:15}
\Big|\Big(\pv \Lxi^i\tildePhiplussHigh{\ell-\sfrak}-2\Lxi^i((v-u)^{2\ell}v^{-2\ell-2})\NPCPs{\ell}\Big)(u,v)
\Big|
\lesssim{}&(v-u)^{2\ell}v^{-i-2\ell-2-\veps}\FBT.
\end{align}
We now integrate $\pv \Lxi^i\tildePhiplussHigh{\ell-\sfrak}$ on constant-$u$ hypersurface from the intersection point $(u,v_{\gamma_{\alpha}}(u))$ with $\gamma_{\alpha}$:
\begin{align}
\label{eq:PL:nv:p:far:puhighLxi:16}
\Lxi^i\tildePhiplussHigh{\ell-\sfrak}(u,v)
-\Lxi^i\tildePhiplussHigh{\ell-\sfrak}(u,v_{\gamma_{\alpha}}(u))
={}& \int_{v_{\gamma_{\alpha}}(u)}^v\pv \Lxi^i\tildePhiplussHigh{\ell-\sfrak}(u,v')\di v'.
\end{align}
To estimate the RHS, we substitute \eqref{eq:PL:nv:p:far:puhighLxi:15} in and use the same way of arguing as in proving \eqref{eq:PL:nv:p:integral:constu} to conclude
\begin{align}
\bigg|\int_{v_{\gamma_{\alpha}}(u)}^v\pv \Lxi^i\tildePhiplussHigh{\ell-\sfrak}(u,v')\di v'-
\frac{2}{2\ell+1}
\Lxi^i\bigg(\frac{(v-u)^{2\ell+1}}{u v^{2\ell+1}}\bigg)\NPCPs{\ell}\bigg|
\lesssim (v-u)^{2\ell} v^{-i-2\ell-1-\veps }\FBT.
\end{align}
For the second term  on the LHS of \eqref{eq:PL:nv:p:far:puhighLxi:16}, which is evaluated at $(u, v_{\gamma_{\alpha}}(u))$, one can use $\pu=\Lxi-\frac{1}{2r}rV$ and the estimates in Proposition \ref{prop:almost:int:ipm:nvv} to achieve
\begin{align}
\big|\Lxi^{i}\tildePhiplussHigh{\ell-\sfrak}(u,v_{\gamma_{\alpha}}(u))\big|
\lesssim{}& (r(u,v_{\gamma_{\alpha}}(u)))^{2\ell+1}u^{-i-2\ell-2+\delta/2}\FBT\notag\\
\lesssim{}& u^{(2\ell+1)\alpha+(-i-2\ell-2+\delta/2)}\FBT.
\end{align}
These together imply that for any $i\leq i_0$,
\begin{align}
\label{eq:PL:nv:p:far:lxihighi:13}
\hspace{4ex}&\hspace{-4ex}
\bigg|\Lxi^i\bigg((v-u)^{-2\ell}\tildePhiplussHigh{\ell-\sfrak}-
\frac{2}{2\ell+1}
\frac{v-u}{u v^{2\ell+1}}\NPCPs{\ell}\bigg)\bigg|\notag\\
\lesssim {}&v^{-i-2\ell-1-\veps }\FBT
+(v-u)^{-2\ell}u^{(2\ell+1)\alpha+(-i-2\ell-2+\delta/2)}\FBT.
\end{align}
The last term can be easily verified to be bounded by $Cv^{-2\ell} \tb^{-i-1-\veps }\FBT$ by considering two cases $\{v-u\geq \half v\}$ and $\{v-u\leq \half v\}$ separately if taking $\delta$ sufficiently small and $\alpha$ sufficiently close to $1$.

Using $\pu=\Lxi-\pv$ and  combining the estimates \eqref{eq:PL:nv:p:far:pvhighi:13} and \eqref{eq:PL:nv:p:far:lxihighi:13},
we finally obtain the following statement for the asymptotic profiles of the derivatives of the scalar $(v-u)^{-2\ell}\tildePhiplussHigh{\ell-\sfrak}$ in the region $\{v-u \geq v^{\alpha}\}$.

\begin{prop}
\label{prop:asymp:tildePhiplussHighest}
Let the initial data condition \ref{assump:initialdata:non-vanishing:pm1} to order $i_0$ hold true. Let $j_1, j_2,j_3\in \mathbb{N}$ and $j_1+j_2+j_3\leq i_0$. There exists a sufficiently small $\delta$ and an $\alpha\in (\frac{1}{2},1)$ sufficiently close to $1$ such that in the region $\{v-u \geq v^{\alpha}\}$,
\begin{align}
\label{eq:PriceDeriactingonPhil-1:pm1:1}
(\Lxi^{j_1}\pu^{j_2}\pv^{j_3}((v-u)^{-2\ell}\tildePhiplussHigh{\ell-\sfrak}))_{\text{AP}}
={}&\frac{2\NPCPs{\ell}}{2\ell+1}
\Lxi^{j_1}\pu^{j_2}\pv^{j_3}\bigg(\frac{v-u}{u v^{2\ell+1}}\bigg).
\end{align}
\end{prop}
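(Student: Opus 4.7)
The plan is to reduce the mixed-derivative statement to the two special cases already established, namely estimate \eqref{eq:PL:nv:p:far:pvhighi:13} for pure $\pv^i$ derivatives and \eqref{eq:PL:nv:p:far:lxihighi:13} for pure $\Lxi^i$ derivatives. Since $\pu = \Lxi - \pv$, an expansion of $\pu^{j_2}$ via the binomial formula shows that it suffices to derive the asymptotic profile of $\Lxi^{j_1}\pv^{j_3}\bigl((v-u)^{-2\ell}\tildePhiplussHigh{\ell-\sfrak}\bigr)$ for all $j_1,j_3\in\mathbb{N}$ with $j_1+j_3\le i_0$, and to then verify that the claimed profile is stable under the translation $\pu \mapsto \Lxi-\pv$ (which it is, since the profile $\frac{v-u}{u v^{2\ell+1}}$ is a smooth function of $(u,v)$ only).

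The mechanism is a direct upgrade of the derivation of \eqref{eq:PL:nv:p:far:pvhighi:13}--\eqref{eq:PL:nv:p:far:lxihighi:13}. Because $\Lxi$ is Killing and commutes with both $\pu$ and $\pv$, I commute $\Lxi^{j_1}\pv^{j_3-1}$ through \eqref{eq:puVtildePhil-1:pm1} (or rather its higher-$\pv$ analogue \eqref{eq:puVtildePhil-1:pms:highpv}) and integrate along constant-$v$ hypersurfaces from the intersection with $\Sigmazero$. The source terms are controlled by the almost Price's law estimates of Propositions \ref{prop:almost:ext:ipm:nvv} and \ref{prop:almost:int:ipm:nvv}, while the Cauchy data term is handled by Assumption \ref{assump:initialdata:nonvanishing:pm1} combined with \eqref{asymp-factor}, yielding
\begin{equation*}
\Bigl|\pv^{j_3}\Lxi^{j_1}\bigl((v-u)^{-2\ell}V \tildePhiplussHigh{\ell-\sfrak}\bigr)-4\pv^{j_3}\Lxi^{j_1}(v^{-2\ell-2})\NPCPs{\ell}\Bigr|\lesssim v^{-j_1-j_3-2\ell-2-\veps}\FBT
\end{equation*}
provided $\delta$ is small and $\alpha$ is close enough to $1$ (depending on $i_0$). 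Integrating this bound along constant-$u$ hypersurfaces from the curve $\gamma_\alpha$ up to $(u,v)$, and using $V=2\pv$, converts the above into an estimate for $\Lxi^{j_1}\pv^{j_3}\bigl((v-u)^{-2\ell}\tildePhiplussHigh{\ell-\sfrak}\bigr)$. The principal integral is evaluated exactly as in \eqref{eq:PL:nv:p:integral:constu}, producing the profile $\frac{2}{2\ell+1}\Lxi^{j_1}\pv^{j_3}\bigl(\frac{v-u}{u v^{2\ell+1}}\bigr)\NPCPs{\ell}$, while the boundary value on $\gamma_\alpha$ is treated by expressing the $\Lxi^{j_1}\pv^{j_3}$ combination in terms of $\Lxi$ and $\pu$ derivatives (via $\pv=\Lxi-\pu$) and invoking the interior bound of Proposition \ref{prop:almost:int:ipm:nvv}; on $\gamma_\alpha$ this contribution is of size $u^{(2\ell+1)\alpha-j_1-j_3-2\ell-2+\delta/2}\FBT$, which is subleading to the principal term once $\alpha$ is close enough to $1$.

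The main obstacle is purely the bookkeeping: each additional derivative tightens the gap between the leading profile and the error, so $\alpha$ and $\delta$ must be fixed uniformly in $j_1+j_2+j_3\le i_0$ at the very start of the argument. Apart from this careful choice of parameters and the binomial translation from $(\Lxi,\pv)$ to $(\Lxi,\pu,\pv)$, no genuinely new analytic input beyond what underlies \eqref{eq:PL:nv:p:far:pvhighi:13} and \eqref{eq:PL:nv:p:far:lxihighi:13} is required.
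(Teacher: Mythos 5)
Your plan is correct and follows essentially the same route as the paper. The paper derives the asymptotic profiles for $\pv^{i-j}\Lxi^j\big((v-u)^{-2\ell}V\tildePhiplussHigh{\ell-\sfrak}\big)$ for all $0\le j\le i$ via iterative substitution $\pv\mapsto\Lxi-\pu$ into \eqref{eq:puVtildePhil-1:pms:highpv}, integrates the $j=i$ endpoint along constant $u$ from $\gamma_\alpha$ to get \eqref{eq:PL:nv:p:far:lxihighi:13}, and then tersely states that the conclusion follows "using $\pu=\Lxi-\pv$"; you organize the same ingredients the other way around, expanding $\pu^{j_2}$ binomially up front and then proving each $\Lxi^{j_1}\pv^{j_3}$ case directly, and you are somewhat more explicit than the paper about the binomial combination step — but the analytic mechanism (commute with the wave equation, integrate along constant $v$ from $\Sigmazero$, integrate along constant $u$ from $\gamma_\alpha$, control the $\gamma_\alpha$-boundary term via Proposition \ref{prop:almost:int:ipm:nvv}, fix $\delta$ and $\alpha$ uniformly in $j_1+j_2+j_3\le i_0$) is identical.
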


\textbf{Step 3. Decay of  all $\tildePhiplussHigh{i}$ and spin $\pm \sfrak$ components in $\{v-u \geq v^{\alpha'}\}$.}

Utilizing the above asymptotics of $\tildePhiplussHigh{\ell-\sfrak}$, we can derive the precise asymptotics of all $\tildePhiplussHigh{i}$, $i=0,1,\ldots, \ell-\sfrak$, hence also of the spin $+\sfrak$ component itself. The asymptotics of the spin $-\sfrak$ component is then calculated by the TSI in Section \ref{sect:TSI}.

\begin{thm}[Price's law in the region $\{v-u\geq v^{\alpha}\}$ in the non-vanishing Newman--Penrose constant case]
\label{thm:PL:anymodes:away:pm1}
Let $j\in\mathbb{N}$ and $\abs{\mathbf{a}}\leq j$. Let $\PriceDeri=\{\Lxi, \pu,\pv\}$. Let the initial data condition \ref{assump:initialdata:non-vanishing:pm1} to order $j+\ell+\sfrak$ hold true. Then, there exists an $\alpha\in (\frac{1}{2},1)$ sufficiently close to $1$ such that
in the region  $\{v-u \geq v^{\alpha}\}$,
the asymptotics $(\PriceDeri^{\mathbf{a}}(r^{-2\sfrak}\psipluss))_{\text{AP}}$ are
\begin{align}
\label{eq:asymp:tildePhiplusi:away:pm1:d}
(\PriceDeri^{\mathbf{a}}(r^{-2\sfrak}\psipluss))_{\text{AP}}={}&
2^{\ell+\sfrak+2}
\prod_{i=\ell+\sfrak+1}^{2\ell+1} i^{-1}{\NPCPs{\ell}}\PriceDeri^{\mathbf{a}}\big((v-u)^{\ell-\sfrak}u^{-\ell+\sfrak-1}v^{-\ell-\sfrak-1}\big)
\end{align}
and the asymptotics $(\PriceDeri^{\mathbf{a}}\psiminuss)_{\text{AP}}$ are
\begin{align}
\label{eq:asymp:psiminus:away:pm1}
(\PriceDeri^{\mathbf{a}}\psiminuss)_{\text{AP}}
={}&2^{\ell+\sfrak+2}
\prod_{i=\ell+\sfrak+1}^{2\ell+1} i^{-1}{\NPCPs{\ell}}\PriceDeri^{\mathbf{a}}\big((v-u)^{\ell-\sfrak}v^{-\ell+\sfrak-1}u^{-\ell-\sfrak-1}\big).
\end{align}
\end{thm}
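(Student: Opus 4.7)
The plan is to derive the asymptotics of all $\tildePhiplussHigh{i}$ for $i=\ell-\sfrak, \ell-\sfrak-1,\ldots,0$ by a downward iteration starting from the base case given in Proposition \ref{prop:asymp:tildePhiplussHighest}, and then transfer to $\psipluss$ via its relation to $\tildePhiplussHigh{0}$ and to $\psiminuss$ via the TSI of Section \ref{sect:TSI}. First, I would rewrite the wave equation \eqref{eq:Phiplushighi:Schw:generall:tildePhiplusi} of $\tildePhiplussHigh{i}$ for $i\in\{0,\ldots,\ell-\sfrak-1\}$, using $\mu Y\curlVR =2\pu\curlVR + O(r^{-1})\curlVR$ on the right and $f_{\sfrak,i,1}-(\ell-\sfrak)(\ell+\sfrak+1) = -(\ell-\sfrak-i)(\ell+\sfrak+i+1)$ on the left, in the form
\begin{align*}
(\ell-\sfrak-i)(\ell+\sfrak+i+1)(v-u)^{-2(i+\sfrak)}\tildePhiplussHigh{i}
={}& -2(v-u)^2 \pu\!\left((v-u)^{-2(i+\sfrak+1)}\tildePhiplussHigh{i+1}\right)\\
&+(v-u)^{-2(i+\sfrak)}\Big(O(1)\pu\tildePhiplussHigh{i}+\sum_{j=0}^{i}O(r^{-1})\tildePhiplussHigh{j}\\
&\quad+O(r^{-2}\log r)\tildePhiplussHigh{i+1}\Big),
\end{align*}
exactly as in the outline. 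The key point is that on the right-hand side, the $\pu$ derivative of the $(i+1)$-th profile carries a factor that, after the rescaling, will reproduce the expected $(v-u)^{-2(i+\sfrak)}$ behaviour in the leading part, while all remaining terms have an extra factor of $r^{-1}$ or $\tb^{-\veps}$ in the region $\{v-u\geq v^\alpha\}$ and thus lie in the error class.

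Next I would iterate downward. By Proposition \ref{prop:asymp:tildePhiplussHighest} applied to $\Lxi^{j_1}\pu^{j_2}\pv^{j_3}$ acting on $(v-u)^{-2\ell}\tildePhiplussHigh{\ell-\sfrak}$, I have the profile at the top level. Assuming by induction that $(v-u)^{-2(i+\sfrak+1)}\tildePhiplussHigh{i+1}$ and all its $\PriceDeri$-derivatives (up to the required order) have the asymptotic profile proportional to $\PriceDeri^{\mathbf{a}}\pu^{\ell-\sfrak-i-1}((v-u)u^{-1}v^{-2\ell-1})$ modulo a $\tb^{-\veps}$ gain, the displayed identity above immediately yields the asymptotic profile of $(v-u)^{-2(i+\sfrak)}\tildePhiplussHigh{i}$ (and of all its $\PriceDeri$-derivatives, since $\Lxi$, $\pu$, $\pv$ commute with the equation of $\tildePhiplussHigh{i}$ up to lower-order terms controlled by the almost Price's law of Sections \ref{sect:BEAM}). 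The iteration contributes the factor $\prod_{j=\ell-\sfrak-i}^{\ell-\sfrak-1}\big[(\ell-\sfrak-j)(\ell+\sfrak+j+1)\big]^{-1}\cdot (-2)^{\ell-\sfrak-i}$ times the base constant; at $i=0$ the explicit product telescopes to $2^{\ell+\sfrak+2}\prod_{j=\ell+\sfrak+1}^{2\ell+1}j^{-1}$, matching the claimed constant in \eqref{eq:asymp:tildePhiplusi:away:pm1:d}. Because $\tildePhiplussHigh{0}=\PhiplussHigh{0}=\mu^{-\sfrak}r\psipluss=\mu^{-\sfrak}r^{1-2\sfrak}(r^{-2\sfrak}\psipluss)\cdot r^{2\sfrak}$, and $\mu=1+O(r^{-1})$, $(v-u)^{-2\sfrak}\tildePhiplussHigh{0}$ equals $2^{-2\sfrak}r^{-2\sfrak}\psipluss\cdot(1+O(r^{-1}\log r))$ in this region, which produces \eqref{eq:asymp:tildePhiplusi:away:pm1:d} after absorbing $O(r^{-1}\log r)$ into the error by choosing $\alpha$ sufficiently close to $1$.

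Finally, for the spin $-\sfrak$ asymptotics I would invoke the TSI \eqref{eq:otherTSI:simpleform:l=1} and \eqref{eq:otherTSI:simpleform:l=2}, which on an $(m,\ell)$ mode read
\begin{align*}
\prod_{j=\ell-\sfrak+1}^{\ell+\sfrak}j\cdot\psiminuss = Y^{2\sfrak}\psipluss + \textup{lower order (for $\sfrak=2$)},
\end{align*}
where the lower-order $M\overline{\Lxi\psiminuss}$ correction for $\sfrak=2$ decays faster by the almost Price's law of Proposition \ref{prop:almost:ext:ipm:nvv} and so enters only the error. Since $Y=\mu^{-1}(2\Lxi-V)=-2\pu/\mu$ on Schwarzschild, $Y^{2\sfrak}\psipluss$ equals $(-2)^{2\sfrak}\pu^{2\sfrak}\psipluss$ modulo $\mu$-factors which are $1+O(r^{-1})$ in the region of consideration, and then applying $\pu^{2\sfrak}$ to the already-established asymptotics of $\psipluss = r^{2\sfrak}\cdot (r^{-2\sfrak}\psipluss)$ produces \eqref{eq:asymp:psiminus:away:pm1} after collecting the constants into $2^{\ell+\sfrak+2}\prod_{j=\ell-\sfrak+1}^{2\ell+1}j^{-1}\NPCPs{\ell}$. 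The main obstacle will be the step of controlling the error terms $O(r^{-1})\tildePhiplussHigh{j}$ and $O(r^{-2}\log r)\tildePhiplussHigh{i+1}$ in the downward iteration: one has to verify that in both sub-regions $\{v-u\geq v/2\}$ (where $r\sim v$) and $\{v^{\alpha}\leq v-u\leq v/2\}$ (where $u\sim v$ and $r\gtrsim v^{\alpha}$), the $r^{-1}$ and $r^{-2}\log r$ weights, combined with the almost sharp pointwise bounds of Proposition \ref{prop:almost:ext:ipm:nvv} and the relations \eqref{eq:rela:d}--\eqref{eq:rela:f}, produce a uniform gain of $\tb^{-\veps}$ relative to the claimed leading profile. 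Once $\alpha$ is chosen close enough to $1$ (as a function of $j$ and $\ell$), this goes through, and the statement follows.
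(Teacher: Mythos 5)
Your proposal follows essentially the same route as the paper: start from Proposition \ref{prop:asymp:tildePhiplussHighest}, rewrite the wave equation \eqref{eq:Phiplushighi:Schw:generall:tildePhiplusi} so that the leading term is $-2(v-u)^2\pu\big((v-u)^{-2(i+\sfrak+1)}\tildePhiplussHigh{i+1}\big)$ and iterate downward, then transfer to $r^{-2\sfrak}\psipluss$ via $r^{-2\sfrak}\psipluss=\mu^{\sfrak}r^{-2\sfrak-1}\tildePhiplussHigh{0}$, and finally to $\psiminuss$ via the TSI with $Y^{2\sfrak}\to 2^{2\sfrak}\pu^{2\sfrak}$. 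Two minor algebraic slips do not affect the conclusion since your final constants match the paper's: the relation $(v-u)^{-2\sfrak}\tildePhiplussHigh{0}\approx 2^{-2\sfrak}r^{-2\sfrak}\psipluss$ should read $(v-u)^{-2\sfrak}\tildePhiplussHigh{0}\approx 2^{-2\sfrak}\mu^{-\sfrak}r\,(r^{-2\sfrak}\psipluss)$ (a factor of $r$ is dropped, though the correct relation $r^{-2\sfrak}\psipluss=\mu^{\sfrak}r^{-2\sfrak-1}\tildePhiplussHigh{0}$ together with $v-u\approx 2r$ still yields the stated $2^{\ell+\sfrak+2}$), and $Y=2\pu/\mu$ (positive), though $(-2)^{2\sfrak}=2^{2\sfrak}$ so the sign slip is harmless.
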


\begin{proof}
Recall from equation \eqref{eq:Phiplushighi:Schw:generall:tildePhiplusi} that for all $i\in \{0,1,\ldots, \ell-\sfrak-1\}$,
\begin{align}
&-\mu Y \curlVR  \tildePhiplussHigh{i} +(\edthR'\edthR+f_{\sfrak,i,1})\tildePhiplussHigh{i}
+{f_{\sfrak,i,2}(r-3M)r^{-2}}\curlVR\tildePhiplussHigh{i}
+\sum_{j=0}^{i}O(r^{-1}) \PhiplussHigh{j}={}0.
\end{align}
We substitute in the values of $f_{\sfrak,i,1}$ and $f_{\sfrak,i,2}$ and put the above equation into the following form
\begin{align}
\hspace{4ex}&\hspace{-4ex}
-\mu Y  \tildePhiplussHigh{i+1} -(\ell-i-\sfrak)(\ell+i+\sfrak+1)\tildePhiplussHigh{i}
-2(i+\sfrak+1) (r-3M)r^{-2}\tildePhiplussHigh{i+1}\notag\\
={}&\sum_{j=0}^{i}\Big(O(r^{-1}) \tildePhiplussHigh{j}
+O(1) Y\tildePhiplussHigh{j}\Big).
\end{align}
This gives
\begin{align}
\hspace{4ex}&\hspace{-4ex}
(\ell-i-\sfrak)(\ell+i+\sfrak+1)(v-u)^{-2(i+\sfrak)}\tildePhiplussHigh{i}\notag\\
={}&
(v-u)^{-2(i+\sfrak)}\Big(-2\pu\tildePhiplussHigh{i+1}
-2(i+\sfrak+1) r^{-1}\tildePhiplussHigh{i+1}\notag\\
&
+\sum_{j=0}^{i}\big(O(1) \pu\tildePhiplussHigh{i}
+O(r^{-1}) \tildePhiplussHigh{j}\big)
+O(r^{-2})\tildePhiplussHigh{i+1}\Big)\notag\\
={}&
-2(v-u)^2\pu\Big((v-u)^{-2(i+\sfrak+1)}\tildePhiplussHigh{i+1}\Big)
\notag\\
&
+(v-u)^{-2(i+\sfrak)}\Big(O(1) \pu\tildePhiplussHigh{i}
+\sum_{j=0}^{i}O(r^{-1}) \tildePhiplussHigh{j}
+O(r^{-1})r^{-1}\log r \tildePhiplussHigh{i+1}\Big).
\end{align}
All the terms in the last line have (at least) extra $\tb^{-\veps}$ decay compared to the second last line,  thus the asymptotics $(\PriceDeri^{\mathbf{a}}\tildePhiplussHigh{i})_{\text{asymp}}$ for all $i\in \{0,1,\ldots, \ell-\sfrak\}$ are determined by
 iteratively solving
the following equations
\begin{align*}
(\PriceDeri^{\mathbf{a}}((v-u)^{-2\ell}\tildePhiplussHigh{\ell-\sfrak}))_{\text{AP}}={}&\frac{2\NPCPs{\ell}}{2\ell+1}
\PriceDeri^{\mathbf{a}}\bigg(\frac{v-u}{u v^{2\ell+1}}\bigg),\\
(\PriceDeri^{\mathbf{a}}((v-u)^{-2(i+\sfrak)}\tildePhiplussHigh{i}))_{\text{AP}}={}&\frac{-2(v-u)^2 (\pu\PriceDeri^{\mathbf{a}}(v-u)^{-2(i+\sfrak+1)}\tildePhiplussHigh{i+1}))_{\text{AP}}}{(\ell-i-\sfrak)(\ell+i+\sfrak+1)}
, \quad 0\leq i\leq \ell-\sfrak-1.
\end{align*}
Solving these equations yields
\begin{align}
\label{eq:asymp:tildePhiplusi:away:pm1}
(\PriceDeri^{\mathbf{a}}((v-u)^{-2\sfrak}\tildePhiplussHigh{0}))_{\text{AP}}={}&2^{\ell-\sfrak+1}
\prod_{i=\ell+\sfrak+1}^{2\ell+1} i^{-1}{\NPCPs{\ell}}\PriceDeri^{\mathbf{a}}\bigg(\bigg(\frac{v-u}{u}\bigg)^{\ell-\sfrak+1}v^{-\ell-\sfrak-1}\bigg).
\end{align}
Since $r^{-2\sfrak}\psipluss=\mu^{\sfrak} r^{-2\sfrak-1}\tildePhiplussHigh{0}$, equation \eqref{eq:asymp:tildePhiplusi:away:pm1:d} holds.

Consider next the asymptotics of the spin $-\sfrak$ component. As mentioned already, the asymptotics of the spin $-\sfrak$ component can be calculated explicitly from the  TSI \eqref{eq:otherTSI:simpleform:l=1} and the already proven asymptotics of the spin $+\sfrak$ component. In view of the  TSI \eqref{eq:otherTSI:simpleform:l=1} and \eqref{eq:otherTSI:simpleform:l=2}, and since the last term $-12 M\overline{\Lxi\psiminustwo}$ in \eqref{eq:otherTSI:simpleform:l=2} has (at least) faster $\tb^{-1+\veps}$ decay than $\psiminustwo$, one has
\begin{align}
(\PriceDeri^{\mathbf{a}}\psiminuss)_{\text{AP}}
={}&\frac{1}{(\ell-\sfrak+1)\cdots (\ell+\sfrak)}\PriceDeri^{\mathbf{a}}((Y^{2\sfrak}\psipluss)_{\text{AP}}).
\end{align}
One can expand $Y^{2\sfrak}\psipluss= 2^{2\sfrak}\pu^{2\sfrak}\psipluss+\sum\limits_{i=1}^{2\sfrak}O(r^{-1-i})\pu^{2\sfrak-i}\psipluss$ and the last term clearly has faster decay than the term $2^{2\sfrak}\pu^{2\sfrak}\psipluss$ in the region $\{v-u \geq v^{\alpha}\}$, hence this  yields
\begin{align}
(\PriceDeri^{\mathbf{a}}\psiminuss)_{\text{AP}}
={}&\frac{2^{2\sfrak}}{(\ell-\sfrak+1)\cdots (\ell+\sfrak)}\PriceDeri^{\mathbf{a}}((\pu^{2\sfrak}\psipluss)_{\text{AP}})\notag\\
={}&\frac{2^{2\sfrak}}{(\ell-\sfrak+1)\cdots (\ell+\sfrak)}(\PriceDeri^{\mathbf{a}}(\pu^{2\sfrak}((v-u)^{2\sfrak}(v-u)^{-2\sfrak}\psipluss))_{\text{AP}})\notag\\
={}&\frac{1}{(\ell-\sfrak+1)\cdots (\ell+\sfrak)}(\PriceDeri^{\mathbf{a}}(\pu^{2\sfrak}((v-u)^{2\sfrak}r^{-2\sfrak}\psipluss))_{\text{AP}}).
\end{align} 
In view of  equation \eqref{eq:asymp:tildePhiplusi:away:pm1:d}, and by the following equality
\begin{align}
 \hspace{3ex}&\hspace{-3ex}\pu^{2\sfrak}((v-u)^{\ell+\sfrak}u^{-\ell+\sfrak-1}v^{-\ell-\sfrak-1})\notag\\
={}&(\ell+\sfrak)(\ell+\sfrak-1)\cdots(\ell-\sfrak+1)(v-u)^{\ell-\sfrak}v^{-\ell+\sfrak-1}u^{-\ell-\sfrak-1},
\end{align}
this proves \eqref{eq:asymp:psiminus:away:pm1}.
\end{proof}

\subsection{Price's law in the region $\{v-u\leq v^{\alpha}\}$}
\label{subsect:nv:int:PL}

We show the Price's law for the spin $\pm \sfrak$ components in the remaining region $\{v-u\leq v^{\alpha}\}$ in this subsection. Again, the spin $\pm \sfrak$ components are assumed to be supported on an $(m,\ell)$ mode. 
The estimates are first proved for the spin $-\sfrak$ component, and these yield the estimates for the spin $+\sfrak$   component via the TSI of Section \ref{sect:TSI}.

\begin{thm}[Price's law in the region $\{v-u\leq v^{\alpha}\}$ in the non-vanishing Newman--Penrose constant case]
\label{thm:PL:anymodes:near:pm1}
Let $j\in\mathbb{N}$. Let the initial data condition \ref{assump:initialdata:non-vanishing:pm1} to order $j+\ell+\sfrak$ hold true.
Then, there exists an $\alpha\in (\half,1)$ sufficiently close to $1$ such that
in the region  $\{v-u \leq v^{\alpha}\}$, the asymptotic profiles $(\Lxi^j (r^{-2\sfrak}\psipluss))_{\text{AF}}$  and $(\Lxi^j \psiminuss)_{\text{AF}}$ are
\begin{subequations}
\begin{align}
\label{eq:asymp:psiminus:near:int:pm1:d}
(\Lxi^j\psiminuss(\tb,\rb))_{\text{AP}}={}&(-1)^j2^{2\ell+2}
\prod_{i=\ell+\sfrak+1}^{2\ell+1} i^{-1}
\frac{(2\ell+j+1)!}{(2\ell+1)!}{\NPCPs{\ell}}\rb^{\ell-\sfrak}\hell\tb^{-2\ell-j-2},\\
\label{eq:asymp:psiplus:near:int:pm1:d}
(\Lxi^j(r^{-2\sfrak}\psipluss(\tb,\rb)))_{\text{AP}}={}&(-1)^j2^{2\ell+2}
\prod_{i=\ell+\sfrak+1}^{2\ell+1} i^{-1}
\frac{(2\ell+j+1)!}{(2\ell+1)!}{\NPCPs{\ell}}\mu^{\sfrak}\rb^{\ell-\sfrak}\hell\tb^{-2\ell-j-2},
\end{align}
and for $\sfrak\neq 0$,
\begin{align}
\label{eq:asymp:psiplus:horizon:pm1:d}
(\Lxi^j(r^{-2\sfrak}\psipluss(\tb,\rb)))_{\text{AP}}\Big|_{\Horizon}={}&\frac{(-1)^{\sfrak+j}2^{2\ell+3}\sfrak (\ell-\sfrak)!(2\ell+j+2)!}{((2\ell+1)!)^2}
{\NPCPs{\ell}}(2M)^{\ell-\sfrak+1}\hell(2M)v^{-2\ell-j-3}.
\end{align}
\end{subequations}
\end{thm}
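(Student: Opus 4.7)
The plan is to propagate the exterior asymptotic of Theorem~\ref{thm:PL:anymodes:away:pm1}, valid on the hypersurface $\gamma_\alpha=\{v-u=v^{\alpha}\}$, inward into $\{v-u\leq v^\alpha\}$ using the degenerate elliptic equation of Proposition~\ref{prop:totalderieq:generall:nega} for $\varphi_{s,\ell}$, together with the improved interior decay of $\prb\varphi_{-\sfrak,\ell}$ (faster by $\tb^{-1}$) from Proposition~\ref{prop:almost:int:ipm:nvv}. The program splits into three parts: (i) derive the interior profile for $\psiminuss$; (ii) run the same argument on $\varphi_{+\sfrak,\ell}$ to obtain the interior profile of $r^{-2\sfrak}\psipluss$; and (iii) for $\sfrak=1,2$, extract the improved $v^{-1}$ gain on $\Horizon$ via the TSI of Section~\ref{sect:TSI}.

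For part (i), on $\gamma_\alpha$ one has $r\sim v^\alpha/2$ and $\tb\sim u\sim v$, so expanding $\Lxi^j\pu^{2\sfrak}\big((v-u)^{\ell+\sfrak}u^{-\ell+\sfrak-1}v^{-\ell-\sfrak-1}\big)$ from \eqref{eq:asymp:psiminus:away:pm1} and dividing by $\hell r^{\ell-\sfrak}$ produces
\begin{align*}
\Lxi^j\varphiminussell\big|_{\gamma_\alpha}=(-1)^{j}\,2^{2\ell+2}\prod_{i=\ell+\sfrak+1}^{2\ell+1}i^{-1}\cdot\frac{(2\ell+j+1)!}{(2\ell+1)!}\NPCPs{\ell}\tb^{-2\ell-j-2}+O\big(\tb^{-2\ell-j-2-\veps}\big),
\end{align*}
the combinatorial factorials coming from differentiating the monomial. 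For any $\rb$ with $(\tb,\rb)$ in the interior, I then write
\begin{align*}
\Lxi^j\varphiminussell(\tb,\rb)=\Lxi^j\varphiminussell(\tb,\rb_{\gamma_\alpha}(\tb))+\int_{\rb_{\gamma_\alpha}(\tb)}^{\rb}\Lxi^j\prb\varphiminussell(\tb,\rb')\,d\rb'.
\end{align*}
The integration length is at most $O(\tb^\alpha)$, while $|\Lxi^j\prb\varphiminussell|\lesssim\tb^{-2\ell-j-3+\vartheta/2}$ by Proposition~\ref{prop:almost:int:ipm:nvv}, so the remainder is $O(\tb^{-2\ell-j-3+\alpha+\vartheta/2})$; choosing $\alpha$ close to $1$ and $\vartheta$ small makes it strictly subleading. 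Multiplying the resulting interior profile of $\varphiminussell$ by $\hell r^{\ell-\sfrak}$, with $\hell>0$ on $[2M,\infty)$ by Proposition~\ref{prop:totalderieq:propofh:generall:nega}, yields \eqref{eq:asymp:psiminus:near:int:pm1:d}.

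For part (ii), $\varphi_{+\sfrak,\ell}$ satisfies the \emph{same} degenerate elliptic equation as $\varphi_{-\sfrak,\ell}$, with source $\Lxi H_{\varphi_{+\sfrak,\ell}}$, so the same integration from $\gamma_\alpha$ applies in any region bounded away from $\Horizon$. Matching is done against \eqref{eq:asymp:tildePhiplusi:away:pm1:d}; a direct computation shows that $\Lxi^j\varphi_{+\sfrak,\ell}|_{\gamma_\alpha}$ carries the same leading coefficient as $\Lxi^j\varphi_{-\sfrak,\ell}|_{\gamma_\alpha}$, reflecting the fact that $\hell r^{\ell-\sfrak}$ is the common zero-energy mode (Proposition~\ref{prop:totalderieq:propofh:generall:nega}\,(\ref{pt:hellproperty:2})). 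The identity $\psipluss=\mu^\sfrak\hell r^{\ell+\sfrak}\varphi_{+\sfrak,\ell}$ then yields \eqref{eq:asymp:psiplus:near:int:pm1:d}. The required pointwise estimate $|\Lxi^j\prb\varphi_{+\sfrak,\ell}|\lesssim\tb^{-2\ell-j-3+\vartheta/2}$ in a region bounded away from $\Horizon$ is obtained either by transferring from the spin $-\sfrak$ estimate via the TSI or by repeating the proof of Proposition~\ref{prop:almost:int:ipm:nvv} for $s=+\sfrak$.

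The main obstacle is part (iii): the interior profile of $r^{-2\sfrak}\psipluss$ carries the factor $\mu^\sfrak$ and therefore vanishes identically on $\Horizon$, so the $v^{-2\ell-j-3}$ value there is a genuinely subleading effect that the bulk propagation argument alone cannot see. The plan is to apply the TSI \eqref{eq:TSI:simpleform:l=1}, \eqref{eq:TSI:simpleform:l=2}, which read $\prod_{i=\ell-\sfrak+1}^{\ell+\sfrak}i\cdot\PhiplussHigh{0}=\Phiminuss{2\sfrak}$ plus a lower-order correction for $\sfrak=2$; since $\PhiplussHigh{0}=\mu^{-\sfrak}r\psipluss$ and $\Phiminuss{2\sfrak}=\curlVR^{2\sfrak}(\mu^\sfrak r\psiminuss)$, substituting the interior profile $\psiminuss\sim C\hell r^{\ell-\sfrak}\tb^{-2\ell-j-2}$ and expanding $\curlVR^{2\sfrak}=(r^2\mu^{-1}V)^{2\sfrak}$ with $V=\Lxi+\mu\partial_r$ yields the horizon value. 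Each $\mu^{-1}V$ that survives the limit $\mu\to 0$ contributes one extra $\Lxi$ (hence one extra $\tb^{-1}$, producing the $(2\ell+j+2)$ factor), while the identity \eqref{TSI:hell} converts the $2\sfrak$ radial derivatives acting on $\mu^\sfrak r^{\ell-\sfrak+1}\hell$ into $(\ell-\sfrak+1)\cdots(\ell+\sfrak)r^{\ell+\sfrak+1}\hell$, and evaluation at $r=2M$ supplies the $(2M)^{\ell-\sfrak+1}\hell(2M)$ prefactor. The delicate step is the combinatorial bookkeeping: tracking which of the many terms in $\curlVR^{2\sfrak}$ survive at $\mu=0$ (the factor $\sfrak$ in the coefficient arises from this count) and verifying that the $-12M\overline{\Lxi\Phiplustwo}$ correction in \eqref{eq:TSI:simpleform:l=2} is strictly lower order on $\Horizon$; the latter follows from the extra $\tb^{-1}$ gain of each $\Lxi$, and the former I expect to handle by induction on $\sfrak$ using \eqref{TSI:hell}.
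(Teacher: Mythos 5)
Your part (i) reproduces the paper's argument: integrate $\prb\varphiminussell$ inward from $\gamma_\alpha$, use the extra $\tb^{-1}$ gain of $\prb\varphiminussell$ from Proposition~\ref{prop:almost:int:ipm:nvv}, and match the boundary value on $\gamma_\alpha$ against the exterior profile. That is correct.

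Part (ii) is where you diverge, and there is a genuine gap. You propose to propagate $\varphi_{+\sfrak,\ell}$ by its own version of equation \eqref{eq:totalderieq:generall:nega}, for which you need an improved interior estimate $|\Lxi^j\prb\varphi_{+\sfrak,\ell}|\lesssim\tb^{-2\ell-j-3+\vartheta/2}$. But the mechanism that produces this gain for the spin $-\sfrak$ case --- integrating $\prb(r^{2\ell+2}\mu^{1+\sfrak}\hell^2\prb\varphiminussell)=\hell\Lxi\Hhatvarphiell$ from $\rho=2M$ and using that the flux $\mu^{1+\sfrak}\hell^2 r^{2\ell+2}\prb\varphiminussell$ vanishes there --- fails for $\varphi_{+\sfrak,\ell}=(\mu^{\sfrak}\hell r^{\ell+\sfrak})^{-1}\psipluss$: since $\psipluss$ is regular at $\Horizon$, $\varphi_{+\sfrak,\ell}\sim\mu^{-\sfrak}$ and $\prb\varphi_{+\sfrak,\ell}\sim\mu^{-\sfrak-1}$, so the weighted flux has a finite, generically nonzero limit as $\rho\to 2M$ and the boundary term does not drop out. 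Your alternative option, "transferring from spin $-\sfrak$ via the TSI," effectively collapses to the paper's own route: the paper does not introduce $\varphi_{+\sfrak,\ell}$ at all, but substitutes the just-established interior profile of $\varphiminussell$ directly into the TSI $\prod_{i=\ell-\sfrak+1}^{\ell+\sfrak}i\cdot\mu^{-\sfrak}r\psipluss=\Phiminuss{2\sfrak}+(\text{lower order})$, expands $\Phiminuss{2\sfrak}$ in terms of $\varphiminussell$ and its $\prb,\Lxi$ derivatives, kills the derivative terms with \eqref{eq:almost:int:m:allorder:nv:gene:prbderi:be}, and converts the remaining zero-derivative coefficient using \eqref{TSI:hell}. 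This is cleaner and avoids circularity; your proposal would also need to justify, without already knowing the interior profile, why $\prb\varphi_{+\sfrak,\ell}$ decays faster than $\varphi_{+\sfrak,\ell}$ --- and that is precisely the content of the conclusion, not an input.

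For part (iii), your mechanism is the same as the paper's --- evaluate the TSI relation on $\Horizon$, where $\mu=0$ and $V=2\Lxi$ so exactly one factor of $V$ survives, supplying one extra $\Lxi$ hence the extra $v^{-1}$ and the $(2\ell+j+2)$ factor --- but you leave the coefficient count (the factor $\sfrak$ and the sign $(-1)^{\sfrak-1}$, and checking that the $-12M\overline{\Lxi\Phiplustwo}$ correction is subleading) as an expectation. The paper establishes these by the explicit expansions \eqref{eq:TSI:expand:horizon:+1} and \eqref{eq:TSI:expand:horizon:+2} for $\sfrak=1,2$, where the surviving coefficient is computed to be $r|_{\rb=2M}=2M$ and $\mu^2 r^{-3}\partial_r(\mu^{-1}r^4\partial_r(r^2\partial_r(\mu^2 r)))|_{\rb=2M}=-4M$, respectively; your "induction on $\sfrak$ using \eqref{TSI:hell}" is not carried out and in fact \eqref{TSI:hell} is used in the paper for the bulk (part (ii)) rather than the horizon count, so that step would need a separate argument.
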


\begin{proof}
Consider first the spin $-\sfrak$ component $\psiminuss$. We have achieved in Proposition \ref{prop:almost:int:ipm:nvv} that
\begin{align}
\label{eq:pfofthm:PL:anymodes:near:pm1:12}
\abs{\Lxi^j\prb\varphiminussell}\lesssim{}&\tb^{-2\ell-j-3+\delta/2}\FBT.
\end{align}
Therefore, by integrating  $\prb\varphiminussell$ from any point $(\tb,\rb')\in \{v-u \leq v^{\alpha}\}$ along constant $\tb$ up to the intersection point with the curve $\gamma_{\alpha}$, one finds for small enough $\delta$ that
\begin{align}
\label{eq:pfofthm:PL:anymodes:near:pm1:1}
\bigg|\int_{\rb'}^{\rb_{\gamma_{\alpha}}(\tb)}{\Lxi^j\prb\varphi_{-1,\ell}}\di\rb\bigg|
\lesssim{}&\tb^{-2\ell-j-3+\delta/2+\alpha}\FBT\lesssim \tb^{-2\ell-j-2-\veps}\FBT.
\end{align}
From the definition \eqref{def:varphiell:-1} of $\varphiminussell$ and equation \eqref{eq:asymp:psiminus:away:pm1}, we obtain
\begin{align}
(\Lxi^j\varphiminussell)_{\text{AP}}\big\vert_{\gamma_{\alpha}}
={}&2^{2\ell+2}
\prod_{i=\ell-\sfrak+1}^{2\ell+1} i^{-1}{\NPCPs{\ell}}\Lxi^j\big((v-u)^{-\ell+\sfrak}\pu^{2\sfrak}\big((v-u)^{\ell+\sfrak}u^{-\ell+\sfrak-1}v^{-\ell-\sfrak-1}\big)\big)\big\vert_{\gamma_{\alpha}}\notag\\
={}&2^{2\ell+2}
\prod_{i=\ell+\sfrak+1}^{2\ell+1} i^{-1}{\NPCPs{\ell}}\Lxi^j\big(u^{-\ell+\sfrak-1}v^{-\ell-\sfrak-1}\big)\big\vert_{\gamma_{\alpha}}\notag\\
={}&(-1)^j2^{2\ell+2}
\prod_{i=\ell+\sfrak+1}^{2\ell+1} i^{-1}
\prod_{n=2\ell+2}^{2\ell+j+1} n{\NPCPs{\ell}}\tb^{-2\ell-j-2}.
\end{align}
From this equation and the estimate  \eqref{eq:pfofthm:PL:anymodes:near:pm1:1}, it holds at any point $(\tb,\rb)\in \{v-u \leq v^{\alpha}\}$ that
\begin{align}
\label{eq:pfofthm:PL:anymodes:near:pm1:111}
(\Lxi^j\varphiminussell(\tb,\rb))_{\text{AP}}={}&(-1)^j2^{2\ell+2}
\prod_{i=\ell+\sfrak+1}^{2\ell+1} i^{-1}
\prod_{n=2\ell+2}^{2\ell+j+1} n{\NPCPs{\ell}}\tb^{-2\ell-j-2}.
\end{align}
Equation \eqref{eq:asymp:psiminus:near:int:pm1:d} thus follows from this equation together with the definition \eqref{def:varphiell:-1} .

Consider next the spin $+\sfrak$ component. We can obtain its asymptotics by utilizing the TSI \eqref{eq:TSI:simpleform:l=1} and \eqref{eq:TSI:simpleform:l=2} together with the proven estimates for the spin $-\sfrak$ component.
Since the last term $12 M \overline{\Lxi\Phiplustwo}$ on the RHS of \eqref{eq:TSI:simpleform:l=2} has faster decay in $\tb$ than $\Phiplustwo$,
the TSI \eqref{eq:TSI:simpleform:l=1}  and \eqref{eq:TSI:simpleform:l=2} can be written as
\begin{align}
\label{eq:TSI:precise:ap:generals}
(\ell-\sfrak+1)\cdots(\ell+\sfrak)(\Lxi^j(r^{-2\sfrak}\psipluss))_{\text{AP}}={}& (\Lxi^j(\mu^{\sfrak} r^{-2\sfrak-1}\Phiminuss{2\sfrak}))_{\text{AP}}.
\end{align}
Note that $\Phiminuss{2\sfrak}=(r^2 \VR)^{2\sfrak}\Phiminuss{0}=(r^2 \VR)^{2\sfrak}(\mu^{\sfrak}r\psiminuss)=(r^2 (\prb+\Hhyp\Lxi))^{2\sfrak}(\mu^{\sfrak}r^{\ell+1-\sfrak}\hell \varphiminussell)$, hence
we can expand out $\Lxi^j(\mu^{\sfrak} r^{-2\sfrak-1}\Phiminuss{2\sfrak})$ as follows:
\begin{align}
\Lxi^j(\mu^{\sfrak} r^{-2\sfrak-1}\Phiminuss{2\sfrak})
={}&\mu^{\sfrak} r^{-2\sfrak-1}(r^2 (\prb+\Hhyp\Lxi))^{2\sfrak}(\mu^{\sfrak}r^{\ell+1-\sfrak}\hell \Lxi^j\varphiminussell)\notag\\
={}&\sum_{j_1\geq 0, j_2\geq 0, 1\leq j_1+j_2\leq 2\sfrak}O(1)r^{\ell-\sfrak+j_1+j_2}\prb^{j_1}\Lxi^{j_2+j}\varphiminussell\notag\\
&+\mu^{\sfrak} r^{-2\sfrak-1}(r^2 \prb)^{2\sfrak}(\mu^{\sfrak}r^{\ell+1-\sfrak}\hell )\Lxi^j\varphiminussell.
\end{align}
By the estimates \eqref{eq:pfofthm:PL:anymodes:near:pm1:12}, the second last line has decay $\tb^{-2\ell-j-2-\veps}$ in the region  $\{v-u \leq v^{\alpha}\}$. Therefore, to show the estimate \eqref{eq:asymp:psiplus:near:int:pm1:d}, it suffices to prove
\begin{align}
\big([(\ell-\sfrak+1)\cdots(\ell+\sfrak)]^{-1}\mu^{\sfrak} r^{-2\sfrak-1}(r^2 \prb)^{2\sfrak}(\mu^{\sfrak}r^{\ell+1-\sfrak}\hell )\Lxi^j\varphiminussell\big)_{\text{AP}}={}\text{RHS of } \eqref{eq:asymp:psiplus:near:int:pm1:d}.
\end{align}
In view of \eqref{eq:pfofthm:PL:anymodes:near:pm1:111}, it in turn suffices to prove
\begin{align}
[(\ell-\sfrak+1)\cdots(\ell+\sfrak)]^{-1}\mu^{\sfrak} r^{-2\sfrak-1}(r^2 \prb)^{2\sfrak}(\mu^{\sfrak}r^{\ell+1-\sfrak}\hell)=\mu^{\sfrak}r^{\ell-\sfrak}\hell.
\end{align}
This is exactly \eqref{TSI:hell}, hence this proves \eqref{eq:asymp:psiplus:near:int:pm1:d}.

 To compute the asymptotics for the spin $+\sfrak$ component on $\Horizon$,  we shall use again equation \eqref{eq:TSI:precise:ap:generals}. Recall that $\Lxi^j(\mu^{\sfrak} r^{-2\sfrak-1}\Phiminuss{2\sfrak})=\Lxi^j(\mu^{\sfrak} r^{-2\sfrak-1}(\mu^{-1}r^2 V)^{2\sfrak}(\mu^{\sfrak}r\psiminuss))$. Hence, for $\rb\geq 2M$, for $\sfrak=1$, this becomes
\begin{subequations}
\label{eq:TSI:expand:horizon:+12}
\begin{align}
\label{eq:TSI:expand:horizon:+1}
\Lxi^j(\mu r^{-3}\Phiminus{2})={}&\Lxi^j(\mu r^{-3}(\mu^{-1}r^2 V)^{2}(\mu r\psiminus))\notag\\
={}&\Lxi^j (r^{-1} V (r^2 \psiminus+r^3 V\psiminus))\notag\\
={}&\Lxi^j (2\mu r \psiminus+ rV\psiminus +3\mu rV\psiminus+r^2 V^2 \psiminus)\notag\\
={}&r\Lxi^j  V\psiminus + \mu \sum_{i=0,1}O(1)r^i\Lxi^j V^i \psiminus+O(1)\Lxi^j r^2V^{2}\psiminus,
\end{align}
and for $\sfrak=2$, it becomes
\begin{align}
\label{eq:TSI:expand:horizon:+2}
\hspace{2.3ex}&\hspace{-2.3ex}
\Lxi^j(\mu^2 r^{-5}\Phiminustwo{4})\notag\\
={}&\Lxi^j(\mu^2 r^{-5}(\mu^{-1}r^2 V)^{4}(\mu^2 r\psiminustwo))\notag\\
={}&\Lxi^j \big[\mu r^{-3} V(\mu^{-1}r^2 V)^2 (r^2 \partial_r (\mu^2 r)\psiminustwo+ \mu r^3 V\psiminustwo)\big]\notag\\
={}&\Lxi^j \big[\mu r^{-3} V(\mu^{-1}r^2 V) (r^2 \partial_r (r^2 \partial_r (\mu^2 r))\psiminustwo+\mu^{-1}r^4  \partial_r (\mu^2 r)V\psiminustwo
+ r^2 \partial_r(\mu r^3) V\psiminustwo
+r^5 V^2 \psiminustwo)\big]\notag\\
={}&\mu^2r^{-3}\partial_r (\mu^{-1}r^4  \partial_r (r^2\partial_r (\mu^2 r)))V\Lxi^j\psiminustwo  + \mu \sum_{i=0,1}O(1) r^i\Lxi^j V^i \psiminustwo+\sum_{i=2}^4 O(1)\Lxi^j r^iV^{i}\psiminustwo.
\end{align}
\end{subequations}
On the event horizon, we have $\mu=0$ and $V=2\Lxi$, hence the second last term of both \eqref{eq:TSI:expand:horizon:+1} and \eqref{eq:TSI:expand:horizon:+2} vanishes and the last term of both \eqref{eq:TSI:expand:horizon:+1} and \eqref{eq:TSI:expand:horizon:+2} has $v^{-2\ell-j-4}$ decay by the asymptotics \eqref{eq:asymp:psiminus:near:int:pm1:d} of $\psiminuss$. Since the coefficient of the first term in the last line of \eqref{eq:TSI:expand:horizon:+2}, $\mu^2r^{-3}\partial_r (\mu^{-1}r^4  \partial_r (r^2\partial_r (\mu^2 r)))$, equals $-4M$ when on $\Horizon$, we obtain, by the above discussions and from equation \eqref{eq:TSI:precise:ap:generals}, that
\begin{align}
(\ell-\sfrak+1)\cdots(\ell+\sfrak)(\Lxi^j(r^{-2\sfrak}\psipluss))_{\text{AP}}\big|_{\Horizon}={}& (-1)^{\sfrak-1}4\sfrak M(\Lxi^{j+1}\psiminuss)_{\text{AP}}\big|_{\Horizon}.
\end{align}
By the asymptotics \eqref{eq:asymp:psiminus:near:int:pm1:d} of $\psiminuss$ on $\Horizon$, this thus yields the asymptotic \eqref{eq:asymp:psiplus:horizon:pm1:d} for the spin $+\sfrak$ component on $\Horizon$.
\end{proof}

\subsection{Global Price's law in the non-vanishing Newman--Penrose constant case}
\label{subsect:GPL:nv:MT}

We collect the statements in Theorems \ref{thm:PL:anymodes:away:pm1} and \ref{thm:PL:anymodes:near:pm1} and summarize as follows:
  
\begin{thm}[Global Price's law  for a fixed mode of the spin $\pm \sfrak$ components in the non-vanishing Newman--Penrose constant case]
\label{summary:PL:nv:pm:global}
Let $j\in\mathbb{N}$ and let $\sfrak=0,1,2$. Assume the spin $s=\pm \sfrak$ components are supported on a single $(m,\ell)$ mode, $\ell\geq \sfrak$ and $-\ell\leq m\leq \ell$.  Let the $(m,\ell)$-th N--P constant $\NPCPs{\ell}$ be defined as in Definition \ref{def:NPCs} and be nonzero.  Let the initial data condition \ref{assump:initialdata:non-vanishing:pm1} to order $j+\ell+\sfrak$ hold true.  Let function $\hell$ and scalars $\tildePhiplussHigh{i}$ and $\tildePhiminuss{i}$ be defined as in Definitions \ref{def:hell:pm} and \ref{def:tildePhiplusandminusHigh}, respectively.

Then there exist constants $\delta>0$, $\reg=\reg(\ell,j)$, $\veps=\veps(j, \ell, \delta)>0$ and $C=C(\reg,\ell, j, \delta)$ such that for any $\tb\geq \tb_0$, 
\begin{subequations}
\label{eq:globalPL:nv:pm}
\begin{align}
  \hspace{4ex}&\hspace{-4ex}
  \bigg|\Lxi^j\Upsilon_{s} -2^{2\ell+2}
  \prod_{i=\ell+\sfrak+1}^{2\ell+1} i^{-1}\mu^{\frac{\sfrak+s}{2}}\hell  r^{\ell-\sfrak}\Lxi^j(v^{-\ell-s-1}\tau^{-\ell+s-1}) {\NPCPs{\ell}}Y_{m,\ell}^{s}(\cos\theta)e^{im\pb} 
  \bigg|\notag\\
  \leq{}&C
 r^{\ell-\sfrak}
  v^{-\ell-s-1}\tau^{-\ell+s-1-j-\veps}\FBT,
\end{align}
and on the future event horizon $\Horizon$, for $\sfrak\neq0$,
\begin{align}
\hspace{4ex}&\hspace{-4ex}
\bigg|\Lxi^j\NPR_{+\sfrak}\big|_{\Horizon}
-\frac{(-1)^{\sfrak+j}2^{2\ell+3}\sfrak(\ell-\sfrak)!}{(2\ell+1)!}
\prod_{n=2\ell+2}^{2\ell+j+2} n
{\NPCPs{\ell}}(2M)^{\ell-\sfrak+1}\hell(2M)v^{-2\ell-j-3}Y_{m,\ell}^{+\sfrak}(\cos\theta)e^{im\pb}\bigg|\notag\\
\leq{}&C
v^{-2\ell-j-3-\veps}\FBT,
\end{align}
\end{subequations}
with $
\FBT=(\InizeroEnergyplussell{\tb_0}{\ell}{\reg}{3-\delta})^{\half}+\abs{\NPCPs{\ell}}+ D_0$ and $\InizeroEnergyplussell{\tb_0}{\ell}{\reg}{3-\delta}$ defined as in Definition \ref{def:energyonSigmazero:Schw}.
\end{thm}

As a corollary, by making use of the Maxwell equations \eqref{eq:TSIsSpin1Kerr}, one can easily derive the precise late-time asymptotics for the middle component of the Maxwell field from the above asymptotics of the spin $\pm \sfrak$ components of the Maxwell field. 

\begin{cor}\label{globalPL:nv:middle:cor}
Under the assumptions in Theorem \ref{summary:PL:nv:pm:global} for $s=\pm 1$, we have for the middle component $\NPRzero$ of the Maxwell field that
for any $\tb\geq \tb_0$ and $\rb\geq 2M$,
\begin{align}
\label{globalPL:nv:middle}
\hspace{4ex}&\hspace{-4ex}
\Biggl|\Lxi^j\biggl(\NPRzero-r^{-2}\left(q_{\mathbf{E}} + iq_{\mathbf{B}}\right)+\frac{2^{2\ell+3}}{\sqrt{\ell(\ell+1)}}
\prod_{i=\ell+2}^{2\ell+1} i^{-1} Y(\mu\hellone  r^{\ell}
\tb^{-\ell}v^{-\ell-2}){\NPCP{\ell}}Y_{m,\ell}^{0}(\cos\theta)e^{im\pb}\biggr)\Bigg|\notag\\
\leq{}&C\tb^{-\veps}
\Lxi^j(Y(\mu r^{\ell}
\tb^{-\ell}v^{-\ell-2}))\FBT,
\end{align}
where $q_{\mathbf{E}}$ and $q_{\mathbf{B}}$ are defined as in Lemma \ref{lem:decomp:Maxwellfield} from initial data and $\FBT$ is defined as in Theorem \ref{summary:PL:nv:pm:global}.
\end{cor}

\begin{proof}
For the middle component of the Maxwell field, we can decompose it as in Lemma \ref{lem:decomp:Maxwellfield} into
$\NPRzero=\NPRzero(\mathbf{F}_{\text{sta}})+ \NPRzero(\mathbf{F}_{\text{rad}})$, where $\NPRzero(\mathbf{F}_{\text{sta}})=r^{-2}(q_{\mathbf{E}} + iq_{\mathbf{B}})$.
The Maxwell equation \eqref{eq:TSIsSpin1KerrAngular0With1} and formula \eqref{eq:ellipticop:eigenvalue:fixedmode} together thus give
 $-\sqrt{\ell (\ell+1)}\NPRzero(\mathbf{F}_{\text{rad}})=2Y(r^{-1}\psiplus)$. We then use the estimates for $r^{-2}\psiplus$ in Theorems \ref{summary:PL:nv:pm:global}  and conclude the estimate \eqref{globalPL:nv:middle} for $\rb\geq 2M$. 
\end{proof}

The main statement in this entire Section \ref{sect:NVNPC:PL} is to compute the precise late-time asymptotics for the spin $\pm \sfrak$ components that are supported on $\ell\geq \ell_0$ modes, $\ell_0\geq \sfrak$. Essentially, this is obtained by combining the above Theorem \ref{summary:PL:nv:pm:global} for $\ell=\ell_0$ mode and the proven Theorems \ref{thm:almost:ext:ipm:nvv:higher} and \ref{thm:almost:int:ipm:nvv:hig} on the almost Price's law for $\ell\geq \ell_0+1$ modes.

\begin{thm}[Global Price's law  for the spin $\pm \sfrak$ components in the non-vanishing Newman--Penrose constant case]
\label{summary:PL:nv:pm:global:HM}
Let $j\in\mathbb{N}$ and let $\sfrak=0,1,2$. Assume the spin $s=\pm \sfrak$ components are supported on $\ell\geq \ell_0$ modes, $\ell_0\geq \sfrak$.  Let function $\hell$ and scalars $\tildePhiplussHigh{i}$ and $\tildePhiminuss{i}$ be defined as in Definitions \ref{def:hell:pm} and \ref{def:tildePhiplusandminusHigh}, respectively. Let the $(m,\ell_0)$-th N--P constant $\NPCPs{\ell_0}$ be defined as in Definition \ref{def:NPCs} and assume that not all of $\{\NPCPs{\ell_0}\}_{-\ell_0\leq m\leq \ell_0}$ are zero.  Assume on $\Sigma_{\tau_0}$ that  there are constants  $\beta\in (0,\half)$ and $0\leq D_0<\infty$ such that for all $0\leq i\leq j+\ell+\sfrak$, $m\in\{-\ell_0,-\ell_0+1,\ldots,\ell_0\}$ and $\rb\geq 10M$,
\begin{align}
\Big|\prb^{i}\Big(\VR(\tildePhiplussHigh{\ell_0-\sfrak})_{m,\ell_0}-{\rb^{-2}\NPCPs{\ell_0}}\Big)\Big|\lesssim D_0\rb^{-2-\beta-i}.
\end{align}

Then there are constants $\delta>0$,\footnote{By examining the proof, one finds that any $\delta\in (0,\frac{1}{2})$ works fine.}  $\reg=\reg(\ell_0,j)$, $\veps=\veps(j, \ell_0, \delta)>0$ and $C=C(\reg,\ell_0, j, \delta)$ such that for any $\tb\geq \tb_0$, 
\begin{align}\label{eq:globalPL:nv:pm:HM}
\hspace{4ex}&\hspace{-4ex}
\bigg|\Lxi^j\Upsilon_s -2^{2\ell_0+2}
\prod_{i=\ell_0+\sfrak+1}^{2\ell_0+1} i^{-1} \mu^{\frac{\sfrak+s}{2}} \hellz r^{\ell_0-\sfrak} \Lxi^j(
v^{-\ell_0-s-1}\tb^{-\ell_0+s-1})\sum_{m=-\ell_0}^{\ell_0}{\NPCPs{\ell_0}} Y_{m,\ell_0}^{s}(\cos\theta)e^{im\pb} \bigg|\notag\\
\leq{}&Cr^{\ell_0-\sfrak}
v^{-\ell_0-s-1}\tau^{-\ell_0+s-1-j-\veps}\FBNV,
\end{align}
and, on the future event horizon, for  $\sfrak\neq0$,
\begin{align}
\bigg|\Lxi^j\NPR_{+\sfrak}\big|_{\Horizon}
-C_j v^{-2\ell_0-j-3} \sum_{m=-\ell_0}^{+\ell_0}
{\NPCPs{\ell_0}}Y_{m,\ell_0}^{+\sfrak}(\cos\theta)e^{im\pb}\bigg|
\leq{}C
v^{-2\ell_0-j-3-\veps}\FBNV,
\end{align}
where
\begin{align}
  C_j=\frac{(-1)^{\sfrak+j}2^{2\ell_0+3}\sfrak (\ell_0-\sfrak)!}{(2\ell_0+1)!} (2M)^{\ell_0-\sfrak+1}\hellz(2M)
  \prod_{n=2\ell_0+2}^{2\ell_0+j+2} n,
\end{align}
and $
\FBNV=(\InizeroEnergyplussnv{\reg}{\delta})^{\half}+\sum_{m=-\ell_0}^{\ell_0}\abs{\NPCPs{\ell_0}}+ D_0$ and $\InizeroEnergyplussnv{\reg}{\delta}$ is defined as in Definition \ref{def:initialenergy:highMODEs}.
\end{thm}

\begin{proof}
By decomposing the spin $\pm \sfrak$ components into $\ell=\ell_0$ mode and $\ell\geq \ell_0+1$ modes and using the previous theorem \ref{summary:PL:nv:pm:global} to achieve the asymptotics for its $(m,\ell_0)$ mode, it suffices to show that $|\Lxi^j (\NPR_{s})^{\ell\geq \ell_0+1}|$ are bounded by the RHS of each estimate in \eqref{eq:globalPL:nv:pm:HM}, and the rest estimates follow easily. This fact is in turn implied by the estimates \eqref{eq:weakdecay:ell:ipm:ext:nv:higher:even} in the exterior region and \eqref{eq:HighermodeshaveFasterdecayininterior} in the interior region.
\end{proof}

\begin{remark}
  Similarly, one can argue as in Corollary \ref{globalPL:nv:middle:cor} to derive the global precise late-time asymptotics for the middle component $\NPRzero$ of the Maxwell field.
\end{remark}


\section{Price's law under vanishing Newman--Penrose constant condition}
\label{sect:VNPC:PL}


This section differs from the previous section in the sense that we assume the Newman--Penrose constant therein vanishes, i.e., $\NPCPs{\ell}=0$. Under this vanishing Newman--Penrose constant condition, and assuming further initial data  condition, we prove the asymptotic profiles for the spin $\pm \sfrak$ components.  In Section \ref{subsect:TI:v}, we define the time integral of each fixed $(m,\ell)$ mode of the spin $+\sfrak$ component and compute the expressions for its derivatives. Then, in Section \ref{subsect:NPIT:esti}, we calculate the Newman--Penrose constant for the time integral and bound the initial norms of the time integral in terms of the  initial norm of the $(m,\ell)$ mode of the spin $+\sfrak$ component. Finally, we apply the results in Section \ref{subsect:GPL:nv:MT} to the time integral and derive the precise late-time asymptotics of the spin $\pm \sfrak$ components in Section \ref{subsect:PL:v:MT}.

\subsection{Time integral}
\label{subsect:TI:v}

In this subsection, we will always assume that the spin $+\sfrak$ component is supported on an $(m,\ell)$ mode, and without confusion, we might suppress the dependence on $m$ and $\ell$ unless specified. In addition, we denote $(\psipluss)_{m,\ell}$ by $\Ppsiell$.

Recall equation \eqref{eq:totalderieq:generall:nega} satisfied by $\Ppsiell$ $(\sfrak=0,1,2)$:
\begin{align}
\prb\left(r^{2\ell+2}\mu^{1+\sfrak}\Phell^2 \prb(\mu^{-\sfrak}\Phell^{-1} r^{-\ell-\sfrak}\Ppsiell)\right)
 ={}& r^{\ell-\sfrak-1}\Lxi(\Hpsi),
\end{align}
where
\begin{align}\label{de:Hpsi}\begin{split}
\Hpsi
=&\mu^{\sfrak}\Phell\Big\{\mu \partial_r\hhyp\curlVR(\mu^{-\sfrak}r\Ppsiell)-\mu \Hhyp r^2 \partial_\rho(\mu^{-\sfrak}r\Ppsiell)\\
&\qquad\quad+\big[\big(2\sfrak r-2M(3\sfrak+1)\big)\Hhyp -\mu r^2\partial_r\Hhyp \big]\mu^{-\sfrak}r\Ppsiell\Big\}.
\end{split}\end{align}

The time integral of the scalar $\Ppsiell$ is constructed and defined as follows. 

\begin{lemma}
For $\sfrak=0,1,2$, there exists a unique smooth $\Pgell$ to the following equation
\begin{align}
\label{eq:totalderieq:generall:nega-1}
\prb\left(r^{2\ell+2}\mu^{1+\sfrak}\Phell^2 \prb(\mu^{-\sfrak}\Phell^{-1} r^{-(\ell+\sfrak)}\Ppsiell)\right)
 ={}& r^{\ell-\sfrak-1}\Hpsi
\end{align}
 which satisfies both
\begin{align}\label{assump:gell:1}
\lim\limits_{\rho\to\infty}r^{-\ell-\sfrak}\Pgell\big|_{\Sigma_{\tau_0}}=0
\end{align}
and
\begin{align}
\label{eq:Lxipgell=ppsiell}
\Lxi \Pgell=\Ppsiell.
\end{align}
Furthermore, we have, on $\Sigma_{\tau_0}$,
\begin{align}\label{eq:s:pgell}
\Pgell(\rho)=
-\mu^{\sfrak}\Phell \rho^{\ell+\sfrak}\int_{\rho}^{+\infty}\bigg\{r^{-2\ell-2}\Phell^{-2}\mu^{-1-\sfrak}
\bigg[\csf+
\int_{2M}^{\rho_1}r^{\ell-\sfrak-1}\Hpsi(\tb_0, \rho_2)\di \rho_2\bigg]\bigg\}\di\rho_1,
\end{align}
where the constants $\csf$ are
\begin{subequations}
\label{eq:csf:012}
\begin{align}
\cfz={}&0,\\
\cfo={}&\frac{(2M)^{\ell-1}\Hpsi\big|_{\rb=2M}}{\ell(\ell+1)},\\
\cft={}&\frac{2(2M)^{\ell-2}}{3(\ell+2)(\ell+1)\ell(\ell-1)}\Big((4\ell^2 +\ell+3)\Hpsi\big|_{\rb=2M}-6M \prb\Hpsi\big|_{\rb=2M}\Big).
\end{align}
\end{subequations}
\end{lemma}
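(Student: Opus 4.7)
The plan is to construct $\Pgell$ in two stages. First, on $\Sigmazero$, solve the second-order ODE obtained by reading \eqref{eq:totalderieq:generall:nega-1} with $\Pgell$ in place of $\Ppsiell$ on the left-hand side, subject to the decay condition \eqref{assump:gell:1} and smoothness at the horizon $\rho=2M$. Second, take $(\Pgell|_{\Sigmazero},\,\Lxi\Pgell|_{\Sigmazero})=(\text{the resulting function},\,\Ppsiell|_{\Sigmazero})$ as Cauchy data and propagate via the TME \eqref{eq:TME}. The identity $\Lxi\Pgell=\Ppsiell$ then follows from uniqueness for the TME Cauchy problem: both scalars solve the TME (since $\Lxi$ is Killing), their zeroth-order traces on $\Sigmazero$ match by construction, and their first time derivatives on $\Sigmazero$ match because, at $\tb=\tb_0$, the TME for $\Pgell$ can be reorganized (using that $\Lxi$ commutes with the spatial differential operator defining $\Hpsi$) into the very ODE that $\Pgell|_{\Sigmazero}$ solves by construction, which is equivalent to $\Lxi^2\Pgell|_{\Sigmazero}=\Lxi\Ppsiell|_{\Sigmazero}$.

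For the ODE itself, I introduce $u:=\mu^{-\sfrak}\Phell^{-1}r^{-(\ell+\sfrak)}\Pgell$ so that the equation reads $\prb\bigl(r^{2\ell+2}\mu^{1+\sfrak}\Phell^{2}\,\prb u\bigr)=r^{\ell-\sfrak-1}\Hpsi(\tb_0,\cdot)$. A first integration from $\rho_1=2M$ gives $\mathcal{F}(\rho):=r^{2\ell+2}\mu^{1+\sfrak}\Phell^{2}\prb u=\csf+\int_{2M}^{\rho}r_1^{\ell-\sfrak-1}\Hpsi(\tb_0,\rho_2)\,\di\rho_2$, with one free constant $\csf=\mathcal{F}(2M)$. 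A second integration in $\rho$ from $\rho$ to $+\infty$, with the constant at infinity fixed by $u(\infty)=0$ (equivalent to \eqref{assump:gell:1} since $\mu,\Phell\to 1$ there), yields exactly \eqref{eq:s:pgell}. Existence for each prescribed $\csf$ is therefore immediate from this representation.

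The main obstacle is to pin $\csf$ down so that $\Pgell\in C^\infty$ across the horizon. Writing $s=\rho-2M$, I expand $\mu^{-1-\sfrak}=(2M+s)^{1+\sfrak}s^{-1-\sfrak}$, the Taylor series $\mathcal{F}(\rho)=\csf+\sum_{k\ge 1}c_k s^k$ (each $c_k$ a polynomial combination of $\prb^{j}\Hpsi(\tb_0,2M)$ for $0\le j\le k-1$), and the smooth Taylor expansion of $r^{-2\ell-2}\Phell^{-2}$. The outer integrand in \eqref{eq:s:pgell} then takes the form $\sum_{k\ge 0}d_k(\csf)\,s^{k-1-\sfrak}$ with each $d_k$ affine in $\csf$; integration in $\rho_1$ produces a single borderline logarithmic term $d_\sfrak(\csf)\log s$, and after multiplication by the outer prefactor $\mu^\sfrak\sim s^\sfrak$ this leaves an $s^\sfrak\log s$ contribution that obstructs smoothness. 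The single linear condition $d_\sfrak(\csf)=0$ determines $\csf$ uniquely, and substituting the explicit expansion coefficients recovers the closed-form formulas \eqref{eq:csf:012}---the $\sfrak=2$ case, in which both $\Hpsi(\tb_0,2M)$ and $\prb\Hpsi(\tb_0,2M)$ enter, will be the most bookkeeping-intensive. Uniqueness of $\Pgell|_{\Sigmazero}$ within smooth decaying solutions is then immediate: the two-dimensional homogeneous solution space of the ODE is spanned by $\mu^\sfrak\Phell r^{\ell+\sfrak}$ (which violates \eqref{assump:gell:1} since $\mu^\sfrak\Phell\to 1$ at infinity) and its Wronskian partner $\mu^\sfrak\Phell r^{\ell+\sfrak}\int_\rho^{\infty}(r_1^{2\ell+2}\mu^{1+\sfrak}\Phell^2)^{-1}\di\rho_1$ (which carries an $s^\sfrak\log s$ singularity at the horizon and so is not smooth), and no non-trivial combination of these can be added while preserving both required properties.
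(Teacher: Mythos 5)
Your plan follows essentially the same route as the paper's: integrate the ODE twice leaving a free constant $\csf$, normalize at infinity by the decay condition, and determine $\csf$ by regularity at the horizon. Your way of packaging the horizon step --- expanding the outer integrand in powers of $s=\rho-2M$, noting that after integration and multiplication by $\mu^\sfrak\sim s^\sfrak$ only the coefficient of $s^{-1}$ survives as a non-smooth $s^\sfrak\log s$ term, and killing that term by a single affine relation $d_\sfrak(\csf)=0$ --- is a cleaner bookkeeping of exactly what the paper does through iterated L'H\^opital limits $\gsf{0},\gsf{1}$ and the auxiliary functions $W_1,W_2$, and you correctly identify that $s^\sfrak\log s$ is the unique obstruction to smoothness (lower powers give $s^{k-\sfrak}$ with $k<\sfrak$, which become $s^k$ after the $\mu^\sfrak$ prefactor; higher powers are already regular).

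There is, however, a genuine gap in the step "the single linear condition $d_\sfrak(\csf)=0$ determines $\csf$ uniquely." Writing $d_\sfrak(\csf)=A\csf+B$, you need $A\neq 0$, and $A$ is the $\sfrak$-th Taylor coefficient at $\rho=2M$ of $\rho^{\sfrak-2\ell-1}\Phell^{-2}$. For $\sfrak=0$ this is $(2M)^{-2\ell-1}\Phell(2M)^{-2}>0$; for $\sfrak=1,2$ it is not obvious, and you nowhere check it. The same unverified claim reappears, disguised, in your closing uniqueness remark: the assertion that the Wronskian partner $\mu^\sfrak\Phell r^{\ell+\sfrak}\int_\rho^\infty(r_1^{2\ell+2}\mu^{1+\sfrak}\Phell^2)^{-1}\di\rho_1$ ``carries an $s^\sfrak\log s$ singularity'' \emph{is} the statement $A\neq 0$; if $A$ happened to vanish that function would be a second smooth, decaying homogeneous solution and uniqueness would fail. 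The paper does not face this abstractly because it substitutes identities coming from the ODE satisfied by $\hell$ --- for $\sfrak=1$, $\prb^2\Pthell=\ell(\ell+1)r^{\ell-1}\hell$ (equivalently $4M\,\prb\hell(2M)=\ell(\ell-1)\hell(2M)$), and for $\sfrak=2$, $\prb(\mu^{-1}r^{-2}\prb\Pthell)=(\ell+2)(\ell-1)r^{\ell-2}\hell$ together with $6M\,\prb\hell(2M)=\ell(\ell-2)\hell(2M)$ --- which, combined with $\hell(2M)>0$ from Proposition~\ref{prop:totalderieq:propofh:generall:nega}, make the equation for $\csf$ visibly nondegenerate (for instance, for $\sfrak=1$ one gets $A=-\ell(\ell+1)(2M)^{-2\ell-1}\hell(2M)^{-2}\neq 0$). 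You need to add this computation; without it neither existence, nor uniqueness, nor the closed forms in \eqref{eq:csf:012} is established. As a minor secondary point, in your TME--propagation argument for $\Lxi\Pgell=\Ppsiell$, the reduction from the ODE to $\Lxi^2\Pgell|_{\Sigmazero}=\Lxi\Ppsiell|_{\Sigmazero}$ implicitly divides by the coefficient of $\Lxi$ inside $\Hpsi$ (the $\curlVR$ term), so one should note that this coefficient is nowhere vanishing on $[2M,\infty)$; this is not deep but should be said.
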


\begin{definition}[Time integral]
\label{def:TimeInt}
The unique scalar $\Pgell$ constructed from $\Ppsiell$ as in above lemma is called the \textbf{time integral} of $\Ppsiell$, that is, the time integral of the $(m,\ell)$ mode of the spin $+\sfrak$ component. 
\end{definition}

\begin{remark}
\label{rem:gplussol}
By equations \eqref{eq:totalderieq:generall:nega-1} and \eqref{eq:Lxipgell=ppsiell}, $\Pgell$ satisfies the same TME as the one of $\Ppsiell$, that is, $\Pgell Y_{m,\ell}^{+\sfrak} (\cos\theta)e^{im\pb}$ is also a solution to the TME \eqref{eq:TME} for $s=+\sfrak$.
\end{remark}

\begin{proof}
It suffices to uniquely determine $\Pgell$ on $\Sigma_{\tau_0}$, which means, we need to solve $\Pgell$ by the following equation on $\Sigma_{\tau_0}$
\begin{align}\label{eq:Pgell:tau0}
\prb\left(r^{2\ell+2}\mu^{1+\sfrak}\Phell^2 \prb(\Pthell^{-1}\Pgell)\right)
 ={}& r^{\ell-\sfrak-1}\Hpsi
\end{align}
with the asymptotic condition \eqref{assump:gell:1} and the smoothness up to $\rho=2M$, where $\Pthell=\mu^{\sfrak}\Phell r^{\ell+\sfrak}$.
By integrating \eqref{eq:Pgell:tau0} from $\rho_1$ to $\rho_2$ where $2M<\rho_1<\rho_2$, we get
\begin{align}
r^{2\ell+2}\mu^{1+\sfrak}\Phell^2 \prb(\Pthell^{-1}\Pgell)(\rho_2)
-r^{2\ell+2}\mu^{1+\sfrak}\Phell^2 \prb(\Pthell^{-1}\Pgell)(\rho_1)
=\int_{\rho_1}^{\rho_2}r^{\ell-\sfrak-1}\Hpsi(\rho)\di\rho.
\end{align}
Taking $\rho_1\to2M$, and by the fact that $\Hpsi$ is smooth away from $\rho=2M$ (one can easy check this by the definition of $\Hpsi$ in \eqref{de:Hpsi} and the smoothness of  $\Ppsiell$ on $\Sigmazero$),  we have for any $\rho>2M$,
\begin{align}\label{first:order:eq:g+}
r^{2\ell+2}\mu^{1+\sfrak}\Phell^2 \prb(\Pthell^{-1}\Pgell)
-\int_{2M}^{\rho}\big(r^{\ell-\sfrak-1}\Hpsi\big)(\rho_1)\di \rho_1=\csf,
\end{align}
for some constant $\csf$ to be determined. We integrate \eqref{first:order:eq:g+} again and obtain for $2M<\rho_1<\rho_2$,
\begin{align}\label{eq:Pgell}
\Pthell^{-1}\Pgell(\rho_2)-\Pthell^{-1}\Pgell(\rho_1)=
\int_{\rho_1}^{\rho_2}r^{-2\ell-2}\mu^{-1-\sfrak}\Phell^{-2}\Big\{
\csf+\int_{2M}^{\rho}\big(r^{\ell-\sfrak-1}\Hpsi\big)(\rho_3)\di \rho_3\Big\}\di \rho.
\end{align}
Thus, one easily finds that $\Pgell$ is smooth in $(2M,+\infty)$.

We shall now determine the values of the constants $\csf$ such that $\Pgell$ is smooth up to and including $\rho=2M$.

First, consider $\sfrak=0$ case. By requiring $\Pgell$ to be $C^1$ at $\rho=2M$, one has from \eqref{first:order:eq:g+} that
\begin{align}
\csf=\lim\limits_{\rho\to2M} r^{2\ell+2}\mu^{1+\sfrak}\Phell^2 \prb(\Pthell^{-1}\Pgell)=0.
\end{align}
Then, by \eqref{eq:Pgell} and since $\mu^{-1}\int_{2M}^{\rho}\big(r^{\ell-\sfrak-1}\Hpsi\big)(\rho_1)\di\rho_1$ is smooth at $\rho=2M$,
$\Pgell$ can be smoothly extended to $\rho=2M$. In \eqref{eq:Pgell}, taking  $\rho_1\to 2M$, $\rho_2\to+\infty$, and by
 \eqref{assump:gell:1}, we have
\begin{align}\label{Pgell:2m:0}
\lim\limits_{\rho_1\to2M}\big(\Pthell^{-1}\Pgell\big)(\rho_1)=
-\int_{2M}^{+\infty}\Big\{r^{-2\ell-2}\Phell^{-2}\mu^{-1}
\int_{2M}^{\rho}\big(r^{\ell-1}\Hpsi\big)(\rho_3)\di \rho_3\Big\}\di\rho,
\end{align}
where the RHS of \eqref{Pgell:2m:0} is integrable. Together with \eqref{eq:Pgell} and \eqref{Pgell:2m:0}, we get
\begin{align}
\Pthell^{-1}\Pgell(\rho)=
-\int_{\rho}^{+\infty}\Big\{r^{-2\ell-2}\Phell^{-2}\mu^{-1}
\int_{2M}^{\rho_1}\big(r^{\ell-1}\Hpsi\big)(\rho_2)\di \rho_2\Big\}\di\rho_1.
\end{align}
This is \eqref{eq:s:pgell} in the case $\sfrak=0$.

Second, we show that $\Pgell$ can be continuously extended to $2M$ with any $\csf$ for $\sfrak=1,2$. In fact, for $\sfrak=1,2$,
\begin{align}\label{lim:gell:0}\begin{split}
\lim\limits_{\rho\to2M}\Pgell
=&\lim\limits_{\rho\to2M}\frac{\Pthell^{-1}\Pgell}{\Pthell^{-1}}\\
=&\lim\limits_{\rho\to2M}\frac{\partial_\rho(\Pthell^{-1}\Pgell)}{-\Pthell^{-2}
\partial_\rho \Pthell}\\
=&\lim\limits_{\rho\to2M}-\frac{r^{2(\sfrak-1)}\mu^{\sfrak-1}(\csf+\int_{2M}^\rho \big(r^{\ell-\sfrak-1}\Hpsi\big)(\rho_1)\di\rho_1)}{\partial_\rho \hellt}\\
=&-\csf\sfrak^{-1}(\Phell(2M))^{-1}(2M)^{\sfrak-\ell-1}:=\gsf{0},
\end{split}\end{align}
where we have used the L'H\^{o}pital's rule in the second step and \eqref{first:order:eq:g+} in the third step.
Further, from \eqref{first:order:eq:g+},
 we have, for $\rho_2>\rho_1>2M$,
\begin{align}\label{eq:zero:order:g+}\begin{split}
\prb\big(\Pthell^{-1}(\Pgell-\gsf{0})\big)=r^{-2\ell-2}\Phell^{-2}\mu^{-1-\sfrak}W_1(\rho),
\end{split}\end{align}
where
\begin{align}\label{behav:eq:gell:2m:1}\begin{split}
W_1(\rho)
\triangleq{}&\csf+\gsf{0}r^{2-2\sfrak}
\mu^{1-\sfrak}\partial_\rho(\Pthell)
+\int_{2M}^{\rho}r^{\ell-\sfrak-1}\Hpsi(\rho_1)\di \rho_1.
\end{split}\end{align}

We now determine the values of $\csf$ such that $\Pgell$ can be smoothly extended to $2M$ for $\sfrak=1$. For $\rho_2>\rho_1>2M$, we have by integrating \eqref{eq:zero:order:g+} that
\begin{align}\label{eq:zero:order:g+:11}\begin{split}
\big(\Pthell^{-1}(\Pgell-\gsf{0})\big)(\rho_1)=\big(\Pthell^{-1}(\Pgell-\gsf{0})\big)(\rho_2)
-\int_{\rho_1}^{\rho_2}r^{-2\ell-2}\Phell^{-2}\mu^{-2}W_1(\rho)\di\rho.
\end{split}\end{align}
From \eqref{behav:eq:gell:2m:1},
$
W_1(\rho)
=\big(\gsf{0}\partial_\rho^2(\Pthell)(2M)+(2M)^{\ell-2}\Hpsi(2M)\big)\cdot(\rho-2M)
+O((\rho-2M)^2)$ as $\rho\to2M$. Further, notice from equation \eqref{eq:totalderieq:generall:nega:2} solved by $\hell$ that $\prb^2 \Pthell=\ell(\ell+1) r^{\ell-1}\hell$.
Hence, by taking $\gsf{0}=-\frac{(2M)^{\ell-2}\Hpsi(2M)}{\partial_\rho^2(\Pthell)(2M)}=-\frac{\Hpsi(2M)}{2\ell(\ell+1) M \hell(2M)}$, or equivalently, from \eqref{lim:gell:0}, taking
\begin{align}
\csf=\frac{(2M)^{\ell-1}\Hpsi(2M)}{\ell(\ell+1)},
\end{align}
then $\mu^{-2}W_1$ is smooth in $[2M,+\infty)$, which thus yields $\Pgell$ is smooth at $2M$ from \eqref{eq:zero:order:g+}.
 Furthermore, By \eqref{eq:zero:order:g+} and the assumption \eqref{assump:gell:1}, we have
\begin{align}\begin{split}
\lim\limits_{\rho\to2M}\Pthell^{-1}(\Pgell-\gsf{0})
=-\int_{2M}^{+\infty}r^{-2\ell-2}\Phell^{-2}\mu^{-2}W_1\di\rho.
\end{split}\end{align}
Using \eqref{eq:zero:order:g+:11} again, we have
\begin{align}\label{eq:zero:order:gsell}\begin{split}
\Pthell^{-1}(\Pgell-\gsf{0})
=-\int_{\rho}^{+\infty}\big(r^{-2\ell-2}\Phell^{-2}\mu^{-2}W_1\big)(\rho_1)\di\rho_1.
\end{split}\end{align}
This corresponds to \eqref{eq:s:pgell} in the case $\sfrak=1$.

Finally, we discuss the case $\sfrak=2$. Similarly as proving  the continuity of $\Pgell$ at $2M$ in the case $\sfrak=1$ via equation \eqref{lim:gell:0} , we can show that $\Pgell$ is actually $C^1$ at $2M$ for any constant $\csf$. In fact, for $\sfrak=2$,
\begin{align}
\label{lim:gell:1}\begin{split}
\lim\limits_{\rho\to2M}\frac{\Pgell(\rho)-\gsf{0}}{\rho-2M}&=
\lim\limits_{\rho\to2M}\frac{\Pthell^{-1}(\Pgell-\gsf{0})}{\mu r\Pthell^{-1}}\\
&=\lim\limits_{\rho\to2M}\frac{\partial_\rho(\Pthell^{-1}(\Pgell-\gsf{0}))}
{\partial_\rho(\mu r\Pthell^{-1})}\\
&=\lim\limits_{\rho\to2M}\frac{r^{-2\ell-2}\mu^{-1-\sfrak}\Phell^{-2}W_1(\rho)}
{\partial_\rho(\mu r\Pthell^{-1})}\\
&=\frac{r^{\ell-\sfrak-1}\Hpsi+\gsf{0}\partial_\rho( r^{-2}\mu^{-1}\partial_\rho(\hellt))}{-2Mr^{\ell-2}\hell}\Big|_{\rho=2M}
:=\gsf{1},
\end{split}\end{align}
where we have used the L'H\^{o}pital's rule in the second step and \eqref{eq:zero:order:g+} in the third step.  Furthermore, by \eqref{first:order:eq:g+}, we have, for $\rho_2>\rho_1>2M$,
\begin{align}\label{eq:gell:sfrak2}\begin{split}
\mu r\Pthell^{-1}\Big(\frac{\Pgell-\gsf{0}}{\mu r}-\gsf{1}\Big)(\rho_1)={}&
\mu r\Pthell^{-1}\Big(\frac{\Pgell-\gsf{0}}{\mu r}-\gsf{1}\Big)(\rho_2)\\
&\quad-\int_{\rho_1}^{\rho_2}r^{-2\ell-2}
\mu^{-3}\Phell^{-2}W_2(\rho)\di \rho,
\end{split}\end{align}
where
\begin{align}\begin{split}
W_2(\rho)
\triangleq{}&\csf+\gsf{0}r^{2-2\sfrak}\mu^{1-\sfrak}\partial_\rho(\Pthell)\\
&-
\gsf{1}r^{2\ell+2}\mu^{1+\sfrak}\Phell^2 \prb(\mu r\Pthell^{-1})+
\int_{2M}^{\rho}\big(r^{\ell-\sfrak-1}\Hpsi\big)(\rho_1)\di \rho_1.
\end{split}\end{align}
Using $\Pthell=\mu^{\sfrak}\Phell r^{\ell+\sfrak}$, we obtain for $\rb-2M$ small,
\begin{align}
\label{esti:W2}
W_2(\rho)={}&\csf+\gsf{0}\big(r^{-2}\mu^{-1}\partial_\rho(\Pthell)\big)(2M)+(\rb-2M)\cdot\Big[\gsf{0}\big(\prb(r^{-2}\mu^{-1}\partial_\rho(\Pthell))\big)(2M)\notag\\
&+
\gsf{1}(2M)^{\ell-1}\Phell(2M)+\big(r^{\ell-3}\Hpsi\big)(2M)\Big]\notag \\
&+\frac{1}{2}(\rb-2M)^2 \cdot \Big[\gsf{0}\big(\prb^2(r^{-2}\mu^{-1}\partial_\rho(\Pthell))\big)(2M)
+\big(\prb(r^{\ell-3}\Hpsi)\big)(2M)\notag\\
&
-2\gsf{1}\big(\prb(r^{2\ell+1}\mu^{2}\Phell^2 \prb(\mu r\Pthell^{-1}))\big)(2M)
\Big]+O((\rb-2M)^3).
\end{align}
We utilize then equations \eqref{lim:gell:1} and \eqref{lim:gell:0} to find the first two lines on the RHS of the above equation are vanishing identically.  By requiring the $(\rb-2M)^2$ terms vanish, one then finds that $\mu^{-3}W_2$ is smooth in $[2M,+\infty)$, and, from \eqref{eq:gell:sfrak2}, $\Pgell$ is thus smooth in $[2M,+\infty)$. This requirement is equivalent to setting the coefficient of $\frac{1}{2}(\rb-2M)^2$ term on the RHS of \eqref{esti:W2} to be zero, and, plugging in the expression \eqref{lim:gell:1} of $\gsf{1}$, it reduces to imposing the following condition
\begin{align*}
\hspace{4ex}&\hspace{-4ex}
\gsf{0}\big(\prb^2(r^{-2}\mu^{-1}\partial_\rho(\Pthell))\big)(2M)
+\big(\prb(r^{\ell-3}\Hpsi)\big)(2M)\notag\\
={}&2\gsf{1}\big(\prb(r^{\ell-1}\Phell-\mu^{-1}r^{-2}\prb \Pthell)\big)(2M)\notag\\
={}&2\big(\prb(r^{\ell-1}\Phell-\mu^{-1}r^{-2}\prb \Pthell)\big)(2M)\times\frac{r^{\ell-\sfrak-1}\Hpsi+\gsf{0}\partial_\rho( r^{-2}\mu^{-1}\partial_\rho(\hellt))}{-2Mr^{\ell-2}\hell}\Big|_{\rho=2M}.
\end{align*}
We use equation \eqref{eq:totalderieq:generall:nega:2} solved by $\hell$, which in particular implies $\prb(\mu^{-1}r^{-2}\prb \Pthell)=(\ell+2)(\ell-1)r^{\ell-2} \hell$ and $6M(\prb \hell) (2M)=\ell(\ell-2) \hell (2M)$, to solve
this equation,  and then use \eqref{lim:gell:0} to obtain the values of $\gsf{0}$ and $\csf$:
\begin{align}
\gsf{0}={}&-\frac{1}{(\ell+2)(\ell-1)}\bigg(\frac{r^{2}\hell\prb(r^{-1} \Hpsi)+((\ell-2\ell^2)\hell+2r\prb\hell)\Hpsi}{\hell\big[3r^{2}\prb \hell+(\ell-2\ell^2)\hell r\big]}\bigg)\bigg|_{\rb=2M},\notag\\
\csf={}&\frac{2(2M)^{\ell-1}}{(\ell+2)(\ell-1)}\bigg(\frac{r^{2}\hell\prb(r^{-1} \Hpsi)+((\ell-2\ell^2)\hell+2r\prb\hell)\Hpsi}{3r^{2}\prb \hell+(\ell-2\ell^2)\hell r}\bigg)\bigg|_{\rb=2M}\notag\\
={}&\frac{2(2M)^{\ell-2}}{3(\ell+2)(\ell+1)\ell(\ell-1)}\big((4\ell^2 +\ell+3)\Hpsi(2M)-6M (\prb\Hpsi)(2M)\big).
\end{align}
By \eqref{assump:gell:1}, we have
\begin{align}
\lim\limits_{\rho\to2M}\mu r\Pthell^{-1}\Big(\frac{\Pgell-\gsf{0}}{\mu r}-\gsf{1}\Big)(\rho)=
-\int_{2M}^{+\infty}r^{-2\ell-2}
\mu^{-3}\Phell^{-2}W_2(\rho)\di \rho,
\end{align}
and then,
\begin{align}
\Pthell^{-1}(\Pgell-\gsf{0}-\mu r\gsf{1})(\rho)=
-\int_{\rho}^{+\infty}\big(r^{-2\ell-2}
\mu^{-3}\Phell^{-2}W_2\big)(\rho_1)\di \rho_1,
\end{align}
which is equality \eqref{eq:s:pgell} for $\sfrak=2$.
\end{proof}

As an immediate consequence, we compute the derivatives of the time integral $\Pgell$ in terms of $\Ppsiell$.

\begin{lemma}[Derivatives of the  time integral $\Pgell$]
\label{lem:asfl:values}
Assume $(r^2\partial_\rho)^{i'}(\mu^{-\sfrak}r\Ppsiell)$, $0\leq i'\leq \ell-\sfrak+1$, are bounded near infinity.  Then for any $0\leq i\leq \ell-\sfrak$,
\begin{align}
\label{eq:higherradialderiofgplus:general}
\hspace{4ex}&\hspace{-4ex}
(r^2\partial_\rho)^i(\mu^{-\sfrak}r\Pgell)\notag\\
=&-\sum_{j=0}^{i}\sum_{n=0}^{j}C_i^j C_j^n[(r^2\prb)^{i-j}\hell]
\rho^{\ell+\sfrak+1+j}
\int_{\rho}^{\infty}\bigg(\big[(r^2\prb)^{j-n}(\hell^{-2}
\mu^{-1-\sfrak})\big]r^{-2\ell-j+n-2}\notag\\
&\qquad \qquad\times\bigg\{\asfl{n}+\int_{2M}^r\big(r^{\ell-\sfrak-n-1}(r^2\partial_\rho)^n\Hpsi\big)(\rb_1)\di \rb_1\bigg\}\bigg)(\rb_2)\di \rb_2\notag\\
&-\csf(r^2\prb)^i\Big\{\hell \rho^{\ell+\sfrak+1}\int_{\rho}^{\infty}r^{-2\ell-2}\hell^{-2}\mu^{-1-\sfrak}\di r\Big\},
\end{align}
where $\asfl{0}=0$, $\asfl{1}=(2M)^{\ell-\sfrak}\Hpsi(2M)$, and for $2\leq n\leq \ell-\sfrak$,
\begin{align}\begin{split}
\asfl{n}
={}&(2M)^{\ell-\sfrak-n+1}(r^2\partial_\rho)^{n-1}\Hpsi(2M)\\
&+\sum_{j=0}^{n-2}(-1)^{n-j-1}\frac{(\ell-\sfrak-(j+1))!}
{(\ell-\sfrak-n)!}(2M)^{\ell-\sfrak-j}(r^2\partial_\rho)^j
\Hpsi(2M).
\end{split}\end{align}
\end{lemma}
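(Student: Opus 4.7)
The plan is to derive \eqref{eq:higherradialderiofgplus:general} by direct computation starting from the integral representation \eqref{eq:s:pgell}. Writing $u=\mu^{-\sfrak}\Phell^{-1}r^{-(\ell+\sfrak)}\Pgell$, equation \eqref{eq:totalderieq:generall:nega-1} together with the boundary conditions gives the explicit formula
\begin{align*}
\prb u=r^{-2\ell-2}\hell^{-2}\mu^{-1-\sfrak}W(r),\qquad W(r)=\csf+\int_{2M}^{r}r_1^{\ell-\sfrak-1}\Hpsi(r_1)\di r_1,
\end{align*}
together with $u\to0$ as $r\to\infty$, so that $u(\rho)=-\int_{\rho}^{\infty}r^{-2\ell-2}\hell^{-2}\mu^{-1-\sfrak}W(r)\di r$. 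The identity $\mu^{-\sfrak}r\Pgell=\hell\,\rho^{\ell+\sfrak+1}u$ therefore reduces the problem to computing $(r^2\prb)^{i}(\hell\,\rho^{\ell+\sfrak+1}u)$, and splitting $W=\csf+H$ with $H(r)=\int_{2M}^{r}r_1^{\ell-\sfrak-1}\Hpsi\,\di r_1$ isolates the pure $\csf$-contribution, which is exactly the last line of \eqref{eq:higherradialderiofgplus:general}. It therefore suffices to handle the part carrying $H$.

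Next, I would apply the Leibniz rule for $(r^2\prb)^{i}$ to pull the $\hell$-factor outside, producing the outer sum $\sum_{j=0}^{i}C_i^{j}(r^2\prb)^{i-j}\hell\cdot(r^2\prb)^{j}\bigl(\rho^{\ell+\sfrak+1}u\bigr)$. The heart of the proof is then to establish, by induction on $j$, the identity
\begin{align*}
(r^2\prb)^{j}\bigl(\rho^{\ell+\sfrak+1}u_{H}\bigr)
=-\rho^{\ell+\sfrak+1+j}\sum_{n=0}^{j}C_j^{n}\int_{\rho}^{\infty}
\bigl[(r^2\prb)^{j-n}(\hell^{-2}\mu^{-1-\sfrak})\bigr]r^{-2\ell-j+n-2}
\bigl\{\asfl{n}+\tilde H_n(r)\bigr\}\di r,
\end{align*}
where $u_{H}(\rho)=-\int_{\rho}^{\infty}r^{-2\ell-2}\hell^{-2}\mu^{-1-\sfrak}H(r)\di r$ and $\tilde H_{n}(r)=\int_{2M}^{r}r_1^{\ell-\sfrak-n-1}(r_1^{2}\prb)^{n}\Hpsi\,\di r_1$. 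The inductive step increases $j$ by one via $r^2\prb$, producing two types of contributions: a bulk term proportional to $\rho^{\ell+\sfrak+2+j}\cdot[\text{integrand at }\rho]$ and a boundary-like term from differentiating $\rho^{\ell+\sfrak+1+j}$. The bulk term is rewritten as $-\int_{\rho}^{\infty}\prb(\,\cdots)\di r$ (using that $r^{-2\ell+n-2}\to0$ fast enough at infinity) and then one integration by parts transfers one $r^2\prb$ from the $r^{-2\ell-j+n-2}$-factor onto $\tilde H_{n}$, converting $\tilde H_{n}$ into $\tilde H_{n+1}$ and generating a boundary contribution at $r=2M$ that is precisely $\asfl{n+1}$ (this is where the explicit formula for $\asfl{n}$ is verified combinatorially). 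Pascal's identity $C_j^{n}+C_j^{n-1}=C_{j+1}^{n}$ then matches the coefficients and closes the induction.

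The main obstacle will be the precise bookkeeping of the boundary contributions at $r=2M$: each integration by parts in the variable $r_1$ inside $\tilde H_{n}$ produces a term $r_1^{\ell-\sfrak-n}(r_1^{2}\prb)^{n-1}\Hpsi\big|_{r_1=2M}$ together with a correction coming from $\prb(r_1^{\ell-\sfrak-n})$, and the alternating-sign sum in the definition of $\asfl{n}$ is exactly the telescoping collection of these corrections after $n-1$ successive integrations by parts. One verifies the closed form of $\asfl{n}$ either by direct induction on $n$ or by the observation
\begin{align*}
\int_{2M}^{r}r_1^{\ell-\sfrak-n}(r_1^{2}\prb)^{n}\Hpsi\,\di r_1
=r^{\ell-\sfrak-n}(r^{2}\prb)^{n-1}\Hpsi
-(\ell-\sfrak-n)\!\int_{2M}^{r}\!r_1^{\ell-\sfrak-n-1}(r_1^{2}\prb)^{n-1}\Hpsi\,\di r_1+\text{b.t.},
\end{align*}
iterated $n-1$ times. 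Combining the induction with the Leibniz expansion in $\hell$ yields the claimed formula.
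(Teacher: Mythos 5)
Your plan is correct and is essentially the paper's own argument: both start from the closed-form representation \eqref{eq:s:pgell}, peel off the factor $\hell$ with the Leibniz rule for $(r^2\prb)^i$, and then repeatedly integrate by parts so that each application of $r^2\prb$ converts the inner integral $\int_{2M}^{\rho}r_1^{\ell-\sfrak-n-1}(r_1^2\prb)^n\Hpsi\di r_1$ into the one of index $n+1$, with the boundary term at $r_1=2M$ generating the recursion $\asfl{n+1}=(2M)^{\ell-\sfrak-n}(r^2\prb)^n\Hpsi(2M)-(\ell-\sfrak-n)\asfl{n}$, whose telescoped solution is the stated closed form. The only organizational difference is that the paper proves two separate commutation sub-identities — one for $(r^2\prb)^j$ acting on $\rho^{-\ell+\sfrak}\int_{2M}^{\rho}(\cdots)$ (producing $\asfl{j}$), one for it acting on $\rho^{\ell+\sfrak+1}\int_{\rho}^{\infty}(\cdots)$ — and then assembles them, whereas you fold both into a single induction on $j$ and invoke Pascal's identity to close it; this is equivalent, though your displayed iterated-IBP verification of $\asfl{n}$ has the power of $r_1$ and the coefficient slightly off (the integrand $r_1^{\ell-\sfrak-n}(r_1^2\prb)^{n}\Hpsi=r_1^{\ell-\sfrak-n+2}\prb[(r_1^2\prb)^{n-1}\Hpsi]$ integrates by parts against the coefficient $\ell-\sfrak-n+2$, not $\ell-\sfrak-n$), so that formula should be rechecked before it is used.
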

\begin{proof}
Let $E(\rho)
=\Phell^{-2}\mu^{-1-\sfrak}\rho^{-\ell+\sfrak}
\int_{2M}^{\rho}\big(r^{\ell-\sfrak-1}\Hpsi\big)(\rho_1)\di \rho_1$,
then $E(\rho)$ is bounded near infinity by assumption.  By this definition  and  equation \eqref{eq:s:pgell}, we have
\begin{align}
\label{eq:radPgell}
\mu^{-\sfrak}r\Pgell=&-\Phell \rho^{\ell+\sfrak+1}\int_{\rho}^{\infty}\big(r^{-\ell-\sfrak-2}E\big)(\rb_1)\di \rb_1\notag\\
&-\csf \Phell \rho^{\ell+\sfrak+1}\int_{\rho}^{\infty}\big(r^{-2\ell-2}\Phell^{-2}\mu^{-1-\sfrak}\big)(\rb_1)\di \rb_1.
\end{align}
This proves the $i=0$ case of \eqref{eq:higherradialderiofgplus:general}.
One can perform integration by parts and obtain
\begin{align}\begin{split}
\hspace{4ex}&\hspace{-4ex}r^2\partial_\rho\bigg(\rho^{-\ell+\sfrak}\int_{2M}^{\rho}\big(r^{\ell-\sfrak-1}
\Hpsi\big)(\rb_1)\di \rb_1\bigg)\\
={}&(2M)^{\ell-s}\Hpsi(2M)\rho^{-\ell+\sfrak+1}
+\rho^{-\ell+\sfrak+1}\int_{2M}^{\rho}\big(r^{\ell-\sfrak-2}r^2\partial_\rho\Hpsi\big)(\rb_1)\di \rb_1.
\end{split}\end{align}
A simple induction then yields that for any $j\in \mathbb{N}^+$,
\begin{align}\label{Def:r2par:1}\begin{split}
\hspace{4ex}&\hspace{-4ex}(r^2\partial_\rho)^j\bigg\{\rho^{-\ell+\sfrak}\int_{2M}^{\rho}\big(r^{\ell-\sfrak-1}
\Hpsi\big)(\rb_1)\di \rb_1\bigg\}\\
={}&\rho^{-\ell+\sfrak+j}\bigg\{\asfl{j}+
\int_{2M}^{\rho}\big(r^{\ell-\sfrak-j-1}(r^2\partial_\rho)^j\Hpsi\big)(\rb_1)\di \rb_1\bigg\}.
\end{split}\end{align}
 Further, we have for any $j\leq \ell-\sfrak$,
\begin{align}\label{Def:r2par:2}
(r^2\partial_\rho)^j\bigg\{\rho^{\ell+\sfrak+1}\int_\rho^\infty \big(r^{-\ell-\sfrak-2}E\big)(\rho_1)\di \rho_1\bigg\}=
\rho^{\ell+\sfrak+1+j}\int_{\rho}^{\infty} \big(r^{-(\ell+\sfrak+2+j)}(r^2\partial_\rho)^jE\big)(\rho_1)\di \rho_1,
\end{align}
where we have used integration by parts and  $\lim_{\rb\to \infty}r^{-\ell-\sfrak-1+j}(r^2\partial_\rho)^jE(\rho)=0$ which yields the boundary term at $\infty$ vanishes.
By taking $(r^2\partial_\rho)^i$ derivative over equation \eqref{eq:radPgell} and utilizing equations  \eqref{Def:r2par:1} and \eqref{Def:r2par:2}, \eqref{eq:higherradialderiofgplus:general} then follows.
\end{proof}

\subsection{Newman--Penrose constant of the time integral $\gplus$}
\label{subsect:NPIT:esti}

Consider a fixed $(m,\ell)$ mode of the spin $+\sfrak$ component. In this subsection, we compute the N--P constant for the time integral $\gplus$, which is fundamental in characterizing the precise late-time asymptotics for the spin $\pm \sfrak$ components. 

Recall from Remark \ref{rem:gplussol} that $\gplus Y_{m,\ell}^{+\sfrak} (\cos\theta)e^{im\pb}$ is also a solution to the TME, hence $\PhiplussHighTI{0}=\mu^{-\sfrak}r\gplus$ is a solution to \eqref{eq:Phi+1:Schw:generall}. One can then similarly define $\PhiplussHighTI{i}$ and $\tildePhiplussHighTI{i}$ as in Definition \ref{def:tildePhiplusandminusHigh}. As a result, the scalars $\PhiplussHighTI{i}$ and $\tildePhiplussHighTI{i}$ solve equations \eqref{eq:Phiplushighi:Schw:generall} and \eqref{eq:Phiplushighi:Schw:generall:tildePhiplusi}, respectively, that is,
\begin{subequations}
\begin{align}
\label{eq:PhiplussHighTI:wave}
&-\mu Y \curlVR  \PhiplussHighTI{i} +(\edthR'\edthR+f_{\sfrak, i,1})\PhiplussHighTI{i}
\notag\\
&
\qquad \quad
+{f_{\sfrak, i,2}(r-3M)r^{-2}}\curlVR\PhiplussHighTI{i}-6f_{\sfrak, i,3}Mr^{-1}\PhiplussHighTI{i}
+g_{\sfrak, i} M\PhiplussHighTI{i-1}={}0,\\
\label{eq:tildePhiplussHighTI:wave}
&-\mu Y \curlVR  \tildePhiplussHighTI{i} +(\edthR'\edthR+f_{\sfrak, i,1})\tildePhiplussHighTI{i}
+{f_{\sfrak, i,2}(r-3M)r^{-2}}\curlVR\tildePhiplussHighTI{i}
+\sum_{j=0}^{i}h_{\sfrak, i,j} \PhiplussHighTI{j}={}0.
\end{align}
\end{subequations}

The following lemma calculates the N--P constant of the time integral $\gplus$.

\begin{lemma}[The value of the N--P constant of the time integral $\gplus$]
\label{lem:higherderiofgplus:general}
Let $\asfl{\ell-\sfrak}$,  $\csf$,   and $\Hpsi$ be defined as in Lemma \ref{lem:asfl:values}, equation \eqref{eq:csf:012}, and equation \eqref{de:Hpsi}, respectively.  Assume on $\Sigmazero$ that the limits $\lim\limits_{\rb\to \infty}r \curlVR \tildePhiplussHigh{\ell-\sfrak}$ and $\lim\limits_{\rb\to\infty}(r^2\partial_\rho)^{i'}(\mu^{-\sfrak}r\Ppsiell)$, $0\leq i'\leq \ell-\sfrak+1$, exist and are finite. Then, the limits $\big\{\lim\limits_{\rb\to \infty}(r^2\prb)^i\PhiplussHighTI{0}\big\}_{i=0,1,\ldots,\ell-\sfrak}$ and $\big\{\lim\limits_{\rb\to \infty}\PhiplussHighTI{i}\big\}_{i=0,1,\ldots,\ell-\sfrak}$ exist, are finite and are given by
\begin{subequations}
\label{eq:limitofPhiplussHighTIi:exp}
\begin{align}
\label{eq:limitofPhiplussHighTIi:exp:low}
\hspace{4ex}&\hspace{-4ex}
\lim_{\rb\to \infty}(r^2\prb)^i\PhiplussHighTI{0}\notag\\
={}&\sum_{j=0}^{i}\sum_{n=0}^{j}C_i^j C_j^m (-1)^{i-j+1} (i-j)! M^{i-j}\hellh{i-j}(\ell+\sfrak+j+1)^{-1}(2\ell+j-n+1)^{-1}\notag\\
&\times \lim_{\rb\to \infty}\big[(r^2\partial_\rho)^{j-n}(h_{\sfrak,\ell}^{-2}
\mu^{-1-s})\big]\lim_{\rb\to \infty} (r^2\prb)^{n}\Hpsi, \quad \text{for } 0\leq i\leq \ell-\sfrak-1,\\
\label{eq:limitofPhiplussHighTIi:exp:low:ell-sfrak}
\hspace{4ex}&\hspace{-4ex}
\lim_{\rb\to \infty}(r^2\prb)^{\ell-\sfrak}\PhiplussHighTI{0}\notag\\
={}&\sum_{j=0}^{i}\sum_{n=0}^{\min(j,\ell-\sfrak-1)}C_i^j C_j^n (-1)^{i-j+1} (i-j)! M^{i-j}\hellh{i-j}(\ell+\sfrak+j+1)^{-1}(2\ell+j-n+1)^{-1}\notag\\
&\times \lim_{\rb\to \infty}\big[(r^2\partial_\rho)^{j-n}(h_{\sfrak,\ell}^{-2}
\mu^{-1-s})\big]\lim_{\rb\to \infty} (r^2\prb)^{n}\Hpsi
- (-1)^{\ell-\sfrak}\frac{\csf}{2\ell+1}(\ell-\sfrak)!\notag\\
&
- \frac{1}{2\ell+1}\asfl{\ell-\sfrak}
-\frac{1}{2\ell+1}\int_{2M}^{\infty}\rb^{-1}(r^2\prb)^{\ell-\sfrak}\Hpsi\di \rb,
\end{align}
\end{subequations}
and the limits $ \lim\limits_{\rb\to \infty}\PhiplussHighTI{j}$ with $j\in \{0,1,\ldots,\ell-\sfrak\}$ are computed via \eqref{eq:limitofPhiplussHighTIi:exp} and
\begin{align}
\label{eq:PhiplussHighTI:r2prbPhiplussHighTI}
\PhiplussHighTI{i}=\curlVR^i\PhiplussHighTI{0}=(r^2\partial_\rho)^i\PhiplussHighTI{0}+\mathcal{E}^i[\Phiplussell],
\end{align}
where $\mathcal{E}^0[\Phiplussell]=0$ and $\mathcal{E}^i[\Phiplussell]=r^2\Hhyp (r^2\partial_\rho)^{i-1}\Phiplussell
+\curlVR\mathcal{E}^{i-1}[\Phiplussell]$.

\underline{The N--P constant of the time integral $\gplus$, denoted by $\NPCITs{\ell}$, equals}
\begin{align}
\label{eq:N--Ptimeintegral:exp}
\NPCITs{\ell}={}&\frac{1}{2\ell+2}\sum_{j=0}^{\ell-\sfrak}\Big(
\lim_{\rb\to \infty}(rh_{\sfrak, \ell-\sfrak,j} )\times \lim_{\rb\to \infty}\PhiplussHighTI{j}\Big)-\frac{2}{2\ell+2} \lim_{\rb\to \infty}r \curlVR \tildePhiplussHigh{\ell-\sfrak},
\end{align}
where the limits $\lim\limits_{\rb\to \infty}(rh_{\sfrak, \ell-\sfrak,j} )$ are calculated from Proposition \ref{prop:wave:Phihigh:pm1}.
\end{lemma}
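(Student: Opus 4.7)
The proof strategy is to apply the explicit integral formula \eqref{eq:higherradialderiofgplus:general} from Lemma~\ref{lem:asfl:values} to $(r^2\prb)^i(\mu^{-\sfrak}r\Pgell)$ on $\Sigmazero$, pass to the limit $\rb\to\infty$ term by term, and then extract the N--P constant $\NPCITs{\ell}$ from the wave equation \eqref{eq:tildePhiplussHighTI:wave} for $\tildePhiplussHighTI{\ell-\sfrak}$. As a preliminary step I would establish \eqref{eq:PhiplussHighTI:r2prbPhiplussHighTI}. Writing $\curlVR=r^2\prb+r^2\Hhyp\Lxi$ and noting that $\Lxi\PhiplussHighTI{0}=\mu^{-\sfrak}r\Lxi\Pgell=\mu^{-\sfrak}r\Ppsiell=\Phiplussell$, an induction gives $\Lxi\PhiplussHighTI{i}=\curlVR^i\Phiplussell$; unwinding $\PhiplussHighTI{i+1}=r^2\prb\PhiplussHighTI{i}+r^2\Hhyp\curlVR^i\Phiplussell$ yields both \eqref{eq:PhiplussHighTI:r2prbPhiplussHighTI} and the stated recursion for $\mathcal{E}^i$ after verifying the identity $\Lxi\mathcal{E}^i=(\curlVR^i-(r^2\prb)^i)\Phiplussell$. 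Since $\Phiplussell$ and its $r$-derivatives are bounded at infinity, $\lim_{\rb\to\infty}\PhiplussHighTI{j}$ follows directly from $\lim_{\rb\to\infty}(r^2\prb)^j\PhiplussHighTI{0}$.

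For \eqref{eq:limitofPhiplussHighTIi:exp:low} with $0\leq i\leq\ell-\sfrak-1$, I would take $\rb\to\infty$ in \eqref{eq:higherradialderiofgplus:general} using the asymptotic expansions in powers of $r^{-1}$. The factor $(r^2\prb)^{i-j}\hell$ tends to $(-1)^{i-j}(i-j)!M^{i-j}\hellh{i-j}$ by Definition~\ref{def:hell:pm}, and $(r^2\prb)^{j-n}(\hell^{-2}\mu^{-1-\sfrak})$ admits a finite limit from its expansion. For the outer integral $\rho^{\ell+\sfrak+1+j}\int_\rho^\infty r^{-2\ell-j+n-2}\{\asfl{n}+\int_{2M}^r r_1^{\ell-\sfrak-n-1}(r_1^2\prb)^n\Hpsi\di r_1\}\di r$, an integration by parts combined with L'H\^{o}pital yields a finite limit linear in $\lim_{r\to\infty}(r^2\prb)^n\Hpsi$ with the claimed rational-function coefficients. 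The $\csf$-term contributes only at $i=\ell-\sfrak$: the factor $\hell\rho^{\ell+\sfrak+1}\int_\rho^\infty r^{-2\ell-2}\hell^{-2}\mu^{-1-\sfrak}\di r\sim\rho^{-\ell+\sfrak}/(2\ell+1)$ is annihilated by all $(r^2\prb)^i$ with $i<\ell-\sfrak$ in the limit, and its $(r^2\prb)^{\ell-\sfrak}$ derivative converges to $(-1)^{\ell-\sfrak}(\ell-\sfrak)!/(2\ell+1)$, producing the $-(-1)^{\ell-\sfrak}\csf(\ell-\sfrak)!/(2\ell+1)$ contribution in \eqref{eq:limitofPhiplussHighTIi:exp:low:ell-sfrak}.

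The delicate case is $i=\ell-\sfrak$ with $n=\ell-\sfrak$, where the inner integral becomes $\int_{2M}^r r_1^{-1}(r_1^2\prb)^{\ell-\sfrak}\Hpsi\di r_1$ and is only conditionally convergent at infinity. The vanishing N--P hypothesis $\NPCPs{\ell}=0$ together with the assumed finiteness of $\lim_{\rb\to\infty}r\curlVR\tildePhiplussHigh{\ell-\sfrak}$ (via Proposition~\ref{prop:vanishingNPC:betterasymnearscri}) forces sufficient decay of $(r^2\prb)^{\ell-\sfrak}\Hpsi$ at infinity to make $\int_{2M}^\infty r^{-1}(r^2\prb)^{\ell-\sfrak}\Hpsi\di r$ convergent; this yields the remaining two terms in \eqref{eq:limitofPhiplussHighTIi:exp:low:ell-sfrak}, while the truncation $\min(j,\ell-\sfrak-1)$ in the main sum reflects the separation of this boundary case from the regular $n\leq\ell-\sfrak-1$ analysis of the previous paragraph.

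To derive \eqref{eq:N--Ptimeintegral:exp}, I would use \eqref{eq:tildePhiplussHighTI:wave} with $i=\ell-\sfrak$ restricted to the $(m,\ell)$ mode. Since $\edthR'\edthR=-f_{\sfrak,\ell-\sfrak,1}$ on this mode, the zeroth-order combination vanishes; substituting $\mu Y=2\Lxi-\mu\VR$ and $\Lxi\tildePhiplussHighTI{\ell-\sfrak}=\tildePhiplussHigh{\ell-\sfrak}$ rearranges the equation into
\begin{equation*}
\mu\VR\curlVR\tildePhiplussHighTI{\ell-\sfrak}-2(\ell+1)(r-3M)r^{-2}\curlVR\tildePhiplussHighTI{\ell-\sfrak}+\sum_{j=0}^{\ell-\sfrak}h_{\sfrak,\ell-\sfrak,j}\PhiplussHighTI{j}=2\curlVR\tildePhiplussHigh{\ell-\sfrak}.
\end{equation*}
Multiplying by $r$ and letting $r\to\infty$ on $\Sigmazero$, the term $r\mu\VR\curlVR\tildePhiplussHighTI{\ell-\sfrak}$ vanishes in the limit: decomposing $\VR=\prb+\Hhyp\Lxi$ splits it into a piece $r\mu\prb\curlVR\tildePhiplussHighTI{\ell-\sfrak}$ controlled by differentiating the explicit expression \eqref{eq:radPgell} (which gives a better-than-$O(r^{-1})$ approach of $\curlVR\tildePhiplussHighTI{\ell-\sfrak}$ to $\NPCITs{\ell}$) and a piece $r\mu\Hhyp\curlVR\tildePhiplussHigh{\ell-\sfrak}=O(r^{-1})\cdot o(1)$ controlled by $r^2\Hhyp$ bounded and the vanishing of $\lim\curlVR\tildePhiplussHigh{\ell-\sfrak}=\NPCPs{\ell}=0$. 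Solving for $\NPCITs{\ell}$ produces \eqref{eq:N--Ptimeintegral:exp}. The principal obstacle throughout is the borderline analysis at $i=n=\ell-\sfrak$: the constants $\csf$ were chosen in Lemma~\ref{lem:asfl:values} precisely to cancel the divergences that would obstruct smoothness of $\Pgell$ at the horizon, and it is essentially the same cancellation mechanism that tames the otherwise-logarithmic divergence at null infinity here.
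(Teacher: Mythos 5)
Your proposal follows essentially the same route as the paper: establish \eqref{eq:PhiplussHighTI:r2prbPhiplussHighTI} from $\curlVR=r^2\prb+r^2\Hhyp\Lxi$ and $\Lxi\PhiplussHighTI{0}=\Phiplussell$, take $\rb\to\infty$ in \eqref{eq:higherradialderiofgplus:general} term by term, handle the borderline $n=j=i=\ell-\sfrak$ contribution using the finiteness of $\lim r\curlVR\tildePhiplussHigh{\ell-\sfrak}$, and extract $\NPCITs{\ell}$ from the wave equation for $\tildePhiplussHighTI{\ell-\sfrak}$ restricted to the $(m,\ell)$ mode.

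The one place where you depart from the paper, and where the argument as written is a bit short, is the final step. You multiply the ODE by $r$ and take a pointwise limit, which requires $r\mu\prb\curlVR\tildePhiplussHighTI{\ell-\sfrak}\to 0$. You justify this by appealing to ``a better-than-$O(r^{-1})$ approach of $\curlVR\tildePhiplussHighTI{\ell-\sfrak}$ to $\NPCITs{\ell}$, controlled by differentiating \eqref{eq:radPgell}.'' That statement is true, but reading it off the explicit formula is not immediate; in fact the clean way to see it is \emph{a posteriori}: the ODE implies $r\mu\prb\curlVR\tildePhiplussHighTI{\ell-\sfrak}$ converges to $2(\ell+1)\NPCITs{\ell}-\sum_j\lim(rh_{\sfrak,\ell-\sfrak,j})\lim\PhiplussHighTI{j}+2\lim(r\curlVR\tildePhiplussHigh{\ell-\sfrak})$, and if this limit were nonzero then $\curlVR\tildePhiplussHighTI{\ell-\sfrak}$ would grow logarithmically, contradicting the existence of its finite limit; so the constant must vanish, which is exactly \eqref{eq:N--Ptimeintegral:exp}. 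The paper sidesteps this by rewriting the ODE in divergence form, $\prb\bigl((\mu r^{-2})^{\ell+1}\curlVR\tildePhiplussHighTI{\ell-\sfrak}\bigr)=\mu^\ell r^{-2\ell-2}\bigl((2-\mu\Hhyp)\curlVR\tildePhiplussHigh{\ell-\sfrak}-\sum_j h_{\sfrak,\ell-\sfrak,j}\PhiplussHighTI{j}\bigr)$, integrating from $\rho'$ to $\infty$, and computing $\lim_{\rho'\to\infty}(\rho')^{2\ell+2}\int_{\rho'}^\infty$; this yields \eqref{eq:N--Ptimeintegral:exp} directly from $r\curlVR\tildePhiplussHigh{\ell-\sfrak}$ and $\PhiplussHighTI{j}$ having finite limits, with no need to estimate the rate of $\prb\curlVR\tildePhiplussHighTI{\ell-\sfrak}$. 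Similarly, your citation of Proposition \ref{prop:vanishingNPC:betterasymnearscri} for the convergence of $\int_{2M}^\infty r^{-1}(r^2\prb)^{\ell-\sfrak}\Hpsi\,\di r$ is slightly off target: what is actually needed is the expansion $(r^2\prb)^{\ell-\sfrak}\Hpsi=\bigl(2r^{-1}+O(r^{-2})\bigr)\,r\curlVR\tildePhiplussHigh{\ell-\sfrak}+\sum_{j}O(r^{-1})(r^2\prb)^j\PhiplussHigh{0}$, which the paper obtains by substituting $Y=-\prb+(2\mu^{-1}-\Hhyp)\Lxi$ and the wave equations to eliminate $\Lxi$-derivatives; together with the assumed finiteness of $\lim r\curlVR\tildePhiplussHigh{\ell-\sfrak}$ (which already \emph{is} a hypothesis of the lemma and implies $\NPCPs{\ell}=0$, so there is no separate ``vanishing N--P hypothesis'' to invoke), this makes the integrand $O(r^{-2})$ and hence convergent. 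Both shortcuts are repairable and the overall argument is sound, but the paper's divergence-form integration is the more robust version of your final step.
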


\begin{proof}
One can easily see from \eqref{eq:higherradialderiofgplus:general} that for $i\leq \ell-\sfrak-1$, the limit
$\lim_{\rb\to \infty}(r^2\prb)^i\PhiplussHighTI{0}$ exists and is finite. For $i=\ell-\sfrak$, the limit of the RHS exists by direct inspection except for one sub-term with $n=k=i=\ell-\sfrak$:
\begin{align}
\label{eq:jfkkdjfsoij}
\lim_{\rb\to \infty}-h_{\sfrak,\ell}
\rho^{2\ell+1}\int_{\rho}^{\infty}\bigg(h_{\sfrak,\ell}^{-2}
\mu^{-1-s}r^{-2l-2}\int_{2M}^r\big(r^{-1}(r^2\partial_\rho)^{\ell-\sfrak}\Hpsi\big)(r')\di r'\bigg)(\rb_1)\di  \rb_1.
\end{align}
By expanding $(r^2\partial_\rho)^{\ell-\sfrak}\Hpsi=e_{\ell-\sfrak+1}(r)\curlVR\tildePhiplussHigh{\ell-\sfrak}+\sum_{j=0}^{\ell-\sfrak}e_j(r) \tildePhiplussHigh{j}$, where we have utilized the expression \eqref{def:vectorVRintermsofprb} and equation \eqref{eq:Phiplushighi:Schw:generall} to rewrite $\Lxi \tildePhiplussHigh{j}$, it is manifest that all $\{e_j\}_{j=0,\ldots, \ell-\sfrak+1}$ are $O(1)$ and comparing the coefficient of the highest order derivative term gives $e_{\ell-\sfrak+1}=2+O(r^{-1})$, thus the integral from $2M$ to $r$ may lead to a $\log r$ growth and the limit \eqref{eq:jfkkdjfsoij} goes to infinity. However, a direct, but tedious, computation shows that  $e_{\ell-\sfrak+1}=O(1)$ and $\{e_j\}_{j=0,\ldots, \ell-\sfrak+1}$ are $O(r^{-1})$. Hence,
\begin{align}(r^2\partial_\rho)^{\ell-\sfrak}\Hpsi=&(2r^{-1}+O(r^{-2}))r\curlVR\tildePhiplussHigh{\ell-\sfrak}
+\sum_{j=0}^{\ell-\sfrak}O(r^{-1})\tildePhiplussHigh{j}\notag\\ 
=&(2r^{-1}+O(r^{-2}))r\curlVR\tildePhiplussHigh{\ell-\sfrak}
+\sum_{j=0}^{\ell-\sfrak}O(r^{-1})(r^2\prb)^j\PhiplussHigh{0},
\end{align}
 where we have used in the second step the relation $\tildePhiplussHigh{j}=\sum_{n=0}^jO(1)(r^2\prb)^n\PhiplussHigh{0}$ which in turn follows by the definition of  $\tildePhiplussHigh{j}$ and an application of equation \eqref{eq:Phiplushighi:Schw:generall}  to rewrite $\Lxi\PhiplussHigh{n}$, and this then guarantees the existence and finiteness of the limit \eqref{eq:jfkkdjfsoij}.  Meanwhile, this also yields that the limits $\{\lim_{\rb\to \infty}\tildePhiplussHigh{j}\}_{j=0,1,\ldots,\ell-\sfrak}$ exist.

A different way of proving the finiteness of the limit $\lim_{\rb\to \infty}\PhiplussHighTI{\ell-\sfrak}$ is to start with equation \eqref{eq:PhiplussHighTI:wave} with $i=\ell-\sfrak-1$, which is
\begin{align}
&-\mu Y \PhiplussHighTI{\ell-\sfrak} -2\ell\PhiplussHighTI{\ell-\sfrak-1}
-2\ell(r-3M)r^{-2}\PhiplussHighTI{\ell-\sfrak}\notag\\
&\qquad\qquad-6f_{\sfrak, \ell-\sfrak-1,3}Mr^{-1}\PhiplussHighTI{\ell-\sfrak-1}
+g_{\sfrak,\ell-\sfrak-1} M\PhiplussHighTI{\ell-\sfrak-2}={}0.
\end{align}
Since $-\mu Y= -(2-\mu H)\Lxi+\mu \prb$ and $\Lxi\PhiplussHighTI{i}=\PhiplussHigh{i}$, this equation can be written as
\begin{align}
\label{eq:PhiplussHighTIell-sfrak:deri}
\hspace{4ex}&\hspace{-4ex} \prb \Big((\mu r^{-2})^{\ell}\PhiplussHighTI{\ell-\sfrak} \Big)\notag\\
={}&\mu^{\ell-1}r^{-2\ell}\Big((2-\mu H)\PhiplussHigh{\ell-\sfrak}+2\ell\PhiplussHighTI{\ell-\sfrak-1}
-2(\ell-\sfrak-1)(\ell-1)(\ell+\sfrak-1)M\PhiplussHighTI{\ell-\sfrak-2}\Big)\notag\\
&+6f_{\sfrak, \ell-\sfrak-1,3}M\mu^{\ell-1}r^{-2\ell-1}\PhiplussHighTI{\ell-\sfrak-1}.
\end{align}
By \eqref{def:vectorVRintermsofprb} and \eqref{eq:propertyofHfunction}, we have, for any $j\geq1$,
\begin{align}
\curlVR^j=(r^2\partial_\rho+r^2\Hhyp\partial_\tau)^j=\sum_{p+q\leq j}(a_{p,q}^j+O(r^{-1}))
(r^2\partial_\rho)^p\partial_\tau^q\qquad \text{as}\ r\to\infty,
\end{align}
and, for any $i\leq \ell-\sfrak-1$ and $n\geq0$, $(r^2\prb)^i\partial_\tau^n\PhiplussHighTI{0}$ are equal to $b_{i,n}+O(r^{-1})$ as $r\to\infty$, where $b_{i,n}$ are finite constants. Hence, the RHS of the first line of \eqref{eq:PhiplussHighTIell-sfrak:deri} equals $r^{-2\ell}(c+O(r^{-1}))$ for some constant $c$ for $r$ large. We now prove $c=0$. In fact, the LHS of \eqref{eq:PhiplussHighTIell-sfrak:deri} equals
\begin{align*}
&2\ell\rho^{-2\ell-1}\cdot\rho^{2\ell+1}\int_{\rho}^{\infty}\Big(h_{\sfrak,\ell}^{-2}
\mu^{-1-s}r^{-2l-2}\int_{2M}^r\big(r^{-1}(r^2\partial_\rho)^{\ell-\sfrak}\Hpsi\big)(r')\di r'\Big)(\rb_1)\di  \rb_1\notag\\
&-\rho^{-2\ell}\partial_\rho\Big(\rho^{2\ell+1}\int_{\rho}^{\infty}\Big(h_{\sfrak,\ell}^{-2}
\mu^{-1-s}r^{-2l-2}\int_{2M}^r\big(r^{-1}(r^2\partial_\rho)^{\ell-\sfrak}\Hpsi \big)(r')\di r'\Big)(\rb_1)\di  \rb_1\Big)+O(\rho^{-2\ell-1}).
\end{align*}
This yields $\lim\limits_{\rho\to\infty} \rho^{2\ell}\times[\text{left-hand side of
\eqref{eq:PhiplussHighTIell-sfrak:deri}}]=0$, hence, by multiplying \eqref{eq:PhiplussHighTIell-sfrak:deri} by $r^{2\ell}$ and taking $r\to\infty$, we get $c=0$. That is, $ \text{right-hand side of
\eqref{eq:PhiplussHighTIell-sfrak:deri}}=O(\rb^{-2\ell-1})$.
We integrate equation \eqref{eq:PhiplussHighTIell-sfrak:deri} from $\infty$ and thus justify that the limit $\lim_{\rb\to \infty}\PhiplussHighTI{\ell-\sfrak} $ exists and is finite.

We turn to computing  the limits $\lim_{\rb\to \infty}(r^2\partial_\rho)^i\PhiplussHighTI{0}$ for $i=0,\ldots,\ell-\sfrak$. By Definition \ref{def:hell:pm} of the function $\hell$, we have $(r^2\prb)^{i-j}\hell=(-1)^{i-j} (i-j)! M^{i-j}\hellh{i-j}$. Meanwhile,
\begin{align*}
&
\lim_{\rb\to \infty}\rho^{\ell+\sfrak+1+j}\int_{\rho}^{\infty}\big[(r^2\partial_\rho)^{j-n}(\hell^{-2}
\mu^{-1-\sfrak})\big]r^{-2\ell-j+n-2}\asfl{n}\di r\notag\\
&\quad={}
\left\{
                  \begin{array}{ll}
                    0, & \hbox{if $n\leq \ell-\sfrak-1$;} \\
                    \frac{1}{2\ell+1}\asfl{\ell-\sfrak}, & \hbox{if $n=j=i=\ell-\sfrak$,}
                  \end{array}
                \right.
\end{align*}
and
\begin{align*}
&\lim_{\rb\to \infty}-\csf(r^2\prb)^i\bigg(h_{\sfrak,\ell} \rho^{\ell+\sfrak+1}\int_{\rho}^{\infty}r^{-2\ell-2}\hell^{-2}\mu^{-1-\sfrak}\di r\bigg)\\
&\quad={}
\left\{
                  \begin{array}{ll}
                    0, & \hbox{if $i\leq \ell-\sfrak-1$;} \\
                    (-1)^{\ell-\sfrak+1}\frac{\csf}{2\ell+1}(\ell-\sfrak)!, & \hbox{if $i=\ell-\sfrak$.}
                  \end{array}
                \right.
\end{align*}
Further, for $n\leq \ell-\sfrak-1$,
\begin{align*}
&\lim_{\rb_1\to \infty}(\rb_1)^{\ell+\sfrak+1+j}\int_{\rho_1}^{\infty}\big[(r^2\partial_\rho)^{j-n}(\hell^{-2}
\mu^{-1-\sfrak})\big]r^{-2\ell-j+n-2}\int_{2M}^\rb(r')^{\ell-\sfrak-n-1}((r^2\partial_\rho)^n\Hpsi)(r')\di r'\di \rb\notag\\
&={}\frac{1}{(\ell+\sfrak+j+1)(2\ell+j-n+1)}\lim_{\rb\to \infty}\big[(r^2\partial_\rho)^{j-n}(h_{\sfrak,\ell}^{-2}
\mu^{-1-\sfrak})\big]\lim_{\rb\to \infty} (r^2\partial_\rho)^{n}\Hpsi,
\end{align*}
and for $n=j=\ell-\sfrak$,
\begin{align*}
&\lim_{\rb_1\to \infty}(\rho_1)^{\ell+\sfrak+1+j}\int_{\rho_1}^{\infty}\big[(r^2\partial_\rho)^{j-n}(\hell^{-2}
\mu^{-1-\sfrak})\big]r^{-2\ell-j+n-2}\int_{2M}^\rb(r')^{\ell-\sfrak-n-1}((r^2\partial_\rho)^n\Hpsi)(r')\di r'\di \rb\notag\\
&={}\lim_{\rb_1\to \infty}(\rho_1)^{2\ell+1}\int_{\rho_1}^{\infty}\hell^{-2}
\mu^{-1-\sfrak}r^{-2\ell-2}\int_{2M}^\rb(r')^{-1}((r^2\partial_\rho)^{\ell-\sfrak}\Hpsi)(r')\di r'\di \rb\notag\\
&={}\frac{1}{2\ell+1}\int_{2M}^{\infty}\rb^{-1}(r^2\partial_\rho)^{\ell-\sfrak}\Hpsi\di \rb.
\end{align*}
In view of the above limits, equations \eqref{eq:limitofPhiplussHighTIi:exp} thus follow.

In the end, we estimate $\curlVR  \tildePhiplussHighTI{\ell-\sfrak}$, the limit of which as $\rb\to \infty$ equals the Newman--Penrose constant of the time integral $\gplus$.
We use equation \eqref{eq:tildePhiplussHighTI:wave} with $i=\ell-\sfrak$ and utilize equation \eqref{def:vectorVRintermsofprb} to write $-\mu Y= -(2-\mu \Hhyp )\Lxi+\mu \prb$, and in view of $\Lxi\tildePhiplussHighTI{i}=\tildePhiplussHigh{i}$ and the expressions of $f_{\sfrak,i,1}$ and $f_{\sfrak,i,2}$, we achieve
\begin{align*}
\mu \prb\curlVR  \tildePhiplussHighTI{\ell-\sfrak} -(2-\mu \Hhyp )\curlVR\tildePhiplussHigh{\ell-\sfrak}
-2(\ell+1)(r-3M)r^{-2}\curlVR\tildePhiplussHighTI{\ell-\sfrak}
+\sum_{j=0}^{\ell-\sfrak}h_{\sfrak, \ell-\sfrak,j} \PhiplussHighTI{j}={}0.
\end{align*}
This can be rewritten as
\begin{align}
\prb\big((\mu r^{-2})^{\ell+1}\curlVR  \tildePhiplussHighTI{\ell-\sfrak}\big)
={}&\mu^{\ell}r^{-2\ell-2}\Big((2-\mu \Hhyp )\curlVR\tildePhiplussHigh{\ell-\sfrak}-\sum_{j=0}^{\ell-\sfrak}h_{\sfrak, \ell-\sfrak,j} \PhiplussHighTI{j}\Big).
\end{align}
By integrating from $\infty$, we obtain
\begin{align}
\lim_{\rb\to \infty}\curlVR  \tildePhiplussHighTI{\ell-\sfrak}=&-\lim_{\rb'\to \infty} (\rb')^{2\ell+2}\int_{\rb'}^{\infty} \mu^{\ell}r^{-2\ell-2}\Big((2-\mu \Hhyp )\curlVR\tildePhiplussHigh{\ell-\sfrak}
-\sum_{j=0}^{\ell-\sfrak}h_{\sfrak, \ell-\sfrak,j} \PhiplussHighTI{j}\Big)\di \rb\notag\\
=&-\frac{2}{2\ell+2} \lim_{\rb\to \infty}r \curlVR \tildePhiplussHigh{\ell-\sfrak}
+\lim_{\rb'\to \infty} (\rb')^{2\ell+2}\int_{\rb'}^{\infty} \mu^{\ell}r^{-2\ell-2}\Big(
\sum_{j=0}^{\ell-\sfrak}h_{\sfrak, \ell-\sfrak,j} \PhiplussHighTI{j}\Big)\di \rb.
\end{align}
For the last term,
since the limits $\lim_{\rb\to \infty}(r^2\prb)^j\PhiplussHighTI{0}$ exist and are finite for all $j=0,1,\ldots, \ell-\sfrak$ and equation \eqref{eq:PhiplussHighTI:r2prbPhiplussHighTI} follows easily from equation \eqref{def:vectorVRintermsofprb} and Definition \ref{def:PsiPhietc}, we conclude that the limits $\lim_{\rb\to \infty}\PhiplussHighTI{j}$ exist and are finite for all $j=0,1,\ldots, \ell-\sfrak$. Hence, we obtain
\begin{align*}
\hspace{4ex}&\hspace{-4ex}\lim_{\rb'\to \infty} (\rb')^{2\ell+2}\int_{\rb'}^{\infty} \mu^{\ell}r^{-2\ell-2}\Big(
\sum_{j=0}^{i}h_{\sfrak, \ell-\sfrak,j} \PhiplussHighTI{j}\Big)\di \rb\notag\\
={}&\frac{1}{2\ell+2}\sum_{j=0}^{\ell-\sfrak}\bigg(
\lim_{\rb\to \infty}(rh_{\sfrak, \ell-\sfrak,j} )\times \lim_{\rb\to \infty}\PhiplussHighTI{j}\bigg).
\end{align*}
These yield equation \eqref{eq:N--Ptimeintegral:exp}.
\end{proof}

We will apply Theorem \ref{summary:PL:nv:pm:global} to the time integral $\gplus$ which solves the same equation as $(\psipluss)_{m,\ell}$, and to do so, we have to bound each terms in the expression of $\FBT$ for the time integral in terms of the initial data of $(\psipluss)_{m,\ell}$. This is achieved in the following lemma.

\begin{lemma}
\label{lem:initialnorm:psiandgplus}
\begin{enumerate}
\item
For any $\delta\in (0,\half)$,
\begin{align}
\label{eq:initialnorm:psiandgplus}
\InizeroEnergyplussellTI{\tb_0}{\ell}{\reg}{3-\delta}\lesssim_{\reg} \InizeroEnergyplussell{\tb_0}{\ell}{\reg+1}{5-\delta}
\end{align}
\item Assume on $\Sigmazero$ that the limit $\lim_{\rb\to \infty}r \curlVR \tildePhiplussHigh{\ell-\sfrak}$ exists and is finite and, for any $i\in \mathbb{N}$, there are constants $\beta\in (0,\half)$ and  $D_0\geq 0$ such that for all $0\leq i'\leq i+1$ and $r\geq R$,
\begin{align}
\Big|\prb^{i'}\Big(\VR\tildePhiplussHigh{\ell-\sfrak}-{\rb^{-3}\lim_{\rb\to \infty}r \curlVR \tildePhiplussHigh{\ell-\sfrak}}\Big)\Big|\lesssim D_0\rb^{-3-\beta-i'}.
\end{align}
Then, for all $0\leq i'\leq i$ and $r\geq R$,
\begin{align}
\Big|\prb^{i'}\Big(\VR\tildePhiplussHighTI{\ell-\sfrak}-{\rb^{-2}\NPCITs{\ell}}\Big)\Big|\lesssim D_0'\rb^{-2-\beta-i'},
\end{align}
with  $D_0'=D_0+(\InizeroEnergyplussell{\tb_0}{\ell}{i+\regl}{5-\delta})^{\half}$ for some constant $\regl$.
\end{enumerate}
\end{lemma}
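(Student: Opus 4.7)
The plan is to exploit two structural features that make the time integral tractable: first, $\Lxi\Pgell=\Ppsiell$ by construction, so every power of $\Lxi$ applied to $\gplus$ is automatically transferred to $\psipluss$; second, the explicit integral representation \eqref{eq:s:pgell} and the higher-order versions derived in Lemma~\ref{lem:asfl:values} reduce weighted estimates on $\gplus$ to weighted integrals of $\Hpsi$, which is a first-order expression in $\Ppsiell$. Throughout, the key bookkeeping observation is that integration from $\rb$ to $\infty$ in \eqref{eq:radPgell} gains exactly one power of $r$ per iteration, which is what upgrades the weight $p=5-\delta$ (on $\psipluss$) to $p=3-\delta$ (on $\gplus$), and analogously drops the $r$-weight by $2$ on each radial-derivative level in the hierarchy $\{(r^2V)^i\Psi_{\gplus}\}$ relative to $\{(r^2V)^i\Psipluss\}$.

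For the energy estimate \eqref{eq:initialnorm:psiandgplus}, I would first reduce to controlling only pure radial derivatives and angular derivatives on $\Sigmazero$, since time derivatives $\Lxi^j\gplus=\Lxi^{j-1}\psipluss$ are handled trivially and angular derivatives commute with all the expressions in Lemma~\ref{lem:asfl:values}. Next, applying the Hardy inequality \eqref{eq:HardyIneqLHS} to each nested integral in \eqref{eq:higherradialderiofgplus:general} converts the pointwise integral representations into weighted $L^2$ bounds, at each step trading a factor $r^2$ in the weight for a $V$-derivative of the source. The net effect is that the contribution from the inner integral $\int_{2M}^\rb r^{\ell-\sfrak-n-1}(r^2\prb)^n\Hpsi\,\di\rb$ is controlled by $\norm{rV\tildePhiplussHigh{\ell-\sfrak}}^2_{W^{\reg}_{3-\delta}(\Sigmazero^{\geq R})}$ together with the lower-order $\{(r^2V)^i\Psipluss\}$-energies, which is precisely what makes up $\InizeroEnergyplussell{\tb_0}{\ell}{\reg+1}{5-\delta}$. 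The boundary constants $\csf$ appearing in \eqref{eq:csf:012} involve $\Hpsi$ and its radial derivatives at $\rb=2M$, which are pointwise controlled via the Sobolev embedding \eqref{eq:Sobolev:1} by the $\reg+1$-order energy of $\psipluss$.

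For the asymptotic expansion, I would begin from the transport equation
\begin{align*}
\prb\!\big((\mu r^{-2})^{\ell+1}\curlVR\tildePhiplussHighTI{\ell-\sfrak}\big)
={}&\mu^\ell r^{-2\ell-2}\Big((2-\mu\Hhyp)\curlVR\tildePhiplussHigh{\ell-\sfrak}
-\sum_{j=0}^{\ell-\sfrak}h_{\sfrak,\ell-\sfrak,j}\PhiplussHighTI{j}\Big)
\end{align*}
derived in the proof of Lemma~\ref{lem:higherderiofgplus:general}, whose integration from $\rb$ to infinity with endpoint value $\NPCITs{\ell}$ already gives the leading $\rb^{-2}\NPCITs{\ell}$ behavior of $\VR\tildePhiplussHighTI{\ell-\sfrak}$. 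The hypothesis on $\tildePhiplussHigh{\ell-\sfrak}$ gives $\curlVR\tildePhiplussHigh{\ell-\sfrak}=\rb^{-1}\lim_{\rb\to\infty}(r\curlVR\tildePhiplussHigh{\ell-\sfrak})+O(r^{-1-\beta})$, which feeds into the first source term with the same $\beta$-remainder at all orders up to $i+1$. The coefficients $h_{\sfrak,\ell-\sfrak,j}=O(r^{-1})$ admit, by Proposition~\ref{prop:wave:Phihigh:pm1}, expansions in integer powers of $r^{-1}$, and each $\PhiplussHighTI{j}$ is controlled by \eqref{eq:PhiplussHighTI:r2prbPhiplussHighTI} together with \eqref{eq:limitofPhiplussHighTIi:exp}, yielding an expansion $\PhiplussHighTI{j}=\mathrm{const}+O(r^{-1})$ whose $r^{-1}$ remainder is dominated by $(\InizeroEnergyplussell{\tb_0}{\ell}{i+\regl}{5-\delta})^{1/2}$ via the already-proven first bullet and the Sobolev estimate \eqref{eq:Sobolev:3}. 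Differentiating the transport equation $i'\le i$ times in $\rb$ and integrating from infinity propagates this to the claimed $\prb^{i'}$-estimate.

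The main obstacle is the second bullet: to pass from the bare existence of $\NPCITs{\ell}$ (shown in Lemma~\ref{lem:higherderiofgplus:general}) to the rate $\rb^{-2-\beta-i'}$ of the remainder, one must show that the telescoped source terms on the RHS of the transport equation do not generate a $\log r$ correction as in the borderline case discussed around \eqref{eq:jfkkdjfsoij} in the proof of Lemma~\ref{lem:higherderiofgplus:general}; this requires tracking the exact coefficient structure to verify that the would-be $r^{-1}$ coefficient in $(r^2\prb)^{\ell-\sfrak}\Hpsi$ is actually $O(r^{-1})$ times the already-decaying quantity $r\curlVR\tildePhiplussHigh{\ell-\sfrak}-\lim(r\curlVR\tildePhiplussHigh{\ell-\sfrak})$, plus $O(r^{-1})$ combinations of $\PhiplussHighTI{j}$ whose own remainders after subtracting their $r\to\infty$ limits are $O(r^{-\beta})$.
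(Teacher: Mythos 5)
Your proposal is correct and follows essentially the same route as the paper: the weighted energy bound \eqref{eq:initialnorm:psiandgplus} is read off from the explicit integral representations in Lemma~\ref{lem:asfl:values} (with Hardy inequalities converting the $\int_\rb^\infty$ iterations into the claimed weight gain of two powers of $r$, and Sobolev embedding controlling the boundary constants $\csf$ at $\rb=2M$), and the pointwise rate on $\VR\tildePhiplussHighTI{\ell-\sfrak}$ is obtained by integrating the transport equation $\prb\big((\mu r^{-2})^{\ell+1}\curlVR\tildePhiplussHighTI{\ell-\sfrak}\big)=\cdots$ from infinity, exactly as in the proof of Lemma~\ref{lem:higherderiofgplus:general}. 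You also correctly flagged the borderline $\log r$-issue around \eqref{eq:jfkkdjfsoij} as the delicate point; the paper handles it by the cancellation showing $e_{\ell-\sfrak+1}=O(r^{-1})$ in the expansion of $(r^2\prb)^{\ell-\sfrak}\Hpsi$.
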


\begin{proof}
It is manifest from Lemmas \ref{eq:higherradialderiofgplus:general} and \ref{lem:higherderiofgplus:general} that
\begin{align}
\sum_{i=0}^{\ell-\sfrak}\norm{(r^2V)^i\Psi_{\gplus}}^2_{W_{-2}^{\reg-i}(\Sigmazero)}\lesssim{}&\InizeroEnergyplussell{\tb_0}{\ell}{\reg+1}{5-\delta}.
\end{align}
Meanwhile, by examining the proof of Lemma \ref{lem:higherderiofgplus:general}, it also implies
\begin{align}
\hspace{4ex}&\hspace{-4ex}\norm{rV\tildePhiplussHighTI{\ell-\sfrak}}^2_{W_{3-\delta-2}^{\reg-\ell}(\Sigmazero\cap\{\rb\geq 4M\})}\notag\\
\lesssim{}&
\sum_{i=0}^{\ell-\sfrak}\norm{(r^2V)^i\Psipluss}^2_{W_{-2}^{\reg-i+1}(\Sigmazero)}+
\norm{rV\tildePhiplussHigh{\ell-\sfrak}}^2_{W_{5-\delta-2}^{\reg-\ell+1}(\Sigmazero\cap\{\rb\geq 4M\})}
\notag\\
={}&\InizeroEnergyplussell{\tb_0}{\ell}{\reg+1}{5-\delta}.
\end{align}
These two estimates together yield \eqref{eq:initialnorm:psiandgplus}.
The second claim follows from the proof of Lemma \ref{lem:higherderiofgplus:general} as well.
\end{proof}

\subsection{Global Price's law in the vanishing Newman--Penrose constant case}
\label{subsect:PL:v:MT}

This final subsection is devoted to deriving the globally precise late-time asymptotics of the spin $\pm\sfrak$ components under the vanishing N--P constant condition.  In addition, we give a proof of Theorem \ref{thm:main:intro} that follows easily from the following Theorem \ref{summary:PL:v:pm:global:HiGhmodes}. 

Analogous to Section \ref{subsect:GPL:nv:MT} which however treats the vanishing N--P constant case, we first show the Price's law in the case that the spin $\pm \sfrak$ components are supported on a single $(m,\ell)$ mode.

\begin{thm}[Global Price's law  for a fixed mode of the spin $\pm \sfrak$ components in the vanishing Newman--Penrose constant case]
\label{summary:PL:v:pm:global}
Let $j\in\mathbb{N^+}$ and let $\sfrak=0,1,2$. Suppose the spin $s=\pm \sfrak$ components are supported on an $(m, \ell)$ mode, $\ell\geq \sfrak$. Let function $\hell$ and scalars $\tildePhiplussHigh{i}$ and $\tildePhiminuss{i}$ be defined as in Definitions \ref{def:hell:pm} and \ref{def:tildePhiplusandminusHigh}, respectively.  Assume on $\Sigma_{\tau_0}$ that  there are constants $\beta\in (0,\half)$ and $0\leq D_0<\infty$ such that for all $0\leq i\leq j+\ell+\sfrak$ and $r\geq 10M$,
\begin{align}
\Big|\prb^{i}\Big(\VR\tildePhiplussHigh{\ell-\sfrak}-{\rb^{-3}\lim_{\rb\to \infty}r \curlVR \tildePhiplussHigh{\ell-\sfrak}}\Big)\Big|\lesssim D_0\rb^{-3-\beta-i}.
\end{align}

Then all the estimates in Theorem \ref{summary:PL:nv:pm:global} hold true by making the following replacements:
\begin{align}
&\Lxi^j \Upsilon_s \rightarrow \Lxi^{j-1} \Upsilon_s, 
\quad \NPCPs{\ell}\rightarrow \NPCIT{\ell}, \quad \FBT\rightarrow \FBTV,
\end{align}
where $
\FBTV=(\InizeroEnergyplussell{\tb_0}{\ell}{\reg}{5-\delta})^{\half}+\abs{\NPCITs{\ell}}+ D_0$, $\InizeroEnergyplussell{\tb_0}{\ell}{\reg}{5-\delta}$ is defined as in Definition \ref{def:energyonSigmazero:Schw}, and the N--P constant $\NPCITs{\ell}$ of the time integral $\gplus$ can be calculated as in Lemma \ref{lem:higherderiofgplus:general}.
\end{thm}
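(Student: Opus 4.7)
The strategy is the standard reduction of the VNPC problem to the NVNPC problem via the time integral. Working at the level of a fixed $(m,\ell)$ mode, I would apply Theorem \ref{summary:PL:nv:pm:global} not to $\Ppsiell$ itself (whose N--P constant vanishes) but to the time integral $\gplus$ constructed in Section \ref{subsect:TI:v}. Recall that by Remark \ref{rem:gplussol} the scalar $\gplus$ solves the same spin-$+\sfrak$ TME as $\Ppsiell$, and by \eqref{eq:Lxipgell=ppsiell} we have $\Lxi \gplus=\Ppsiell$; moreover by Lemma \ref{lem:higherderiofgplus:general} the corresponding $(m,\ell)$-th N--P constant of the time integral is precisely $\NPCIT{\ell}$, which is \emph{generically nonzero} even when $\NPCPs{\ell}=0$. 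Thus the time integral plays the role, in the VNPC case, that the original mode played in the NVNPC case.

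The key verification is that the hypotheses of Theorem \ref{summary:PL:nv:pm:global} are satisfied by $\gplus$ in place of $\Ppsiell$, with $\NPCIT{\ell}$ in place of $\NPCPs{\ell}$. This is exactly the content of Lemma \ref{lem:initialnorm:psiandgplus}: first, the initial energy bound
\[
\InizeroEnergyplussellTI{\tb_0}{\ell}{\reg}{3-\delta}\lesssim_{\reg} \InizeroEnergyplussell{\tb_0}{\ell}{\reg+1}{5-\delta}
\]
shows that a finite $5-\delta$ weighted initial energy of $\Ppsiell$ controls the $3-\delta$ weighted initial energy of $\gplus$ (the extra two powers of $r$-weight are precisely absorbed by the radial integration in the construction of $\gplus$ in \eqref{eq:s:pgell}); second, the pointwise decay of $\VR\tildePhiplussHighTI{\ell-\sfrak}-r^{-2}\NPCIT{\ell}$ in terms of $D_0'$ provides the initial data assumption \ref{assump:initialdata:nonvanishing:pm1} for $\gplus$.

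Once these verifications are in place, Theorem \ref{summary:PL:nv:pm:global} applied to $\gplus$ (with $\NPCPs{\ell}$ replaced by $\NPCIT{\ell}$ and the initial energy $\FBT$ replaced by $\FBTV$) yields asymptotic profiles for $\Lxi^{j'}\gplus$, its spin $-\sfrak$ counterpart, and the associated middle component, for any $j'\in\mathbb{N}$. Setting $j'=j-1$ for $j\in\mathbb{N}^+$ and commuting with $\Lxi$ (which is the Killing vector and commutes with the TME, with the TSI, and with all the elliptic relations used throughout the paper), one obtains the asymptotic profiles of $\Lxi^{j-1}\Ppsiell=\Lxi^j\gplus$. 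The asymptotic profile of $\Lxi^{j-1}\NPRminuss$ is then obtained either by applying the TSI of Section \ref{sect:TSI} to the profile of $\Lxi^{j-1}\Ppsiell$ (just as in the proofs of Theorems \ref{thm:PL:anymodes:away:pm1} and \ref{thm:PL:anymodes:near:pm1}), or equivalently by applying the time integral construction for the spin $-\sfrak$ component and invoking Corollary \ref{cor:relationoftwoNPconsts}. The middle-component statement \eqref{globalPL:nv:middle}--\eqref{globalPL:nv:middle:nearr} transfers in the same way since the Maxwell relation $-\sqrt{\ell(\ell+1)}\NPRzero(\mathbf{F}_{\text{rad}})=2Y(r^{-1}\psiplus)$ commutes with $\Lxi$.

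The only point requiring care is bookkeeping of the shift $j \mapsto j-1$: the time integral construction costs exactly one power of $\tb$-decay (equivalently, one $\Lxi$-derivative), which is precisely the effect encoded in the substitution $\Lxi^j\to\Lxi^{j-1}$ throughout. The main obstacle is not analytic but algebraic/combinatorial---confirming that, when Theorem \ref{summary:PL:nv:pm:global} is applied to $\gplus$ and one $\Lxi$ is taken, the resulting constant prefactors of the asymptotic profiles match exactly the ones obtained by substituting $\NPCPs{\ell}\to\NPCIT{\ell}$ and $j\to j-1$ in the statements of Theorem \ref{summary:PL:nv:pm:global}; this amounts to the observation that the explicit polynomials $\prod_{n=2\ell+2}^{2\ell+j+1}n$ and $\Lxi^j(r^{\ell-\sfrak}\tb^{-\ell+\sfrak-1}v^{-\ell-\sfrak-1})$ appearing in the NVNPC asymptotics transform in the expected way under a single $\Lxi$ derivative, which is an elementary but notationally involved check.
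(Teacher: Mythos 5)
Your proposal is correct and follows essentially the same route as the paper's proof: apply Theorem \ref{summary:PL:nv:pm:global} to the time integral $\gplus$ after verifying its hypotheses via Lemma \ref{lem:initialnorm:psiandgplus}, use $\Lxi\gplus=\Ppsiell$ to transfer the asymptotics to the spin $+\sfrak$ component with the shift $j\to j-1$, then obtain the spin $-\sfrak$ and middle components via the TSI and the Maxwell system as in the NVNPC case. The additional bookkeeping you flag is indeed the only remaining check, and the paper treats it as immediate.
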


\begin{proof}
By Lemma \ref{lem:initialnorm:psiandgplus}, we can apply Theorem \ref{summary:PL:nv:pm:global} to the time integral $\gplus$. The statements for the spin $+\sfrak$ component  then follows simply from $(\psipluss)_{(m,\ell)}=\Lxi \gplus$. The estimates for the spin $-\sfrak$ component can be proven by utilizing the TSI in Lemma \ref{lem:TSI:gene}. 
\end{proof}

We also obtain in the following theorem the precise late-time asymptotics for the spin $\pm \sfrak$ components when they are supported on $\ell\geq \ell_0$ modes. This is our main theorem in the vanishing N--P constant case and shall be compared to Theorem \ref{summary:PL:nv:pm:global:HM} in the non-vanishing N--P constant case.  

\begin{thm}[Global Price's law  for the spin $\pm \sfrak$ components in the vanishing Newman--Penrose constant case]
\label{summary:PL:v:pm:global:HiGhmodes}
Let $j\in\mathbb{N^+}$ and let $\sfrak=0,1,2$. Suppose the spin $s=\pm \sfrak$ components are supported on $\ell\geq \ell_0$ mode, $\ell_0\geq \sfrak$. Let function $\hell$ and scalars $\tildePhiplussHigh{i}$ and $\tildePhiminuss{i}$ be defined as in Definitions \ref{def:hell:pm} and \ref{def:tildePhiplusandminusHigh}, respectively. Assume on $\Sigma_{\tau_0}$ that  there are constants $\NPCPs{\ell_0+1}\in \mathbb{R}$, $\beta\in (0,\half)$ and $0\leq D_0<\infty$ such that for  any $r\geq 10M$,
\begin{subequations}
\begin{align}
\Big|\prb^{i}\Big(\VR(\tildePhiplussHigh{\ell_0-\sfrak})_{m,\ell_0}-{\rb^{-3}\lim_{\rb\to \infty}r \curlVR \tildePhiplussHigh{\ell-\sfrak}}\Big)\Big|\lesssim {}&D_0\rb^{-3-\beta-i}, \quad \forall 0\leq i\leq j+\ell_0+\sfrak, \abs{m}\leq \ell_0,\\
\Big|\prb^{i}\Big(\VR(\tildePhiplussHigh{\ell_0+1-\sfrak})_{m,\ell_0+1}-{\rb^{-2}\NPCPs{\ell_0+1}}\Big)\Big|\lesssim {}&D_0\rb^{-2-\beta-i}, \quad \forall 0\leq i\leq j+\ell_0+\sfrak-1, \abs{m}\leq \ell_0+1.
\end{align}
\end{subequations}

Then there are constants $\delta>0$,\footnote{In fact, any $\delta\in (0,\frac{1}{2})$ works fine.} $\reg=\reg(\ell_0,j)$, $\veps=\veps(j, \ell_0, \delta)>0$ and $C=C(\reg,\ell_0, j, \delta)$ such that for any $\tb\geq \tb_0$, 
\begin{align}\label{eq:globalPL:v:pm:HM}
&\bigg|\Lxi^{j-1}\Upsilon_s-2^{2\ell_0+2}
\prod_{i=\ell_0+\sfrak+1}^{2\ell_0+1} i^{-1} \mu^{\frac{\sfrak+s}{2}} \hellz r^{\ell_0-\sfrak}\Lxi^j(
\tb^{-\ell_0+s-1}v^{-\ell_0-s-1})\sum_{m=-\ell_0}^{\ell_0}{\NPCIT{\ell_0}} Y_{m,\ell_0}^{s}(\cos\theta)e^{im\pb} \notag\\
&\qquad\quad\,\,\,
-2^{2\ell_0+4}
\prod_{i=\ell_0+\sfrak+2}^{2\ell_0+3} i^{-1}
\mu^{\frac{\sfrak+s}{2}}\hellzp r^{\ell_0+1-\sfrak}\Lxi^{j-1}(
\tb^{-\ell_0+s-2}v^{-\ell_0-s-2})\notag\\
&\qquad\quad\,\,\,\times\sum_{m=-\ell_0-1}^{\ell_0+1}{\NPCPs{\ell_0+1}} Y_{m,\ell_0+1}^{s}(\cos\theta)e^{im\pb} \bigg|\notag\\
&\leq{}Cr^{\ell_0-\sfrak}
\tb^{-\ell_0+s-1-j-\veps}v^{-\ell_0-s-1}\FBV,
\end{align}
and, on the future event horizon, for $\sfrak\neq0$,
\begin{align}
\label{eq:VNPC:Thm:pluss:horizon}
\bigg|\Lxi^{j-1}\NPR_{+\sfrak}\Big|_{\Horizon}-\mathfrak{q}_{\sfrak,j-1}v^{-2\ell_0-4-(j-1)}
\sum_{m=-\ell_0}^{\ell_0}{\NPCIT{\ell_0}} Y_{m,\ell_0}^{+\sfrak}(\cos\theta)e^{im\pb}\bigg|
\leq{}C
v^{-2\ell_0-4-(j-1)-\veps}\FBV,
\end{align}
where
\begin{align}
  \mathfrak{q}_{\sfrak,j-1}=\frac{(-1)^{\sfrak+j}2^{2\ell_0+3}\sfrak (\ell_0-\sfrak)!(2M)^{\ell_0-\sfrak+1}\hellz(2M)}{(2\ell_0+1)!}
  \prod_{n=2\ell_0+2}^{2\ell_0+j+2} n,
\end{align}
$
\FBV=(\InizeroEnergyplussv{\reg}{\delta})^{\half}+\sum\limits_{m=-\ell_0}^{\ell_0}\abs{\NPCIT{\ell_0}}+\sum\limits_{m=-\ell_0-1}^{\ell_0+1}\abs{\NPCPs{\ell_0+1}}+ D_0$  and $\InizeroEnergyplussv{\reg}{\delta}$ is defined as in Definition \ref{def:initialenergy:highMODEs}.
\end{thm}

\begin{proof}
To prove this theorem, it suffices to prove the estimates \eqref{eq:globalPL:v:pm:HM}. This follows by simply applying Theorem \ref{summary:PL:v:pm:global} to estimate each $(m,\ell_0)$ mode and Theorem \ref{summary:PL:nv:pm:global:HM} to estimate the remainder $\ell\geq \ell_0+1$ modes.
 \end{proof}

 \begin{remark}\label{summary:PL:v:pm:global:HiGhmodes:version2}
  By expanding the expression $\Lxi(\tb^{-\ell_0+s-1}v^{-\ell_0-s-1})$, we can rewrite \eqref{eq:globalPL:v:pm:HM} as
  \begin{align}
  \label{eq:VNPC:Thm:ver2}
    &\biggl|\Lxi^{j-1}\Bigl\{\Upsilon_s-\mu^{\frac{\sfrak+s}{2}}\big[(-\ell_0+s-1)\mathfrak{d}_s^{\ell_0}\hellz r^{\ell_0-\sfrak}
    +\mathfrak{e}_s^{\ell_0+1}\hellzp r^{\ell_0+1-\sfrak}v^{-1}\big]\tau^{-\ell_0+s-2}v^{-\ell_0-s-1}
    \notag\\
    &\qquad\qquad\, -\mu^{\frac{\sfrak+s}{2}}(-\ell_0-s-1)\mathfrak{d}_s^{\ell_0}\hellz r^{\ell_0-\sfrak}\tau^{-\ell_0+s-1}v^{-\ell_0-s-2}
    \Bigr\}\biggr|\notag\\
    &\leq{}C
r^{\ell_0-\sfrak}
\tb^{-\ell_0+s-1-j-\veps}v^{-\ell_0-s-1}\FBV,
  \end{align}
  where
  \begin{subequations}
  \begin{align}
    &\mathfrak{d}_s^{\ell_0}=2^{2\ell_0+2}
    \prod_{i=\ell_0+\sfrak+1}^{2\ell_0+1} i^{-1} \sum_{m=-\ell_0}^{\ell_0}{\NPCIT{\ell_0}} Y_{m,\ell_0}^{s}(\cos\theta)e^{im\pb}, \\
    &\mathfrak{e}_s^{\ell_0+1}=2^{2\ell_0+4}
    \prod_{i=\ell_0+\sfrak+2}^{2\ell_0+3} i^{-1} \sum_{m=-\ell_0-1}^{\ell_0+1}{\NPCPs{\ell_0+1}} Y_{m,\ell_0+1}^{s}(\cos\theta)e^{im\pb}.
  \end{align}
  \end{subequations}
 \end{remark}

 \begin{remark}
   Under the assumptions in Theorem \ref{summary:PL:v:pm:global:HiGhmodes}, we remark without discussing in details that the global asymptotics for the middle component $\NPRzero$ of the Maxwell field can be obtained in a similar manner as in Corollary \ref{globalPL:nv:middle:cor}. 
\end{remark}

To end this section, we show that Theorem \ref{thm:main:intro}  immediately follows from the above Theorem \ref{summary:PL:v:pm:global:HiGhmodes} and Remark \ref{summary:PL:v:pm:global:HiGhmodes:version2}.

First, we show point \ref{point:suffidecay:intro} of Theorem \ref{thm:main:intro} .  Since the initial data are compactly supported away from infinity (or decay rapidly enough), this implies $ \NPCPs{\ell_0+1}=0$ (hence $\mathfrak{e}_s^{\ell_0+1}=0$). The estimates in point \ref{point:suffidecay:intro} of Theorem \ref{thm:main:intro} manifestly hold true with $C_{\sfrak;m,\ell_0}=\NPCIT{\ell_0}$ from the estimates \eqref{eq:VNPC:Thm:ver2} and \eqref{eq:VNPC:Thm:pluss:horizon}.

Next, we give a proof of point \ref{point:ccinfty:intro} in Theorem \ref{thm:main:intro}.
If the initial data of the spin $s$ components on $t=t_0$ hypersurface are supported in a compact region of $(2M,\infty)$, we can choose $\Sigmazero$ such that it coincides with $t=t_0$ hypersurface in the compact support of the initial data and on $\Sigmazero$, if away from the intersection of these two hypersurfaces, the initial data vanish identically. Then, one finds by \eqref{eq:csf:012} that all $\csf=0$. Since $\Hhyp =\partial_r \hhyp =\mu^{-1}$, we have from \eqref{de:Hpsi} that $\Hpsi=\mu^{-1}\hell r^{2\sfrak+1}(r^2\partial_t (\NPRpluss)_{m,\ell}+2\sfrak(r-3M)(\NPRpluss)_{m,\ell})$.
By Lemma \ref{lem:higherderiofgplus:general}, the N--P constant of the time integral $\gplus$ is
\begin{align}
\NPCITs{\ell}={}&\frac{1}{2\ell+2}\lim_{r\to \infty}(rh_{\sfrak, \ell-\sfrak,\ell-\sfrak} )\times \lim_{r\to \infty}(r^2\partial_r)^{\ell-\sfrak}\PhiplussHighTI{0}\notag\\
={}&-\frac{1}{(2\ell+1)(2\ell+2)}\lim_{r\to \infty}(rh_{\sfrak, \ell-\sfrak,\ell-\sfrak} )\times \int_{2M}^{\infty}r^{-1}(r^2\partial_r)^{\ell-\sfrak}\Hpsi\di r\notag\\
={}&-\frac{1}{(2\ell+1)(2\ell+2)}\lim_{r\to \infty}(rh_{\sfrak, \ell_0-\sfrak,\ell_0-\sfrak} )\notag\\
&\times \int_{2M}^{\infty}r^{-1}(r^2\partial_r)^{\ell-\sfrak}\Big(\mu^{-1}\hell  r^{2\sfrak+1}(r^2\partial_t (\NPRpluss)_{m,\ell}+2\sfrak(r-3M)(\NPRpluss)_{m,\ell})\Big)\di r\notag\\
={}&\frac{(-1)^{\ell-\sfrak+1}(\ell-\sfrak)!}{(2\ell+1)(2\ell+2)}\lim_{r\to \infty}(rh_{\sfrak, \ell_0-\sfrak,\ell_0-\sfrak} )\notag\\
&\times \int_{2M}^{\infty}\mu^{-1}\hell  r^{\ell+\sfrak}(r^2\partial_t (\NPRpluss)_{m,\ell}+2\sfrak(r-3M)(\NPRpluss)_{m,\ell})\di r,
\end{align}
where in the last step we have used integration by parts in $r$ and the assumption that the initial data is supported on a compact region in $(2M,\infty)$.
This completes the proof of point \ref{point:ccinfty:intro} in Theorem \ref{thm:main:intro}.

\section*{Acknowledgement}

The first author S. M. acknowledges the support by the ERC grant ERC-2016 CoG 725589 EPGR.  Both authors are grateful for the comments and suggestions by anonymous reviewers, as well as an anonymous comment on Barack--Ori's result. 


\newcommand{\arxivref}[1]{\href{http://www.arxiv.org/abs/#1}{{arXiv.org:#1}}}
\newcommand{\mnras}{Monthly Notices of the Royal Astronomical Society}
\newcommand{\prd}{Phys. Rev. D}
\newcommand{\apj}{Astrophysical J.}

\bibliographystyle{amsplain}

\providecommand{\MR}{\relax\ifhmode\unskip\space\fi MR }
\providecommand{\MRhref}[2]{%
  \href{http://www.ams.org/mathscinet-getitem?mr=#1}{#2}
}
\providecommand{\href}[2]{#2}

\end{document}